\def\arXiv#1{\href{http://arxiv.org/abs/#1}{arXiv:#1}}
\def\?[#1]{\textbf{[#1]}\marginpar{\Large{\textbf{??}}}}
\def\smallsection#1{\smallskip\noindent\textbf{#1}.}
\newcommand{\RR}{{\mathbb R}}
\newtheorem{prop}{Proposition}[section]	
\newtheorem{theo}[prop]{Theorem}
\newtheorem{lemm}[prop]{Lemma}
\numberwithin{equation}{section}
\theoremstyle{definition}
\newtheorem{defi}[prop]{Definition}
\newtheorem{Assumption}[prop]{Assumption}
\newtheorem{rem}[prop]{Remark}
\newtheorem{ex}[prop]{Example}
\let\Im=\Imag
\let\Re=\Real
\newcommand*{\MA}[1]{{\color{magenta}#1}}
\def\indic{\operatorname{1\hskip-2.75pt\relax l}}
\title[Bilinear- and nonlinear Schr\"odinger equations]{Computing solutions of Schr\"odinger equations on unbounded domains \\- On the brink of numerical algorithms}
\author{Simon Becker}
\email{simon.becker@damtp.cam.ac.uk}
\address{University of Cambridge,
DAMTP, Wilberforce Rd, Cambridge CB3 0WA, UK}
\author{Anders C. Hansen}
\email{ach70@cam.ac.uk}
\address{University of Cambridge,
DAMTP, Wilberforce Rd, Cambridge CB3 0WA, UK}
\begin{document}
\begin{abstract}
We address the open problem of determining which classes of time-dependent \emph{linear Schr\"odinger equations} and focusing and defocusing cubic and quintic \emph{non-linear Schr\"odinger equations (NLS)}  on unbounded domains that can be computed by an algorithm. We demonstrate how such an algorithm in general does not exist, yielding a substantial classification theory of which problems in quantum mechanics that can be computed.  Moreover, we establish classifications on which problems that can be computed with a uniform bound on the runtime, as a function of the desired $\epsilon$-accuracy of the approximation. This include linear and nonlinear Schr\"odinger equations for which we provide positive and negative results and conditions on both the initial state and the potentials such that there exist computational (recursive) a priori bounds that allow reduction of the IVP on an unbounded domain to an IVP on a bounded domain, yielding an algorithm that can produce an $\epsilon$-approximation. In addition, we show how no algorithm can decide, and in fact not verify nor falsify, if the focusing NLS will blow up in finite time or not, yet, for the defocusing NLS, solutions can be computed given mild assumptions on the initial state and the potentials. Finally, we show that solutions to discrete NLS equations (focusing and defocusing) on an unbounded domain can always be computed with uniform bounds on the runtime of the algorithm. The algorithms presented are not just of theoretical interest, but efficient and easy to implement in applications. Our results have implications beyond computational quantum mechanics and are a part of the Solvability Complexity Index (SCI) hierarchy and Smale's program on the foundations of computational mathematics. 
For example our results provide classifications of which mathematical problems may be solved by computer assisted proofs. 
\end{abstract}
\maketitle
%%%%%%%%%%%%%%%%%%%%%%%%%%%%%%%%%%%%%%%%%%%%%%%%%%%%%%%%%%%%%%%%%%%%%%%%%%%%%%%%
%                                 INTRODUCTION                                 %
%%%%%%%%%%%%%%%%%%%%%%%%%%%%%%%%%%%%%%%%%%%%%%%%%%%%%%%%%%%%%%%%%%%%%%%%%%%%%%%%
%\addtocounter{section}{1}

\section{Introduction}
Since the pioneering work of B. Engquist and A. Majda \cite{Engquist_CPAM, EM} on absorbing boundary conditions (ABC), the problem of computing approximations to solutions of PDEs on unbounded domains has been notoriously challenging. The Schr\"odinger equation (linear or non-linear) is not an exception.  In particular, despite more than 90 years of the Schr\"odinger equations, the following question is open:
\vspace{1mm}
\begin{displayquote}
	\normalsize
	{\it 
	For which classes of Schr\"odinger equations, linear and non-linear (see \eqref{eq:bilSchr} or \eqref{eq:GP}) on unbounded domains, will there exist an algorithm $\Gamma$, taking point samples of the initial state $\varphi_0$ and the potential function $V$, such that for any $\epsilon > 0$ we have that $\Gamma(\varphi_0, V,\epsilon)$ is no more than $\epsilon$ away from the true solution in the $L^2(\mathbb{R}^d)$ sense? Moreover, for which classes will $\Gamma(\varphi_0, V,\epsilon)$ have uniformly bounded runtime for all inputs in the class?}
\end{displayquote}
\vspace{1mm}
This question has been open since mathematicians initiated research in computational PDEs on unbounded domains in the 1970s. Although there is a vast literature and a myriad of different techniques, the foundations of computational quantum mechanical PDEs on unbounded domains are not known. The situation is similar to the problem of computing spectra of general operators and Schr\"odinger operators on unbounded domains, where W. Arveson pointed out in the 1990s \cite{Arveson_role_of94} that: {\it "Unfortunately, there is a dearth of literature on this basic problem, and so far as we have been able to tell, there are no proven techniques"}. Indeed, there is a vast literature providing invaluable insight into specific spectral computational cases, yet the general computational spectral problem remained unsolved for a substantial time \cite{hansen2011solvability, ben2015can, colb1}.

\subsection{Short summary of the main results}\label{sec:novelty}
We establish impossibility results demonstrating how no algorithm exists for computing solutions to large classes of linear Schr\"odinger PDE problems, despite that the spectra of the operators can easily be computed. Moreover, we provide sufficient conditions for the existence of algorithms for many classes of Schr\"odinger PDE, both linear and non-linear. 
\begin{itemize}
\item[(i)]
Our impossibility results demonstrate that the answer to the above question becomes a vast classification theory on its own. In particular, we show how it is impossible to compute solutions to the linear Schr\"odinger equation on unbounded domains even locally, given smoothness and computability assumptions on the initial state. Similarly, we obtain impossibility results on nonlinear Schr\"odinger equations and show that it is in general impossible to numerically compute whether a solution blows up in finite time or exists forever. However, in the linear case, there are classes of problems on unbounded domains where the initial state blows up in any weighted Sobolev norm, yet there do exists algorithms that can compute solutions to these problems. Such results demonstrate how intricate and potentially surprising such a classification theory is. 

\item[(ii)]
The positive results initiate a program for classifying the different classes of Schr\"odinger PDEs that can be computed and imply affirmative answers to the basic question above for classes of problems on unbounded domains that before were unknown how to handle. These results are based on new techniques that apply to both linear and nonlinear Schr\"odinger equations with time-dependent potentials. The main ideas can be summarised as follows: By assuming sufficient smoothness and decay on the initial state, bounds on its generalised Sobolev norm, weak growth conditions at infinity of the potential as well as mild regularity, we show how one can recursively compute from $\epsilon$ an $R > 0$, such that the solution on the cube $\mathcal{C}_R$ centred at zero with length $R$ and Dirichlet boundary conditions, is $\epsilon$ away from the true solution (on the unbounded domain) in the $L^2$ sense. We then use discretisation techniques for the computational problem on $\mathcal{C}_R$, where all parameters needed to get $\epsilon$-accuracy can be recursively determined from $\epsilon$.
In order to tackle initial states with little smoothness and decay and rough potentials, we then apply several approximation techniques in terms of global basis approximation and mollifying of the potentials. This allows us to deal with non-smooth potentials and almost non-smooth $L^2$ initials states $\varphi_0$ that only satisfy $\|S^{\nu}\varphi_0\|_{L^2} \leq C$, for some $C > 0$, where $S=-\Delta+\vert x \vert^2$ is the harmonic oscillator \eqref{eq:the_S} and $\nu > 0$ can be arbitrarily small. 

\item[(iii)]
The results are a direct continuation of Smale's program \cite{Smale2, Smale_Acta_Numerica, BSS_Machine, Smale_book} on the foundations of computational mathematics initiated in the 1980s. Smale asked several fundamental questions on the existence of algorithms for basic problems in numerical analysis and computational mathematics, for example if there exist alternatives to Newton's method that would always converge for polynomial root finding. This led to deep and surprising results in form of impossibility results developed by McMullen and upper bounds by Doyle \& McMullen \cite{McMullen1, mcmullen1988braiding, Doyle_McMullen}. These problems are special cases in the Solvability Complexity Index (SCI) hierarchy \cite{ben2015can, hansen2011solvability,colb1} that also is the basis for the solutions to the computational spectral problem that characterises the boundaries of computational quantum mechanics in terms of spectral computation. Our results follow in this tradition, and many of the techniques used stem from the SCI hierarchy framework. 
\end{itemize}

\begin{rem}[{\bf Sufficient and necessary conditions for the existence of algorithms}]
In Theorem \ref{Th:main_thrm_lower_bound} we demonstrate how it is in general impossible to compute solutions to the linear and nonlinear Schr\"odinger equations. In order to characterise the boundaries of what computers can achieve in quantum mechanics, this immediately sparks the question: which conditions are sufficient and necessary for the existence of algorithms? The question on sufficient conditions makes perfect sense, and large part of this paper is devoted to this question. However, the question on necessary conditions may not accurately address the issue of determining the boundaries of what computers can achieve. Indeed, one may be tempted to think that a necessary condition would be bounds on smoothness and/or decay of the initial state. As we show in Theorem \ref{Th:main_thrm_lower_bound}, and discussed above in (i), this is not the case. Blow up in any weighted Sobolev norm does not hinder the existence of algorithms for certain classes of problems. Thus, necessary conditions will be very general (such as maybe measurability or continuity of the initial state). A different point of view to establish the boundaries is to determine the classes for which there will exist an algorithm, and the classes for which there will not exist algorithms. As our results suggest, this is a highly intricate task. 
\end{rem}

\subsection{Classical approaches do not in general answer the above foundations question}
Just as the long tradition in computational spectral theory provided important knowledge in specific cases, the classical techniques yield invaluable insight into many core questions on computational issues and numerical analysis problems regarding Schr\"odinger PDEs on unbounded domains, however, do not in general answer the above foundational question. 
Classical approaches to solving Schr\"odinger equations on unbounded domains typically fall into the following three categories:
\begin{itemize}
\item[(i)] {\it Truncation of the unbounded domain to a bounded domain}. This includes the classical techniques of Engquist and Majda on ABC, the many follow up variations \cite{EM,AES,LZZ18,T98,YZ,ABK11,Sz,AABES}, as well as the influential work of S. Jin, P. Markowich and C. Sparber \cite{JMS11} (see specifically Remark 4.2). C. Lasser and C. Lubich follow a similar approach in their extensive survey \cite{LL20} (see specifically Section 7.3). 
\item[(ii)] {\it Galerkin discretisation}. This technique projects the solution to a finite set of basis functions and is explored in Lubich's foundational monograph  \cite{L08a} (Section III.1), where Hermite functions are used, see also \cite{KLY19}, and in Lasser and Lubich's work  \cite{LL20} mentioned above. 
\item[(iii)] {\it Series expansion}. Series expansions methods have been pioneered by A. Iserles, K. Kropielnicka, and P. Singh \cite{IKS18,IKS18a,IKS18b} using for example Magnus series. This approach is closely related to the Dyson series, and can also handle time-dependent potentials using variations of the Strang splitting scheme for the nonlinear Schr\"odinger equation, see also Lubich \cite{L08}. 
\end{itemize}

Considering (i), it is only in very specific cases where one knows how to set the bounded domain with the corresponding boundary conditions so that the solution to the problem on the bounded domain is $\epsilon$ away from the solution to the problem on the unbounded domain. This is even reflected in the original results of Engquist and Majda \cite{Engquist_CPAM} that only guarantee that given $\epsilon > 0$ there exists a set of boundary conditions with the desired properties. In particular, there is a function $\epsilon \mapsto f(\epsilon)$ that takes $\epsilon$ to the parameters needed to describe the appropriate bounded domain and the boundary conditions, however, this function may not be recursive. That is, there may not exist an algorithm that can compute $f$ (in fact our Theorem \ref{Th:main_thrm_lower_bound} immediately implies that in general no algorithm exists for computing $f$). This problem is universal for all techniques of the form "truncate the infinite domain", and as a result such boundary conditions are typically heuristically set. Hence, the vast literature on artificial boundary conditions does not answer the above basic question. 

The approach in (ii) is fundamentally different, providing actual global error bounds on the computed solution compared to the true solution on the unbounded domain. However, the current techniques require assumptions also on the behaviour of the true solution and strong smoothness assumptions. Hence, the results will only give answers to the above question for certain specific classes of problems where certain properties of the true solution are known. In the cases \cite{LL20} where it is used for computing in the semi-classical regime (see \eqref{eq:semicl0} in \S \ref{sec:semicllimit}), the error analysis is as the semiclassical parameter $\mu \rightarrow 0$. Thus, for fixed $\mu$ the error does not tend to zero. In the semiclassical regime such a priori properties of the solution can occur in a priori bounds on auxiliary functions such as the Herman-Kluk prefactor, cf. \cite{LS17}.

The series expansion approach in (iii) relies on a spacial discretisation, converting the problem to a system of ODEs, for which the solution have a a small error compared to the original solution. Although, for any $\epsilon$, there may exist such a spacial discretisation yielding an error of size $\epsilon$, the function $f(\epsilon)$ taking $\epsilon$ to the parameters describing the discretisation may not be recursive. Hence, in general, no algorithm will exists computing $f$ (this follows by our Theorem \ref{Th:main_thrm_lower_bound} similar to the issue in (i) above). Moreover, the series expansion approach requires sufficiently high regularity on the potential, depending on the order of the scheme, to converge \cite{IKS18,IKS18a,IKS18b}.  The situation for the nonlinear Schr\"odinger equation is similar and a priori regularity of the solution is needed to obtain convergent splitting methods \cite{L08}. 

%Moreover, after applying the splitting methods in both cases, the individual propagation problems are still on unbounded %spatial domains.

\smallsection{Acknowledgements}
We would like to thank A. Iserles, C. Lubich and O. Nevanlinna for interesting discussions and helpful comments on our manuscript. S.B. acknowledges support by the EPSRC grant EP/L016516/1 for the University of Cambridge CDT, the CCA.

\section{The Schr\"odinger equation}

\subsection{The linear Schr\"odinger equation} 

We consider the situation of a single particle described by a self-adjoint Schr\"odinger operator $H_0=-\Delta+V: D(H_0) \subset L^2(\mathbb R^d) \rightarrow L^2(\mathbb R^d)$ with static \emph{pinning potential} $V$. Apart from the static pinning potential, we also allow the presence of an additional \emph{control potential} $V_{\operatorname{con}}$ with time-dependent control function $u \in W^{1,1}_{\operatorname{pcw}}(0,T)$ (piecewise $W^{1,1}$). Thus, writing $V_{\operatorname{TD}}(t):= u(t) V_{\operatorname{con}} $ for the time-dependent potential, we cover in this article time-dependent linear Schr\"odinger equations of the form
\begin{equation}
\begin{split}
\label{eq:bilSchr}
i \partial_t \psi(x,t) &= (H_0 + V_{\operatorname{TD}}(x,t))\psi(x,t), \quad (x,t) \in \mathbb R^d \times (0,T) \\
\psi(\bullet,0)&=\varphi_0.
\end{split}
\end{equation}
The Schr\"odinger equation \eqref{eq:bilSchr} with linear control potential appears naturally in the study of static physical systems, described by Schr\"odinger operators $H_0$, under the influence of a time-dependent electric field. This includes the study of the Stark effect that is the response of an atom or molecule to an external homogeneous constant electric field. 
In the so-called \emph{dipole approximation}, the time-dependent control potential becomes $V_{\operatorname{TD}}= \langle p,E(t) \rangle$ where $p$ is the dipole moment of the object and $E$ the external (time-dependent) electric field acting on it.

Mathematical properties of equations of the form \eqref{eq:bilSchr} have been studied in various contexts and we may only mention those that are most relevant for our analysis:
In this article, we build upon a method, introduced in \cite{BKP}, to prove existence of solutions to \eqref{eq:bilSchr} in certain generalized Sobolev spaces. These spaces are essential in our study of global numerical algorithms that provide solutions to \eqref{eq:bilSchr} on unbounded domains. In addition to standard Sobolev spaces, they ensure a fixed spatial decay rate as well. 
Let us conclude by mentioning that it was shown in \cite{B} that the analysis of \cite{BKP} can be extended to the (nonlinear) Hartree equation. For recent results and further reference on numerical methods for linear time-dependent Schr\"odinger equations, we refer to \cite{IKS18,IKS18a,IKS18b}.

\subsubsection{Algorithms sample the potential and the initial state}

In order for an algorithm to access information about the differential equation it must sample the initial state as well as the potential point-wise. Hence, we are in need of the following definition of a function with \emph{controlled local bounded variation} (CLBBV).

\begin{defi}[Initial state with controlled local boundedness and bounded variation and (CLBBV)]
\label{def:CLBV}
Given an initial state $\varphi_0 \in  \operatorname{BV}_{\mathrm{loc}}(\mathbb R^d)$ we say that $\varphi_0$ has \emph{controlled local boundedness and bounded variation} (CLBBV) by $\omega: \mathbb{N} \rightarrow \mathbb{N}$ if for every $R \in \mathbb{N}$ then $K = \omega(R)$ is such that 
\[
\|\varphi_0\big\vert_{\mathcal{C}_R(0)}\|_{L^{\infty}}, \mathrm{TV}(\varphi_0\big\vert_{\mathcal{C}_R(0)}) \leq K,
\]
where $\mathcal{C}_R(0)$ is the closed cube of length $R$ centered at zero.\footnote{We emphasize that bounded total variation already implies a possibly weak $L^{\infty}$ estimate by $\|\varphi_0\big\vert_{\mathcal{C}_R(0)}\|_{L^{\infty}} \le \vert \varphi_0(0) \vert + \mathrm{TV}(\varphi_0\big\vert_{\mathcal{C}_R(0)})$}
\end{defi}

It is a rather obvious assumption that the functions to be sampled must be of local bounded variations. Indeed, from a numerical perspective, to guarantee successful pointwise sampling of a function one will need bounds on the local total variations. Functions that have unbounded local variations will either have discontinuities that will be hard to control or arbitrary wild oscillations. Both of these issues will cause numerical instabilities in the sampling procedure.    

For potentials without singularities, we introduce the following simple definition to capture basic regularity.

\begin{defi}[Potential with controlled $W^{\varepsilon,p}$ norm]
\label{def:PCS}
For $\varepsilon>0$ and $p \in [1,\infty]$, we say that a potential $V \in W^{\varepsilon,p}_{\operatorname{loc}}(\RR^d)$ has controlled local $W^{\varepsilon,p}$-norm by $\Phi: \mathbb{Q}_{+} \rightarrow \mathbb{Q}_{+}$ if 
$
\Vert V \Vert_{W^{\varepsilon,p}(B(0,r))} \le \Phi(r).
$
\end{defi}

Any potential with singularities will be denoted by $W_{\operatorname{sing}}$ where we assume that the singularities $\{x_j\}_{j=1}^{\infty} \subset \mathbb R^d$ have no accumulation point. In order to sample a potential with singularities we need the concept of controlled singularity-blowup.    

\begin{defi}[$L^p$ potential with controlled blow-up]
Given 
\[
W_{\operatorname{sing}} \in L^p(\mathbb R^d) \cap W^{q,\infty}_{\mathrm{loc}}( \mathbb{R}^d \setminus \cup_j\{x_j\}),
\]
 with singularities $\{x_j\}_{j=1}^{\infty} \subset \mathbb R^d$, we say that $W_{\operatorname{sing}}$ has controlled singularity-blowup by $f: (0,\epsilon_0) \times \mathbb{R}_+ \rightarrow \mathbb{Q}_+ \times \mathbb{Q}_+$ if for every $(\epsilon, R) \in (0,\epsilon_0) \times \mathbb{R}_+$ then $f(\epsilon, R) = (\delta, K)$, such that for characteristic functions $\indic_M$ of set $M$  $\|W_{\operatorname{sing}}\indic_{\mathcal{A}} - W_{\operatorname{sing}}\indic_{\mathcal{C}_R(0)}\|_{L^p} \leq \epsilon$ where $\mathcal{A} = \mathcal{A}(\delta,R) = (\bigcup_{x_j \in \mathcal{C}_R(0)} \mathcal{C}_\delta(x_j))^c \cap \mathcal{C}_R(0)$, where $\mathcal{C}_{\delta}(x_j)$ is the closed cube centred at $x_j$ with length $\delta$. Moreover, $\|W_{\operatorname{sing}}\big\vert_{\mathcal{A}}\|_{W^{q,\infty}(\mathcal A)} \leq K$.  
\end{defi}

Since away from singularities, the singular potentials is locally bounded, it is natural to make the following definition

\begin{defi}[Potential with controlled local smoothness]
Given a potential $V \in W^{p,\infty}_{\mathrm{loc}}(\mathbb R^d)$ we say that $V$ has controlled local smoothness by $g: \mathbb{R}_+ \rightarrow \mathbb{N}$ if for every $R \in \mathbb{R}_+$ then $K = g(R)$ is such that $\|V\big\vert_{\mathcal{C}_R(0)}\|_{W^{p,\infty}(\mathcal{C}_R(0))} \leq K$.
\end{defi}

\begin{rem}[Input to the algorithms] 
We assume that 
\[
\left\{(\varphi_0(x_k), V_{\operatorname{TD}}(x_k,t_j)) \, \vert \, \{x_k\}_{k\in\mathbb{N}}, \{t_j\}_{j \in \mathbb{N}} \text{ are dense in $\mathbb{R}^d$ and $[0,T]$}, x_k, t_j \text{ have coordinates} \in \mathbb{Q}\right\},
\]
are accessible to the algorithm. In the Blum-Shub-Smale (BSS) model \cite{BSS_Machine}, this means that the point samples of the initial state $\varphi_0$ and the potential $V_{\operatorname{TD}}$ are accessed through an oracle node. In the Turing model \cite{Turing_Machine} (assuming rational values) this means that the point samples are accessed through an oracle tape. 
\end{rem}

\subsection{The non-linear Schr\"odinger eq. (NLS): focusing/defocusing} 

We then show, as for the linear Schr\"odinger equations, that by proving well-posedness of certain NLS on generalized Sobolev spaces, the solutions to these equations are globally computable by restricting the dynamics to a sufficiently large bounded spatial domain. 

The nonlinear Schr\"odinger equations we consider are
\begin{equation}
\begin{split}
\label{eq:GP}
i \partial_t \psi(x,t) &= H_0 \psi(x,t) + V_{\operatorname{TD}}(x,t)\psi(x,t)+  \nu F_{\sigma}(\psi(x,t)), \quad (x,t) \in \mathbb R \times (0,T) \\
\psi(\bullet,0)&=\varphi_0
\end{split}
\end{equation} 
with scattering length $\nu =1$ and nonlinearity $F_{\sigma}(\psi(x,t)) = \vert \psi(x,t) \vert^{\sigma-1} \psi(x,t)$ where we consider $\sigma=3$ (cubic NLS) and $\sigma=5$ (quintic NLS). The choice $\nu=1$ yields a \emph{defocussing} nonlinearity and $\nu=-1$ a \emph{focussing} one. 
For our positive global computability result, we can only consider a defocussing nonlinearity, i.e. $\nu=1$. This choice is necessary as the focussing nonlinearity in \eqref{eq:GP} can lead to the existence of blow up solutions. The numerical analysis of blow-ups is addressed separately in this paper.  The spaces on which we define our numerical methods to study \eqref{eq:GP} are smaller than the spaces on which \eqref{eq:GP} is naturally well-posed \cite{D16}.

\subsubsection{Blow-up criteria and focussing NLS} 
\label{subsec:INLS}
While the solution to the quintic NLS in \eqref{eq:GP} exists for all times if the nonlinearity is defocussing, this is no longer the case if \eqref{eq:GP} has a focussing quintic nonlinearity. In greater generality, we study whether it is possible to numerically decide whether a solution to a NLS will blow up in finite time or not?- We show that this is impossible in great generality.

The -at least from a physics perspective- most prominent example of a NLS with non-trivial blow-up dichotomy is the focussing ($\nu=-1$) cubic NLS 
\begin{equation}
\begin{split}
\label{eq:focusing}
i \partial_t \psi(x,t) + \Delta \psi(x,t) &= \nu \vert \psi(x,t) \vert^{2} \psi(x,t), \quad (t,x) \in \mathbb R \times \mathbb R^3, \\
\psi(0,x) &= \varphi_0(x).
\end{split}
\end{equation}
Choose any fixed $C > 0$ and $g$ as in Def. \ref{def:CLBV}. Then, we define, for $\rho,\nu \ge 1$,
the set of initial data as
\[\Omega_{\mathrm{BU}(1)} = \{\varphi_0 \in H^{\rho}_{\nu}(\RR) \, \vert \,   \left\lVert  \varphi_0 \right\rVert_{H^{\rho}_{\nu} (\mathbb R)} \leq C \text{ and }\varphi_0 \text{ has CLBV by }g \}.\]
We consider the computational problem  
 \begin{equation}\label{eq:dec_prob_1}
 \Xi_{\mathrm{BU}(1)}: \Omega_{\mathrm{BU}(1)} \ni \varphi_0 \mapsto
 \begin{cases}
 \mathrm{Yes}  &  \text{if \eqref{eq:focusing} blows up in finite time},\\
  \mathrm{No}  &  \text{if \eqref{eq:focusing} does not blows up in finite time}
 \end{cases}
 \in \mathcal{M},
\end{equation}
where  
$
\mathcal{M} = \{\mathrm{Yes}, \mathrm{No}\} = \{1,0\}.
$
Next, we consider the focussing ($\nu=-1$) mass-critical NLS with  $\sigma=1+4/d,$ in particular,
\begin{equation}
\begin{split}
\label{eq:focusing2}
i \partial_t \psi(x,t) + \Delta \psi(x,t) &= \nu \vert \psi(x,t) \vert^{\sigma-1} \psi(x,t), \quad (t,x) \in \mathbb R \times \mathbb R^d, \\
\psi(0,x) &= \varphi_0(x).
\end{split}
\end{equation}
Choose any fixed $C > 0$ and $g$ as in Def. \ref{def:CLBV}. Then, we define, for $\rho,\nu \ge 1$,
the set of initial data as
\[
\Omega_{\mathrm{BU}(2)} = \{\varphi_0 \in H^{\rho}_{\nu} (\mathbb R^d) \, \vert \,   \left\lVert  \varphi_0 \right\rVert_{H^{\rho}_{\nu}(\mathbb R^d)} \leq C \text{ and }\varphi_0 \text{ has CLBV by }g\}.
\]
We consider the computational problem  
\begin{equation}\label{eq:dec_prob_2}
 \Xi_{\mathrm{BU}(2)}: \Omega_{\mathrm{BU}(2)} \ni \varphi_0 \mapsto
 \begin{cases}
 \mathrm{Yes}  &  \text{if \eqref{eq:focusing2} blows up in finite time},\\
  \mathrm{No}  &  \text{if \eqref{eq:focusing2} does not blows up in finite time}
 \end{cases}
 \in \mathcal{M}.
\end{equation}
Our main result on the computability of blow-ups is then Theorem \ref{Th:main_thrm_blow}.

\subsection{The semiclassical limit} 
\label{sec:semicllimit}
The semiclassical formulation of the linear Schr\"odinger equation, with semiclassical parameter $\mu>0$, is 
\begin{equation}
\label{eq:semicl0}
i \mu \partial_t \psi(x,t) = -\mu^2 \Delta \psi(x,t) + V\psi(x,t) + V_{\operatorname{TD}}(t) \psi(x,t).
\end{equation}
By rescaling potentials, it suffices to analyze
\begin{equation}
\label{eq:semicl}
i  \partial_t \psi(x,t) = -\mu^2 \Delta \psi(x,t) + V\psi(x,t) + V_{\operatorname{TD}}(t) \psi(x,t).
\end{equation}
Our algorithms can handle these cases as well, although we do not specifically analyse how the runtime of the algorithm is effected by small $\mu$. 
Our Theorem \ref{theo2}, however, shows that for potentials without singular part, the size of the domain on which the evolution of \eqref{eq:semicl} is supported up to a specified error is controlled only by the semiclassical norm $\Vert \varphi_0 \Vert_{H^{2,\operatorname{sem}}_2}$ (defined in \eqref{eq:semi_classical}) of the initial state $\varphi_0$. Indeed, since the semiclassical norm is monotonically increasing in $\mu>0$, this domain is uniformly bounded in the semiclassical limit. 
In particular, the $R>0$ describing the size of the cube $\mathcal{C}_R$ described \S \ref{sec:novelty} in the outline of the algorithm in Step Ib in \S \ref{sec:OoA} is uniformly bounded in $\mu>0.$
If the singular part of the potential is non-zero, then the size of the domain depends in addition on the constants for the relative boundedness of the singular part of the potential and can potentially grow as $\mu \downarrow 0.$

%\begin{conj}[Scattering conjecture]
%Let $d\ge 1$, $\mu=-1$, and $\varphi_0 \in L^2(\mathbb R^d)$ with $\Vert u_0 \Vert_{L^2} = \Vert Q \Vert_{L^2}.$ Then either the solution to the NLS \eqref{eq:NLSprot} scatters or is the soliton up to symmetries of the equation.
%\end{conj}

\section{Main results}

The main results are split in two: the linear Schr\"odinger and the non-linear Schr\"odinger equations. For the NLS we separately analyze the computability of the solution to the defocussing equation and the computability of blow-up solutions for the focussing one. To explain our results, we introduce (generalized) Sobolev spaces for parameters $\rho,\eta  \ge 0$ 
\begin{equation}
\begin{split}
\label{eq:gensob}
&H^\rho_\eta(\mathbb R^d) :=\left\{ f \in H^\rho(\mathbb R^d); \langle \bullet \rangle^{\eta} f \in L^2(\mathbb R^d) \right\}\text{ with norms }\\
&\left\lVert f \right\rVert^2_{H^\rho_{\eta}(\mathbb R^d)}:=\left\lVert \langle\bullet \rangle^{\rho} \widehat{f} \right\rVert_{L^2(\mathbb R^d)}^2+  \left\lVert  \langle\bullet \rangle^{\eta} f \right\rVert_{L^2(\mathbb R^d)}^2
\end{split}
\end{equation} 
where $\langle x \rangle :=\left(1+\vert x \vert^2\right)^{1/2}.$ 
The semiclassical Sobolev norms are defined for a semiclassical parameter $\mu$ as 
\begin{equation}\label{eq:semi_classical}
\left\lVert f \right\rVert^2_{H^{\rho,\operatorname{sem}}_{\eta}(\mathbb R^d)}:= \left\lVert \langle \mu \bullet \rangle^{\rho} \widehat{f} \right\rVert_{L^2(\mathbb R^d)}^2+  \left\lVert  \langle\bullet \rangle^{\eta} f \right\rVert_{L^2(\mathbb R^d)}^2.
\end{equation}
Moreover, for the self-adjoint positive-definite operator 
\begin{equation}\label{eq:the_S}
S=-\Delta+\vert x \vert^2
\end{equation}
 on $L^2(\RR^d)$, we define the canonical norm on the domain $D(S^{\nu})$\footnote{Let $(\lambda_n, \varphi_n)$ be the tuple of eigenvalues and eigenfunctions (counting multiplicity), then $\varphi \in L^2(\RR^d)$ is in $D(S^{\nu})$ if and only if $\sum_{n}  \lambda_n^{2\nu} \vert \langle \varphi, \varphi_n \rangle \vert^2<\infty$.}  for any 
 $\nu \ge 0$ by
\[
\Vert f \Vert_{S^{\nu}}:= \Vert S^{\nu} f \Vert_{L^2}.
\]

The space of piecewise $W^{1,1}$ functions on some interval $(0,T)$ is denoted by $W^{1,1}_{\operatorname{pcw}}(0,T)$ and consists of all functions $u$ that there exists a finite partition $I_i:=(t_i,t_{i+1})$ of $(0,T)$ such that 
$
 \Vert u \Vert_{W^{1,1}_{\operatorname{pcw}}(0,T)} = \sum_{i=1}^{n} \Vert u \Vert_{W^{1,1}(I_i)}.
 $

\vspace{1mm}
\smallsection{Universality of results and model of computation}
All our lower bounds and impossibility results are universal, independent of the computational model i.e. the Blum-Shub-Smale (BSS) model, the Turing model, the Von Neumann (Princeton) model etc.. However, all our positive results are done in in the BSS model. As this is a paper written for numerical analysts and analysts working in mathematical physics we have deliberately chosen the BSS model as this is the standard model used by numerical analysts, i.e. one assumes basic arithmetic operations and comparisons with real numbers. It is evident from our methods that with minor changes, our approach will hold in the Turing model as well, however, as this is not a paper in logic nor computer science our focus is on the BSS model in order to serve the targeted numerical analysis and mathematical analysis community. 

\begin{rem}[Runtime of an algorithm]
We define the runtime of an algorithm to be the total number of arithmetic operations and comparisons done to execute the algorithm. This is equivalent to the definition of the runtime of a BSS machine. 
\end{rem}

 \subsection{Computing solutions to the linear Schr\"odinger equation} We establish both upper and lower bounds on the existence of algorithms for computing the solutions to the linear Schr\"odinger equation. The first result establishes that it is in general impossible to compute even a local solution regardless of the potential. 

\begin{theo}[Impossibility results and paradoxes]\label{Th:main_thrm_lower_bound}
Let $C > 0$ and $\rho \geq 0$. Choose any $T > 0$ and any open set $O \subset \mathbb{R}$ and define domains
\begin{equation}\label{eq:free_domain}
\begin{split}
\Omega^1_{\mathrm{free}} &= \{\varphi \in D(S^{\rho}); \, \vert \,   \left\lVert   \varphi \right\rVert_{L^2(\mathbb R)} \leq C, \varphi \text{ is computable} \},\\
\Omega^2_{\mathrm{free}} &= \{\varphi \in D(S^{\rho}); \, \vert \,   \left\lVert   \varphi \right\rVert_{H^\rho_0(\mathbb R)} \leq C, \varphi \text{ is computable} \},
\end{split}
\end{equation}
where $S^{\rho}$ is defined in \eqref{eq:the_S},
as well as the mapping
\begin{equation}\label{eq:free_Xi}
\begin{split}
\Xi^1_{\mathrm{free}} &: \Omega^1_{\mathrm{free}} \ni \varphi_0 \mapsto u(\bullet,T)\big\vert_{O} \in L^2(O),\\
\Xi^2_{\mathrm{free}} &: \Omega^2_{\mathrm{free}} \ni \varphi_0 \mapsto u(\bullet,T) \in L^2(\mathbb{R}),
\end{split}
\end{equation}
where $u$ is the solution to the free Schr\"odinger equation $(i \partial_t + \Delta)u = 0, \ u(\bullet,0) = \varphi_0$.
Then we have the following:
\begin{itemize}
\item[(I)]
{\bf (Not even a local solution can be computed on $\Omega^1_{\mathrm{free}}$).} There does not exist any algorithm $\Gamma$ such that $\|\Gamma(\epsilon, \varphi_0) - \Xi^1_{\mathrm{free}}(\varphi_0)\|_{L^2(O)} \leq \epsilon$ for all sufficiently small $\epsilon >0$ and $\varphi_0 \in \Omega^1_{\mathrm{free}}$. 
\item[(II)] {\bf (No global solution can be computed on $\Omega^2_{\mathrm{free}}$).} 
There does not exist any algorithm $\Gamma$ such that $\|\Gamma(\epsilon, \varphi_0) - \Xi^2_{\mathrm{free}}(\varphi_0)\|_{L^2(\mathbb{R})} \leq \epsilon$ for all sufficiently small $\epsilon >0$ and $\varphi_0 \in \Omega^2_{\mathrm{free}}$.
\item[(III)] {\bf (Certain cases can be computed despite no bounds in $\left\lVert \cdot \right\rVert_{H^\rho_{\eta}}$).} There exists a domain $\Omega_{\mathrm{free}} \subset L^2(\mathbb{R})$ where $\sup\{\|\varphi \|_{H^\rho_{\eta}} \, \vert \, \varphi \in \Omega_{\mathrm{free}}\} = \infty$ for all $(\rho, \eta) \in \mathbb{R}^2_+ \setminus \{(0,0)\}$, and an algorithm $\Gamma$ such that $\|\Gamma(\epsilon, \varphi_0) - \Xi_{\mathrm{free}}(\varphi_0)\|_{L^2(\mathbb{R})} \leq \epsilon$, where $\Xi_{\mathrm{free}}: \Omega_{\mathrm{free}} \ni \varphi_0 \mapsto u(\bullet,T) \in L^2(\mathbb{R})$.
\end{itemize} 
\end{theo}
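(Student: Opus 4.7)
I would prove the three parts separately: the impossibility results (I)--(II) via an SCI-style adversary argument, and the positive anomaly (III) by exhibiting a tagged family of Gaussian wave packets.

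\emph{Impossibility in (I) and (II).} Suppose $\Gamma$ claims $\epsilon$-accuracy on $\Omega^j_{\mathrm{free}}$. Running $\Gamma(\epsilon,0)$ terminates after finitely many arithmetic steps and queries the initial-state oracle only at a finite set $\{x_1,\dots,x_N\}\subset\mathbb Q$. My plan is to build a second computable initial state $\tilde\varphi_0\in\Omega^j_{\mathrm{free}}$ that vanishes at each $x_k$, so that $\Gamma(\epsilon,\tilde\varphi_0)$ executes identically to $\Gamma(\epsilon,0)$ and returns the same output, while $\|\Xi^j_{\mathrm{free}}(\tilde\varphi_0)\|$ is at least $3\epsilon$ on $O$ (case I) or on $\mathbb R$ (case II), yielding the contradiction. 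I would construct $\tilde\varphi_0$ by \emph{prescribing the solution and inverting the flow}: pick a target $\phi\in C^\infty_c(O)$ with $\|\phi\|_{L^2(O)}>3\epsilon$ subject to the $N$ linear constraints $(e^{-iT\Delta}\phi)(x_k)=0$, and set $\tilde\varphi_0:=e^{-iT\Delta}\phi\in\mathcal S(\mathbb R)\subset D(S^\rho)$. Unitarity of $e^{-iT\Delta}$ on $L^2$ and on $H^\rho$ transfers norm bounds from $\phi$ to $\tilde\varphi_0$, so the constraints $\|\tilde\varphi_0\|_{L^2}\le C$ (case I) and $\|\tilde\varphi_0\|_{H^\rho_0}\le C$ (case II) can be met by rescaling $\phi$, and $e^{iT\Delta}\tilde\varphi_0=\phi$ supplies the required $L^2$-mass in $O$. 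The $N$ constraints are solvable because the kernel slices $y\mapsto K(x_k,y,T)=(4\pi iT)^{-1/2}e^{i(x_k-y)^2/(4T)}$ are linearly independent in $L^2(O)$ (distinct quadratic phases in $y$), so their joint kernel has codimension $N$ in $L^2(O)$ and still contains computable $\phi$ with arbitrarily large $L^2(O)$-mass.

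\emph{Positive anomaly in (III).} I would take the discrete family of Gaussian wave packets
\[
\Omega_{\mathrm{free}} := \{\varphi_n : n\in\mathbb N\},\qquad \varphi_n(x) := \pi^{-1/4}\,e^{-(x-n)^2/2}\,e^{inx},
\]
of unit $L^2$-norm. A direct Fourier computation gives $|\widehat{\varphi_n}(\xi)|=(2\pi)^{1/2}\pi^{-1/4}e^{-(\xi-n)^2/2}$, a Gaussian centred at $\xi=n$, whence $\|\varphi_n\|_{H^\rho_\eta}^2\ge c(\langle n\rangle^{2\rho}+\langle n\rangle^{2\eta})\to\infty$ whenever $(\rho,\eta)\in\mathbb R_+^2\setminus\{(0,0)\}$. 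On input $\varphi_0\in\Omega_{\mathrm{free}}$ the algorithm proceeds in two steps: (a) sample $\varphi_0$ at the rational point $x=0$, read the value $\varphi_n(0)=\pi^{-1/4}e^{-n^2/2}$, and recover $n$ from the strictly decreasing injection $n\mapsto|\varphi_n(0)|$; (b) evaluate the explicit closed-form Schr\"odinger evolution of the Gaussian wave packet, which remains a complex Gaussian function of $x$, to $\epsilon$-accuracy in $L^2(\mathbb R)$ by truncating to an interval whose size is computable from $n,T,\epsilon$ via Gaussian tail estimates and applying standard quadrature.

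\emph{Main obstacle.} I expect the analytically delicate step to be the adversary construction in (I)--(II): producing a genuinely \emph{computable} $\phi$ in the joint kernel of the $N$ evaluation functionals while keeping $\|\phi\|_{L^2(O)}>3\epsilon$ (and, in case (II), $\|\phi\|_{H^\rho}\le C$). The analytic input is linear independence of the quadratic-phase Schr\"odinger kernel slices above, and extracting a computable $\phi$ then requires a Gram--Schmidt/density argument on a countable computable basis of $C^\infty_c(O)$, followed by verifying computability of $\tilde\varphi_0=e^{-iT\Delta}\phi$ as a pointwise-sampled Schwartz function --- the latter reducing to computability of the inverse free propagator on explicit wave-packet data.
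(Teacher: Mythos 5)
Your part (III) is essentially the paper's own construction: a one--parameter family of Gaussian coherent states whose $H^\rho_\eta$ norms diverge for $(\rho,\eta)\neq(0,0)$, such that a single point sample at the origin identifies the parameter, after which the explicit closed-form free evolution can be output to any accuracy. (The paper places the centre at $-sT$ with momentum $s$ so the packet reaches the origin at time $T$ and recovers $s$ via a bespoke $\mathrm{Log}_{k,l}$; you centre at $n$; either works.)

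Your parts (I)--(II) take a genuinely different, \emph{inverse} route. The paper's adversary is built forward: take a Gaussian $\psi$ with large momentum $k$ whose initial centre $-kT$ is far from the sample window $[-K,K]$, so $\psi$ is exponentially small there; multiply by an elementary cut-off that kills $\psi$ on $[-K,K]$, which changes $\psi$ by a negligible $L^2$ amount and, by unitarity, changes the solution at time $T$ by the same negligible amount. The adversary is then manifestly a computable Schwartz function, no linear algebra is needed, and its image under $\Xi^1_{\mathrm{free}}$ is an explicit Gaussian centred in $O$. You instead prescribe the terminal state $\phi\in C_c^\infty(O)$, back-propagate $\tilde\varphi_0=e^{-iT\Delta}\phi$, and impose the $N$ pointwise constraints $\tilde\varphi_0(x_k)=0$. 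At the structural level this is sound: the $\ell_k(\phi)=\int_O K(x_k,y,T)\phi(y)\,dy$ are $N$ bounded functionals of finite codimension, unitarity transfers the $L^2$ and $H^\rho$ bounds as you say, and the kernel slices are indeed independent (though not because of ``distinct quadratic phases'' --- the quadratic chirp $e^{iy^2/4T}$ is common to all; after stripping it one is left with $e^{-ix_ky/2T}$, exponentials of distinct linear frequencies, which are linearly independent on any interval).

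The genuine gap is exactly where you flag it, and it is not a cosmetic one. Since $\Omega^j_{\mathrm{free}}$ contains only \emph{computable} initial states, you must actually exhibit a computable $\phi\in\bigcap_k\ker\ell_k$ with a \emph{certified} positive lower bound on $\|\phi\|_{L^2(O)}$, and ``Gram--Schmidt on a computable basis'' does not by itself deliver this: projecting a single trial bump onto $(\mathrm{span}\{K(x_k,\cdot,T)\})^\perp$ may yield a vector of uncontrollably small norm, and zero-testing a computable real is undecidable, so one cannot simply iterate through trial bumps until a ``nonzero'' projection is found. One can rescue the route quantitatively: Gram--Schmidt on the $N$ kernel slices succeeds because their Gram determinant is a positive computable real (the independence above); then take $N{+}1$ mutually orthogonal, explicitly computable bumps $\phi_0,\dots,\phi_N\subset C_c^\infty(O)$, and observe that the orthogonal projection $P$ onto $(\mathrm{span}\{K(x_k,\cdot,T)\})^\perp$ satisfies $\sum_{i}\|P\phi_i\|^2\ge (N{+}1)-N=1$, so some $\|P\phi_i\|^2\ge 1/(N{+}1)$, a computable lower bound that lets you select, certify, and rescale. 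This salvages your argument, but it is precisely the bookkeeping the paper's forward-Gaussian-plus-cutoff construction was designed to avoid; as it stands, the one-sentence Gram--Schmidt sketch leaves the impossibility claims unproved.
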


\begin{rem}[Universality of the computational model]
Statement (I) is independent of the computational model. Statement (II) is true for any reasonable model of computation for example the Turing model or the BSS model. 
\end{rem}

\begin{rem}[Computable function]
The term computable function refers to the standard definition in the literature. The reader unfamiliar with the term may think of a computable function as a function for which there exists an algorithm that can output an approximation to the function to arbitrary precision. For example, the functions $\exp$, $\cos$, $\sin$ are obviously computable, and there are a myriad of different ways to compute these functions.   
\end{rem}

\begin{rem}[Consequences of Theorem \ref{Th:main_thrm_lower_bound}]
Theorem \ref{Th:main_thrm_lower_bound} demonstrates that the Schr\"odinger equation evolution operator, regardless of potential, takes computable initial conditions to non-computable functions even in the $L^2$ sense. Moreover, one cannot even compute a local approximation on any open set.  This means that there is only a limited collection of Schr\"odinger equations for which there will exist algorithms that can compute the corresponding solutions. The question is: which such equations will have algorithms that allow for accurate computations?
\end{rem}

 With the lower bounds in Theorem \ref{Th:main_thrm_lower_bound} established, the discussion of the assumptions needed to ensure existence of algorithms for the problem. 
 
\begin{Assumption}[Initial states and potentials]\label{ass:1} 
 Let $C > 0$ and $\varepsilon > 0$. Choose any $T > 0$ and define $\Omega_{\mathrm{Lin}}$ and $\Omega_{\mathrm{NLS}}$ to be the collection of inputs 
 $(\varphi_0,V+V_{\operatorname{TD}}(t),C)$ such that we have the following. In the case of $\Omega_{\mathrm{Lin}}$ we let $k = 2$ and for $\Omega_{\mathrm{NLS}}$ we let $k = 3$.
 \begin{itemize}
 \item[(i)] $\varphi_0 \in H^{k+\varepsilon}_2$ has controlled local boundedness and bounded variation by $\omega: \mathbb{R}_+ \rightarrow \mathbb{N}$
 (recall Definition \ref{def:CLBV}), and $\Vert \varphi_0 \Vert_{H^{k+\varepsilon}_2} \le C$.
 \item[(ii)] $V_{\operatorname{TD}}(t) = u(t)V_{\operatorname{con}} + V$ and $V=W_{\operatorname{sing}}+W_{\operatorname{reg}}$ such that for $p < \infty$ and $(x_j)_j$ a sequence that is nowhere dense
 \begin{equation*}
\begin{split}
&W_{\operatorname{sing}} \in L^p \cap W^{k,\infty}_{\mathrm{loc}}(\mathbb{R}^d \setminus \cup_j\{x_j\}), \text{ and } \\
&\Vert W_{\operatorname{sing}} \Vert_{L^p} , \Vert \langle \bullet \rangle^{-2} W_{\operatorname{reg}} \Vert_{L^{\infty}}, \Vert \langle \bullet \rangle^{-2} V_{\operatorname{con}} \Vert_{L^{\infty}}\le C, \text{ as well as } \\
&u \in W^{1,1}_{\operatorname{pcw}}(0,T) \text{ with } \Vert u \Vert_{W^{1,1}_{\operatorname{pcw}}((0,T))} \le C.
\end{split}
\end{equation*}
\item[(iii)] Moreover, $W_{\operatorname{sing}}$ has controlled blowup by $f$ and both $W_{\operatorname{reg}}, V_{\operatorname{con}} \in  W^{k,\infty}_{\mathrm{loc}}$ have controlled local smoothness by $g$.  
\end{itemize}
\end{Assumption}

 Note that the only difference between the assumptions needed in the linear versus the non-linear case is the extra degree of smoothness. In particular, $H^{3+\varepsilon}_2$ and $W^{3,\infty}_{\mathrm{loc}}$ are needed in the non-linear case as opposed to $H^{2+\varepsilon}_2$ and $W^{2,\infty}_{\mathrm{loc}}$ in the linear case. 

We also consider the following alternative set of weaker assumptions on the initial state which we shall use for a more restrictive class of Schr\"odinger equations\footnote{At least for static potentials, this assumptions can be additionally weakened to also cover potentials with quadratic growth.}
\begin{Assumption}
\label{ass:2}
 Let $C > 0$ and $\varepsilon > 0$ and consider the Schr\"odinger operator $S=-\Delta + \vert x \vert^2$. Choose any $T > 0$ and define $\Omega_{\mathrm{Lin2}}$ and $\Omega_{\mathrm{NLS2}}$ to be the collection of inputs 
 $(\varphi_0,V+V_{\operatorname{TD}}(t),C)$ such that we have the following
 \begin{itemize}
 \item[(i)] $\Vert S^{\varepsilon} \varphi_0 \Vert_{L^2}\le C$ and $\varphi_0$ has controlled local boundedness and  bounded variation by $\omega: \mathbb{R}_+ \rightarrow \mathbb{N}$
 (recall Definition \ref{def:CLBV}). 
 \item[(ii)] $V_{\operatorname{TD}}(t) = u(t)V_{\operatorname{con}} + V$ 
where $\|V_{\operatorname{con}}\|_{L^{\infty}}$,$\|V\|_{L^{\infty}} \leq C$ and $V_{\operatorname{con}},V$  have controlled local $W^{\varepsilon,p}$-norm by $\Phi: \mathbb{Q}_{+} \rightarrow \mathbb{Q}_{+}$, as in Definition \ref{def:PCS} with $p$ as in Remark \ref{rem:bounded}, and controlled local bounded variation by $\omega: \mathbb{R}_+ \rightarrow \mathbb{N}$.
\end{itemize}
\end{Assumption}

We recall that an operator $T$ with domain $D(T)$ is called infinitesimally bounded with respect to an operator $S$ with domain $D(S)$ if $D(S) \subset D(T)$ and for every $\varepsilon>0$ there is $C_{\varepsilon}>0$ such that for all $x \in D(S)$
\[ \Vert Tx \Vert \le \varepsilon \Vert Sx \Vert+ C_{\varepsilon} \Vert x \Vert. \]
In particular, for the singular part of the potential, we require this one to be infinitesimally bounded with respect to the negative Laplacian. Sufficient conditions for this are summarized in the following remark:
\begin{rem}
\label{rem:bounded}
\cite[X.20]{RS2} A potential $V \in L^p(\mathbb R^d)$ is infinitesimally bounded with respect to the negative Laplacian $-\Delta$ if $p \ge 2 \operatorname{ for } d \in \left\{1,2,3 \right\} \operatorname{ and } p >\frac{d}{2} \operatorname{ for }d \ge 4.$ In particular, for any $\varepsilon>0$ there exists a constants $C_{\varepsilon}$ such that for all $\psi \in H^2(\RR^d)$ 
\begin{equation}
\label{eq:zero-bd}
\Vert V \psi \Vert_{L^2} \le \varepsilon \Vert \Delta \psi \Vert_{L^2}+C_{\varepsilon} \Vert \psi \Vert_{L^2}.
\end{equation}
\end{rem}

We can now present the main theorem on how to compute solutions to the linear Schr\"odinger equation. 

%Moreover, for $T>0$ we define $\Omega_{\mathrm{NLS2}}$ to be the collection of input states $(\varphi_0,C)$ such that $\Vert \sqrt{S} \varphi_0 \Vert_{L^2}\le C$ (or $\Vert \varphi_0 \Vert_{H^1_1} \le C$) and $\varphi_0$ has controlled local bounded variation by $h$.
%We then choose any $T > 0$ and define for $\Omega_{\mathrm{NLS2}}$ as above
%\[
%\Xi_{\operatorname{NLS2}} : \Omega_{\operatorname{NLS2}} \ni \varphi_0 \mapsto \psi(\bullet,T) \in \mathcal{M}_2,
%\]
%where $\psi$ satisfies for $\sigma \in \{3,5\}$
%\begin{equation}
%\label{eq:NLSWV}
% i \partial_t u(t)  = -\Delta u(t) + \vert u(t) \vert^{\sigma-1} u (t) \text{ with }\psi(\bullet,0)=\varphi_0 
% \end{equation}
% and
%\[
%(\mathcal{M}_2,d_2) = (L^2(\mathbb R),\left\lVert  \bullet \right\rVert_{L^2(\tilde \Omega)}).
%\]
%  With the assumptions established we can present the main positive result for the linear case. 

\begin{theo}[Upper bounds: global solution - linear Schr\"odinger equation]\label{BigTh:main_thrm2}
 Let $C$ and $T > 0$, and consider the linear Schr\"odinger equation \eqref{eq:bilSchr}. 
 \begin{itemize}
 \item[(i)]
 Define, for $\varepsilon>0$, $\Omega_{\mathrm{Lin}}$ as in Assumption \ref{ass:1} and 
 $
\Xi_{\mathrm{Lin}} : \Omega_{\mathrm{Lin}} \ni (\varphi_0,V) \mapsto \psi(\bullet,T) \in L^2(\mathbb{R}^d),
$
where $\psi$ satisfies \eqref{eq:bilSchr} and $\psi(\bullet,0)=\varphi_0$.
 \item[(ii)]
Define also $\Omega_{\mathrm{Lin2}}$ as in Assumption \ref{ass:2} and 
 $
\Xi_{\mathrm{Lin2}} : \Omega_{\mathrm{Lin2}} \ni (\tilde \varphi_0,V) \mapsto \psi(\bullet,T) \in L^2(\mathbb{R}^d),
$
where $\tilde \psi$ satisfies \eqref{eq:bilSchr} with initial value  $\tilde \psi(\bullet,0)=\tilde \varphi_0$.
\end{itemize}
Then there exist algorithms $\Gamma_1$ and $\Gamma_2$ with the following properties.
\begin{itemize}
\item[(I)] For $\Gamma_1$ we have that 
\[
\|\Gamma_1(\varphi_0,V,\epsilon) - \Xi_{\mathrm{Lin}}(\varphi_0,V)\|_{L^2(\mathbb{R}^d)} \leq \epsilon, \qquad \forall \, \epsilon > 0, \,  (\varphi_0,V) \in \Omega_{\mathrm{Lin}}.
\]
If there are no singularities in the potential $W_{\operatorname{sing}}$ then algorithm $\Gamma_1$ can be made so that it will have for each $\epsilon > 0$ uniformly bounded runtime for all $(\varphi_0,V) \in \Omega_{\mathrm{Lin}}$. 
\item[(II)]
For $\Gamma_2$ we have that 
\[
\|\Gamma_2(\tilde \varphi_0,V,\epsilon) - \Xi_{\mathrm{Lin2}}(\tilde \varphi_0,V)\|_{L^2(\mathbb{R}^d)} \leq \epsilon, \qquad \forall \, \epsilon > 0,  \, (\tilde \varphi_0,V) \in \Omega_{\mathrm{Lin2}}.
\]
 Moreover, for each $\epsilon > 0$, $\Gamma_2$ will have uniformly bounded runtime for all $(\varphi_0,V) \in \Omega_{\mathrm{Lin}}$. 
\end{itemize}
\end{theo}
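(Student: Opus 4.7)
\smallsection{Proof plan}

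The plan is to treat (I) and (II) in a common three-stage architecture: (a) promote the hypothesis on $\varphi_0$ to an a priori bound on $\psi(\bullet,T)$ that forces spatial decay, with constants that are computable from the oracle data $C,T,\omega,g,\Phi$; (b) convert this decay, together with a ``cut off and propagate in reverse'' argument, into a recursive truncation radius $R=R(\epsilon)$ such that replacing \eqref{eq:bilSchr} by the Dirichlet problem on $\mathcal C_R$ introduces an $L^2$ error of at most $\epsilon/3$; (c) discretize the IVP on $\mathcal C_R$ with grid size, time step and potential-mollification parameters all computable from $\epsilon$, accumulating a further error $\epsilon/3$ for the potential mollification and $\epsilon/3$ for the discretization.

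For (I), stage (a) uses the BKP-type propagation result on $H^{k+\varepsilon}_2$: Assumption~\ref{ass:1}(ii), combined with the infinitesimal $-\Delta$-boundedness of $W_{\operatorname{sing}}$ from Remark~\ref{rem:bounded} and the quadratic growth bound on $W_{\operatorname{reg}}, V_{\operatorname{con}}$, closes an energy identity for $\|\langle x\rangle^{2}\psi(\bullet,t)\|_{L^2}^2$ where the growth in $t$ is controlled by $\|u\|_{W^{1,1}_{\operatorname{pcw}}}\le C$. Chebyshev then converts the resulting bound $M(C,T)$ into a computable $R$ with $\|\psi(\bullet,T)\mathbf 1_{|x|>R/2}\|_{L^2}\le \epsilon/6$; for stage (b) one uses a smooth cutoff $\chi_R$ supported in $\mathcal C_R$, writes the difference between the free and Dirichlet evolutions via Duhamel against $[\Delta,\chi_R]\psi$, and bounds that commutator using the same a priori estimate, giving the recursive $R(\epsilon)$.

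In stage (c), the controlled-blowup function $f$ lets us compute, from $(\epsilon,R)$, a $\delta$ such that excising cubes $\mathcal C_\delta(x_j)$ around singularities changes $W_{\operatorname{sing}}$ by $O(\epsilon)$ in $L^p$; by \eqref{eq:zero-bd} this perturbs the generator by $O(\epsilon)$ in operator norm on $H^2$-bounded sets, giving an $O(\epsilon T)$ semigroup error. On the resulting bounded-coefficient Dirichlet problem on $\mathcal C_R$ we run an implicit midpoint (Crank--Nicolson) scheme in a fixed finite-difference or finite-element basis; the CLBBV hypothesis on $\varphi_0$ and the controlled local smoothness hypothesis on $W_{\operatorname{reg}}, V_{\operatorname{con}}$ guarantee that pointwise oracle samples are enough to synthesize an $\epsilon$-close discrete initial state and an $\epsilon$-close discrete coefficient matrix, and the standard smooth-coefficient bounded-domain convergence theory gives a rate depending only on $\omega,g,C,T$. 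When $W_{\operatorname{sing}}=0$ every one of these constants depends only on $(C,T,\omega,g,\Phi)$ and not on the specific input, so the total operation count is uniformly bounded in $(\varphi_0,V)\in\Omega_{\mathrm{Lin}}$.

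For (II) I would replace the role of $-\Delta$ as organizing operator by the harmonic oscillator $S$ of \eqref{eq:the_S}: the bound $\|S^{\varepsilon}\varphi_0\|_{L^2}\le C$ gives polynomial decay of the Hermite coefficients of $\varphi_0$ and hence, via commutator estimates between $S$ and the evolution generated by $-\Delta+V+V_{\operatorname{TD}}$ (using only $\|V\|_\infty,\|V_{\operatorname{con}}\|_\infty\le C$ and the controlled $W^{\varepsilon,p}$-norms from Assumption~\ref{ass:2}(ii)), the same decay persists at time $T$; this delivers both a truncation radius $R(\epsilon)$ and a Hermite cut-off $N(\epsilon)$, which I would feed into a Galerkin scheme in the first $N$ Hermite modes, closed in time by an ODE integrator. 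Since the only input-dependent quantity is $C$, the runtime is automatically uniform. The main obstacle throughout is making stage (a) genuinely \emph{recursive}: every constant appearing in the propagated estimate, in particular the $C_\varepsilon$ of Remark~\ref{rem:bounded} and the Sobolev-embedding constants used to control $V_{\operatorname{TD}}(t)$ via $\|u\|_{W^{1,1}_{\operatorname{pcw}}}$, must be explicit enough that $R(\epsilon)$ becomes a function one can evaluate with finitely many oracle calls to $\omega,g,\Phi$ and $f$.
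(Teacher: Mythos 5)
Your three-stage architecture for part (I) --- a priori $H^2_2$ propagation, recursive truncation to $\mathcal C_R$, then a Crank--Nicolson discretisation on the cube --- is the paper's approach, and your singular-potential excision via the controlled-blowup function $f$ and the Lipschitz dependence of the flow on $W_{\operatorname{sing}}$ matches Lemma~\ref{potlem} and Step~II of the paper's proof. The truncation mechanism you propose (cut with $\chi_R$ and Duhamel against $[\Delta,\chi_R]\psi$) differs in detail from the paper's Theorem~\ref{theo2}, which compares directly to the Dirichlet BVP solution and controls the resulting boundary term $\int_{\partial\mathcal B_R}(\nabla_n\xi)\overline{\xi}\,dS$ via the co-area argument of Lemma~\ref{lemm:auxlemmes}; your commutator source $[\Delta,\chi_R]\psi = 2\nabla\chi_R\cdot\nabla\psi+(\Delta\chi_R)\psi$ requires a weighted bound on $\nabla\psi$ which the $H^2_2$ estimate of Lemma~\ref{eos} does not hand you directly, so you would need essentially the same annulus/co-area interpolation the paper uses --- this is a genuine, if minor, gap to fill.

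The serious gap is in your plan for part (II). You propose to propagate the bound $\Vert S^{\varepsilon}\varphi_0\Vert_{L^2}\le C$ through the evolution by ``commutator estimates between $S$ and the evolution generated by $-\Delta+V+V_{\operatorname{TD}}$, using only $\Vert V\Vert_\infty,\Vert V_{\operatorname{con}}\Vert_\infty\le C$ and the controlled $W^{\varepsilon,p}$-norms.'' This cannot work: controlling $\tfrac{d}{dt}\Vert S^\varepsilon\psi(t)\Vert^2$ produces the term $\Im\langle[S^\varepsilon,V]\psi,S^\varepsilon\psi\rangle$, and $[S^\varepsilon,V]$ involves (fractional) derivatives of $V$. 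Under Assumption~\ref{ass:2} the potential is merely $L^\infty\cap W^{\varepsilon,p}_{\mathrm{loc}}$ for an arbitrarily small $\varepsilon>0$, so $\nabla V$ is a negative-order distribution and no such commutator estimate closes. The paper avoids this entirely by never claiming the $S^\varepsilon$-decay persists: Step~I of the $\Omega_{\mathrm{Lin2}}$ proof is merely $L^2$-Lipschitz dependence of the flow on data, Step~II replaces $\varphi_0$ by a finite Hermite truncation $\widetilde\varphi_0\in H^4_4$ with recursive control on both the $L^2$ error and the $H^4_4$ norm, Step~IV replaces $V,V_{\operatorname{con}}$ by Gaussian mollifications $V_\sigma,(V_{\operatorname{con}})_\sigma$ whose $L^p$-distance to $V$ is controlled by the $W^{\varepsilon,p}$ modulus $\Phi$ and whose $W^{k,\infty}$ norms are controlled via Young's inequality --- and only \emph{then} applies the smooth-data machinery of part (I). In other words, the algorithm approximates the rough input by smooth data up front and pays only an $L^2$ penalty; it does not propagate low-regularity norms through the flow. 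Your Hermite--Galerkin scheme is a separate departure from the paper (which still discretises by finite differences on a cube after the Hermite projection of $\varphi_0$); it is not obviously wrong, but you would still need the quasi-Monte-Carlo apparatus of Proposition~\ref{prop:bound_disc_int} to evaluate the matrix elements $\langle\psi_m,V\psi_n\rangle$ and the coefficients $\langle\psi_n,\varphi_0\rangle$ from pointwise oracle samples with recursive error bounds --- a step the paper carries out explicitly via the CLBBV hypothesis and which your sketch omits.
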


\begin{rem}[The boundaries of computational quantum mechanics]
Theorem \ref{BigTh:main_thrm2} demonstrates upper bounds on the boundaries of what computers can achieve in computational quantum mechanics. They are, to the best of our knowledge, the most general conditions known. However, Theorem \ref{BigTh:main_thrm2} immediately begs the question about necessity of the assumptions on the potentials and the initial state. 
\end{rem}

 \subsection{Computing solutions to the NLS equation} In the linear Schr\"odinger case one can guarantee the existence of a unique solution for any time $T >0$. This is not the case when considering focusing NLS. From a computational point of view it becomes crucial to determine whether an algorithm can determine if the solution will blow up in finite time or not. As the following theorem reveals, this is not just impossible, but impossible to verify and falsify. 
 
  \begin{theo}[Blow up cannot be decided, in fact not verified nor falsified]\label{Th:main_thrm_blow}
 Consider the decision problems $\{\Xi_{\mathrm{BU}(1)},\Omega_{\mathrm{BU}(1)}\}$ and $\{\Xi_{\mathrm{BU}(2)},\Omega_{\mathrm{BU}(2)}\}$ defined in \eqref{eq:dec_prob_1} and \eqref{eq:dec_prob_2} concerning the blow up of the NLS. Then, there do not exist sequences of algorithms $\{\Gamma^1_k\}$, $\{\Gamma^2_k\}$, with $\Gamma^1_k: \Omega_{\mathrm{BU}(1)} \rightarrow \mathcal{M}$ and $\Gamma^2_k: \Omega_{\mathrm{BU}(2)} \rightarrow \mathcal{M}$ such that 
 \[
\lim_{k\rightarrow \infty} \Gamma^1_k(\varphi_0) = \Xi_{\mathrm{BU}(1)}(\varphi_0), \text{ such that }  \Gamma^1_k(\varphi_0) = \mathrm{No} \Rightarrow \Xi_{\mathrm{BU}(1)}(\varphi_0) = \mathrm{No}, 
 \]
 \[
\lim_{k\rightarrow \infty} \Gamma^2_k(\varphi_0) = \Xi_{\mathrm{BU}(2)}(\varphi_0), \text{ such that }  \Gamma^1_k(\varphi_0) = \mathrm{Yes} \Rightarrow \Xi_{\mathrm{BU}(2)}(\varphi_0) = \mathrm{Yes}. 
 \] 
These statements are universal independent of the computational model. 
 \end{theo}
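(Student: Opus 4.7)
The plan is to reduce each non-existence claim to a classical incomputability result from recursion theory. Convergence $\Gamma^1_k \to \Xi_{\mathrm{BU}(1)}$ combined with $\Gamma^1_k = \mathrm{No} \Rightarrow \Xi_{\mathrm{BU}(1)} = \mathrm{No}$ forces $\Xi_{\mathrm{BU}(1)}^{-1}(\mathrm{No})$ to be recursively enumerable (enumerate it by running all $\Gamma^1_k$ and listing $\varphi_0$ upon its first "No" output, which is trusted). The symmetric condition for problem 2 forces $\Xi_{\mathrm{BU}(2)}^{-1}(\mathrm{Yes})$ itself to be r.e. My task is therefore to show that the blow-up set is not $\Pi_1$ in problem 1 and not $\Sigma_1$ in problem 2. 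I achieve both by constructing, from each Turing machine $M$, an admissible initial state $\varphi_0^M$ whose blow-up behaviour coincides with halting (problem 1) or with non-halting (problem 2) of $M$.

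For problem 1, I would fix a smooth scattering base datum $\varphi_{\mathrm{scat}}$ and a localised profile $\chi$ of strongly negative Hamiltonian energy, and set
\[
\varphi_0^M = \varphi_{\mathrm{scat}} + \mathbf{1}\{M \text{ halts}\} \cdot \chi_{M},
\]
where $\chi_M$ is a rescaled, translated copy of $\chi$ whose location depends on the halting step of $M$. Glassey's virial identity for the 3D cubic focussing NLS guarantees blow-up whenever $E(\varphi_0^M) < 0$ and $\int |x|^2 |\varphi_0^M|^2 < \infty$, both of which hold precisely when $M$ halts. For problem 2 I would exploit the mass-critical dichotomy around the ground-state mass $\Vert Q \Vert_{L^2}$ and set
\[
\varphi_0^M = \sum_{\ell \geq 0} \mathbf{1}\{M \text{ has not halted by step } \ell\}\, a_\ell \psi_\ell,
\]
with disjointly supported profiles $\psi_\ell$ and amplitudes $a_\ell$ chosen so that every finite truncation stays strictly below a scattering threshold while the full sum lies on the blow-up branch (for instance by crossing $\Vert Q \Vert_{L^2}$ with negative-variance derivative). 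In both cases the geometric parameters are calibrated so that $\varphi_0^M$ meets the $H^\rho_\nu$ norm, $L^\infty_{\mathrm{loc}}$ and CLBV bounds defining $\Omega_{\mathrm{BU}(j)}$ uniformly in $M$.

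The crucial algorithmic feature is that pointwise samples $\varphi_0^M(x_k)$ to any prescribed precision are computable from $M$ and $x_k$: for each $x_k$ only a finite tail of the $\chi_M$ or $\psi_\ell$ contributes above the tolerance, and deciding those contributions requires only simulating $M$ for a finite number of steps. Composing this computable sample oracle with the hypothetical $\{\Gamma^j_k\}$ would therefore yield, in problem 1, an r.e. enumeration of the set of $M$ that do not halt, and, in problem 2, the same r.e. enumeration. Both contradict the $\Pi_1$-completeness of the non-halting problem. Since the obstruction lives entirely at the recursion-theoretic level and the reduction only uses computable samples, the statement is universal across reasonable models of computation (Turing or BSS with rational oracle inputs).

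The main obstacle is the uniform membership in $\Omega_{\mathrm{BU}(j)}$: placing translates of a fixed profile far from the origin tends to inflate the weighted Sobolev norm $\Vert \cdot \Vert_{H^\rho_\nu}$, while rescaling a blow-up trigger to tame that norm can destroy its blow-up property. Balancing these requires choosing translation distances, amplitudes, and scaling parameters in concert with the blow-up criterion used; for problem 2 the delicate point is to ensure the infinite-sum datum genuinely lies in the blow-up regime of a rigorous theorem (e.g.\ the variational/virial arguments of Weinstein and Ogawa--Tsutsumi) despite being an orthogonal superposition of shifted bumps. These are technical but not conceptual difficulties, and standard tools from NLS theory together with the variational characterization of the ground state should suffice.
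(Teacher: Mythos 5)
Your plan reduces the non-existence claim to classical recursion theory: you argue that the hypothesized sequence $\{\Gamma^1_k\}$ (resp.\ $\{\Gamma^2_k\}$) would make a certain set recursively enumerable, and you then aim to contradict this by embedding the halting problem into the initial data via Turing-machine-indexed families $\varphi_0^M$. This is not the paper's approach, and it cannot prove the theorem as stated. The theorem asserts non-existence of sequences of \emph{general algorithms} in the SCI sense (Definition~\ref{alg}), and the paper explicitly emphasizes that the impossibility is ``universal independent of the computational model.'' A general algorithm is merely a map $\Gamma:\Omega\to\mathcal M$ that, for each input, reads finitely many point samples and outputs an answer depending only on those samples; it is \emph{not} required to be a Turing or BSS machine, and it may be an arbitrary, non-computable function of the sampled values. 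A reduction to the halting problem rules out Turing-computable $\Gamma_k$'s, but it says nothing about such non-computable general algorithms, so your argument establishes a strictly weaker statement than the one claimed.

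The paper's proof (Propositions~\ref{prop:Sigma1G} and \ref{prop:Pi1G} and the two specializations to the cubic and mass-critical NLS) is a direct adversary argument that works at the level of information access, not computability. Given a hypothetical convergent $\{\Gamma_n\}$ with one-sided error, one fixes an input $\varphi_0$ in one class, lets $n$ be large enough that $\Gamma_n(\varphi_0)$ has stabilized, notes that $\Gamma_n$ has interrogated only finitely many rational sample points, and then constructs a second admissible input $\tilde\varphi_0$ (by adding or modifying small, localized bump functions in an open ball avoiding all sampled points) that agrees with $\varphi_0$ on every sample read but sits on the other side of the sharp blow-up threshold (Holmer--Roudenko for 3D cubic, Dodson for mass-critical). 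By Definition~\ref{alg}(ii)--(iii), $\Gamma_n(\tilde\varphi_0)=\Gamma_n(\varphi_0)$, contradicting the trusted one-sided output. This argument works for arbitrary general algorithms, which is exactly why the conclusion is model-independent.

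Two further concrete issues with your construction even within the Turing model. First, for problem 1 your $\varphi_0^M = \varphi_{\mathrm{scat}} + \mathbf 1\{M\ \text{halts}\}\cdot\chi_M$ must lie in a \emph{fixed} ball in $H^\rho_\nu$ uniformly over $M$, but placing $\chi_M$ at a location depending on the halting step grows $\Vert\cdot\Vert_{H^\rho_\nu}$ without bound with the halting step, exactly because of the spatial weight $\langle x\rangle^\nu$; the theorem statement takes $\nu\ge 1$, so this is not a tunable issue. Rescaling $\chi_M$ to control the weighted norm degrades the negativity of the energy, so Glassey's virial criterion may no longer hold -- you flag this tension but do not resolve it, and the paper's proof bypasses it entirely. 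Second, for problem 2 your infinite orthogonal superposition needs to land provably in the blow-up regime of a rigorous dichotomy; the paper does not need any such infinite-sum delicacy since a single bump in an un-sampled ball already does the job, pushing $\Vert\varphi_0\Vert_{L^2}$ (or the relevant scaling-invariant quantity) across the $\Vert Q\Vert_{L^2}$ threshold.

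In short, the correct and much shorter route is a sample-obliviousness adversary argument: modify the input only on the complement of the finitely many read samples, and invoke the sharp blow-up/scattering dichotomy to flip the answer. The halting-problem detour both fails to reach the stated generality and introduces technical obstructions that are avoidable.
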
 

 \begin{rem}[Consequences of Theorem \ref{Th:main_thrm_blow}] Theorem \ref{Th:main_thrm_blow} demonstrates that regardless of how smooth and rapidly decaying the initial state $\varphi_0$ is, in particular, $ \left\lVert  \varphi_0 \right\rVert_{H^{\rho}_{\nu}(\mathbb R^d)} \leq C$ for arbitrary $\rho$ and $\nu$, one cannot determine blow up from point samples of $\varphi_0$. Moreover, one cannot even verify or falsify whether one has blow up or not by making an algorithm run forever. Hence, there is very little point of numerical computation of the focusing NLS unless one has extra knowledge of  lack of blow-up. 
 \end{rem}
 
However, for the defocusing NLS it is possible, subject to assumptions on the potential and initial state, to compute approximations to the solution. 

\begin{theo}[Upper bounds: global solution - NLS]\label{BigTh:main_thrm2_NLS}
 Let $C$ and $T > 0$ and consider the NLS \eqref{eq:GP} with either $\sigma=3$ (cubic NLS) or $\sigma=5$ (quintic NLS) and a defocussing nonlinearity, i.e. $\mu=1$.
\begin{itemize}
 \item[(i)]
 Define, for $\varepsilon>0$, $\Omega_{\mathrm{NLS}}$ as in Assumption \ref{ass:1} and 
 $
\Xi_{\mathrm{NLS}} : \Omega_{\mathrm{NLS}} \ni (\varphi_0,V) \mapsto \psi(\bullet,T) \in L^2(\mathbb{R}^d),
$
where $\psi$ satisfies \eqref{eq:GP} and $\psi(\bullet,0)=\varphi_0$. 
 \item[(ii)]
Define also $\Omega_{\mathrm{NLS2}}$ as in Assumption \ref{ass:2} and 
 $
\Xi_{\mathrm{NLS2}} : \Omega_{\mathrm{NLS2}} \ni (\tilde \varphi_0,V) \mapsto \psi(\bullet,T) \in L^2(\mathbb{R}^d),
$
where $\tilde \psi$ satisfies \eqref{eq:bilSchr} with initial value  $\tilde \psi(\bullet,0)=\tilde \varphi_0$.
\end{itemize}
Then there exist two algorithms $\Gamma_1$ and $\Gamma_2$ with the following properties.
\begin{itemize}
\item[(I)] For $\Gamma_1$ we have that 
\[
\|\Gamma_1(\varphi_0,V,\epsilon) - \Xi_{\mathrm{NLS}}(\varphi_0,V)\|_{L^2(\mathbb{R}^d)} \leq \epsilon, \qquad \forall \, \epsilon > 0, \,  (\varphi_0,V) \in \Omega_{\mathrm{NLS}}.
\]
If there are no singularities in the potential $W_{\operatorname{sing}}$ then algorithm $\Gamma_1$ can be made so that it will have for each $\epsilon > 0$ uniformly bounded runtime for all $(\varphi_0,V) \in \Omega_{\mathrm{NLS}}$. 
\item[(II)]
For $\Gamma_2$ we have that 
\[
\|\Gamma_2(\tilde \varphi_0,V,\epsilon) - \Xi_{\mathrm{NLS2}}(\tilde \varphi_0,V)\|_{L^2(\mathbb{R}^d)} \leq \epsilon, \qquad \forall \, \epsilon > 0,  \, (\tilde \varphi_0,V) \in \Omega_{\mathrm{NLS2}}.
\]
 Moreover, for each $\epsilon > 0$, $\Gamma_2$ will have uniformly bounded runtime for all $(\varphi_0,V) \in \Omega_{\mathrm{}}$. 
\end{itemize}
\end{theo}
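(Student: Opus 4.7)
\medskip
\noindent\textbf{Proof plan.}
The plan is to imitate the architecture of the proof of Theorem \ref{BigTh:main_thrm2} (linear case), inserting at each step the additional ingredients needed to control the nonlinearity $F_\sigma(\psi)=|\psi|^{\sigma-1}\psi$. The defocusing sign $\nu=1$ is crucial: it gives a positive energy and thus global-in-time bounds on $\|\psi(\bullet,t)\|_{H^1}$ (and in fact in the full generalized Sobolev scale, via a BKP/\cite{B}-style well-posedness argument). The reason one now needs $H^{3+\varepsilon}_2$ and $W^{3,\infty}_{\mathrm{loc}}$-regularity rather than $H^{2+\varepsilon}_2/W^{2,\infty}_{\mathrm{loc}}$ is that $F_\sigma$ is only finitely differentiable, and propagation of the generalised Sobolev norm $\|\psi(\bullet,t)\|_{H^{3+\varepsilon}_2}$ requires one more derivative than in the linear case to close the Moser-type nonlinear estimate.

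For part (I), I would first establish a recursive a priori bound. Given $(\varphi_0,V)\in\Omega_{\mathrm{NLS}}$, well-posedness in $H^{3+\varepsilon}_2$ together with the conservation of mass and energy for the defocusing NLS yields a Gronwall-type estimate
\[
 \|\psi(\bullet,t)\|_{H^{3+\varepsilon}_2}\le \Psi(t,C,\|u\|_{W^{1,1}_{\mathrm{pcw}}},\Phi,\omega,f,g),\qquad t\in[0,T],
\]
for a function $\Psi$ depending only on the control data, and in particular on the weight $\langle x\rangle^2$ in the initial state. The $\langle x\rangle^2$-decay of $\psi$ then implies that $\|\psi(\bullet,T)\indic_{\mathcal{C}_R^c}\|_{L^2}\to 0$ as $R\to\infty$, with an explicit quantitative rate. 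Choosing $R=R(\epsilon)$ recursively from $\epsilon$ via this rate, I would then compare $\psi$ to the solution $\psi_R$ of the Dirichlet problem on $\mathcal{C}_R$ with the truncated potential and mollified singular part $W_{\operatorname{sing}}$ (the mollification parameter $\delta$ being produced from the controlled blow-up function $f$). The difference $e:=\psi-\psi_R$ satisfies a perturbed NLS whose source terms are controlled by the cutoff error, the mollification error, and a nonlinear term $F_\sigma(\psi)-F_\sigma(\psi_R)$ which, because of the uniform $H^1$ bound on both $\psi$ and $\psi_R$ in the defocusing case, is Lipschitz in $L^2$ through Sobolev embedding. A Gronwall argument then yields $\|e(T)\|_{L^2}\lesssim \epsilon$. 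Finally, $\psi_R$ is approximated by a standard spatial discretisation on $\mathcal{C}_R$ (finite elements, or Hermite/Fourier on the cube) combined with a symmetric Strang splitting in time: the linear part via the truncated self-adjoint operator, the nonlinear flow explicitly (since $F_\sigma$ preserves $|\psi|^2$ pointwise). All discretisation parameters are chosen recursively from $\epsilon$ using the quantitative $W^{3,\infty}$-bounds provided by the moduli $\omega,\Phi,g$. Uniform runtime in the non-singular case follows because $R(\epsilon)$, mesh sizes and step numbers then depend only on $(\epsilon,C,\omega,\Phi,g)$ and not on the individual input.

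For part (II), under the weaker Assumption \ref{ass:2}, the strategy is a two-step approximation followed by invocation of (I)-type arguments, exactly as in the proof of Theorem \ref{BigTh:main_thrm2}(II). First, I would replace $\tilde\varphi_0$ by its truncation in the Hermite basis (eigenbasis of $S=-\Delta+|x|^2$): since $\|S^\varepsilon\tilde\varphi_0\|_{L^2}\le C$, the tail estimate gives a computable $N=N(\epsilon)$ such that the projection $\tilde\varphi_{0,N}$ onto the first $N$ Hermite modes satisfies $\|\tilde\varphi_0-\tilde\varphi_{0,N}\|_{L^2}\le\epsilon/4$, and $\tilde\varphi_{0,N}$ is automatically Schwartz with computable $H^{3+\varepsilon}_2$-norm. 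The potential $V$ is mollified analogously; the controlled local $W^{\varepsilon,p}$-norm and the infinitesimal-boundedness of $V$ with respect to $-\Delta$ (Remark \ref{rem:bounded}) give a uniform convergence rate. To pass the approximation through the nonlinear flow, I would use the $L^2$-Lipschitz stability of the defocusing NLS on bounded $H^1$-balls (via mass/energy conservation and a Gronwall argument identical to the one above), yielding
\[
 \|\psi(\bullet,T)-\tilde\psi_N(\bullet,T)\|_{L^2}\le \tfrac{\epsilon}{2},
\]
where $\tilde\psi_N$ is the solution with the smooth truncated data. This $\tilde\psi_N$ now falls in the scope of (I), so applying the algorithm $\Gamma_1$ of part (I) with accuracy $\epsilon/2$ completes the construction of $\Gamma_2$. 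Uniform runtime follows since every parameter ($N,\delta,R,$ mesh) is extracted from the control moduli alone.

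The main obstacle is step two: propagating the $L^2$-approximation error through the nonlinear flow without being forced to assume high regularity on the true solution. The defocusing sign and the associated uniform $H^1$ control of the solutions $\psi,\tilde\psi_N$ (via conserved energy, which needs the $L^2$ boundedness of $V,V_{\mathrm{con}}$ in Assumption \ref{ass:2} to stay coercive) is what makes the nonlinear Gronwall estimate closeable, and in particular is what forces the restriction to defocusing nonlinearities throughout Theorem \ref{BigTh:main_thrm2_NLS}. For the energy-critical quintic case in $d=3$ one would additionally invoke Strichartz estimates to guarantee a uniform-in-$N$ spacetime bound before running the Gronwall step.
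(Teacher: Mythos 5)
Your proposal is correct and follows essentially the same architecture as the paper's proof: reduction to a bounded domain via weighted Sobolev a priori bounds, Strang splitting combined with Crank--Nicolson and cubic discretisation for part (I), and Hermite truncation of the initial state plus Gaussian mollification of the potentials followed by invocation of part (I) for part (II). One small misattribution in your motivation: the reason the NLS requires $H^{3+\varepsilon}_2$ and $W^{3,\infty}_{\mathrm{loc}}$ rather than $H^{2+\varepsilon}_2$/$W^{2,\infty}_{\mathrm{loc}}$ as in the linear case is \emph{not} that the PDE-level a priori estimate needs an extra derivative --- Lemma \ref{eos2} propagates $H^{2+\varepsilon}_2$ regularity for both linear and nonlinear equations under identical hypotheses --- but that the Strang splitting for the NLS must converge in $H^1_h$ rather than merely $L^2$ (so the discrete Sobolev embedding $\|\varphi\|_{L^\infty}\lesssim\|\varphi\|_{H^1_h}$ controls the pointwise nonlinearity in step (2) of the scheme), and $H^1_h$ convergence (Proposition \ref{prop:nontime}) costs one additional Sobolev index over the $L^2$ convergence used in the linear case (Proposition \ref{prop:lintime}); also note the paper restricts the NLS to $d=1$, so the $d=3$ energy-critical quintic and Strichartz considerations you mention fall outside the scope here.
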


For discrete (nonlinear) Schr\"odinger equations on $\ell^2(\mathbb Z^d)$ with both focussing and defocussing nonlinearity $\mu \in \{\pm 1\}$
 \begin{equation}
\begin{split}
\label{eq:fullNLS}
i  \partial_t v(k,t) &=- \Delta^d v(k,t) +\nu F_{\sigma}(v(k,t)) ,\ k \in \mathbb Z^d \\
v(0)&=v_0 \in \ell^2(\mathbb{Z}^d),
\end{split}
\end{equation}
with discrete nearest-neighbor Laplacian $ \Delta^d$ the analysis simplifies dramatically and it is possible to consider input data up to (and not including) the critical space $\ell^2$ such that for some $\eta, C>0$ the input space is
\[ 
\Omega_{\mathrm{discNLS}}:=\{ v_0 \in \ell^2(\mathbb Z^d); \Vert v_0 \Vert_{\ell^2_{\eta}} \le C \}
\]
and
 $
\Xi_{\mathrm{discNLS}}: \Omega_{\mathrm{NLS}} \ni (v_0,C,\varepsilon) \mapsto v(\bullet,T) \in \ell^2(\mathbb Z^d),
$
where $v$ satisfies \eqref{eq:fullNLS} and $v(\bullet,0)=v_0$.

 \begin{theo}[Upper bounds: Global solution can be computed for discrete NLS]\label{Th:main_thrm_discNLS}
Let $C > 0$ and $T > 0$. Let $\Omega_{\mathrm{discNLS}}$ and $\Xi_{\mathrm{discNLS}}$ be as above. Then there exists an algorithm $\Gamma$ such that 
 \[
\|\Gamma(\epsilon,v_0) - \Xi_{\mathrm{NLS}}(v_0,V)\|_{\ell^2} \leq \epsilon, \qquad \forall \, \epsilon > 0,
\]
and all $v_0 \in \Omega_{\mathrm{NLS}}$ and any given $C,\varepsilon, \eta>0$.
\end{theo}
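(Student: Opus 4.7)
The plan is to exploit three facts specific to the discrete setting: $\Delta^d$ is a bounded operator on $\ell^2(\mathbb Z^d)$ with $\|\Delta^d\|\le 4d$; the $\ell^2$ norm is conserved by \eqref{eq:fullNLS} regardless of the sign of $\nu$; and the weighted norm $\|v(t)\|_{\ell^2_\eta}$ grows at most exponentially in $T$. Together these reduce the infinite-dimensional problem to a finite-dimensional ODE with a recursively computable error bound. Global well-posedness follows from Picard--Lindel\"of, since $-i\Delta^d$ is bounded and $F_\sigma$ is Lipschitz on every ball of radius $M$ in $\ell^2$ with constant $\sigma M^{\sigma-1}$ (using $\|v\|_{\ell^\infty}\le\|v\|_{\ell^2}$). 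A direct computation of $\tfrac{d}{dt}\|v(t)\|_{\ell^2}^2$, exploiting self-adjointness of $\Delta^d$ and the pointwise gauge invariance $\bar v(k)F_\sigma(v)(k)=|v(k)|^{\sigma+1}\in\mathbb R$, yields mass conservation $\|v(t)\|_{\ell^2}=\|v_0\|_{\ell^2}\le C$, upgrading the local solution to a global one.

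Next I would propagate the weighted norm. Differentiating $\sum_k \langle k\rangle^{2\eta}|v(k,t)|^2$, the nonlinear contribution again drops by the same gauge invariance, while the linear contribution reduces (by pair symmetrisation over edges) to $-\Im\sum_{|k-j|=1}(\langle k\rangle^{2\eta}-\langle j\rangle^{2\eta})\overline{v(k)}v(j)$. Using the elementary estimate $|\langle k\rangle^{2\eta}-\langle j\rangle^{2\eta}|\le C_\eta \langle k\rangle^{2\eta-1}$ for neighbouring $k,j$ and Cauchy--Schwarz gives $\tfrac{d}{dt}\|v(t)\|_{\ell^2_\eta}^2\le\kappa_\eta\|v(t)\|_{\ell^2_\eta}^2$, hence $\|v(T)\|_{\ell^2_\eta}\le Ce^{\kappa_\eta T/2}$ by Gronwall, with $\kappa_\eta$ depending only on $\eta,d$. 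The identical identity runs on the Dirichlet restriction to $\Lambda_{R'}:=\mathcal C_{R'}(0)\cap\mathbb Z^d$ (boundary contributions have the correct sign), so the Dirichlet-truncated flow satisfies the same weighted bound.

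For the algorithm, given $\epsilon>0$ I would pick, recursively in $(\epsilon,C,T,\eta,d,\sigma)$, cube radii $R<R'$ and an ODE step size $h$ so that each of four error pieces below is at most $\epsilon/4$: the tail of the true solution outside $\Lambda_{R'}$, bounded by $Ce^{\kappa_\eta T/2}\langle R'\rangle^{-\eta}$ via the weighted-norm propagation; the $\ell^2$ perturbation from truncating the initial datum to $\Lambda_R$, bounded by $C\langle R\rangle^{-\eta}$ and propagated through the flow by Gronwall with Lipschitz constant $L=\|\Delta^d\|+\sigma C^{\sigma-1}$; the Dirichlet-boundary error (discussed next); and the time-stepping error of a convergent high-order ODE scheme (e.g.\ a Runge--Kutta method) applied to the $|\Lambda_{R'}|$-dimensional system. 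Sampling $v_0$ on $\Lambda_R$, extending by zero, and running the ODE scheme on $\Lambda_{R'}$ with zero Dirichlet boundary then produces the claimed $\epsilon$-approximation in $\ell^2(\mathbb Z^d)$.

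The main obstacle is the Dirichlet-boundary piece. Writing $e=v-w$, where $v$ is the full-space flow and $w$ the Dirichlet flow on $\Lambda_{R'}$ from the same finitely-supported datum, the discrepancy satisfies $\dot e=-i\Delta^d e - i\nu(F_\sigma(v)-F_\sigma(w))+S$, where the source $S$ is supported on the one-step interior boundary layer of $\Lambda_{R'}$ and $\|S(t)\|_{\ell^2}$ is controlled by $\|w(t)\|_{\ell^2}$ restricted to that layer, hence by $Ce^{\kappa_\eta T/2}\langle R'\rangle^{-\eta}$ through the weighted-norm bound for $w$. A Duhamel--Gronwall argument then yields $\|e(T)\|_{\ell^2}\le T e^{LT}\cdot Ce^{\kappa_\eta T/2}\langle R'\rangle^{-\eta}$, which is $\le\epsilon/4$ for $R'$ chosen recursively. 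A cleaner alternative is a Combes--Thomas/Lieb--Robinson bound for the free discrete Schr\"odinger propagator $e^{-it\Delta^d}$, which yields a finite effective propagation speed $v_\ast\lesssim\|\Delta^d\|$, so that any $R'\gtrsim R+v_\ast T+\log(1/\epsilon)$ suffices. Either route completes the recursive description of the algorithm.
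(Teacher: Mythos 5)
Your proposal is correct and follows the same global strategy as the paper (truncate the lattice to a finite hypercube, bound the tail via propagation of a weighted $\ell^2_\eta$ norm, then control the Dirichlet-boundary discrepancy by Gronwall), so structurally you have rediscovered the intended argument, including the key observation that $\bar v F_\sigma(v)$ is real so the nonlinearity drops out of both the mass and the weighted-mass evolution. Where you differ from the paper in a substantive way is in making the weighted-norm propagation self-contained: the paper simply invokes a cited lemma from [KPS] for $\Vert v(t)\Vert_{\ell^2_s}\le C_T\Vert v(0)\Vert_{\ell^2_s}$, whereas you prove it by edge-symmetrising the commutator of $\Delta^d$ with the weight, using $|\langle k\rangle^{2\eta}-\langle j\rangle^{2\eta}|\lesssim_\eta\langle k\rangle^{2\eta-1}$ for nearest-neighbour pairs, then Gronwall; this is cleaner and also makes transparent why the same identity runs on the Dirichlet-truncated flow (there is no genuine boundary term, one just restricts the edge sum). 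Your treatment of the boundary error as a Duhamel source supported on the interior boundary layer, controlled by the weighted bound on $w$, is a slightly different bookkeeping from the paper's direct differentiation of $\sum_{k\in I_n^d}|\xi(k,t)|^2$, but it yields the same $\mathcal O(\langle R'\rangle^{-\eta})$ conclusion; the finite-propagation-speed (Lieb--Robinson/Combes--Thomas) alternative you sketch would also work and would in fact give a better (exponential rather than polynomial) dependence on $R'$. The one place you diverge in a way worth flagging: for the time-stepping you propose a generic high-order Runge--Kutta scheme on the resulting finite-dimensional ODE, whereas the paper reuses its Strang-splitting-with-Crank--Nicholson machinery (recursively calibrated as in the continuous case); either choice is fine here because once the system is finite-dimensional, $-i\Delta^d_{\operatorname{BVP}}$ is a bounded matrix and $F_\sigma$ is locally Lipschitz, so any consistent, stable one-step method with recursively computable error constants closes the argument. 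In short: same roadmap, but your proof buys independence from [KPS] and is arguably more modular.
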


\subsection{Connection to previous work and future directions}

Based on the ideas first introduced by Engquist and Majda \cite{EM}, a special case of the Schr\"odinger equation \eqref{eq:bilSchr} on the full domain has been studied, by restricting the analysis to a region of interest of finite measure, using non-reflecting boundary conditions in \cite{AES,LZZ18,T98,YZ}. A similar idea for nonlinear Schr\"odinger equations has been discussed in \cite{ABK11,Sz}, see also the review article \cite{AABES}.
Global numerical discretization schemes, based on Strang's splitting method \cite{L08,DT12,ESS16,JMS}, 
exponential integrators \cite{ORS19,KOS19,OS18}, and their convergence rates are well studied both in the linear and nonlinear setting. Yet, these convergence rates are usually limited to time discretizations \cite{L08,OS18} and an analysis justifying the reduction from a full discretization of $\mathbb R^d$ to a bounded domain has not been addressed to our knowledge. Attempts to reduce this to a finite basis expansion, usually rely on additional a priori information on the solutions, cf. Chapter $3$ in Lubich's monograph \cite{L08a}. We provide a comprehensive answer to this issue in our Section \ref{sec:num}. In particular, our analysis allows us also to obtain-under quite general assumptions-, see Section \ref{sec:semicllimit} uniform estimates in the semiclassical parameter- of the Schr\"odinger equation, justifying some considerations in the recent comprehensive article by Lasser and Lubich \cite{LL20}, and also Jin, Markowich, Sparber \cite{JMS11}.

It would be interesting to perform a similar analysis for Schr\"odinger equations with magnetic fields, as considered in \cite{HLW20, HL20}. Moreover, it would be desirable to consider Schr\"odinger equations with non-local (+non-linear) potentials such as the Hartree equation \cite{L08}. Finally, a detailed analysis of the scaling for multi-particle systems would be desirable.

\section{Preliminaries for the proofs - The SCI hierarchy}
The SCI hierarchy is a framework for establishing the boundaries of computational mathematics that allows for proving universal lower bounds and impossibility results independent of the model of computation. It is also flexible enough to encompass any model of computation for proving positive results and upper bounds. 
We will review some of the basic concepts starting with a computational problem. 
\begin{itemize}
\item[(i)]
$\Omega$ is some set, called the \emph{domain}.
\item[(ii)]
$\Lambda$ is a set of complex valued functions on $\Omega$, called the \emph{evaluation} set.
\item[(iii)]
$\mathcal{M}$ is a metric space.
\item[(iv)]
$\Xi : \Omega\to\mathcal{M}$ is called the \emph{problem} function.
\end{itemize}

\begin{defi}[Computational problem]
Given a domain $\Omega$, an evaluation set $\Lambda$, a metric space $\mathcal{M}$ and a problem function $\Xi:\Omega \to \mathcal{M}$, we call the collection $\{\Xi,\Omega,\mathcal{M},\Lambda\}$ a computational problem.
\end{defi}

A simple example of a computational problem would be the problem of computing the solution to the Schr\"odinger equation 
\eqref{eq:bilSchr} at some time $T > 0$. In particular, $\Omega$ would be a family of initial states and potentials. $\Lambda$ would be the collection of functions providing point samples of elements in $\Omega$, i.e. for $\varphi_0 \in \Omega$ and $f_x \in \Lambda$ we have 
$
f_x(\varphi_0) = \varphi_0(x).
$
In particular, the input to the algorithm will be point samples of the initial state and the potential.  
In this paper 
\[
\Lambda = \{f_{x,t} \, \vert \, f_{x,t}(\varphi_0,V_{\operatorname{TD}}) = (\varphi_0(x),\varphi_0,V_{\operatorname{TD}}(x,t)), (x,t) \text{ have rational coordinates}\}.
\]

We could have $\mathcal{M} = L^2(\mathbb R^d)$ or $\mathcal{M}$ could for example be $H^\rho_\eta(\mathbb R^d)$ with appropriately chosen $\rho, \eta > 0$. Finally, we could have 
\[
\Xi(\varphi_0,V_{\operatorname{TD}}) = \psi(\bullet,T), \quad \psi \text{ satisfies \eqref{eq:bilSchr}}, \quad \psi(\bullet,0)=\varphi_0.
\]
Note that $\Xi$ will depend on the actual PDE we are considering (linear or non-linear). 

In order to compute approximate solutions to computational problems we need to define the concept of an \emph{algorithm}. The mainstay is the \emph{general algorithm} that allows universal lower bounds and impossibility results.
\begin{defi}[General Algorithm]\label{alg}
Given a  computational problem $\{\Xi,\Omega,\mathcal{M},\Lambda\}$, a \emph{general algorithm} is a mapping $\Gamma:\Omega\to\mathcal{M}$ such that for each $A\in\Omega$:
\begin{itemize}
\item[(i)] there exists a finite subset of evaluations $\Lambda_\Gamma(A) \subset\Lambda$,
\item[(ii)]  the action of $\,\Gamma$ on $A$ only depends on $\{A_f\}_{f \in \Lambda_\Gamma(A)}$ where $A_f := f(A),$
\item[(iii)]  for every $B\in\Omega$ such that $B_f=A_f$ for every $f\in\Lambda_\Gamma(A)$, it holds that $\Lambda_\Gamma(B)=\Lambda_\Gamma(A)$.
\end{itemize}
\end{defi}

\begin{rem}[The purpose of a general algorithm]
The purpose of a general algorithm is to have a definition that will encompass any model of computation, and that will allow lower bounds and impossibility results to become universal. Given that there are several non equivalent models of computation, lower bounds will be shown with a general definition of an algorithm. Upper bounds will always be done with more structure on the algorithms for example using a Turing machine or a Blum--Shub--Smale (BSS) machine. 
\end{rem}

The concept of a general algorithm, however, is not enough to describe the world of computational problems. For that we need the concept of \emph{towers of algorithms}. 

\begin{defi}[Tower of Algorithms]\label{tower_funct}
Given a computational problem $\{\Xi,\Omega,\mathcal{M},\Lambda\}$, a \emph{tower of algorithms of height $k$
 for $\{\Xi,\Omega,\mathcal{M},\Lambda\}$} is a family of sequences of functions
 $$\Gamma_{n_k}:\Omega
\rightarrow \mathcal{M},\ \Gamma_{n_k, n_{k-1}}:\Omega
\rightarrow \mathcal{M},\dots,\ \Gamma_{n_k, \hdots, n_1}:\Omega \rightarrow \mathcal{M},
$$
where $n_k,\hdots,n_1 \in \mathbb{N}$ and the functions $\Gamma_{n_k, \hdots, n_1}$ at the ``lowest level'' of the tower are general algorithms in the sense of Definition \ref{alg}. Moreover, for every $A \in \Omega$,
$$
\Xi(A)= \lim_{n_k \rightarrow \infty} \Gamma_{n_k}(A), \quad \Gamma_{n_k, \hdots, n_{j+1}}(A)= \lim_{n_j \rightarrow \infty} \Gamma_{n_k, \hdots, n_j}(A) \quad j=k-1,\dots,1.
$$
\end{defi}

In this paper we will discuss two types of towers: {\it General towers}, when there is no extra structure on the functions at the lowest level in the tower, and {\it Arithmetic towers}, that restricts the algorithm to arithmetic operations and comparisons.
A general tower will refer to the very general definition in Definition \ref{tower_funct} specifying that there are no further restrictions as will be the case for the other towers. All our lower bounds and impossibility results are with respect to general towers, whereas all upper bounds and positive results are with respect to arithmetic towers defined as follows.

\begin{defi}[Arithmetic towers]
Given a computational problem $\{\Xi,\Omega,\mathcal{M},\Lambda\}$, where $\Lambda$ is countable, we define the following: An \emph{Arithmetic tower of algorithms} of height $k$
 for $\{\Xi,\Omega,\mathcal{M},\Lambda\}$ is a tower of algorithms where the lowest functions $\Gamma = \Gamma_{n_k, \hdots, n_1} :\Omega \rightarrow \mathcal{M}$ satisfy the following:
 For each $A\in\Omega$ the mapping $(n_k, \hdots, n_1) \mapsto \Gamma_{n_k, \hdots, n_1}(A) = \Gamma_{n_k, \hdots, n_1}(\{A_f\}_{f \in \Lambda})$ is recursive, and $\Gamma_{n_k, \hdots, n_1}(A)$ is a finite string of complex numbers that can be identified with an element in $\mathcal{M}$. For arithmetic towers we let $\alpha = A$ 
\end{defi} 

\begin{rem}[Recursiveness] By recursive we mean the following. If $f(A) \in \mathbb{Q}$ for all $f \in \Lambda$, $A \in \Omega$, and $\Lambda$ is countable, then $\Gamma_{n_k, \hdots, n_1}(\{A_f\}_{f \in \Lambda})$ can be executed by a Turing machine \cite{Turing_Machine}, that takes $(n_k, \hdots, n_1)$ as input, and that has an oracle tape consisting of $\{A_f\}_{f \in \Lambda}$. If $f(A) \in \mathbb{R}$ (or $\mathbb{C}$) for all $f \in \Lambda$, then $\Gamma_{n_k, \hdots, n_1}(\{A_f\}_{f \in \Lambda})$ can be executed by a Blum-Shub-Smale (BSS) machine \cite{Smale_book} that takes $(n_k, \hdots, n_1)$, as input, and that has an oracle that can access any $A_f$ for $f \in \Lambda$. 
\end{rem}

The model of recursiveness in this paper will be the BSS machine, as this is the model closest to the standard tradition in numerical analysis. We do note, however, that with minor modifications, all our results will hold in the Turing model. 

\begin{defi}[Runtime]
Given an arithmetic tower of algorithms, the runtime of $\Gamma_{n_k, \hdots, n_1}(A)$, $A \in \Omega$ is sum of the number of arithmetic operations and comparisons done by the BSS machine executing the output $\Gamma_{n_k, \hdots, n_1}(A)$ plus $\sup\lbrace m \in \mathbb{N} \, \vert \, f_m \in \Lambda_{\Gamma_{n_k, \hdots, n_1}}(A) \rbrace.$
\end{defi} 

In any realistic model of computation there is a cost associated to accessing the information $\{f_m(A) \, \vert \,f_m \in \Lambda_{\Gamma_{n_k, \hdots, n_1}}(A)\}$. Our results do not address the actual complexity, and hence, for simplicity we only add $K = \sup\lbrace m \in \mathbb{N} \, \vert \, f_m \in \Lambda_{\Gamma_{n_k, \hdots, n_1}}(A) \rbrace$ to the cost of the computation. If a specified model is given, one should of course use $g(K)$ for some specified function $g:\mathbb{N} \rightarrow \mathbb{N}$.

Given the definitions above we can now define the key concept, namely, the Solvability Complexity Index: 

\begin{defi}[Solvability Complexity Index]\label{complex_ind}
A computational problem $\{\Xi,\Omega,\mathcal{M},\Lambda\}$ is said to have \emph{Solvability Complexity Index $\mathrm{SCI}(\Xi,\Omega,\mathcal{M},\Lambda)_{\alpha} = k$}, with respect to a tower of algorithms of type $\alpha$, if $k$ is the smallest integer for which there exists a tower of algorithms of type $\alpha$ of height $k$. If no such tower exists then $\mathrm{SCI}(\Xi,\Omega,\mathcal{M},\Lambda)_{\alpha} = \infty.$ If there exists a tower $\{\Gamma_n\}_{n\in\mathbb{N}}$ of type $\alpha$ and height one such that $\Xi = \Gamma_{n_1}$ for some $n_1 < \infty$, then we define $\mathrm{SCI}(\Xi,\Omega,\mathcal{M},\Lambda)_{\alpha} = 0$. The type $\alpha$ may be General, or Arithmetic, denoted respectively G and A. We may sometimes write $\mathrm{SCI}(\Xi,\Omega)_{\alpha}$ to simplify notation when $\mathcal{M}$ and $\Lambda$ are obvious. 
\end{defi}

We will let $\mathrm{SCI}(\Xi,\Omega)_{\mathrm{A}}$ and $\mathrm{SCI}(\Xi,\Omega)_{\mathrm{G}}$ denote the SCI with respect to an arithmetic tower and a general tower, respectively. Note that a general tower means just a tower of algorithms as in Definition \ref{tower_funct}, where there are no restrictions on the mathematical operations. Thus, clearly $\mathrm{SCI}(\Xi,\Omega)_{\mathrm{A}} \geq \mathrm{SCI}(\Xi,\Omega)_{\mathrm{G}}$. The definition of the SCI immediately induces the SCI hierarchy:

 \begin{defi}[The Solvability Complexity Index Hierarchy]
\label{1st_SCI}
Consider a collection $\mathcal{C}$ of computational problems and let $\mathcal{T}$ be the collection of all towers of algorithms of type $\alpha$ for the computational problems in $\mathcal{C}$.
Define 
\begin{equation*}
\begin{split}
\Delta^{\alpha}_0 &:= \{\{\Xi,\Omega\} \in \mathcal{C} \ \vert \   \mathrm{SCI}(\Xi,\Omega)_{\alpha} = 0\}\\
\Delta^{\alpha}_{m+1} &:= \{\{\Xi,\Omega\}  \in \mathcal{C} \ \vert \   \mathrm{SCI}(\Xi,\Omega)_{\alpha} \leq m\}, \qquad \quad m \in \mathbb{N},
\end{split}
\end{equation*}
as well as
\[
\Delta^{\alpha}_{1} := \{\{\Xi,\Omega\}  \in \mathcal{C}   \  \vert \ \exists \ \{\Gamma_n\}_{n\in \mathbb{N}} \in \mathcal{T}\text{ s.t. } \forall A \ d(\Gamma_n(A),\Xi(A)) \leq 2^{-n}\}. 
\]
\end{defi}

For problems, such as decision problems, that have extra structure on the metric space one can extend the SCI hierarchy. 

\begin{defi}[The SCI Hierarchy (totally ordered set)]\label{def:tot_ord}
Given the set-up in Definition \ref{1st_SCI} and suppose in addition that $\mathcal{M}$ is a totally ordered set. 
Define 
\begin{equation*}
\begin{split}
\Sigma^{\alpha}_0 &= \Pi^{\alpha}_0 = \Delta^{\alpha}_0,\\
\Sigma^{\alpha}_{1} &= \{\{\Xi,\Omega\} \in \Delta_{2}^{\alpha} \ \vert \  \exists \ \{\Gamma_{n}\} \in \mathcal{T} \text{ s.t. } \Gamma_{n}(A) \nearrow \Xi(A) \ \, \forall A \in \Omega\}, 
\\
\Pi^{\alpha}_{1} &= \{\{\Xi,\Omega\} \in \Delta_{2}^{\alpha} \ \vert \  \exists \ \{\Gamma_{n}\} \in \mathcal{T} \text{ s.t. } \Gamma_{n}(A) \searrow \Xi(A) \ \, \forall A \in \Omega\},
\end{split}
\end{equation*}
where $\nearrow$ and $\searrow$ denotes convergence from below and above respectively,
as well as, for $m \in \mathbb{N}$, 
\begin{equation*}
\begin{split}
\Sigma^{\alpha}_{m+1} &= \{\{\Xi,\Omega\} \in \Delta_{m+2}^{\alpha} \ \vert \  \exists \ \{\Gamma_{n_{m+1}, \hdots, n_1}\} \in \mathcal{T} \text{ s.t. }\Gamma_{n_{m+1}}(A) \nearrow \Xi(A) \ \, \forall A \in \Omega\}, \\
\Pi^{\alpha}_{m+1} &= \{\{\Xi,\Omega\} \in \Delta_{m+2}^{\alpha} \ \vert \  \exists \ \{\Gamma_{n_{m+1}, \hdots, n_1}\} \in \mathcal{T} \text{ s.t. }\Gamma_{n_{m+1}}(A) \searrow \Xi(A) \ \, \forall A \in \Omega\}.
\end{split}
\end{equation*}
\end{defi}

Schematically the general SCI hierarchy can be viewed as follows. 

\begin{equation}\label{SCI_hierarchy}
%\begin{center}
\begin{tikzpicture}[baseline=(current  bounding  box.center)]
%\centering
  \matrix (m) [matrix of math nodes,row sep=1.2em,column sep=1.5em] {
  \Pi_0^{\alpha}   &                    & \MA{\Pi_1^{\alpha}} &    &  \Pi_2^{\alpha}&  & {}\\
  \Delta_0^{\alpha}&  \Delta_1^{\alpha} & \Sigma_1^{\alpha}\cup\Pi_1^{\alpha} & \Delta_2^{\alpha}&      \Sigma_2^{\alpha}\cup\Pi_2^{\alpha} & \Delta_3^{\alpha}& \cdots\\
	\Sigma_0^{\alpha}&                    & \MA{\Sigma_1^{\alpha}} & &  \Sigma_2^{\alpha}&  &{} \\
  };
 \path[-stealth, auto] (m-1-1) edge[draw=none]
                                    node [sloped, auto=false,
                                     allow upside down] {$=$} (m-2-1)
																		(m-3-1) edge[draw=none]
                                    node [sloped, auto=false,
                                     allow upside down] {$=$} (m-2-1)
																		
																		(m-2-2) edge[draw=none]
                                    node [sloped, auto=false,
                                     allow upside down] {$\subsetneq$} (m-2-3)
																		(m-2-3) edge[draw=none]
                                    node [sloped, auto=false,
                                     allow upside down] {$\subsetneq$} (m-2-4)
																		(m-2-4) edge[draw=none]
                                    node [sloped, auto=false,
                                     allow upside down] {$\subsetneq$} (m-2-5)
																		(m-2-5) edge[draw=none]
                                    node [sloped, auto=false,
                                     allow upside down] {$\subsetneq$} (m-2-6)
																		(m-2-6) edge[draw=none]
                                    node [sloped, auto=false,
                                     allow upside down] {$\subsetneq$} (m-2-7)

												(m-2-1) edge[draw=none]
                                    node [sloped, auto=false,
                                     allow upside down] {$\subsetneq$} (m-2-2)
											 (m-2-2) edge[draw=none]
                                    node [sloped, auto=false,
                                     allow upside down] {$\subsetneq$} (m-1-3)
											(m-2-2) edge[draw=none]
                                    node [sloped, auto=false,
                                     allow upside down] {$\subsetneq$} (m-3-3)
											 (m-1-3) edge[draw=none]
                                    node [sloped, auto=false,
                                     allow upside down] {$\subsetneq$} (m-2-4)
																		(m-3-3) edge[draw=none]
                                    node [sloped, auto=false,
                                     allow upside down] {$\subsetneq$} (m-2-4)
																		(m-2-4) edge[draw=none]
                                    node [sloped, auto=false,
                                     allow upside down] {$\subsetneq$} (m-1-5)
											(m-2-4) edge[draw=none]
                                    node [sloped, auto=false,
                                     allow upside down] {$\subsetneq$} (m-3-5)
																		(m-1-5) edge[draw=none]
                                    node [sloped, auto=false,
                                     allow upside down] {$\subsetneq$} (m-2-6)
																		(m-3-5) edge[draw=none]
                                    node [sloped, auto=false,
                                     allow upside down] {$\subsetneq$} (m-2-6)
											(m-2-6) edge[draw=none]
                                    node [sloped, auto=false,
                                     allow upside down] {$\subsetneq$} (m-1-7)
																		(m-2-6) edge[draw=none]
                                    node [sloped, auto=false,
                                     allow upside down] {$\subsetneq$} (m-3-7);
																		
\end{tikzpicture}
%\end{center}
\end{equation}
Note that the $\Sigma_1^{\alpha}$ and $\Pi_1^{\alpha}$ classes become crucial in computer-assisted proofs. 

Finally, we define $\Delta^A_{1,\mathrm{b}} \subset \Delta^A_{1}$ to be the set of $\Delta^A_{1}$ problems for which there exists algorithms with bounded runtime. In particular, 
\[
\Delta^A_{1,\mathrm{b}} = \{\{\Xi,\Omega\} \in \Delta_1 \ \vert \  \exists \ \{\Gamma_{n}\} \in \mathcal{T} \text{ s.t. } \sup_{A \in \Omega} d(\Gamma_{n}(A),\Xi(A)) \leq 2^{-n}, \, \sup_{A \in \Omega} \mathrm{runtime}(\Gamma_n(A)) < \infty\}.
\]

\subsection{The main theorems in the SCI hierarchy language }
Our theorems are deliberately written in layman terms, thus, in order to make the statements absolutely precise we specify what the statements are in the language of the SCI hierarchy.  
In all of the computational problems the domain $\Omega$ will consist of functions and the 
collection $\Lambda$ is defined as follows:
\[
\Lambda = \{f_x : \Omega \rightarrow \mathbb{C} \, \vert \, f_x(\varphi) = \varphi(x), x \text{ has rational entries}\}.
\]
 It is implicitly assumed that there is an ordering of the countable elements in $\Lambda$, hence, we will always have that $\Lambda = \{f_m\}_{m\in\mathbb{N}}$. 
 
\begin{itemize}

\item[(i)] {\it Local solution of the free Schr\"odinger equation: Theorem \ref{Th:main_thrm_lower_bound}.}
The statements of Theorem \ref{Th:main_thrm_lower_bound} in the SCI hierarchy language are as follows. Define $\Xi^1_{\mathrm{free}} : \Omega^1_{\mathrm{free}} \rightarrow \mathcal{M}_1 = L^2(O)$ and $\Xi^2_{\mathrm{free}} : \Omega^2_{\mathrm{free}} \rightarrow \mathcal{M}_2 = H^\rho_0(\mathbb R)$ as in \eqref{eq:free_domain} and \eqref{eq:free_Xi}. Then 
\[
\{\Xi^1_{\mathrm{free}},\Omega^1_{\mathrm{free}},\mathcal{M}_1,\Lambda\}  \notin \Delta_1^G.
\]
Moreover, when considering $\{\Xi^2_{\mathrm{free}},\Omega^2_{\mathrm{free}},\mathcal{M}_2,\Lambda\}$, then any sequence of general algorithms $\{\Gamma_n\}$ with $\Gamma_n : \Omega^2_{\mathrm{free}} \rightarrow \mathcal{M}_2$ and $d(\Gamma_n(A), \Xi^2_{\mathrm{free}}(A) \leq 2^{-n}$ will have 
\[
\sup_{A \in \Omega^2_{\mathrm{free}}}\sup\lbrace m \in \mathbb{N} \, \vert \, f_m \in \Lambda_{\Gamma_n}(A) \rbrace = \infty,
\]
in particular, the runtime is not uniformly bounded.

\item[(ii)] {\it Global solution of the linear Schr\"odinger equation: Theorem \ref{BigTh:main_thrm2}.} 
The statements of Theorem \ref{BigTh:main_thrm2} in the SCI hierarchy language are as follows. Define 
$\Xi_{\mathrm{Lin}} : \Omega_{\mathrm{Lin}} \rightarrow \mathcal{M} =  L^2(\mathbb{R}^d)$ and 
$
\Xi_{\mathrm{Lin2}} : \Omega_{\mathrm{Lin2}} \rightarrow \mathcal{M} 
$
as in Theorem \ref{BigTh:main_thrm2}. Then
\begin{equation*}
\begin{split}
&\{\Xi_{\mathrm{Lin}} : \Omega_{\mathrm{Lin}}, \mathcal{M},\Lambda\}  \in \Delta_{1,\mathrm{b}}^A,
\\
&\{\Xi_{\mathrm{Lin2}} : \Omega_{\mathrm{Lin2}}, \mathcal{M},\Lambda\}  \in \Delta_{1,\mathrm{b}}^A.
\end{split}
\end{equation*}

\item[(iii)] {\it Blow up of focusing NLS cannot be decided: Theorem \ref{Th:main_thrm_blow}.} Consider the decision problems 
\[
\{\Xi_{\mathrm{BU}(1)},\Omega_{\mathrm{BU}(1)}\}, \quad \{\Xi_{\mathrm{BU}(2)},\Omega_{\mathrm{BU}(2)}\}
\]
defined in \eqref{eq:dec_prob_1} and \eqref{eq:dec_prob_2}. Then 
\[
\{\Xi_{\mathrm{BU}(1)},\Omega_{\mathrm{BU}(1)}\}  \notin \Pi_1^G, \quad 
\{\Xi_{\mathrm{BU}(1)},\Omega_{\mathrm{BU}(1)}\}  \notin \Sigma_1^G.
\]

\item[(iv)] {\it Upper bounds: global solution - NLS: Theorem \ref{BigTh:main_thrm2_NLS}.} The statements of Theorem \ref{BigTh:main_thrm2_NLS} in the SCI hierarchy language are as follows.
Define 
$\Xi_{\mathrm{NLS}} : \Omega_{\mathrm{NLS}} \rightarrow \mathcal{M} =  L^2(\mathbb{R}^d)$ and 
$
\Xi_{\mathrm{NLS2}} : \Omega_{\mathrm{NLS2}} \rightarrow \mathcal{M} 
$
as in Theorem \ref{BigTh:main_thrm2_NLS}. 
Then
\begin{equation*}
\begin{split}
&\{\Xi_{\mathrm{Lin}}, \Omega_{\mathrm{Lin}}, \mathcal{M},\Lambda\}  \in \Delta_{1,\mathrm{b}}^A,
\\
&\{\Xi_{\mathrm{Lin2}},\Omega_{\mathrm{Lin2}}, \mathcal{M},\Lambda\}  \in \Delta_{1,\mathrm{b}}^A.
\end{split}
\end{equation*}
\item[(v)] {\it Upper bounds: The discrete NLS: Theorem \ref{Th:main_thrm_discNLS}.} The statements of Theorem \ref{Th:main_thrm_discNLS} in the SCI hierarchy language are as follows.
Define 
$\Xi_{\mathrm{discNLS}}: \Omega_{\mathrm{NLS}} \rightarrow \mathcal{M} =  \ell^2(\mathbb Z^d)$. 
Then
\[
\{\Xi_{\mathrm{discNLS}}, \Omega_{\mathrm{NLS}}, \mathcal{M},\Lambda\}  \in \Delta_{1,\mathrm{b}}^A.
\]
\end{itemize}
 
 \subsection{The SCI hierarchy and computer-assisted proofs}
\label{sec:comp_ass_proofs}
Note that  $\Delta_1^A$ is the class of problems that are computable according to Turing's definition of computability \cite{Turing_Machine}. In particular, there exists an algorithm such that for any $\epsilon > 0$, the algorithm can produce an $\epsilon$-accurate output. Most infinite-dimensional spectral problems, unlike the finite-dimensional case, are $\notin \Delta_1^A.$ The simplest way to see this is to consider the problem of computing spectra of infinite diagonal matrices. Since this problem is the simplest of the infinite computational spectral problems and does not lie in $\Delta_1^A$, very few interesting infinite-dimensional spectral problems are actually in $\Delta_1^A$. This is why most of the literature on spectral computations provides algorithms that yield $\Delta_2^A$ classification results. In particular, an algorithm will converge, but error control may not be possible. 

Problems that are not in $\Delta_1^A$ are computed daily in the sciences, simply because numerical simulations may be suggestive rather than providing a rock-solid truth. Moreover, the lack of error control may be compensated for by comparing with experiments. However, this is not possible in computer-assisted proofs, where $100\%$ rigour is the only approach accepted. It may, therefore, be surprising that there are examples of famous conjectures that have been proven with numerical calculations of problems that are not in $\Delta_1^A$, i.e. problems that are non-computable according to Turing. 
A striking example is the proof of Kepler's conjecture \cite{Hales_Annals, hales_Pi}, where the decision problems computed are not in $\Delta_1^A$. The decision problems are of the form of deciding feasibility of linear programs given irrational inputs, shown in \cite{SCI_optimization} to not lie in $\Delta_1^A$. Similarly, the problem of obtaining the asymptotic of the ground state of the operator 
\[
H_{dZ} = \sum_{k=1}^d(-\Delta_{x_k} - Z|x_k|^{-1}) + \sum_{1\leq j \leq k \leq d}|x_j-x_k|^{-1},
\]
as $Z \rightarrow \infty$ was obtained by a computer-assisted proof \cite{fefferman1990, fefferman1992, fefferman1993aperiodicity,  fefferman1994, fefferman1994_2, fefferman1995, fefferman1996interval, fefferman1996, fefferman1997} by Fefferman and Seco, proving the Dirac-Schwinger conjecture, that relied on problems that were not in $\Delta_1^A$. The SCI hierarchy can describe these paradoxical phenomena.

\subsubsection{The $\Sigma^A_1$ and $\Pi^A_1$ classes}
The key to the paradoxical phenomena lies in the $\Sigma^A_1$ and $\Pi^A_1$ classes. These classes of problems are larger than $\Delta^A_1$, but can still be used in computer-assisted proofs. Indeed, if we consider computational spectral problems that are in $\Sigma^A_1$, then there is an algorithm that will never provide incorrect output. The output may not include the whole spectrum, but it is always sound. Thus, conjectures about operators never having spectra in a certain area could be disproved by a computer-assisted proof. Similarly, $\Pi^A_1$ problems would always be approximated from above, and thus conjectures on the spectrum being in a certain area could be disproved by computer simulations. 

In both of the above examples (the proof of the Dirac-Schwinger conjecture and Kepler's conjecture), one implicitly shows  that the relevant computational problems in the computer-assisted proofs are in $\Sigma^A_1$. 

\section{Roadmap to the proofs}

\subsection{Outline of the algorithm}
\label{sec:OoA}
For the linear and nonlinear Schr\"odinger equation as studied under Assumption \ref{ass:2} in Theorems \ref{BigTh:main_thrm2} and  \ref{BigTh:main_thrm2_NLS} it is enough to assume that the initial state satisfies for some explicit $C>0$ and some fixed $\varepsilon>0$
\[ \Vert (-\Delta + \vert x \vert^2)^{\varepsilon} \varphi_0 \Vert_{L^2} \le C\]
and for the linear Schr\"odinger equation, and some potentially different but arbitrary $\varepsilon>0$, the potentials $V,V_{\operatorname{con}} \in L^{\infty}(\RR^d) \cap W_{\operatorname{loc}}^{\varepsilon,p}(\RR^d)$ satisfy 
$
\Vert V\Vert_{L^{\infty}}, \Vert V_{\operatorname{con}}\Vert_{L^{\infty}} \le C.
$
Moreover, there exists a map
\begin{equation*}
 \mathbb{Q}_{+} \rightarrow \mathbb{Q}_{+} \text{ such that } \Vert V \Vert_{W^{\varepsilon,p}(B(0,r))},   \Vert V_{\operatorname{con}} \Vert_{W^{\varepsilon,p}(B(0,r))} \le \Phi(r).
\end{equation*}
It is then possible, as discussed in the proof of Theorems \ref{BigTh:main_thrm2} and \ref{BigTh:main_thrm2_NLS} to numerically approximate the initial state and potentials with smooth ones satisfying, among others, all of the following assumptions so that the algorithms of Theorems \ref{BigTh:main_thrm2} and  \ref{BigTh:main_thrm2_NLS} apply:

\begin{itemize}
\item \emph{Assumption on the input data for initial length scale estimate:}  We assume to have the following a priori estimates on the input data available to the algorithm: Three explicit constants $C_1,C_2,C_3>0$:
\begin{enumerate}
\item For an initial state $\varphi_0  \in H^2_2(\mathbb R^d)$ we require that $\Vert \varphi_0 \Vert_{H^2_2} \le C_1.$
\item For a control function $u \in W^{1,1}_{\operatorname{pcw}}((0,T))$ we require that $\Vert u \Vert_{W^{1,1}_{\operatorname{pcw}}((0,T))} \le C_2.$
\item For static and control potentials $V=W_{\operatorname{reg}}+W_{\operatorname{sing}}$, $V_{\operatorname{con}}$ satisfying a standard order condition \emph{(Assumption \ref{ass1})} we require that 
\[ \Vert W_{\operatorname{sing}} \Vert_{L^p} , \Vert \langle \bullet \rangle^{-2} W_{\operatorname{reg}} \Vert_{L^{\infty}}, \Vert \langle \bullet \rangle^{-2} V_{\operatorname{con}} \Vert_{L^{\infty}}\le C_3. \]
\end{enumerate} 
\item \emph{Step 1a - Initial length scale estimate:} In the first step, we assume the algorithm is given constants $C_1$ to $C_3$ and a final time $T$, only. 
It follows then from Lemmas \ref{eos} and \ref{NLS} that the solution to the (nonlinear) Schr\"odinger equation can be estimated, by a recursively defined constant $C_{C_1,C_2,C_3,T}>0,$ as
\[ \left\lVert \psi \right\rVert_{L^{\infty}((0,T),H^2_2(\mathbb R^d))}  \le C_{C_1,C_2,C_3,T}.\]
\item \emph{Step 1b - Restriction of the domain:} Given an error threshold $\varepsilon>0$, our Theorem \ref{theo2}, see also the preliminary discussion stated just before Theorem \ref{theo2}, then yields explicit estimates to identify a domain $\Omega$ or radius $R$ of bounded size such that using the a priori estimate $C_{C_1,C_2,C_3,T}$, the time-evolution in this bounded domain coincides with the true solution on the entire space up to an error $\varepsilon>0$ in $L^2$ norm.
\item \emph{Step 2 - Discretization of input data:} 
We assume the algorithm is able to evaluate the above input data on $\Omega$, which we can now freely modify outside $\Omega$. In particular, we can assume without loss of generality that our potentials are bounded at infinity. For our numerical scheme, we now impose slightly stronger assumptions to provide explicit rates of convergence in our numerical methods. 

\medskip
\underline{Linear Schr\"odinger equation:} The algorithm samples $(\varphi_0,V) \in  \Omega_{\mathrm{Lin}}$ with $\Omega_{\mathrm{Lin}}$ as in Assumption \ref{ass:1}.
\medskip

\underline{Nonlinear Schr\"odinger equation:} The algorithm samples $(\varphi_0,V) \in  \Omega_{\mathrm{NLS}}$ with $\Omega_{\mathrm{NLS}}$ as in Assumption \ref{ass:1}.
%\begin{enumerate}
%\item The initial state $\varphi_0  \in H^{3+\varepsilon}_2(\mathbb R)$ for some $\varepsilon>0$ with $\Vert \varphi_0 \Vert_{H^{3+\varepsilon}_2} \le K_1.$
%\item The control function $u \in W^{1,1}_{\operatorname{pcw}}((0,T))$ with $\Vert u \Vert_{W^{1,1}_{\operatorname{pcw}}((0,T))} \le K_2.$
%\\item a function 
%\begin{equation}
%\begin{split}
%\label{eq:functionfnonlin}
%& f:(0,\varepsilon_0) \times L^p(\mathbb R) \times L^{\infty}(\mathbb R) \times L^{\infty}(\mathbb R) \rightarrow (0,\infty)^3 \times W^{3,\infty}(\mathbb R)^3 \\
%& f(\varepsilon,W_{\operatorname{sing}},W_{\operatorname{reg}},V_{\operatorname{con}}) := \Bigg(\Vert  \widetilde{W}_{\operatorname{sing}} \Vert_{W^{3,\infty}}, \Vert \widetilde{W}_{\operatorname{reg}} \Vert_{W^{3,\infty}},\\
%& \qquad \qquad \qquad \qquad  \Vert \widetilde{V}_{\operatorname{con}} \Vert_{W^{3,\infty}},\widetilde{W}_{\operatorname{sing}}, \widetilde{W}_{\operatorname{reg}},\widetilde{V}_{\operatorname{con}} \Bigg) \text{ such that }\\
%&\operatorname{max}\{ \Vert W_{\operatorname{sing}}-\widetilde{W}_{\operatorname{sing}} \Vert_{L^p},\Vert W_{\operatorname{reg}}-\widetilde{W}_{\operatorname{reg}} \Vert_{L^{\infty}},\Vert V_{\operatorname{con}}-\widetilde{V}_{\operatorname{con}} \Vert_{L^{\infty}} \}<\varepsilon.
 %\end{split}
% \end{equation}
% For the numerical discretization the algorithm then samples potentials $\widetilde{W}_{\operatorname{sing}},\widetilde{W}%_{\operatorname{reg}},\widetilde{V}_{\operatorname{con}}$ where we will drop the tilde in the sequel to simplify the notation.
%\end{enumerate} 

The algorithm then computes the \emph{cubic discretization}, by numerically evaluating the integral stated in Definition \ref{cubdisc}, using quasi MC methods \cite{MC}, of the preceding objects with explicit error bounds, see Proposition \ref{convrate}. The required cube size of the cubic discretization is determined by bearing in mind the additional error in the numerical schemes used in the subsequent step:
\item \emph{Step 3- Numerical methods:}  For the solution to the linear Schr\"odinger equation with time independent Schr\"odinger operator, we use the Crank-Nicholson method \eqref{eq:discrule} and the Strang splitting scheme \eqref{eq:Stranglin} to include the defocussing NLS or time-dependent control potential with explicit convergence rates. The convergence of the Crank-Nicholson scheme, with error bounds, is shown in Subsec. \ref{sec:LSE} and the convergence of the splitting scheme in Subsec. \ref{sec:defoc}. 
\end{itemize}

\begin{rem}
Our assumptions on the singular potentials include standard examples of singular potentials such as the Coulomb potential $W_{\operatorname{sing}}(x):=1/\vert x \vert$ in $\RR^3$, which has the property that for a smooth cut-off function $\chi_{\mathcal B_{\varepsilon}(0)}$ 
supported away from $\mathcal B_{\varepsilon}(0)$ 
such that $\chi_{\mathcal B_{\varepsilon}(0)}(x)=1$ for $x \notin \mathcal B_{2\varepsilon}(0)$ 
we can define a smooth approximation potential $\widetilde{W}_{\operatorname{sing}}(\varepsilon,x):=\chi_{\mathcal B_{\varepsilon}(0)}(x) /\vert x \vert \in C^{\infty}(\mathbb R^3)$ 
with the property that
\[ \Vert W_{\operatorname{sing}}-\widetilde{W}_{\operatorname{sing}}(\varepsilon) \Vert_{L^2} \le \left(\int_{\mathcal B_{2\varepsilon}(0)} \frac{1}{\vert x \vert^2} \ dx \right)^{1/2} = \sqrt{8\pi \varepsilon}.\] 
\end{rem}

\smallsection{Notation}
We introduce (generalized) Sobolev spaces for parameters $\rho,\eta  \ge 0$ 
\begin{equation}
\begin{split}
&H^\rho_\eta(\mathbb R^d) :=\left\{ f \in H^\rho(\mathbb R^d); \langle \bullet \rangle^{\eta} f \in L^2(\mathbb R^d) \right\}\text{ with norms }\\
&\left\lVert f \right\rVert^2_{H^\rho_{\eta}(\mathbb R^d)}:=\left\lVert \langle\bullet \rangle^{\rho} \widehat{f} \right\rVert_{L^2(\mathbb R^d)}^2+  \left\lVert  \langle\bullet \rangle^{\eta} f \right\rVert_{L^2(\mathbb R^d)}^2
\end{split}
\end{equation} 
where $\langle x \rangle :=\left(1+\vert x \vert^2\right)^{1/2}.$ For $\alpha_1> \alpha_2 \ge 0$ and $\beta_1>\beta_2\ge 0$ the inclusion of generalized Sobolev spaces $H^{\alpha_1}_{\beta_1}(\mathbb R^d) \hookrightarrow H^{\alpha_2}_{\beta_2}(\mathbb R^d)$ is compact \cite[Theorem B.3]{DZ18}.
Moreover, for the self-adjoint positive-definite operator $S=-\Delta+\vert x \vert^2$ on $L^2(\RR^d)$, we define the canonical norm on the domain $D(S^{\nu})$ for any $\nu \ge 0$ by
\[ \Vert f \Vert_{S^{\nu}}:= \Vert S^{\nu} f \Vert_{L^2}.\]
The space of piecewise $W^{1,1}$ functions on some interval $(0,T)$ is denoted by $W^{1,1}_{\operatorname{pcw}}(0,T)$ and consists of all functions $u$ that there exists a finite partition $I_i:=(t_i,t_{i+1})$ of $(0,T)$ such that 
$
 \Vert u \Vert_{W^{1,1}_{\operatorname{pcw}}(0,T)} = \sum_{i=1}^{n} \Vert u \Vert_{W^{1,1}(I_i)}.
 $

We frequently omit the domain of functions in function spaces to shorten the notation. The ball centred at $x_0$ with radius $r$ is denoted as $\mathcal B_r(x_0).$
We denote the spatially averaged integral as 
\[
\fint_A f(x) \ dx = \frac{1}{\vert A \vert} \int_A f(x) \ dx,
\] 
where $\vert A \vert = \int_A \ dx.$
The space of functions of bounded variation is denoted by $\operatorname{BV}(\mathbb R^d).$ The norm on the intersection $X \cap Y\ni x$ of two normed spaces is $\Vert x \Vert_{X \cap Y} = \operatorname{max} \{ \Vert x \Vert_X, \Vert x \Vert_Y \}.$
\medskip

We also introduce discrete weighted spaces
\begin{equation}
\label{eq:weighted}
\ell^2_{\eta}(\mathbb Z^d):=\left\{ (x_n)_{n \in \mathbb Z^d}; \Vert x \Vert^2_{\ell^2_{\eta}}:=\sum_{n \in \mathbb Z^d} \langle n \rangle^{2\eta} \vert x_n \vert^2< \infty \right\}.
\end{equation}

Throughout the text, we denote the standard mollifier by $\eta_t(x):=t^{-d}\eta(t^{-1}x)$ with $\eta \in C_c^{\infty}(\mathbb R^d, [0,\infty))$ where $\left\lVert \eta \right\rVert_{L^1(\mathbb R^d)}= 1$. We denote subsequences of sequences $(x_n)$ again by $(x_n)$ and time differentiation of a space-time dependent function $f$ is denoted by $f'$.

For a weakly, to some state $\psi$, convergent sequence $(\psi_n)$ we write $\psi_n \overset{}{\rightharpoonup} \psi$ and weak$^*$-convergence is denoted by $\psi_n \overset{\star}{\rightharpoonup} \psi.$
If there is a constant $C>0$, independent of $y$, such that $\left\lvert x \right\rvert \le C \left\lvert y \right\rvert$ we also write $\left\lvert x \right\rvert \lesssim \left\lvert y \right\rvert$ or $x=\mathcal O(\vert y \vert).$

\begin{rem}
\label{rem:genSob}
In Lemmas \ref{eos} and \ref{eos2} we establish existence of uniformly (in time) bounded solutions to the linear and nonlinear Schr\"odinger equation, respectively, in certain generalized Sobolev spaces $H^{\rho}_{\eta}(\mathbb R^d)$ on compact time intervals.
Such bounds allow us to identify bounded domains on which the solution is localized up to arbitrary small $L^2$-error. 
For any $R >0$ we have
\begin{equation}
\label{eq:outside}
 \left\lVert\varphi \indic_{\mathcal B_R(0)^c}  \right\rVert_{L^2(\mathbb R^d)} \le \left\lVert \varphi \right\rVert_{H_{\eta}(\mathbb R^d)}\/(1+R^2)^{-\eta/2} \le \left\lVert \varphi \right\rVert_{H_{\eta}(\mathbb R^d)}\/R^{-\eta}  
 \end{equation}
such that by choosing $R>(\left\lVert \varphi \right\rVert_{H_{\eta}(\mathbb R^d)}/\varepsilon)^{1/\eta}$ for some $\varepsilon>0$ it follows that $\left\lVert\varphi \indic_{\mathcal B_R(0)^c}  \right\rVert_{L^2(\mathbb R^d)}< \varepsilon.$
\end{rem}

\medskip 

We refer to both linear and nonlinear Schr\"odinger equations in this text as Schr\"odinger equations and write $\mathbb R^d$ in estimates that hold true for both the linear and nonlinear Schr\"odinger equations. However, in case of nonlinear Schr\"odinger equations we restrict us henceforth to the cubic and quintic NLS in $d=1.$

\section{Existence of solutions in generalized Sobolev spaces}
\label{eosol}
To show existence of solutions, we assume that the \emph{singular part} of the pinning potential is zero-bounded with respect to the negative Laplacian. Sufficient conditions in any dimension for this to hold are summarized in the following Remark \ref{rem:bounded}.
Henceforth, we assume that $u \in  W^{1,1}_{\operatorname{pcw}}(0,T)$. For both potentials in \eqref{eq:bilSchr} we impose the following integer $k$-parameterized assumption:

\begin{Assumption}[Potentials]
\label{ass1}
Consider a decomposition of the pinning potential $V=W_{\operatorname{sing}}+W_{\operatorname{reg}}$.The pinning potential $V$ and control potential $V_{\operatorname{con}}$ satisfy a standard condition if $W_{\operatorname{reg}}$ and $V_{\operatorname{con}}$ 
\begin{equation}
\begin{split}
\label{eq:potentials}
&W_{\operatorname{sing}} \in L^p(\mathbb{R}^d), \text{ with  }p<\infty \text{ as in Remark }\ref{rem:bounded},\text{ and both } \\
& \langle \bullet \rangle^{-2} W_{\operatorname{reg}}\text{ and } \langle \bullet \rangle^{-2}  V_{\operatorname{con}} \in L^{\infty}(\mathbb R^d).
\end{split}
\end{equation}
\end{Assumption}

We want to think of $W_{\operatorname{sing}}$ as the \emph{localized singular part} of the pinning potential $V$, that is relatively bounded with respect to the Laplacian, whereas $W_{\operatorname{reg}}$ describes the regular part of the pinning potential that is allowed to be unbounded as $\left\lvert x \right\rvert \rightarrow \infty,$ but should, in the above sense, not grow faster than the harmonic potential $\sim \left\lvert x \right\rvert^2.$ 

We will now start by discussing the existence of solutions to the linear Schr\"odinger equation and then extend this result to the NLS afterwards.
\subsection{The linear Schr\"odinger equation}
\label{sec:linnum}
The solution to \eqref{eq:semicl} can be constructed by a limiting procedure: 
For an approximate identity $(\eta_n) $, we consider the family of approximate Schr\"odinger equations
\begin{equation}
\begin{split}
\label{eq:bilSchrred}
i \partial_t \psi_n(x,t) &= \left(-\mu^2\Delta+(W_{\operatorname{sing}}+W_{\operatorname{reg}}\indic_{\mathcal B_n(0)})*\eta_n+ (V_{\operatorname{TD}}(t)\indic_{\mathcal B_n(0)})*\eta_n\right)\psi_n(x,t), \\
\psi_n(\bullet,0)&=\varphi_0 \in H^2_2(\mathbb R^d).
\end{split}
\end{equation}
In the sequel, we use the notation $V_{1_n}:=W_{\operatorname{sing}}*\eta_n$, $V_{2_n}:=(W_{\operatorname{reg}}\indic_{\mathcal B_n(0)}))*\eta_n$, $V_n:=V_{1_n}+V_{2_n}$, and $ V_{\operatorname{TD}_n}(t):=(V_{\operatorname{TD}}(t)\indic_{\mathcal B_n(0)})*\eta_n.$ We then take a suitable limit $n \rightarrow \infty$ and show that this provides a solution to the Schr\"odinger equation \eqref{eq:semicl}.

The existence of unique solutions to the mollified equation \eqref{eq:bilSchrred} follows from fixed-point arguments \cite[\S 5.2]{LiYo}. 
There are, of course, more restrictive conditions on the potentials such that the limiting construction is redundant. We summarize some of them in the following remark:
\begin{rem}[Essentially bounded potentials]
Let $W_{\operatorname{reg}}, V_{\operatorname{con}}$ be bounded multiplication operators on $L^2(\mathbb R^d)$ and $u \in L^1((0,T),\mathbb R)$ then \eqref{eq:semicl} possesses a unique mild solution in $C((0,T); L^2(\mathbb R^d))$ for any initial datum $\varphi_0 \in L^2(\mathbb R^d).$ \\
Extending this to solutions of higher spatial regularity is immediate: Let $V, V_{\operatorname{con}} \in W^{k,\infty}(\mathbb R^d)$ and $u \in L^1((0,T),\mathbb R)$. Equation \eqref{eq:semicl} possesses a unique mild solution in $C((0,T); H^k(\mathbb R^d))$ for any initial datum $\varphi_0 \in H^k(\mathbb R^d).$ 
\end{rem}

We start by explaining that smoothness of the initial state is necessary to ensure that the Schr\"odinger evolution preserves the decay of the initial state, cf. \cite[Lemma $3$]{BKP}. We illustrate this by the following explicit initial state that is compactly supported, of low-regularity, and disperses immediately under the free Schr\"odinger dynamics:
\begin{ex}[Spatial regularity and decay] 
The indicator function $\varphi_0:=\indic_{[-\tfrac{1}{2},\tfrac{1}{2}]}$ is in any $H^{\rho}_{\eta}(\mathbb R)$ for $\rho<\tfrac{1}{2}$ \emph{(low regularity)} and any $\eta>0$ \emph{(rapid decay)}. Its Fourier transform is $\widehat{\varphi_0}(p)=\operatorname{sinc}(p).$
Under the free evolution $i \varphi'(t) = (- \Delta \varphi)(t),$
the solution satisfies then $\widehat{\varphi}(p,t) = e^{-i\vert p \vert^2 t} \widehat{\varphi_0}(p).$
Thus, although the initial state is compactly supported one finds that already the first moment is not square integrable $\langle \bullet \rangle \varphi(t) \notin L^2(\mathbb R)$ for $t \neq 0$, since
\[ \mathcal F(x \mapsto -i x\varphi(t,x))(p) =  \partial_p \widehat{\varphi}(p,t)= e^{-i\vert p \vert^2 t} \left( \frac{\cos(p)}{p}-\frac{\sin(p)}{p^2}-2it \sin(p) \right)\notin L^2(\mathbb R)\]
which implies $\langle \bullet \rangle \varphi(t) \notin L^2(\mathbb R).$
In other words, initial states of low spatial regularity but compact support can strongly disperse under the Schr\"odinger dynamics. 
\end{ex}
We then define the Dirichlet and Neumann spaces for $\rho \ge 2$
\begin{equation}
\begin{split}
\label{eq:DirichletaNeumann}
X^D_{\rho}(\Omega)&:=H^{\rho}(\Omega)\cap H_0^1(\Omega) \text{ and }X^N_{\rho}(\Omega):=H^{\rho}(\Omega)\cap \left\{ \psi; \langle \nabla \psi, n \rangle\vert_{\partial \Omega}= 0 \right\}.
\end{split}
\end{equation}
In the following we denote either space in \eqref{eq:DirichletaNeumann} just by $X_{\rho}.$
The next Lemma, that relies on energy estimates, introduced in \cite[Theorem $1$]{BKP}, yields the existence of solutions to \eqref{eq:semicl} in $X_{\rho}(\Omega)$ and is established by showing that solutions to \eqref{eq:bilSchrred} possess a weak$^*$-convergent subsequence that converges to the (unique) solution of \eqref{eq:semicl} in generalized Sobolev spaces.

\begin{lemm}[Existence of solutions to linear Schr\"odiger eq.]
\label{eos}
Let $\varphi_0$ be an initial state in $X_2(\Omega)$ to \eqref{eq:bilSchr} and $\Omega=\mathcal B_r(0)$ or $\Omega=\mathbb R^d$. We consider control functions $u \in W^{1,1}_{\operatorname{pcw}}(0,T)$ and potentials $V,V_{\operatorname{con}}$ satisfying a standard condition with $p$ as in \emph{Assumption \ref{ass1}}. Then, there exists a solution $\psi \in L^{\infty}((0,T),X_2(\Omega))$ with $\psi' \in L^{\infty}((0,T),L^2(\Omega))$ to \eqref{eq:semicl} such that for $\Omega' = \mathbb R^d$ or if $\Omega$ was a ball $\mathcal B_r(0)$ then for $\Omega' = \mathcal B_{r'}(0)$ uniformly in $r' \ge r,$ we have for a recursively defined function 
\[C = C(T,\Vert u \Vert_{W^{1,1}_{\operatorname{pcw}}((0,T))},\Vert W_{\operatorname{sing}} \Vert_{L^p}, \Vert \langle \bullet \rangle^{-2} W_{\operatorname{reg}} \Vert_{L^{\infty}}, \Vert \langle \bullet \rangle^{-2}V_{\operatorname{con}} \Vert_{L^{\infty}},C_{\mu^2/2}),\]
where $C_{\mu^2/2}$ is defined in \eqref{eq:zero-bd}, that 
\begin{equation}
\label{eq:estm63}
 \left\lVert \psi \right\rVert_{L^{\infty}((0,T),H^{2,\operatorname{sem}}_2(\Omega'))} +\left\lVert \psi' \right\rVert_{L^{\infty}((0,T),L^2(\Omega'))} \le C \left\lVert \varphi_0 \right\rVert_{H_2^{2,\operatorname{sem}}(\Omega')}.
 \end{equation}
Moreover, as functions in $L^{\infty}((0,T); X_2(\Omega))$, with time-derivative in $L^{\infty}((0,T); L^2(\Omega)),$ there exists a subsequence of solutions to \eqref{eq:bilSchrred} such that both 
\begin{equation}
\begin{split}
&\psi_n \overset{\star}{\rightharpoonup} \psi,\text{ in }L^{\infty}((0,T),X_2(\Omega)) \text{ and }\psi_n' \overset{\star}{\rightharpoonup} \psi'\text{ in }L^{\infty}((0,T); L^2(\Omega)).
\end{split}
\end{equation} Finally, the solution $\psi$ to \eqref{eq:bilSchr} is unique in $X_2(\Omega)$.
\end{lemm}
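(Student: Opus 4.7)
The natural plan is to carry out a regularization-plus-compactness argument: use the mollified problems \eqref{eq:bilSchrred}, whose unique $C([0,T]; H^2(\mathbb R^d))$ solutions $\psi_n$ are guaranteed by standard semigroup/fixed-point theory on the smooth, bounded, compactly supported potentials $V_n$ and $V_{\operatorname{TD}_n}$. The goal is to derive $n$-uniform estimates for $\psi_n$ in $L^\infty((0,T); H^{2,\operatorname{sem}}_2(\Omega))$ together with $W^{1,\infty}((0,T);L^2(\Omega))$ control of the time derivative, and then pass to the limit by weak$^*$ compactness. When $\Omega=\mathbb R^d$ one localizes to $\mathcal B_{r'}(0)$ using smooth cutoffs to get the stated uniformity in $r' \ge r$.

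The heart of the proof is three energy-type estimates for $\psi_n$, in the spirit of \cite{BKP}. First, $L^2$-conservation is immediate from the self-adjointness of the mollified Hamiltonian $H_n(t):=-\mu^2\Delta+V_n+u(t)\,V_{\operatorname{TD}_n}$. Second, to control the semiclassical $H^2$ piece one differentiates the equation in $t$ to obtain $i\partial_t \psi_n'(t) = H_n(t)\psi_n'(t) + u'(t)\,V_{\operatorname{TD}_n}\psi_n(t)$, so that
\[
\partial_t\|\psi_n'(t)\|_{L^2}^2 \lesssim |u'(t)|\,\|V_{\operatorname{TD}_n}\psi_n(t)\|_{L^2}\,\|\psi_n'(t)\|_{L^2};
\]
a Gr\"onwall argument over the pieces of the partition underlying the $W^{1,1}_{\operatorname{pcw}}$ bound on $u$ (with the PDE at $t=0^+$ initialising $\psi_n'$) gives a uniform bound on $\|\psi_n'\|_{L^\infty((0,T);L^2)}$, which in turn controls $\mu^2\|\Delta\psi_n\|_{L^2}$ via the equation after absorbing the singular part through the infinitesimal bound \eqref{eq:zero-bd}; this is where $C_{\mu^2/2}$ enters. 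Third, for the weighted piece $\|\langle x\rangle^2\psi_n\|_{L^2}$ one tests the equation against $\langle x\rangle^4\psi_n$; the commutator $[\mu^2\Delta,\langle x\rangle^2]$ is first order in $\nabla$ and thus bounded by the already-controlled $H^1$-piece, while the potential term is absorbable thanks to $\|\langle\bullet\rangle^{-2}W_{\operatorname{reg}}\|_{L^\infty},\|\langle\bullet\rangle^{-2}V_{\operatorname{con}}\|_{L^\infty}\le C$; a further Gr\"onwall then closes the estimate.

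With these three uniform bounds, $(\psi_n)$ is bounded in $L^\infty((0,T);X_2(\Omega))$ and $(\psi_n')$ in $L^\infty((0,T);L^2(\Omega))$. Banach--Alaoglu applied to these duals of separable Banach spaces yields a subsequence with $\psi_n \overset{\star}{\rightharpoonup} \psi$ and $\psi_n' \overset{\star}{\rightharpoonup} \psi'$ in the claimed senses. Since $V_n \to V$ and $V_{\operatorname{TD}_n} \to V_{\operatorname{TD}}$ in local $L^p$ respectively $L^\infty_{\operatorname{loc}}$, one may pass to the limit in the weak formulation of \eqref{eq:bilSchrred} paired with test functions $\chi \in C_c^\infty((0,T)\times\Omega)$ to conclude that $\psi$ solves \eqref{eq:semicl}; weak$^*$ lower semicontinuity of the norms then yields \eqref{eq:estm63} with a constant $C$ depending recursively on the listed input data (the recursion being traced through the two Gr\"onwall steps).

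Uniqueness is the shortest step: if $\psi_1,\psi_2$ are two $X_2$-valued solutions with the same initial datum, then $w:=\psi_1-\psi_2$ satisfies a linear Schr\"odinger equation with (a.e.\ $t$) self-adjoint Hamiltonian and zero initial data, so $\tfrac{d}{dt}\|w\|_{L^2}^2=0$ and $w\equiv 0$. The main obstacle I anticipate is the weighted estimate on $\langle x\rangle^2\psi_n$: one has to verify that the quadratic growth allowed on $W_{\operatorname{reg}},V_{\operatorname{con}}$ is precisely what the weight cancels, that the commutator $[\Delta,\langle x\rangle^2]\psi_n$ can be absorbed using the already-controlled $H^1$-type norm, and that the singular $W_{\operatorname{sing}}\in L^p$ contribution to this weighted identity is harmless via Remark \ref{rem:bounded}. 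This step is also what converts an abstract existence of $C$ in \eqref{eq:estm63} into a recursively computable dependence on the stated input parameters.
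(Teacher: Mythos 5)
Your overall architecture (mollified problems, $n$-uniform energy estimates, Banach--Alaoglu, pass to the limit, uniqueness by Gr\"onwall) matches the paper's. But two points in the energy-estimate part are genuine gaps rather than presentational choices. First, your second and third estimates are \emph{coupled}, not sequential: to close the Gr\"onwall for $\|\psi_n'\|_{L^2}$ you must control $\|V_{\operatorname{con}_n}\psi_n\|_{L^2}$, and since $V_{\operatorname{con}}$ may grow like $\langle x\rangle^2$ this requires the weighted bound $\|\langle x\rangle^2\psi_n\|_{L^2}$ from your ``third'' step; conversely the weighted estimate needs the $\|\psi_n'\|_{L^2}$ bound. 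The paper resolves this by summing \eqref{eq:important one} and the $\psi_n'$-inequality and applying Gr\"onwall once to the combined quantity (leading to \eqref{eq:estm53}); as written, your step two cannot be closed ``first.'' (A small slip: after differentiating in $t$, the inhomogeneity is $u'(t)\,V_{\operatorname{con}_n}\psi_n$, i.e. $V_{\operatorname{TD}_n}'$, not $u'(t)V_{\operatorname{TD}_n}\psi_n$.)

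Second, the commutator claim in your third step is wrong. Testing the equation against $\langle x\rangle^4\overline{\psi_n}$ (or handling $[\mu^2\Delta,\langle x\rangle^2]\psi_n = 4\mu^2\, x\cdot\nabla\psi_n + 2d\mu^2\psi_n$) produces the \emph{weighted} gradient term $\int\mu^2|x|^2|\nabla\psi_n|^2\,dx$, which is \emph{not} controlled by the unweighted $H^1$-piece: the factor $x\cdot\nabla$ carries a weight. The paper therefore inserts an extra energy estimate \eqref{eq:estm1}, obtained by multiplying the equation by $|x|^2\overline{\psi_n}$ and taking the real part, to bound $\int\mu^2|x|^2|\nabla\psi_n|^2$ by $\mu^2\|\nabla\psi_n\|_{L^2}^2 + \|\psi_n\|_{H_2}^2 + \|\psi_n'\|_{L^2}^2$, and then a third estimate (multiplying by $\overline{\psi_n}$, real part) to handle the remaining unweighted gradient via the infinitesimal bound. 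Without this intermediate weighted-gradient step the third Gr\"onwall does not close. Once these two points are fixed, the rest of the proposal (lower semicontinuity of norms, passage to the limit in the weak formulation after identifying the $L^p$/$H^0_{-2-\varepsilon}$ limits of the mollified potentials, and the $L^2$-uniqueness argument) matches the paper.
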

\begin{proof}
By multiplying \eqref{eq:bilSchrred} with $\left\lvert x \right\rvert^4  \overline{\psi_n}$, integrating by parts, and taking the imaginary part it follows that 
\begin{equation}
\label{eq:initeq}
 \frac{d}{dt} \int_{\Omega} \left\lvert x \right\rvert^4 \left\lvert \psi_n(x,t) \right\rvert^2 \ dx \lesssim \int_{\Omega}   \mu^2 \left\lvert x \right\rvert^2 \left\lvert \nabla \psi_n(x,t) \right\rvert^2 \ dx + \left\lVert \psi_n(t) \right\rVert_{H_2(\Omega)}^2.
 \end{equation}
To obtain a bound on the first term on the right-hand side of \eqref{eq:initeq}, we multiply \eqref{eq:bilSchrred} by $\left\lvert x \right\rvert^2 \overline{ \psi_n}$ and integrate this time the real part over $\Omega$ such that by the assumption on the potentials 
\begin{equation}
\label{eq:estm1}
 \int_{\Omega} \mu^2  \left\lvert x \right\rvert^2 \left\lvert \nabla \psi_n(x,t) \right\rvert^2 \ dx \lesssim \mu^2 \left\lVert \nabla \psi_n(t) \right\rVert^2_{L^2(\Omega)} + \left\lVert \psi_n(t) \right\rVert^2_{H_2(\Omega)}+ \left\lVert \psi_n'(t) \right\rVert^2_{L^2(\Omega)} .
 \end{equation}
Finally, to obtain a bound on the gradient appearing on the right-hand side of \eqref{eq:estm1} we multiply \eqref{eq:bilSchrred} by $\overline{\psi}_n$ and integrate the equation \eqref{eq:bilSchrred} over the entire space. Then, for the real part of that expression we obtain, by the zero-boundedness of the singular part of the potential, the desired bound 
\[\mu^2\left\lVert \nabla \psi_n(t) \right\rVert^2_{L^2(\Omega)}  \lesssim \left\lVert \psi_n'(t) \right\rVert^2_{L^2(\Omega)} + C_{\mu^2/2}\left\lVert \psi_n(t) \right\rVert^2_{H_1(\Omega)}.\]

\medskip

Combining this estimate on the gradient with \eqref{eq:estm1} yields, by invoking \eqref{eq:initeq} and the preservation of the $L^2$ norm for solutions to \eqref{eq:bilSchrred}, 
\[ \frac{d}{dt} \left\lVert \psi_n(t) \right\rVert_{H_{2}(\Omega)}^2=   \frac{d}{dt} \int_{\Omega}  (1+ \left\lvert x \right\rvert^4) \left\lvert  \psi_n(x,t) \right\rvert^2 \ dx \lesssim \left\lVert \psi_n'(t) \right\rVert^2_{L^2(\Omega)} + C_{\mu^2/2}\left\lVert \psi_n(t) \right\rVert^2_{H_2(\Omega)}.\]
Integrating this bound over a compact time interval $[0,t]$ shows that 
 \begin{equation}
\begin{split}
\label{eq:important one}
\left\lVert \psi_n(t) \right\rVert_{H_{2}(\Omega)}^2 \lesssim \left\lVert \psi_n(0) \right\rVert_{H_{2}(\Omega)}^2 + \int_0^t \left(\left\lVert \psi_n'(s) \right\rVert^2_{L^2(\Omega)} +C_{\mu^2/2} \left\lVert \psi_n(s) \right\rVert^2_{H_2(\Omega)} \right)\ ds.
  \end{split}
  \end{equation} 
We now want to bound $ \left\lVert \psi_n'(s) \right\rVert^2_{L^2(\Omega)}$ on the right-hand side of \eqref{eq:important one}. This term is the only missing ingredient to control the $H^2$ norm of the solution to the Schr\"odinger equation, since directly from the Schr\"odinger equation \eqref{eq:bilSchrred} we conclude that
\begin{equation}
\begin{split}
\label{eq:h2estm}
\left\lVert \psi_n(t) \right\rVert_{H^{2,\operatorname{sem}}(\Omega)}& \lesssim \left\lVert \psi_n'(t) \right\rVert_{L^2(\Omega)} + \left\lVert \psi_n(t) \right\rVert_{H_2(\Omega)}. 
\end{split}
\end{equation}

To bound the time derivative appearing on the right-hand side of \eqref{eq:h2estm} we write $\chi_n(t):=\psi_n'(t)$ and observe that this function satisfies a PDE 
\begin{equation}
\label{eq:diffpde}
i \chi_n'(t)=- \mu^2\Delta \chi_n(t)+ V_n\chi_n(t)+V_{\operatorname{TD}_n}'(t)\psi_n(t)+V_{\operatorname{TD}_n}(t)\chi_n(t)
\end{equation}
for some initial value $\left\lVert \chi_n(0)\right\rVert_{L^2(\Omega)} \lesssim \left\lVert \varphi_0 \right\rVert_{H_2^{2,\operatorname{sem}}(\Omega)}$ where this bound follows from the Schr\"odinger equation at zero. Multiplying the PDE \eqref{eq:diffpde} by $\overline{\chi_n}$ and integrating in space yields by taking the imaginary part of that expression
\begin{equation*}
\begin{split}
 \frac{1}{2} \frac{d}{dt} \left\lVert \chi_n(t) \right\rVert_{L^2(\Omega)}^2 
 &= \Im \left\langle V_{\operatorname{TD}_n}'(t) \psi_n(t), \chi_n(t) \right\rangle_{L^2(\Omega)} \\
 &\le \left\lVert V_{\operatorname{TD}_n}'(t)/\langle \bullet \rangle^2 \right\rVert_{L^{\infty}(\Omega)} \left\lVert \psi_n(t) \right\rVert_{H_2(\Omega)} \left\lVert \chi_n(t) \right\rVert_{L^2(\Omega)}.
 \end{split}
 \end{equation*}

Integrating this bound in time shows that
\begin{equation*}
\begin{split}
 \left\lVert \psi_n'(t) \right\rVert_{L^2(\Omega)}^2 
  &\lesssim \left\lVert \varphi_0 \right\rVert^2_{H_2^{2,\operatorname{sem}}(\Omega)} + \int_0^t  \left\lVert \tfrac{V_{\operatorname{TD}_n}'(s)}{\langle \bullet \rangle^2} \right\rVert_{L^{\infty}(\Omega)}\left( \left\lVert \psi_n(s) \right\rVert_{H_2(\Omega)}^2 +\left\lVert \psi_n'(s) \right\rVert_{L^2(\Omega)}^2\right) \ ds.
 \end{split}
 \end{equation*}
Thus, combining this estimate with \eqref{eq:h2estm} and adding \eqref{eq:important one} to it, implies by Gr\"onwall's lemma that 
\begin{equation}
\label{eq:estm53}
\left\lVert \psi_n \right\rVert^2_{L^{\infty}((0,T),H^{2,\operatorname{sem}}_2(\Omega))} +  \left\lVert \psi_n' \right\rVert_{L^{\infty}((0,T),L^2(\Omega))}^2 \lesssim \left\lVert \varphi_0 \right\rVert^2_{H^{2,\operatorname{sem}}_2(\Omega)}.
\end{equation}
By Alaoglu's theorem, there is a subsequence of
 \[\psi_n, \psi \in W:= \left\{ u \in L^{\infty}((0,T); X_2^{\operatorname{sem}}(\Omega)); u' \in L^{\infty}((0,T);L^2(\Omega)) \right\}\]
 such that both $\psi_n \overset{\ast}{\rightharpoonup} \psi$ in $L^{\infty}((0,T); X_2(\Omega)\cap H^{2,\operatorname{sem}}_2(\Omega))$ and $\psi_n' \overset{\ast}{\rightharpoonup} \psi'$ in $L^{\infty}((0,T);L^2(\Omega))$.
 We note that since $W_{\operatorname{sing}} \in L^p(\mathbb R^d)$ it follows that $\lim_{n \rightarrow \infty} (W_{\operatorname{sing}} \indic_{\Omega'}) * \eta_n = W_{\operatorname{sing}}\indic_{\Omega'}$ in $L^p(\Omega)$ and in the $H^0_{-2-\varepsilon}(\mathbb R^d)$ sense we have the limits
 \[(W_{\operatorname{reg}} \indic_{\mathcal B_n(0)})*\eta_n \rightarrow W_{\operatorname{reg}} \operatorname{ and } \quad (V_{\operatorname{con}} \indic_{\mathcal B_n(0)})*\eta_n \rightarrow V_{\operatorname{con}}.\]
 The lower semicontinuity together with the continuity of the trace operator imply that in case of Dirichlet boundary conditions
 \[ \Vert  \psi \Vert_{L^{\infty}((0,T),L^2(\partial \Omega))} \le \liminf_{n \rightarrow \infty} \Vert  \psi_n \Vert_{L^{\infty}((0,T),L^2(\partial \Omega))} =0 \]
 and similarly for the Neumann boundary condition.
 
 Thus, taking the limit in the weak formulation of the linear Schr\"odinger equation for arbitrary $\zeta \in \mathscr S(\RR^d)$
\[\langle \psi_n(t), \zeta \rangle =  \langle \varphi_0, \zeta \rangle + \int_0^t \langle \psi_n'(s), \zeta  \rangle \ ds =  \langle \varphi_0, \zeta \rangle - i \int_0^t \langle \left(H_0 + V_{\operatorname{TD}}(s)\right)_n \psi_n(s), \zeta  \rangle \ ds \] 
 implies the existence of a distributional solution 
 \[\psi \in L^{\infty}((0,T),X_2(\Omega))\text{ with }\psi' \in  L^{\infty}((0,T),L^2(\Omega))\]
\[\langle \psi(t), \zeta \rangle 
=  \langle \varphi_0, \zeta \rangle + \int_0^t \langle \psi'(s), \zeta  \rangle \ ds =  \langle \varphi_0, \zeta \rangle - i \int_0^t \langle \left(H_0 + V_{\operatorname{TD}}(s)\right) \psi(s), \zeta  \rangle \ ds .\]
The uniqueness of the solution follows then for example from Gr\"onwall's lemma. Estimate \eqref{eq:estm63} follows from \eqref{eq:estm53} applied to $\psi.$
\end{proof}

The time-evolution operator defined by the linear Schr\"odinger equation \eqref{eq:bilSchr} will be denoted by $(\phi_{\operatorname{S}}(t,s))_{0\le s \le t \le T}.$

We continue by showing that the assumptions of Lemma \ref{eos} allow for both very dispersive and localized time-evolution. In particular, the following example shows that without further assumptions on the potentials, the exponential dispersion of the state cannot be improved.
\begin{ex}[(Inverted) harmonic oscillator]
Let $\varphi_0(x) = \pi^{-1/4} \exp(-x^2/2)$ and consider the Schr\"odinger equation
\[ i \partial_t \psi(t) = - \frac{1}{2} \left( \partial_x^2+ x^2\right) \psi(t) \text{ with } \psi(0)=\varphi_0.\]
The solution to this equation satisfies then
\[ \vert \psi(x,t)\vert^2 = \frac{1}{\pi^{1/2}A(t)} \exp(-x^2/A(t)^2) \]
where $A(t)^2 = \cosh(2t)$. In particular, the variance of the state increases exponentially fast. This is consistent with the Gronwall estimates in the proof of Lemma \ref{eos}.
\end{ex}
On the other hand, the initial state $\varphi_0$ in the above example is an eigenstate to the operator $ H=-\partial_x^2+ x^2$ and is therefore fixed under the time evolution of the Schr\"odinger equation. 

\medskip

These two examples illustrate that the time-evolution by the Schr\"odinger equation, satisfying the conditions of Lemma \ref{eos}, is highly model-dependent but, as shown in the previous Lemma \ref{eos}, always confined to a bounded domain. To derive estimates for practical applications it is therefore desirable to use a bound that is more tailored to the potential configuration.

\subsection{The nonlinear defocusing Schr\"odinger equation}
We now turn to the existence of solutions in generalized Sobolev spaces for the defocusing NLS. We start with a technical lemma to deal with the nonlinearities: 
In the following we write $F$ for the nonlinear term in the respective Schr\"odinger equation. In particular, $F_{\operatorname{S}}:=0$ for the linear Schr\"odinger equation and $F_{\sigma}(u):=\vert u \vert^{\sigma-1} u$ for the defocusing nonlinearity.
\begin{lemm}[Local Lipschitz conditions]
\label{F}
The \emph{cubic} $\sigma=3$ and \emph{quintic} $\sigma=5$ nonlinearity satisfy for every natural number $n \ge 0$ the weighted estimate
\begin{equation*}
\begin{split}
\left\lVert  \vert x \vert^n(F_{\sigma}(u)-F_{\sigma}(v))\right\rVert_{L^2(\mathbb R)} &\lesssim \left(\left\lVert u \right\rVert^{\sigma-1}_{H^1(\mathbb R)}+\left\lVert v \right\rVert^{\sigma-1}_{H^1(\mathbb R)} \right) \left\lVert \vert x \vert^n (u-v) \right\rVert_{L^2(\mathbb R)}.
\end{split}
\end{equation*}
Moreover, for any $k\ge 0$
\begin{equation}
\label{eq:Sob}
 \left\lVert F_{\sigma}(u) \right\rVert_{H^{k}(\RR)} \lesssim  \left\lVert u \right\rVert_{H^{k}(\mathbb R)}\left\lVert u \right\rVert^{\sigma-1}_{H^{1}(\mathbb R)}.
 \end{equation}
 \end{lemm}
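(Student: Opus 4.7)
Both inequalities rest on a structural fact about $F_\sigma$ together with the one–dimensional Sobolev embedding $H^1(\RR)\hookrightarrow L^\infty(\RR)$, so I would organise the argument around these two ingredients.

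For the weighted Lipschitz bound, the main step is the pointwise estimate
\[
|F_\sigma(z_1) - F_\sigma(z_2)| \;\lesssim\; \bigl(|z_1|^{\sigma-1} + |z_2|^{\sigma-1}\bigr)|z_1 - z_2|, \qquad z_1, z_2 \in \CC.
\]
Since $\sigma - 1 \ge 2$, $F_\sigma(z) = |z|^{\sigma-1}z$ is $C^1$ as a map $\RR^2 \to \RR^2$ with gradient of size $\lesssim |z|^{\sigma-1}$, so the inequality follows from the mean value theorem applied along the segment from $z_2$ to $z_1$, using $|(1-t)z_2 + t z_1| \le |z_1| + |z_2|$. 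Multiplying by $|x|^n$, taking $L^2(\RR)$-norms and pulling out the pointwise prefactor in $L^\infty$ gives
\[
\bigl\| |x|^n (F_\sigma(u) - F_\sigma(v)) \bigr\|_{L^2(\RR)} \;\le\; \bigl(\|u\|_{L^\infty(\RR)}^{\sigma-1} + \|v\|_{L^\infty(\RR)}^{\sigma-1}\bigr) \bigl\| |x|^n (u-v)\bigr\|_{L^2(\RR)},
\]
and the one-dimensional Sobolev embedding converts each $L^\infty$-norm into an $H^1$-norm, yielding the claim.

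For the Sobolev estimate \eqref{eq:Sob} the essential observation is that $\sigma \in \{3,5\}$ is odd, so $F_\sigma(u) = u^{(\sigma+1)/2} \bar u^{(\sigma-1)/2}$ is a polynomial of degree $\sigma$ in $(u,\bar u)$, in particular a smooth function with $F_\sigma(0)=0$. For integer $k$, I would expand any derivative $\partial^\beta F_\sigma(u)$ with $|\beta| \le k$ by Leibniz into a finite sum of products $\prod_{i=1}^\sigma \partial^{\alpha_i} w_i$ with $w_i \in \{u,\bar u\}$ and $\sum_i|\alpha_i| = |\beta|$. In each such product I place the factor of largest order in $L^2$ (bounded by $\|u\|_{H^k}$) and the remaining $\sigma-1$ factors in $L^\infty$, controlled by $\|u\|_{L^\infty}\lesssim \|u\|_{H^1}$ via the 1D embedding combined with Gagliardo--Nirenberg interpolations, which assemble to $\|u\|_{H^1}^{\sigma-1}$. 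For general $k \ge 0$ the same conclusion is obtained by the Kato--Ponce (fractional Leibniz) inequality in place of the Leibniz rule; equivalently, it is a Moser-type estimate $\|F(u)\|_{H^k}\lesssim_{\|u\|_{L^\infty}} \|u\|_{H^k}$ for smooth $F$ with $F(0)=0$, where the implicit constant is polynomial of degree $\sigma-1$ in $\|u\|_{L^\infty}$ since $F_\sigma$ is a polynomial of degree $\sigma$.

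The only delicate point is the bookkeeping in the last step: one must check that the Gagliardo--Nirenberg interpolation exponents arising from the $\sigma-1$ lower-order factors sum correctly so that the overall bound is $\|u\|_{H^1}^{\sigma-1}\|u\|_{H^k}$, rather than some larger combination of mixed Sobolev norms. This is a counting exercise and is not a genuine analytic obstacle.
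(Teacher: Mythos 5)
Your argument for the weighted Lipschitz bound is essentially the one in the paper. The paper splits $F_\sigma(u)-F_\sigma(v)=|u|^{\sigma-1}(u-v)+(|u|^{\sigma-1}-|v|^{\sigma-1})v$, pulls the prefactors out in $L^\infty$, and concludes with the $H^1(\RR)\hookrightarrow L^\infty(\RR)$ embedding; your mean-value-theorem pointwise estimate is the same Lipschitz fact packaged slightly differently, so this part is a match.

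For the bound \eqref{eq:Sob} you take a genuinely different technical route. The paper works directly in Fourier variables: it uses the elementary inequality $\langle\xi\rangle^k\lesssim\langle\xi-\eta\rangle^k+\langle\eta\rangle^k$ inside the convolution $\widehat{uv}=\widehat{u}*\widehat{v}$, then applies Young's inequality with $\|\widehat v\|_{L^1}\lesssim\|v\|_{H^1}$ (valid in dimension one) to obtain the tame product estimate
$\|uv\|_{H^k}\lesssim\|u\|_{H^k}\|v\|_{H^1}+\|u\|_{H^1}\|v\|_{H^k}$, and finally iterates this to peel off the $\sigma-1$ lower-order factors. You instead go through the Leibniz rule plus Gagliardo--Nirenberg interpolation for integer $k$ and invoke Kato--Ponce (equivalently a Moser-type estimate) for fractional $k$. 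Both arguments are sound and both implicitly rest on the same algebra property of $H^s(\RR)$ for $s>1/2$; the paper's Fourier route is more self-contained and treats all real $k\ge 0$ uniformly in a few lines, whereas your route is more modular but either imports Kato--Ponce as a black box or requires the exponent bookkeeping you flag at the end. One small imprecision worth tightening in your write-up: when you say the remaining $\sigma-1$ factors are "placed in $L^\infty$, controlled by $\|u\|_{L^\infty}$," this is only literally true for the factors carrying no derivative; factors of the form $\partial^{\alpha_i}u$ with $|\alpha_i|\ge 1$ need the Gagliardo--Nirenberg step you mention to land back on $\|u\|_{H^1}^{\sigma-1}\|u\|_{H^k}$, so the interpolation is doing real work there, not just embedding $H^1\hookrightarrow L^\infty$.
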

\begin{proof}
%The estimates on the derivatives follow from Sobolev embeddings $H^1(\mathbb R) \hookrightarrow L^{\infty}(\mathbb R)$ and generalized H\"older inequalities 
%  \begin{equation*}
 % \begin{split} 
%  \left\lVert \Delta F_{\sigma}(u) \right\rVert_{L^2(\mathbb R)} 
%  &\le  \left\lVert \Delta u \right\rVert_{L^2(\mathbb R)}  \left\lVert u \right\rVert^{\sigma-1}_{L^{\infty}(\mathbb R)} + \left\lVert \nabla u \right\rVert_{L^4(\mathbb R)}^2 \left\lVert u \right\rVert_{L^{\infty}(\mathbb R)}^{\sigma-2}  \\
%  &\lesssim \left\lVert u \right\rVert^{2}_{H^2(\mathbb R)}\left\lVert u \right\rVert^{\sigma-2}_{H^1(\mathbb R)}.
%  \end{split}
%  \end{equation*}
%and finally
%  \begin{equation*}
%  \begin{split}
%\left\lVert \Delta^2F_{\sigma}(u) \right\rVert_{L^2(\mathbb R)}
%&\lesssim   \left\lVert \Delta^2u \right\rVert_{L^2(\mathbb R)} \left\lVert u \right\rVert^{\sigma-1}_{L^{\infty}(\mathbb R)}+ \left\lVert \Delta u \right\rVert_{L^{\infty}(\mathbb R)} \left\lVert \Delta u \right\rVert_{L^2(\mathbb R)}\left\lVert u \right\rVert^{\sigma-2}_{L^{\infty}(\mathbb R)}  \\
%& \ + \left\lVert \Delta \nabla u  \right\rVert_{L^4(\mathbb R)} \left\lVert \nabla u \right\rVert_{L^4(\mathbb R)}  \left\lVert  u \right\rVert^{\sigma-2}_{L^{\infty}(\mathbb R)}+ \left\lVert \Delta u \right\rVert_{L^{\infty}(\mathbb R)} \left\lVert \nabla u \right\rVert_{L^4(\mathbb R)}^2\left\lVert  u \right\rVert^{\sigma-2}_{L^{\infty}(\mathbb R)} \\
%& \lesssim   \left\lVert u \right\rVert_{H^4(\mathbb R)}\left\lVert u \right\rVert^{\sigma-1}_{H^2(\mathbb R)}. 
%\end{split}
% \end{equation*}
The weighted estimate follows immediately from 
\begin{equation*}
\begin{split}
\left\lVert  \vert x \vert^n(F_{\sigma}(u)-F_{\sigma}(v))\right\rVert_{L^2(\mathbb R)} &\lesssim \left\lVert  \vert u\vert^{\sigma-1} \vert x \vert^n (u-v) \right\rVert_{L^2(\mathbb R)} +\left\lVert \vert x \vert^n (\vert u \vert^{\sigma-1}-\vert v\vert^{\sigma-1})   v \right\rVert_{L^2(\mathbb R)} \\
&\lesssim \left(\left\lVert u \right\rVert^{\sigma-1}_{L^{\infty}(\mathbb R)}+\left\lVert v \right\rVert^{\sigma-1}_{L^{\infty}(\mathbb R)}\right) \left\lVert \vert x \vert^n (u-v) \right\rVert_{L^2(\mathbb R)}\\
&\lesssim  \left(\left\lVert u \right\rVert^{\sigma-1}_{H^1(\mathbb R)}+\left\lVert v \right\rVert^{\sigma-1}_{H^1(\mathbb R)}\right) \left\lVert \vert x \vert^n (u-v) \right\rVert_{L^2(\mathbb R)}.
 \end{split}
 \end{equation*}
 To show the estimate \eqref{eq:Sob} on the derivatives, one can for example use the Fourier representation of the Sobolev spaces such that for $u,v \in H^{\max \{1,k\}}(\RR)$
 \begin{equation*}
\begin{split}
\langle \xi \rangle^{k}  \vert \widehat{u v } (\xi) \vert 
&\le \langle \xi \rangle^{k} \int_{\RR} \vert \widehat{u}(\xi-\eta) \widehat{v}(\eta) \vert d\eta \\ 
&\lesssim \int_{\mathbb R} \langle \xi -\eta  \rangle^{k}  \vert \widehat{u}(\xi-\eta) \widehat{v}(\eta) \vert d\eta +  \int_{\mathbb R} \langle \eta \rangle^{k}\vert \widehat{u}(\xi-\eta) \widehat{v}(\eta) \vert d\eta \\
& \lesssim \left( \vert \langle \bullet \rangle^{k}\widehat{u}\vert * \vert \widehat{v} \vert +\vert \widehat{u} \vert* \vert \langle  \bullet \rangle^{k}\widehat{v}\vert \right)(\xi). 
\end{split}
 \end{equation*}
Squaring and integrating this estimate and applying Young's convolution inequality then shows that ($\Vert \widehat v\Vert_{L^1} \lesssim \Vert v\Vert_{H^1}$ in dimension one)
\[ \Vert u v \Vert_{H^{k}} \lesssim \Vert u \Vert_{H^{k}} \Vert v \Vert_{H^{1}} +   \Vert u \Vert_{H^{1}} \Vert v \Vert_{H^{k}}\]
which shows that we can iteratively \emph{peal off} individual factors from the nonlinearity such that a single $H^k(\RR)$ norm remains. The $H^1(\RR)$-norm satisfies by Sobolev's embedding theorem in one dimension $ \Vert uv \Vert_{H^1(\RR)} \le    \Vert u \Vert_{H^1(\RR)} \Vert v \Vert_{H^1(\RR)}$ which yields \eqref{eq:Sob}.
 \end{proof}
By Banach's fixed point theorem and the previous Lemma the solution to the NLS \eqref{eq:GP} exists for short times $t \in [0,\tau]$ and initial states $\psi(0)=\varphi_0 \in H_2^2(\mathbb R)$ and satisfies the variation of constant formula
\begin{equation}
\begin{split}
\label{eq:NLSsol}
\psi(t) &= \phi_{\operatorname{S}}(t,0)\varphi_0-i \int_0^t \phi_{\operatorname{S}}(t,s) F_{\sigma}(\psi(s)) \ ds.
\end{split}
\end{equation}
We now show that such solutions must indeed be global in time.
\begin{lemm}[Global existence of solutions to defoc.\@ NLS]
\label{NLS}
Let $\varphi_0 \in H_2^2(\mathbb R)$ with $u \in W^{1,1}_{\operatorname{pcw}}{(0,T)}$ and potentials $V,V_{\operatorname{con}}$ satisfying a standard condition \emph{(Assumption \ref{ass1})}. The solution to the defocussing \emph{NLS} exists on every compact time interval and satisfies for a recursively defined function
\[ 
C = C(T, \Vert u \Vert_{W^{1,1}_{\operatorname{pcw}}}, \Vert W_{\operatorname{sing}} \Vert_{L^p},\Vert \langle \bullet \rangle^{-2} V_{\operatorname{reg}} \Vert_{L^{\infty}},\Vert \langle \bullet \rangle^{-2} V_{\operatorname{con}} \Vert_{L^{\infty}}) 
\]
such that
\begin{equation}
\label{eq:H1}
 \left( \left\lVert \psi \right\rVert^2_{L^{\infty}((0,T),H^1_1(\mathbb R))}+   \left\lVert  F_{\sigma}(\psi) \overline{ \psi} \right\rVert_{L^{\infty}((0,T),L^1(\mathbb R))} \right)  \le C \left(  \left\lVert \varphi_0 \right\rVert^2_{H^1_1(\mathbb R)}+
 \left\lVert \varphi_0 \right\rVert^4_{H^1(\mathbb R)}\right).
 \end{equation}
Moreover, we have
\begin{equation}
\label{eq:H2}
\left\lVert \psi \right\rVert_{L^{\infty}((0,T),H^2_2(\mathbb R))} \le C \left\lVert \varphi_0 \right\rVert_{H^2_2(\mathbb R)}. 
 \end{equation}
\end{lemm}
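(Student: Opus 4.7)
The plan is to bootstrap the local solution from \eqref{eq:NLSsol} (produced by Banach's fixed point theorem in $H^2_2(\mathbb R)$) to a global solution by obtaining a priori bounds in $H^1_1$ and then in $H^2_2$ that stay finite on $[0,T]$. Throughout I will mimic the energy estimates of Lemma \ref{eos}, treating the defocussing nonlinearity as a helpful sign and handling it with the local Lipschitz bounds of Lemma \ref{F}. All constants will be tracked explicitly in $T$, $\Vert u\Vert_{W^{1,1}_{\operatorname{pcw}}}$ and the norms of the potentials, so that they are recursively computable from the inputs.

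First I would establish mass conservation $\Vert\psi(t)\Vert_{L^2}=\Vert\varphi_0\Vert_{L^2}$, obtained by pairing the NLS with $\bar\psi$ and taking the imaginary part; the nonlinearity $\vert\psi\vert^{\sigma-1}\psi\cdot\bar\psi$ is real, so it drops out. Next I would run the defocussing energy identity: pair with $\bar\psi'$, take the real part, and integrate to get
\begin{equation*}
\frac{d}{dt}\Bigl[\tfrac12\Vert\nabla\psi\Vert_{L^2}^2+\tfrac12\langle V\psi,\psi\rangle+\tfrac{1}{\sigma+1}\Vert\psi\Vert_{L^{\sigma+1}}^{\sigma+1}\Bigr]=\tfrac12 u'(t)\langle V_{\operatorname{con}}\psi,\psi\rangle.
\end{equation*}
Because the nonlinearity is defocussing, the $L^{\sigma+1}$ term carries the correct sign and directly produces a bound on $\Vert F_\sigma(\psi)\bar\psi\Vert_{L^1}$; the singular part $W_{\operatorname{sing}}$ is absorbed via the infinitesimal bound \eqref{eq:zero-bd} at the price of an extra $\Vert\psi\Vert_{L^2}^2$, and the regular/control potentials contribute $\langle\bullet\rangle^{-2}$-weighted $L^\infty$ terms times $\Vert\psi\Vert_{H_1}^2$. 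A Gr\"onwall argument using $u'\in L^1$ yields the $H^1$ part of \eqref{eq:H1}; the weighted $H^1_1$ extension is obtained by the now-standard trick of pairing with $\langle x\rangle^2\bar\psi$, taking the imaginary part and bounding the cross terms $\Im\int\bar\psi\,x\cdot\nabla\psi$ by $\Vert\nabla\psi\Vert_{L^2}\Vert\langle x\rangle\psi\Vert_{L^2}$, while the nonlinear contribution vanishes pointwise since $\vert\psi\vert^{\sigma-1}\psi\langle x\rangle^2\bar\psi$ is real.

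For \eqref{eq:H2} the plan is to repeat the strategy from Lemma \ref{eos}: differentiate the NLS in $t$ and let $\chi:=\psi'$; then
\begin{equation*}
i\chi'=-\Delta\chi+V\chi+V_{\operatorname{TD}}(t)\chi+u'(t)V_{\operatorname{con}}\psi+F_\sigma'(\psi)\chi,
\end{equation*}
where $F_\sigma'(\psi)\chi$ is linear in $\chi$ with coefficient bounded by $\vert\psi\vert^{\sigma-1}$. Pairing with $\bar\chi$ and taking the imaginary part, the self-adjoint $-\Delta+V+V_{\operatorname{TD}}$ drops out, and the derivative-of-nonlinearity term contributes an imaginary part controlled by $\Vert\psi\Vert_{L^\infty}^{\sigma-1}\Vert\chi\Vert_{L^2}^2\lesssim\Vert\psi\Vert_{H^1}^{\sigma-1}\Vert\chi\Vert_{L^2}^2$, which is already bounded by the previous step. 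The source $u'V_{\operatorname{con}}\psi$ is handled exactly as in Lemma \ref{eos}. Gr\"onwall in $t$ yields $\Vert\chi\Vert_{L^\infty((0,T),L^2)}\lesssim\Vert\varphi_0\Vert_{H^2_2}$, since $\Vert\chi(0)\Vert_{L^2}$ is controlled by evaluating the NLS at $t=0$ and invoking Lemma \ref{F} to bound $\Vert F_\sigma(\varphi_0)\Vert_{L^2}\lesssim\Vert\varphi_0\Vert_{H^1}^{\sigma-1}\Vert\varphi_0\Vert_{L^2}$. Reading the NLS backwards then gives $\Vert\Delta\psi\Vert_{L^2}\lesssim\Vert\chi\Vert_{L^2}+\Vert V\psi\Vert_{L^2}+\Vert F_\sigma(\psi)\Vert_{L^2}$, and by \eqref{eq:Sob} the nonlinear term sits inside the bound already obtained. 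Finally the weighted part $\Vert\langle x\rangle^2\psi\Vert_{L^2}$ is propagated exactly as in \eqref{eq:initeq}--\eqref{eq:important one} of Lemma \ref{eos}; the nonlinearity again drops out of the imaginary-part energy identity because $F_\sigma(\psi)\bar\psi\langle x\rangle^4$ is real.

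The main obstacle is controlling the nonlinear term in the \emph{weighted} estimates without losing the defocussing sign: the bound must only use $\Im\int\vert\psi\vert^{\sigma-1}\psi\,w\,\bar\psi=0$ for real weights $w$, so every pairing that cancels $\nabla$ against $V$ must also use a real weight. This is what justifies pairing against $\langle x\rangle^{2k}\bar\psi$ (reals weights) rather than against $\langle x\rangle^{2k}\bar\psi'$. Once this design choice is made, the existence on $[0,\tau]$ plus the a priori bounds \eqref{eq:H1}--\eqref{eq:H2} on every compact subinterval preclude blow-up in $H^2_2$ and therefore extend the solution to $[0,T]$ by a standard continuation argument. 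Recursiveness of $C$ follows from the explicit Gr\"onwall constants: each appears as a composition of elementary operations in $T$, $\Vert u\Vert_{W^{1,1}_{\operatorname{pcw}}}$, $\Vert W_{\operatorname{sing}}\Vert_{L^p}$, $\Vert\langle\bullet\rangle^{-2}W_{\operatorname{reg}}\Vert_{L^\infty}$, $\Vert\langle\bullet\rangle^{-2}V_{\operatorname{con}}\Vert_{L^\infty}$ and the infinitesimal constants from Remark \ref{rem:bounded}.
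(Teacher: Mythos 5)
Your $H^1_1$ argument matches the paper's: both pair the NLS against $\overline{\psi'}$ (real part) for the energy identity, against $\langle x\rangle^2\bar\psi$ (imaginary part) for the weighted mass, and close via Gr\"onwall; the paper packages the two into a single functional $E(t)$ with a free parameter $\lambda$ to absorb the $\frac{d}{dt}\int (V+V_{\operatorname{TD}})\vert\psi\vert^2$ term, whereas you keep the two inequalities separate, but this is a cosmetic distinction and the substance is the same. One small caution about your phrasing: it is true that the nonlinearity drops out of the \emph{imaginary-part} weighted identity, but to close the weighted loop you must also run the \emph{real-part} pairing against $\langle x\rangle^2\bar\psi$ (the analogue of \eqref{eq:estm1}), and there the nonlinearity contributes $\int\langle x\rangle^2\vert\psi\vert^{\sigma+1}\,dx$, which does not vanish — it merely carries the favourable defocussing sign, so it can be discarded. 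You flag the sign convention in your opening remarks, so this is not a genuine gap, but it is worth making explicit that the real-part steps are where the defocussing hypothesis is actually consumed.

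For \eqref{eq:H2} you take a genuinely different route from the paper. You differentiate the NLS in $t$, treat $\chi=\psi'$ via an energy estimate (noting that the linearisation $DF_\sigma(\psi)[\chi]$ has a $\bar\chi$-term, so its imaginary pairing with $\bar\chi$ is nonzero but controlled by $\Vert\psi\Vert^{\sigma-1}_{H^1}\Vert\chi\Vert^2_{L^2}$), read the equation backwards to recover $\Vert\Delta\psi\Vert_{L^2}$, and propagate the weight $\Vert\langle x\rangle^2\psi\Vert_{L^2}$ exactly as in the linear Lemma~\ref{eos}. This essentially re-runs the machinery of Lemma~\ref{eos} with the extra nonlinear terms carried along. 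The paper instead exploits the fact that Lemma~\ref{eos} already gives $H^2_2$-boundedness of the linear flow $\phi_{\operatorname{S}}(t,s)$, then treats $F_\sigma(\psi)$ as a perturbation in the Duhamel formula \eqref{eq:NLSsol} and closes with the quasi-Lipschitz estimates of Lemma~\ref{F} (both the weighted estimate with $v=0$, $n=2$, and \eqref{eq:Sob}), so that a single scalar Gr\"onwall in $\Vert\psi\Vert_{H^2_2}$ finishes the proof. The paper's route is shorter because it amortises all the integration-by-parts work onto the linear lemma; yours redoes it but avoids needing the full strength of Lemma~\ref{eos} as a black box. Both yield recursive constants in the stated data, and both are correct.
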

\begin{proof}
Let $\tau>0$ be the time of existence as in the fixed-point argument \eqref{eq:NLSsol} and consider any time $t \in (0,\tau).$
Multiplying the NLS by $\overline{\psi'}$ and taking the real part yields after rearranging
\begin{equation}
\begin{split}
\label{eq:bound0}
&\frac{d}{dt} \int_{\mathbb R}\left( \frac{1}{2} \vert \nabla \psi(x,t) \vert^2 + \frac{1}{\sigma+1} F_{\sigma}(\psi(x,t))\overline{\psi(x,t)}+\frac{1}{2} \left(V+V_{\operatorname{TD}}(t)\right)\vert \psi(x,t) \vert^2 \right)\ dx  \\
&=  \int_{\mathbb R} u'(t)V_{\operatorname{con}}(x) \vert \psi(x,t) \vert^2 \ dx \lesssim \vert u'(t) \vert \left\lVert \psi(t) \right\rVert_{H_1(\mathbb R)}^2
\end{split}
\end{equation}
which implies the dependence of $C$ on $\Vert u \Vert_{W^{1,1}_{\operatorname{pcw}}}$
On the other hand, by multiplying the NLS with $(1+\vert \bullet \vert^2) \overline{\psi}$ we obtain for the imaginary part integrated over $\mathbb R$ that 
\begin{equation}
\label{eq:bound0a}
 \frac{d}{dt} \int_{\mathbb R} (1+\vert x \vert^2) \vert \psi(x,t) \vert^2 \ dx  \lesssim \left\lVert \psi(t) \right\rVert_{H_1^1(\mathbb R)}^2.
 \end{equation}
Consider then the energy 
\[ E(t) = \left\lVert \nabla \psi(t) \right\rVert_{L^2(\mathbb R)}^2 + \lambda  \left\lVert \langle \bullet\rangle  \psi(t) \right\rVert_{L^2(\mathbb R)}^2 +\int_{\mathbb R}F_{\sigma}(\psi(x,t)) \overline{\psi(x,t)} \ dx\]
for some $\lambda>0.$
From the above bounds \eqref{eq:bound0} and \eqref{eq:bound0a} we conclude that 
\[ E'(t) \lesssim \frac{d}{dt} \int_{\mathbb R}  \left(V(x)+V_{\operatorname{TD}}(x,t)\right)\vert \psi(x,t) \vert^2 \  dx +(1+\vert u'(s)   \vert ) E(t). \]
By a sufficiently large choice of $\lambda$, we can absorb $\frac{d}{dt} \int_{\mathbb R}  \left(V(x)+V_{\operatorname{TD}}(x,t)\right)\vert \psi(x,t) \vert^2 \  dx$ in the derivative of the energy on the left-hand side, using the control on the norm of the potential. Thus, we have shown that \eqref{eq:H1} holds with 
$
E(t) \lesssim \left\lVert \varphi_0 \right\rVert_{H^1_1(\mathbb R)}^2 +   \left\lVert \varphi_0 \right\rVert_{H^1(\mathbb R)}^4.
$
The bound on the $H_2^2$ norm follows then immediately from applying Gronwall's inequality to \eqref{eq:NLSsol} and the estimates in Lemma \ref{F} together with the boundedness of the $H^1_1$ norm.
\end{proof}

We discuss in the subsequent lemma sufficient conditions for the solution to \eqref{eq:bilSchr} to be in Sobolev spaces of order larger than two. This is because we require slightly higher regularity for the discretization of the Laplacian to converge to the numerical approximation. 
\begin{lemm}[$H^{2+\varepsilon}_2$ regularity:]
\label{eos2} 
Let $\Omega = \RR^d$ or $\Omega$ and consider an initial state $\varphi_0 \in H^{2+\varepsilon}_2(\Omega)$, satisfying Dirichlet or Neumann boundary conditions, for some $\varepsilon \in (0,1)\cup (1,2)$. For potentials $V$,$V_{\operatorname{con}} \in W^{\nu,\gamma}(\Omega)$  with $\gamma \in (2d,\infty)$, for some $\nu=1$, if $\varepsilon \in (0,1)$ and $\nu=2$ if $\varepsilon \in (1,2),$ the solution $\psi$ to \eqref{eq:bilSchr} and \eqref{eq:GP} is in $L^{\infty}((0,T); H^{2+\varepsilon}_2(\Omega))$ and satisfies for a recursively defined function
\[
C = C(T,\Vert u \Vert_{W^{1,1}_{\operatorname{pcw}}((0,T))},\Vert V \Vert_{W^{\nu,\gamma} }, \Vert V_{\operatorname{con}} \Vert_{W^{\nu,\gamma}} , \Vert \phi_{\operatorname{S}}\varphi_0 \Vert_{L^{\infty}((0,T), X^2(\Omega))}),
\]
where $\phi_{\operatorname{S}}$ is the linear Schr\"odinger flow, the estimate
\begin{equation}
 \label{eq:H22}
 \left\lVert \psi \right\rVert_{L^{\infty}((0,T),H^{2+\varepsilon}_2(\Omega))} \le C \left\lVert \varphi_0 \right\rVert_{H^{2+\varepsilon}_2(\Omega)}.
 \end{equation} 

\end{lemm}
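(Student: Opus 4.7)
The plan is to upgrade the $H^2_2$ bounds of Lemmas \ref{eos} and \ref{NLS} to the higher regularity $H^{2+\varepsilon}_2$ by running the three-step energy argument of Lemma \ref{eos} one or two orders of differentiation higher. Set $\varepsilon' := \lceil \varepsilon \rceil \in \{1,2\}$; by hypothesis $V, V_{\operatorname{con}} \in W^{\varepsilon',\gamma}(\Omega)$ with $\gamma > 2d$, so Morrey's embedding gives $W^{\varepsilon',\gamma} \hookrightarrow W^{\varepsilon'-1,\infty}$ with a small Hölder margin to spare.

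For the integer-level bound at $H^{2+\varepsilon'}_2$, apply $\partial^\beta$ with $|\beta| = \varepsilon'$ to \eqref{eq:bilSchr} (respectively \eqref{eq:GP}) and set $\chi := \partial^\beta \psi$. The commutator $[\partial^\beta, V + V_{\operatorname{TD}}(t)]\psi$ produces a remainder $R_\beta$ built from products $\partial^\tau(V+V_{\operatorname{TD}})\cdot\partial^{\beta-\tau}\psi$ with $0 < \tau \le \beta$, each of which is uniformly bounded in $L^2_2$ on $[0,T]$ thanks to the $H^2_2$ control on $\psi$ supplied by Lemmas \ref{eos}/\ref{NLS}, Hölder's inequality, and the pointwise bounds on derivatives of the potential. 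I then repeat verbatim the three-step multiplication of Lemma \ref{eos} applied to $\chi$: testing successively against $\langle x \rangle^4 \overline{\chi}$, $\langle x \rangle^2 \overline{\chi}$, $\overline{\chi}$, and then controlling the time-derivative $\chi'$ via multiplication of the differentiated equation by $\overline{\chi'}$, and invoke Gronwall to close the bound. For the NLS \eqref{eq:GP}, the extra nonlinear term $\partial^\beta F_\sigma(\psi)$ is estimated by a direct extension of Lemma \ref{F}, namely $\|\partial^\beta F_\sigma(\psi)\|_{L^2_2} \lesssim \|\psi\|_{H^{\varepsilon'}_2}\|\psi\|_{H^1}^{\sigma-1}$; since $\|\psi\|_{H^1}$ is globally controlled by Lemma \ref{NLS}, this contribution is linear in the top-order norm and absorbed into Gronwall. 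For fractional $\varepsilon \in (0,1)\cup(1,2)$ the bound follows by complex interpolation between the $H^2_2$ estimate of Lemma \ref{eos}/\ref{NLS} and the integer estimate at $H^{2+\varepsilon'}_2$, using the standard identification of $H^{2+\varepsilon}_2$ as a complex interpolation space; boundary contributions in the integrations by parts vanish under the Dirichlet or Neumann condition imposed on $\varphi_0$, exactly as in Lemma \ref{eos}.

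The principal obstacle is bounding the commutator remainder $R_\beta$ uniformly in $L^2_2$. The strict inequality $\gamma > 2d$ is used precisely so that, via Hölder and Morrey embedding, the top-order factor $\partial^{\varepsilon'}(V+V_{\operatorname{TD}})\cdot\psi$ lies in $L^2_2$ with norm bounded by $\|V\|_{W^{\varepsilon',\gamma}} + \|V_{\operatorname{con}}\|_{W^{\varepsilon',\gamma}}$ times the pre-existing $H^2_2$ bound on $\psi$, while the lower-order commutator factors are controlled pointwise through $W^{\varepsilon'-1,\infty}$ and paired against the $H^{\varepsilon'-1}_2$ norm of the corresponding derivative of $\psi$. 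Once $R_\beta$ is under control the rest of the argument is a direct parallel of the proof of Lemma \ref{eos}, with the constant $C$ inheriting its recursive structure from the input regularity data together with the already-available $X^2(\Omega)$ bound on $\phi_{\operatorname{S}}\varphi_0$.
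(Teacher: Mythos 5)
Your approach is genuinely different from the paper's. The paper does not differentiate $\psi$ by an integer number of derivatives and interpolate; it applies the spectral fractional Laplacian $(-\Delta)^{\varepsilon/2}$ directly to the \emph{time derivative} $\psi'$, sets $\chi := (-\Delta)^{\varepsilon/2}\psi'$, derives a PDE for $\chi$, and closes a Gronwall estimate on $\|\chi\|_{L^2}$ using the Brezis--Mironescu fractional product estimates; the $H^{2+\varepsilon}$ bound on $\psi$ then drops out of the equation itself via $\|(-\Delta)^{1+\varepsilon/2}\psi\| \lesssim \|(-\Delta)^{\varepsilon/2}\psi'\| + \|\psi\|_{H^2}$, while the weight estimate $\langle x\rangle^2\psi \in L^2$ comes for free from Lemma~\ref{eos}. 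That choice of $\chi$ is not cosmetic; it is what makes the estimates close, and your version misses it.

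There are two genuine gaps. First, you assert that the commutator remainder $R_\beta$ is uniformly bounded in $L^2_2$ so that you can run all three weighted tests of Lemma~\ref{eos} on $\chi = \partial^\beta\psi$. That does not follow from what is available: bounding $\langle x\rangle^2(\partial^{\varepsilon'}V)\,\psi$ in $L^2$ via H\"older would require $\langle x\rangle^2\psi \in L^q$ for some $q>2$ (or a weighted bound on $\partial V$), but Lemma~\ref{eos} only gives $\langle x\rangle^2\psi\in L^2$ and $\langle x\rangle\nabla\psi\in L^2$, not $\langle x\rangle^2\psi$ in any higher-integrability or Sobolev space. Fortunately the weighted tests are unnecessary---the statement only needs $\psi \in H^{2+\varepsilon}$ plus the already-known $\langle x\rangle^2\psi\in L^2$---but dropping them exposes the second and more serious gap: the step controlling $\chi'$ requires $R_\beta' \in L^2$, and for $\varepsilon'=1$ this contains the term $(\partial_j V)\psi'$, whose $L^2$ norm needs $\psi' \in L^q$ with $q = 2\gamma/(\gamma-2)>2$. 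Lemma~\ref{eos} gives only $\psi' \in L^\infty((0,T),L^2)$, so this term is \emph{not} a priori bounded; it is exactly of the same order as the quantity being estimated. The paper's trick of working with $(-\Delta)^{\varepsilon/2}\psi'$ closes this loop: the dangerous factor is controlled by $\|\psi'\|_{H^{\varepsilon}} \lesssim \|\chi\|_{L^2}+\|\psi'\|_{L^2}$, which appears linearly in the Gronwall inequality. Your write-up treats the commutator as a fixed source rather than recognising this self-referential structure, so the argument does not close as stated. Finally, the invocation of complex interpolation ``between the $H^2_2$ estimate of Lemma~\ref{eos}/\ref{NLS} and the integer estimate'' cannot be used for the NLS \eqref{eq:GP}: the solution map is nonlinear, so one cannot interpolate operator bounds. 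The paper avoids this by establishing the linear flow bound at fractional order directly, then feeding it into the Duhamel formula \eqref{eq:regularitynon} together with Lemma~\ref{F}, which already covers fractional $k$; you would need to follow that route rather than interpolate the nonlinear estimate.
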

\begin{proof}
We start with the linear Schr\"odinger equation and start by establishing $H^{2+\varepsilon}_2$ regularity, for which we use Lemmas \ref{eos} and \ref{NLS}.  
Applying the fractional Laplacian $(-\Delta)^{\varepsilon/2}$ to the linear Schr\"odinger equation yields 
 \begin{equation}
 \label{eq:fracSchr}
 i  (-\Delta)^{\varepsilon/2}\psi'(t)  = (- \Delta)^{1+\varepsilon/2} \psi(t) +  (-\Delta)^{\varepsilon/2}(\psi(t) V(t)). 
 \end{equation}
We then find by rearranging this equation, using 
\begin{equation}
\begin{split}
 \Vert (-\Delta)^{\varepsilon/2} (\psi(t) V(t)) \Vert_{L^2} &\le  \Vert V(t)\psi(t) \Vert_{H^1}  \lesssim_{\Vert V(t) \Vert_{W^{1,\gamma}}} \Vert \psi(t) \Vert_{H^1} \text{ for } \varepsilon \in (0,1) \text{ and } \\
  \Vert (-\Delta)^{\varepsilon/2} (\psi(t) V(t)) \Vert_{L^2} &\le  \Vert V(t)\psi(t) \Vert_{H^2}\lesssim_{\Vert V(t) \Vert_{W^{2,\gamma}}} \Vert \psi(t) \Vert_{H^2}  \text{ for } \varepsilon \in (1,2).
  \end{split}
 \end{equation}
 Here, we used in the first inequality that 
 \begin{equation}
\begin{split}
&\Vert V(t) \nabla \psi(t) \Vert_{L^2} \le \Vert V(t)  \Vert_{L^\infty}\Vert \nabla \psi(t) \Vert_{L^2} \lesssim \Vert V(t) \Vert_{W^{1,\gamma}}\Vert \psi(t) \Vert_{H^1} \\
&\Vert (\nabla V(t)) \psi(t) \Vert_{L^2} \le \Vert V(t) \Vert_{W^{1,\gamma}} \Vert \psi(t) \Vert_{L^{\gamma/(\gamma-1)}} \lesssim \Vert V(t) \Vert_{W^{1,\gamma}} \Vert \psi(t) \Vert_{H^1}
  \end{split}
 \end{equation}
and similar arguments for the second inequality.
Hence, we have from \eqref{eq:fracSchr} that
 \begin{equation}
 \label{eq:auxi}
 \Vert  ( -\Delta)^{1+\varepsilon/2} \psi(t) \Vert_{L^2} \lesssim \Vert  (-\Delta)^{\varepsilon/2} \psi'(t)  \Vert_{L^2}+\Vert \psi \Vert_{H^2}. 
 \end{equation}
 Since  $t\mapsto \Vert \psi(t) \Vert_{H^2}$ is a.e. uniformly bounded on compact time intervals by Lemma \ref{eos}, it suffices to analyze the term $\Vert  (-\Delta)^{\varepsilon/2} \psi'(t)  \Vert_{L^2}$.
For this purpose, we introduce the auxiliary function $\chi(t):=(-\Delta)^{\varepsilon/2} \psi'(t).$ We then have from differentiating \eqref{eq:fracSchr} in time
\begin{equation}
\begin{split} 
\label{eq:H41a}
i \partial_t \chi(t) 
&=  - \Delta \chi(t) +  (-\Delta)^{\varepsilon/2}(\psi'(t) V(t))+ (-\Delta)^{\varepsilon/2}(\psi(t) V'(t)).
 \end{split}
 \end{equation}
Using Sobolev embeddings $H^{2\varepsilon d/\gamma} \hookrightarrow L^{\frac{2}{1-2\varepsilon/\gamma}}$ and $W^{1,\gamma} \hookrightarrow L^{\infty}$,
we have the following product estimates \cite[Lem. $6$]{BM} on Sobolev spaces with $\gamma \in (2d,\infty)$ arbitrary for $\varepsilon \in (0,1)$
 \begin{equation}
\begin{split} 
\Vert (-\Delta)^{\varepsilon/2} (\psi'(t)V(t)) \Vert_{L^2} &\lesssim \Vert \psi'(t) \Vert_{H^{\varepsilon}} \Vert V(t) \Vert_{L^{\infty}}  \\
& \quad + \Vert  \psi'(t) \Vert_{L^{\frac{2}{1-2\varepsilon/\gamma}}} \Vert V(t) \Vert^{\varepsilon}_{W^{1,\gamma}} \Vert V(t) \Vert^{1-\varepsilon}_{L^{\infty}}    \\
&\lesssim \Vert \psi'(t) \Vert_{H^{\varepsilon}} \Vert V(t) \Vert_{L^{\infty}}  \\
& \quad + \Vert  \psi'(t) \Vert_{H^{2\varepsilon d/\gamma}} \Vert V(t) \Vert^{\varepsilon}_{W^{1,\gamma}} \Vert V(t) \Vert^{1-\varepsilon}_{L^{\infty}} \\
&\lesssim \Vert V(t) \Vert_{L^{\infty}} + \Vert V(t) \Vert^{\varepsilon}_{W^{1,\gamma}} \Vert V(t) \Vert^{1-\varepsilon}_{L^{\infty}}\Vert \psi'(t) \Vert_{H^{\varepsilon}}
 \end{split}
 \end{equation}
 and for $\gamma \in (2d,\infty)$ arbitrary for $\varepsilon \in (1,2)$
  \begin{equation}
\begin{split} 
\Vert (-\Delta)^{\varepsilon/2} (\psi'(t)V(t)) \Vert_{L^2} &\lesssim \Vert \psi'(t) \Vert_{H^{\varepsilon}} \Vert V(t) \Vert_{L^{\infty}}  \\
& \quad + \Vert  \psi'(t) \Vert_{L^{\frac{2}{1-\varepsilon/\gamma}}} \Vert V(t) \Vert^{\varepsilon/2}_{W^{2,\gamma}} \Vert V(t) \Vert^{1-\varepsilon/2}_{L^{\infty}}   \\
&\lesssim (\Vert V(t) \Vert_{L^{\infty}} + \Vert V(t) \Vert^{\varepsilon/2}_{W^{2,\gamma}} \Vert V(t) \Vert^{1-\varepsilon/2}_{L^{\infty}})\Vert \psi'(t) \Vert_{H^{\varepsilon}}.
 \end{split}
 \end{equation}
Then, multiplying equation \eqref{eq:H41a} by $\overline{\chi(t)}$, integrating over $\Omega$, and taking the imaginary part yields
 \begin{equation}
\begin{split} 
\label{eq:H420}
\frac{1}{2} \frac{d}{dt}\Vert \chi(t) \Vert_{L^2}^2  &\le \Vert \psi'(t) \Vert^2_{H^{\varepsilon}}  \Vert V(t) \Vert_{L^{\infty}}+ \Vert \psi'(t) \Vert^2_{H^{\varepsilon}} \Vert V(t) \Vert^{\varepsilon}_{W^{\nu,\gamma}} \Vert V(t) \Vert^{1-\varepsilon}_{L^{\infty}} \\
&\quad + \Vert \psi'(t) \Vert_{H^{\varepsilon/2}}  \Vert \psi(t) \Vert_{H^2} \vert u'(t) \vert \Vert V_{\operatorname{con}} \Vert_{W^{\nu,\gamma}}.
 \end{split}
 \end{equation}
 Applying Gronwall's inequality to \eqref{eq:H420} yields then the Gronwall estimate in \eqref{eq:H22} with a constant depending on the specified objects, only.

\medskip

To extend the preceding bounds to the nonlinear Schr\"odinger equations, we estimate the $H^{2+\varepsilon}_2(\Omega)$ norm using the local Lipschitz conditions from Lemma \ref{F}, the boundedness of the $H^1(\Omega)$ norm that we established in Lemma \ref{NLS}, and the boundedness of the linear Schr\"odinger dynamics in $H^{2+\varepsilon}_{2}(\Omega)$ that we just verified, as follows
\begin{equation}
\begin{split}
\label{eq:regularitynon}
\left\lVert \psi(t) \right\rVert_{H^{2+\varepsilon}_{2}} 
&\lesssim \left\lVert \varphi_0 \right\rVert_{H^{2+\varepsilon}_{2}} + \int_0^t \left\lVert F(\psi(s)) \right\rVert_{H^{2+\varepsilon}_{2}} \ ds \\ 
&\lesssim \left\lVert \varphi_0 \right\rVert_{H^{2+\varepsilon}_{2}} + \int_0^t\left\lVert \psi(s) \right\rVert^{\sigma-1}_{H^1} \left\lVert \psi(s) \right\rVert_{H^{2+\varepsilon}_{2}} \ ds 
\end{split}
\end{equation}
which by Gronwall's lemma yields the claim.
\end{proof}

\section{Reduction to bounded domains}
\label{sec:num}
The aim of this section is to show (global) convergence of a (cubic)-discretization of the Schr\"odinger equation to its actual solution with an explicit rate of convergence. 
Sufficiently high regularity of the solution and potentials is also required for the finite-difference scheme to converge to the actual solution (with a fixed rate implying uniform runtime). 

\medskip

In applications however, the potentials and the initial state may a priori not be as regular as necessary for a numerical implementation. However, since smoothness is a local property it can be recovered for instance by mollification.

\medskip
To control the error of this approximation, we decompose the full approximation into several steps. We start by introducing the prerequisites of our finite-difference schemes:

\subsection{Reduction of Schr\"odinger equation}

Our next Lemma shows that it suffices to replace the singular part, $W_{\operatorname{sing}}$, of the potential $V$ and the initial state by an approximation thereof. This implies in particular, that even though we cannot use singular potentials directly in our numerical method, we can always use a smooth approximation thereof and capture all dynamical features with error control:
\begin{lemm}[Perturbation of singular potentials \& initial states]
\label{potlem}
Consider two singular potentials $\widetilde{W}_{\operatorname{sing}},W_{\operatorname{sing}}$, one regular potential $W_{\operatorname{reg}}$, and one control potential $V_{\operatorname{con}}$ satisfying a standard condition \emph{(Assumption \ref{ass1})} with a control $u \in W^{1,1}_{\operatorname{pcw}}(0,T)$. Let $\widetilde{\varphi}_0, \varphi_0 \in H^2_2(\mathbb R^d)$ be two initial conditions. Then the solution to 
\begin{equation*}
\begin{split}
i \partial_t \widetilde{\psi}(x,t) &= \left(-\Delta+(\widetilde{W}_{\operatorname{sing}}+W_{\operatorname{reg}} + V_{\operatorname{TD}}(t))\right)\widetilde{\psi}(x,t)+F(\widetilde{\psi}(x,t)), \quad (x,t) \in \mathbb R^d \times (0,\infty) \\
\widetilde{\psi}(\bullet,0)&=\widetilde{\varphi}_0
\end{split}
\end{equation*}
converges uniformly in time $L^2(\mathbb R^d)$ to the solution of 
\begin{equation*}
\begin{split}
i \partial_t \psi(x,t) &= \left(-\Delta+(W_{\operatorname{sing}}+W_{\operatorname{reg}} + V_{\operatorname{TD}}(t))\right)\psi(x,t)+F(\psi(x,t)), \quad (x,t) \in \mathbb R^d \times (0,\infty) \\
\psi(\bullet,0)&=\varphi_{0}
\end{split}
\end{equation*}
as $\widetilde{W}_{\operatorname{sing}}  \rightarrow_{L^p} W_{\operatorname{sing}}$ and $\widetilde{\varphi_0} \rightarrow_{L^2} \varphi_0.$
Moreover, there exists a recursive function 
\[
C=C(T, \Vert \psi \Vert_{L^{\infty}((0,T), H^2(\RR^d))},\Vert \widetilde{\psi} \Vert_{L^{\infty}((0,T), H^2(\RR^d))})
\]
such that 
\begin{equation}
\label{eq:convinis}
 \left\lVert \psi-\widetilde{\psi} \right\rVert_{L^{\infty}((0,T),L^2(\mathbb R^d))}  \le C\left( \left\lVert W_{\operatorname{sing}}-\widetilde{W}_{\operatorname{sing}} \right\rVert_{L^p(\mathbb R^d)}+\left\lVert \varphi_0-\widetilde{\varphi}_0 \right\rVert_{L^2(\mathbb R^d)}\right).
\end{equation}
In particular, if in addition we have two different control potentials $\widetilde{V}_{\operatorname{con}},V_{\operatorname{con}} \in L^p(\RR^d)$ in either equation, then also for a recursive function 
\[C=C(T, \Vert \psi \Vert_{L^{\infty}((0,T), H^2(\RR^d))},\Vert \widetilde{\psi} \Vert_{L^{\infty}((0,T), H^2(\RR^d))}, \Vert u \Vert_{W^{1,1}_{\operatorname{pcw}}})\]
we find
\begin{equation}
\begin{split}
\label{eq:citeme}
 \left\lVert \psi-\widetilde{\psi} \right\rVert_{L^{\infty}((0,T),L^2(\mathbb R^d))}  \le &C\Bigg( \left\lVert W_{\operatorname{sing}}-\widetilde{W}_{\operatorname{sing}} \right\rVert_{L^p(\mathbb R^d)}+\left\lVert V_{\operatorname{con}}-\widetilde{V}_{\operatorname{con}} \right\rVert_{L^p(\mathbb R^d)}\\
 &+\left\lVert \varphi_0-\widetilde{\varphi}_0 \right\rVert_{L^2(\mathbb R^d)}\Bigg).
 \end{split}
\end{equation}
\end{lemm}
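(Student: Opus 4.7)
The plan is a standard energy estimate for the difference of the two solutions, closed off by Gr\"onwall's inequality. Set $u:=\psi-\widetilde{\psi}$. Subtracting the two equations, and writing all common terms together, we find
\[
i\partial_t u = -\Delta u + (W_{\operatorname{sing}}+W_{\operatorname{reg}}+V_{\operatorname{TD}}(t))u + (W_{\operatorname{sing}}-\widetilde{W}_{\operatorname{sing}})\widetilde{\psi} + F(\psi)-F(\widetilde{\psi}),
\]
with initial datum $u(\bullet,0)=\varphi_0-\widetilde{\varphi}_0$. Taking the $L^2$ inner product with $u$ and extracting the imaginary part kills the self-adjoint terms $-\Delta u$ and $(W_{\operatorname{sing}}+W_{\operatorname{reg}}+V_{\operatorname{TD}}(t))u$, leaving
\[
\tfrac{1}{2}\tfrac{d}{dt}\|u(t)\|_{L^2}^2 = \Im\langle (W_{\operatorname{sing}}-\widetilde{W}_{\operatorname{sing}})\widetilde{\psi}(t),u(t)\rangle + \Im\langle F(\psi(t))-F(\widetilde{\psi}(t)),u(t)\rangle.
\]

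The first term is controlled as follows. By Remark \ref{rem:bounded} and Sobolev embedding, any $L^p$ potential (with $p$ as specified there) satisfies
\[
\|(W_{\operatorname{sing}}-\widetilde{W}_{\operatorname{sing}})\widetilde{\psi}(t)\|_{L^2}\lesssim \|W_{\operatorname{sing}}-\widetilde{W}_{\operatorname{sing}}\|_{L^p}\,\|\widetilde{\psi}(t)\|_{H^2},
\]
and by Lemmas \ref{eos} and \ref{NLS} the $H^2$-norm of $\widetilde{\psi}$ is bounded uniformly on $(0,T)$ by a recursive constant of the advertised form. The nonlinear contribution is handled by the local Lipschitz estimate of Lemma \ref{F}:
\[
\|F(\psi(t))-F(\widetilde{\psi}(t))\|_{L^2}\lesssim (\|\psi(t)\|_{H^1}^{\sigma-1}+\|\widetilde{\psi}(t)\|_{H^1}^{\sigma-1})\|u(t)\|_{L^2},
\]
and the $H^1$-norms are again uniformly bounded in $t$ by Lemma \ref{NLS}. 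Applying Cauchy--Schwarz to both terms yields
\[
\tfrac{d}{dt}\|u(t)\|_{L^2}^2 \le A\|W_{\operatorname{sing}}-\widetilde{W}_{\operatorname{sing}}\|_{L^p}\,\|u(t)\|_{L^2} + B\|u(t)\|_{L^2}^2,
\]
with $A,B$ recursive constants in the quantities listed in the lemma. Gr\"onwall's inequality applied to $y(t):=\|u(t)\|_{L^2}$ (or equivalently to $y^2$ after Young's inequality) gives \eqref{eq:convinis}, absorbing the initial error $\|\varphi_0-\widetilde{\varphi}_0\|_{L^2}=\|u(0)\|_{L^2}$ through the starting value.

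For the refined statement \eqref{eq:citeme}, the subtraction produces an additional source term $u(t)(V_{\operatorname{con}}-\widetilde{V}_{\operatorname{con}})\widetilde{\psi}(t)$ on the right-hand side of the PDE for $u$. The same Sobolev/$L^p$ bound gives
\[
\|u(t)(V_{\operatorname{con}}-\widetilde{V}_{\operatorname{con}})\widetilde{\psi}(t)\|_{L^2}\lesssim |u(t)|\,\|V_{\operatorname{con}}-\widetilde{V}_{\operatorname{con}}\|_{L^p}\,\|\widetilde{\psi}(t)\|_{H^2},
\]
and integrating in time picks up $\int_0^T|u(s)|ds\lesssim \|u\|_{W^{1,1}_{\operatorname{pcw}}(0,T)}$, explaining the dependence of $C$ on $\|u\|_{W^{1,1}_{\operatorname{pcw}}}$ in the second estimate.

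The one step that requires mild care is the translation from the infinitesimal boundedness statement in Remark \ref{rem:bounded} to the product estimate $\|V\varphi\|_{L^2}\lesssim\|V\|_{L^p}\|\varphi\|_{H^2}$ used above: this follows from H\"older with the Sobolev exponents that make the indices in Remark \ref{rem:bounded} sharp, and it is the only place where the precise range of $p$ enters. All other steps are purely the Duhamel/Gr\"onwall machinery and use only the a priori $H^2$ and $H^1$ bounds already recorded in Lemmas \ref{eos} and \ref{NLS}.
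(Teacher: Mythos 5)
Your proof is correct and follows essentially the same route as the paper: subtract the equations, take the $L^2$ inner product of the difference with itself, extract the imaginary part to kill the self-adjoint terms, bound the source terms by the $H^2$ a priori estimates of Lemmas \ref{eos}/\ref{NLS} and the Lipschitz estimate of Lemma \ref{F}, and close with Gr\"onwall. The only (cosmetically) different choice is the decomposition of the cross term $W_{\operatorname{sing}}\psi-\widetilde{W}_{\operatorname{sing}}\widetilde{\psi}$: the paper keeps $\widetilde{W}_{\operatorname{sing}}$ in the linear operator acting on the difference $\xi$ and attaches $(W_{\operatorname{sing}}-\widetilde{W}_{\operatorname{sing}})$ to $\psi$, whereas you keep $W_{\operatorname{sing}}$ and attach the difference to $\widetilde{\psi}$; both are valid since both solutions carry the same uniform $H^2$ bound. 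One small blemish worth fixing: you overload the symbol $u$, first as the difference $\psi-\widetilde{\psi}$ and later (in the discussion of \eqref{eq:citeme}) as the control function from $V_{\operatorname{TD}}=u(t)V_{\operatorname{con}}$; rename the difference (the paper uses $\xi$) to avoid this collision.
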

\begin{proof}
By subtracting the two equations from each other and introducing the auxiliary function $\xi:=\psi-\widetilde{\psi}$ one finds that $\xi$ satisfies the perturbed Schr\"odinger equation
\[i\partial_t \xi = (-\Delta +\widetilde{W}_{\operatorname{sing}} +W_{\operatorname{reg}}+ V_{\operatorname{TD}}(t)) \xi +  (W_{\operatorname{sing}}-\widetilde{W}_{\operatorname{sing}}) \psi + F(\psi)-F(\widetilde{\psi})  \]
with initial condition $\xi(0)=\varphi_0-\widetilde{\varphi}_0.$ 
Multiplying the above equation by $\overline{\xi}$, integrating over $\mathbb R^d,$ and taking the imaginary part yields 
\begin{equation*}
\begin{split}
&\frac{1}{2} \frac{d}{dt} \int_{\mathbb R^d} \left\lvert \xi(x,t) \right\rvert^2 \ dx = \Im \left(\int_{\mathbb R^d} \left(\left((W_{\operatorname{sing}}-\widetilde{W}_{\operatorname{sing}}) \psi +F(\psi)-F(\widetilde{\psi})\right)\overline{\xi} \right)(x,t)  \ dx \right) \\
& \le \left\lVert \xi(t) \right\rVert_{L^2(\mathbb R^d)}\left(\left\lVert (W_{\operatorname{sing}}-\widetilde{W}_{\operatorname{sing}}) \psi(t)\right\rVert_{L^2(\mathbb R^d)} + \left\lVert F(\psi)-F(\widetilde{\psi}) \right\rVert_{L^2(\mathbb R^d)} \right).
\end{split}
\end{equation*}
Integrating in time, using that $\psi \in L^{\infty}((0,T),H^2(\mathbb R^d))$, which follows from Lemma \ref{eos} or Lemma \ref{NLS}, and the local Lipschitz condition in Lemma \ref{F}, shows that for $t \in (0,T)$
\begin{equation*}
\begin{split}
\left\lVert \xi(t) \right\rVert^2_{L^2(\mathbb R^d)} 
&\lesssim \left\lVert \xi(0) \right\rVert^2_{L^2(\mathbb R^d)} +   \left\lVert W_{\operatorname{sing}}-\widetilde{W}_{\operatorname{sing}}\right\rVert^2_{L^p(\mathbb R^d)} \int_0^t \left\lVert \psi(s) \right\rVert^2_{H^2(\mathbb R^d)} \ ds \\
&\quad +\int_0^t \left\lVert \xi(s) \right\rVert_{L^2(\mathbb R^d)}^2 \ ds\\
&\lesssim \left\lVert \xi(0) \right\rVert^2_{L^2(\mathbb R^d)} +  t \left\lVert W_{\operatorname{sing}}-\widetilde{W}_{\operatorname{sing}}\right\rVert^2_{L^p(\mathbb R^d)} +\int_0^t \left\lVert \xi(s) \right\rVert_{L^2(\mathbb R^d)}^2 \ ds.
\end{split}
\end{equation*}
Gr\"onwall's inequality shows then that on any finite time interval $(0,T)$
\[\left\lVert \xi \right\rVert_{L^{\infty}((0,T),L^2(\mathbb R^d))}   \lesssim \left\lVert W_{\operatorname{sing}}-\widetilde{W}_{\operatorname{sing}} \right\rVert_{L^p(\mathbb R^d)}+\left\lVert \varphi_0-\widetilde{\varphi}_0 \right\rVert_{L^2(\mathbb R^d)}.\]
Estimate \eqref{eq:citeme} can be obtained analogously.
\end{proof}

\begin{lemm}[Perturbation of regular potentials]
\label{potlem2}
Let $\varphi_0 \in H^2_2(\mathbb R^d)$ be an initial state and consider potentials $V$ and $V_{\operatorname{con}}$ satisfying a standard condition \emph{(Assumption \ref{ass1})}.
Let $\widetilde{\psi}$ be the solution, to the same initial value $\varphi_0$, but for possibly different $\widetilde{W}_{\operatorname{reg}}, \widetilde{V}_{\operatorname{con}}$ satisfying a standard condition as well. Then, the solutions satisfy for a recursive function $C=C(T, \Vert \psi \Vert_{L^{\infty}((0,T), H_2(\RR^d))},\Vert \widetilde{\psi} \Vert_{L^{\infty}((0,T), H_2(\RR^d))})$ 
\begin{equation}
\label{eq:pertpot}
\left\lVert \psi-\widetilde \psi \right\rVert_{L^{\infty}((0,T),L^2(\mathbb R^d))} \le C \int_0^t \left\lVert  \tfrac{W_{\operatorname{reg}}+ V_{\operatorname{TD}}(s)-\widetilde{W}_{\operatorname{reg}}- \widetilde{V}_{\operatorname{TD}}(s) }{\langle \bullet \rangle^{2}}\right\rVert_{L^{\infty}(\mathbb R^d)} \ ds.
\end{equation}
\end{lemm}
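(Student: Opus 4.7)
The plan is to mimic the perturbation argument of Lemma \ref{potlem}, but with the new twist that the difference of potentials is only controlled after dividing by the weight $\langle\bullet\rangle^2$; the extra weight will be absorbed into $\widetilde\psi$ by means of the a priori $H_2$ bound guaranteed by Lemmas \ref{eos} and \ref{NLS}.

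Concretely, set $\xi:=\psi-\widetilde\psi$, and note that $\xi(0)=0$ since both equations share the initial state $\varphi_0$. Subtracting the two Schr\"odinger equations yields, in either the linear or defocusing NLS case,
\[
 i\partial_t \xi = \bigl(-\Delta + \widetilde W_{\operatorname{reg}} + W_{\operatorname{sing}} + \widetilde V_{\operatorname{TD}}(t)\bigr)\xi + \bigl(W_{\operatorname{reg}}+V_{\operatorname{TD}}(t)-\widetilde W_{\operatorname{reg}}-\widetilde V_{\operatorname{TD}}(t)\bigr)\psi + F(\psi)-F(\widetilde\psi),
\]
the source being the last two terms on the right. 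Multiplying by $\overline\xi$, integrating over $\mathbb R^d$, and taking imaginary parts makes the self-adjoint part disappear and leaves
\[
 \tfrac{1}{2}\tfrac{d}{dt}\|\xi(t)\|_{L^2}^2 \le \|\xi(t)\|_{L^2}\bigl(\|(W_{\operatorname{reg}}+V_{\operatorname{TD}}(t)-\widetilde W_{\operatorname{reg}}-\widetilde V_{\operatorname{TD}}(t))\psi(t)\|_{L^2}+\|F(\psi(t))-F(\widetilde\psi(t))\|_{L^2}\bigr).
\]

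The key step is the weighted estimate
\[
 \|(W_{\operatorname{reg}}+V_{\operatorname{TD}}(t)-\widetilde W_{\operatorname{reg}}-\widetilde V_{\operatorname{TD}}(t))\psi(t)\|_{L^2} \le \Big\|\tfrac{W_{\operatorname{reg}}+V_{\operatorname{TD}}(t)-\widetilde W_{\operatorname{reg}}-\widetilde V_{\operatorname{TD}}(t)}{\langle\bullet\rangle^2}\Big\|_{L^\infty}\|\langle\bullet\rangle^2\psi(t)\|_{L^2},
\]
where $\|\langle\bullet\rangle^2\psi(t)\|_{L^2}\le \|\psi(t)\|_{H_2}$ is uniformly bounded on $(0,T)$ by the estimates of Lemma \ref{eos} (linear case) or Lemma \ref{NLS}/\ref{eos2} (NLS case). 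In the nonlinear case the term $\|F(\psi)-F(\widetilde\psi)\|_{L^2}$ is handled by the local Lipschitz estimate of Lemma \ref{F}, giving $\|F(\psi(s))-F(\widetilde\psi(s))\|_{L^2}\lesssim (\|\psi(s)\|_{H^1}^{\sigma-1}+\|\widetilde\psi(s)\|_{H^1}^{\sigma-1})\|\xi(s)\|_{L^2}$, where the $H^1$ norms are uniformly bounded on $(0,T)$ again by Lemma \ref{NLS}. In the linear case $F\equiv 0$ and this term is absent.

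Integrating in $t$ and applying Gronwall's inequality with vanishing initial data $\xi(0)=0$ produces
\[
 \|\xi\|_{L^\infty((0,T),L^2)} \le C\int_0^T \Big\|\tfrac{W_{\operatorname{reg}}+V_{\operatorname{TD}}(s)-\widetilde W_{\operatorname{reg}}-\widetilde V_{\operatorname{TD}}(s)}{\langle\bullet\rangle^2}\Big\|_{L^\infty}\,ds,
\]
with $C$ depending recursively on $T$ and on $\|\psi\|_{L^\infty((0,T),H_2)},\|\widetilde\psi\|_{L^\infty((0,T),H_2)}$ through the Gronwall prefactor. The only minor obstacle is bookkeeping the recursive dependence of $C$: one must check that the constants generated by the $H_2$ a priori bound, the local Lipschitz factor of Lemma \ref{F}, and the Gronwall exponential are all computable from the data already listed in the statement, which is immediate from Lemmas \ref{eos}--\ref{eos2}.
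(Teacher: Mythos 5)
Your proof is correct and follows essentially the same route as the paper: form $\xi=\psi-\widetilde\psi$, derive the perturbed equation with source $(V+V_{\operatorname{TD}}-\widetilde V-\widetilde V_{\operatorname{TD}})\psi + F(\psi)-F(\widetilde\psi)$, test against $\overline\xi$, take imaginary parts, absorb the weight $\langle\bullet\rangle^2$ into the $H_2$ bound on $\psi$ from Lemmas \ref{eos}/\ref{NLS}, handle the nonlinearity with Lemma \ref{F}, and close with Gr\"onwall. The only difference is that you spell out the weighted Cauchy--Schwarz step that the paper leaves implicit, which is a welcome clarification rather than a deviation.
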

\begin{proof}
Let $\psi$ be the solution to the unperturbed problem and $\widetilde{\psi}$ the solution of the Schr\"odinger equation with potentials $\widetilde{V}, \widetilde{V}_{\operatorname{con}},$ then the difference $\xi = \psi-\widetilde{\psi}$ satisfies the equation
\begin{equation*}
\begin{split}
i \partial_t \xi = \left(-\Delta_{\mathbb R^d} +\widetilde{V} + \widetilde{V}_{\operatorname{TD}}(t) \right) \xi +  F(\psi)- F(\widetilde{\psi}) + \left(V+ V_{\operatorname{TD}}(t)-(\widetilde{V} + \widetilde{V}_{\operatorname{TD}}(t)) \right)\psi
\end{split}
\end{equation*}
with initial condition zero.
Multiplying this equation by $\widetilde{\xi}$, integrating over $\mathbb R^d$, taking the imaginary part and applying Gr\"onwall's inequality together with Lemmas \ref{eos}, \ref{NLS} yields together with the local Lipschitz condition in Lemma \ref{F} equation \eqref{eq:pertpot}.
\end{proof}

We now show that the Schr\"odinger equation is Lipschitz continuous with respect to control functions. This Lemma allow us to assume that the controls are locally constant in time.
\begin{lemm}[Perturbation of controls]
\label{pertcon}
Let $\varphi_0$ be an initial state in $H^2_2(\mathbb R^d)$ to either \eqref{eq:bilSchr} or \eqref{eq:GP}. We consider control functions $u,v \in W^{1,1}_{\operatorname{pcw}}{(0,T)}$ and potentials $V,V_{\operatorname{con}}$ satisfying a standard condition \emph{(Assumption \ref{ass1})}. By Lemma \ref{eos} for the Schr\"odinger equation and Lemma \ref{NLS} for the \emph{NLS} there are two solutions $\psi_u, \psi_v$ in $L^{\infty}((0,T),H^2_2(\mathbb R^d))$ for each of the two control functions. Then, the solutions satisfy a Lipschitz condition in terms of a recursively defined function $C=C(T, \Vert  \psi_u \Vert_{L^{\infty}((0,T); H^1_1)}, \Vert  \psi_v \Vert_{L^{\infty}((0,T); H^1_1)}) $
\[ \left\lVert \psi_u-\psi_v \right\rVert_{L^{\infty}((0,T),H_1^1(\mathbb R^d))} \le C \left\lVert u-v \right\rVert_{L^1(0,T)}.\]
\end{lemm}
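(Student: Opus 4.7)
The plan is to mimic the strategy of Lemmas \ref{potlem} and \ref{potlem2}: set $\xi := \psi_u - \psi_v$, derive the PDE that $\xi$ satisfies, carry out energy estimates at the $H^1_1$ level, and close the argument via Gr\"onwall's inequality. Subtracting the two evolution equations gives
\[
i\partial_t \xi = \bigl(-\Delta + V + u(t) V_{\operatorname{con}}\bigr)\xi + (u(t)-v(t))\,V_{\operatorname{con}}\,\psi_v + F(\psi_u) - F(\psi_v), \qquad \xi(0) = 0,
\]
so the only novelty compared to the potential-perturbation lemmas is the explicit forcing $(u-v)V_{\operatorname{con}}\psi_v$; this factor inherits the desired bounds in $H^1_1$ from the standard condition on $V_{\operatorname{con}}$ (namely $\|\langle \cdot\rangle^{-2} V_{\operatorname{con}}\|_{L^\infty} \le C$) combined with the $L^\infty((0,T);H^2_2)$ bound on $\psi_v$ established in Lemmas \ref{eos} and \ref{NLS}.

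The $L^2$ component of the $H^1_1$ estimate is obtained, as in Lemma \ref{potlem2}, by testing with $\overline{\xi}$ and taking imaginary parts. The forcing term contributes the desired factor $|u(t)-v(t)|\,\|\langle\cdot\rangle^{-2}V_{\operatorname{con}}\|_{L^\infty}\,\|\psi_v(t)\|_{H_2}\,\|\xi(t)\|_{L^2}$, and the nonlinearity $F(\psi_u)-F(\psi_v)$ is handled by the local Lipschitz estimate from Lemma \ref{F}, with constant depending on $\|\psi_u\|_{H^1}^{\sigma-1}+\|\psi_v\|_{H^1}^{\sigma-1}$, which is uniformly bounded on $(0,T)$ by Lemma \ref{NLS}. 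The weighted-moment contribution $\|\langle x\rangle \xi\|_{L^2}^2$ is obtained by testing against $\langle x\rangle^2 \overline{\xi}$, reproducing the bootstrap of the proof of Lemma \ref{eos}: the commutator $[\langle x\rangle^2, -\Delta]$ yields lower-order terms that can be reabsorbed into $\|\xi(t)\|_{H^1_1}$ on the right-hand side. Finally, the gradient contribution $\|\nabla \xi\|_{L^2}^2$ is obtained either by testing against $-\Delta \overline{\xi}$ or by differentiating the equation and estimating $\xi' := \partial_t \xi$ as in the last part of Lemma \ref{eos}; again the Lipschitz estimate of Lemma \ref{F} absorbs the nonlinear terms, while the forcing produces a bound in terms of $|u(t)-v(t)|\cdot \|\psi_v(t)\|_{H^2_2}$.

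Summing the three estimates yields a differential inequality of the form
\[
\frac{d}{dt}\|\xi(t)\|_{H^1_1}^2 \;\le\; A(t)\,\|\xi(t)\|_{H^1_1}^2 \;+\; B(t)\,|u(t)-v(t)|\,\|\xi(t)\|_{H^1_1},
\]
where $A(t)$ and $B(t)$ are controlled by the recursive data specified in the statement (via Lemmas \ref{eos}, \ref{NLS}, \ref{F} and the standard condition on $V,V_{\operatorname{con}}$, and in particular are uniformly integrable on $(0,T)$). Gr\"onwall's inequality applied to $\|\xi(t)\|_{H^1_1}$, together with $\xi(0)=0$, produces the claimed bound $\|\psi_u-\psi_v\|_{L^\infty((0,T);H^1_1)}\le C\|u-v\|_{L^1(0,T)}$ with $C$ of the asserted recursive form.

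The main obstacle lies in the gradient estimate: the perturbation term $(u-v)V_{\operatorname{con}}\psi_v$ only belongs to $H^1_1$ in a controlled way because of the assumption $\langle\cdot\rangle^{-2}V_{\operatorname{con}}\in L^\infty$ combined with the higher-order regularity $\psi_v\in L^\infty((0,T);H^2_2)$ provided by Lemmas \ref{eos} and \ref{NLS}; without the latter the gradient estimate would fail to close. Everything else is a rerun of the energy-method machinery already used throughout Section \ref{eosol}.
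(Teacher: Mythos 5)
Your proof follows essentially the same approach as the paper: subtract the two evolution equations, perform $H^1_1$-level energy estimates on the difference $\xi=\psi_u-\psi_v$, handle the nonlinearity via the local Lipschitz estimates of Lemma~\ref{F}, use the $L^\infty((0,T);H^2_2)$ bounds from Lemmas~\ref{eos} and~\ref{NLS} to control the forcing, and close with Gr\"onwall to extract the $L^1$ bound on $u-v$. The only cosmetic variation is that for the gradient part the paper multiplies the equation by $\overline{\xi'}=\overline{\psi_u'-\psi_v'}$ and takes the real part rather than testing against $-\Delta\overline{\xi}$, but this is an interchangeable variant of the same energy technique.
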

\begin{proof}
We first subtract the Schr\"odinger equations \eqref{eq:bilSchr} of the two solutions $\psi_u,\psi_v$ from each other and obtain for the function $\xi(t) = \psi_u(t)-\psi_v(t)$
\begin{equation}
\begin{split}
\label{eq:spde}
&(i\partial_t + \Delta - V-u(t)V_{\operatorname{con}})\xi(t)-F(\psi_u)+F(\psi_v)\\
&\qquad +(v(t)-u(t))V_{\operatorname{con}}\psi_v(t)=0 \\
&\xi(0)=0.
\end{split}
\end{equation}
By multiplying the above Schr\"odinger equation with $\overline{(\psi_u-\psi_v)}(1+\left\lvert x \right\rvert^2)$, integrating it over $\mathbb R^d$, and taking the imaginary part, we obtain a bound on the difference of the two solutions in $H_1(\mathbb R^d)$ norm
\begin{equation*}
\begin{split}
&\left\lVert \xi(t) \right\rVert_{H_1(\mathbb R^d)}^2 
\lesssim \int_0^t   \left(\left\lVert \xi(s) \right\rVert_{H^1_1(\mathbb R^d)}^2 +  \left\lvert u(s)-v(s) \right\rvert^2 \left\lVert \psi_v(s) \right\rVert^2_{H_2(\mathbb R^d)}\right) \ ds  \\
& \qquad + \int_0^t \underbrace{\left\lVert F(\psi_u(s))-F(\psi_v(s)) \right\rVert_{H_1(\mathbb R^d)}}_{\le \left(\left\lVert \psi_u(s) \right\rVert^{\sigma-1}_{H^1(\mathbb R^d)} + \left\lVert \psi_v(s) \right\rVert^{\sigma-1}_{H^1(\mathbb R^d)}\right)\left\lVert \xi(s) \right\rVert_{H_1(\mathbb R^d)}}  \left\lVert \xi(s) \right\rVert_{H_1(\mathbb R^d)}+\left\lVert \xi(s) \right\rVert^2_{H_2(\mathbb R^d)}  ds
\end{split}     
\end{equation*}
where we applied Lemma \ref{F} in the last step.
To bound the $H^1(\mathbb R^d)$ norm, we multiply the above equation \eqref{eq:spde} by $\overline{\psi'_u-\psi'_v},$ integrate over $\mathbb R^d$ and take the real part. Then, by the zero-boundedness of the potentials, there is an $\varepsilon>0$ such that by Lemma \ref{F}
\begin{equation*}
\begin{split}
\left\lVert \xi(t) \right\rVert_{H^1(\mathbb R^d)}^2 
&\lesssim \varepsilon \left\lVert \xi(t) \right\rVert_{H^1(\mathbb R^d)}^2+ \left\lVert  \xi(t) \right\rVert_{H_1(\mathbb R^d)}^2 \\  
&\quad +\int_0^t  \left(\left\lvert u(s)-v(s) \right\rvert^2 \left\lVert \psi_u(s) \right\rVert^2_{H_2(\mathbb R^d)}+\left\lVert\xi'(s) \right\rVert^2_{L^2(\mathbb R^d)}\right) \ ds \\
&\quad +\int_0^t \underbrace{\left\lVert F(\psi_u(s))-F(\psi_v(s)) \right\rVert_{L^2(\mathbb R^d)}}_{\le \left(\left\lVert \psi_u(s) \right\rVert^{\sigma-1}_{H^1(\mathbb R^d)} + \left\lVert \psi_v(s) \right\rVert^{\sigma-1}_{H^1(\mathbb R^d)}\right)\left\lVert \xi(s) \right\rVert_{H_1(\mathbb R^d)}}\left\lVert \xi'(s) \right\rVert_{L^2(\mathbb R^d)} \ ds.
\end{split}
\end{equation*}
Adding together the two preceding estimates, using the boundedness of the solution in $H_2^2$ norm as obtained in Lemmas \ref{eos} and \ref{NLS}, an application of Gr\"onwall's inequality on the $H_1^1(\mathbb R^d)$ norm shows $\left\lVert \xi \right\rVert_{L^{\infty}((0,T),H_1^1(\mathbb R^d))} \lesssim \left\lVert u-v \right\rVert_{L^1(0,T)}.$
\end{proof}

Our next theorem, Theorem \ref{theo2}, provides error bounds for reducing the Schr\"odiger equation on $\mathbb R^d$ to a boundary-value problem (BVP) on a bounded domain $\mathcal B_R(0)$.
We emphasize that this theorem therefore implies the reducibility of the PDE on $\mathbb R^d$ to an equation on a finite domain $\mathcal B_R(0)$ with full a priori error control. The theorem assumes initial states in $H^2_2(\mathbb R^d)$ for which the solution to the Schr\"odinger equation is bounded in $H^2_2(\mathbb R^d)$ as well. Since the regular and control potentials grow at most quadratically, \eqref{eq:pertpot} implies that one can replace unbounded potentials by bounded ones with full error control.

\medskip

The estimate \eqref{eq:dirichletestimate} in Theorem \ref{theo2} below implies then the convergence of the solution to the Schr\"odinger equation on a bounded domain to the solution on the full domain by the following argument: 

\medskip 
\begin{lemm}
\label{lemm:auxlemmes}
There is $C>0$ such that for all $r\ge 1$ and $f$ in generalized Sobolev spaces $H^{2,\operatorname{sem}}_2(\mathbb R^d),$ we have
\[ 
\mu^2 \int_{\partial \mathcal B_{r}(0)} f(x) (\nabla_n f)(x) \ dS(x)  \le \frac{C\left\lVert  f \right\rVert^2_{H^{2,\operatorname{sem}}_2}}{r}.
\]
\end{lemm}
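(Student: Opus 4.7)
The plan is to use Green's first identity to convert the surface integral into a volume integral over the exterior $B_r^c$, then exploit the quadratic spatial weight in $\Vert f \Vert_{H^{2,\operatorname{sem}}_2}$ to produce the decay factor in $r$.

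First, assuming that $f$ decays at infinity (which is automatic for $f\in H^{2,\operatorname{sem}}_2$), Green's first identity on $B_r^c$ with outward normal $-\hat r$ gives
\[
\int_{B_r^c} \bar f\,\Delta f\,dx + \int_{B_r^c} |\nabla f|^2\,dx \;=\; -\int_{\partial \mathcal B_r(0)} \bar f\,\partial_r f\,dS.
\]
Multiplying by $\mu^2$, taking real parts, and using the triangle inequality reduces the claim to the two bounds
\[
\mu^2\,\|f\|_{L^2(B_r^c)}\,\|\Delta f\|_{L^2} \;\le\; \frac{C}{r}\,\|f\|_{H^{2,\operatorname{sem}}_2}^2
\qquad\text{and}\qquad
\mu^2\!\int_{B_r^c}|\nabla f|^2\,dx \;\le\; \frac{C}{r}\,\|f\|_{H^{2,\operatorname{sem}}_2}^2.
\]
The first bound is immediate: on $B_r^c$, $\langle x\rangle^2\ge r^2$, hence $\|f\|_{L^2(B_r^c)}\le r^{-2}\|f\|_{H_2}\le r^{-2}\|f\|_{H^{2,\operatorname{sem}}_2}$, while $\mu^2\|\Delta f\|_{L^2}\le \|f\|_{H^{2,\operatorname{sem}}_2}$ since $2\mu^2|\xi|^2\le\langle\mu\xi\rangle^2$ and $\mu^4|\xi|^4\le\langle\mu\xi\rangle^4$. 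This yields a bound of order $r^{-2}\le r^{-1}$ for $r\ge 1$.

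The second bound is the main obstacle, since the semiclassical norm puts no spatial weight on $\nabla f$. To introduce the required decay, I would use a smooth cutoff $\phi_r$ with $\phi_r\equiv 1$ on $B_r^c$, $\phi_r\equiv 0$ on $B_{r/2}$, and $|\nabla\phi_r|\le C/r$, and apply integration by parts to $\int \phi_r^2|\nabla f|^2\,dx$:
\[
\int \phi_r^2 |\nabla f|^2\,dx \;=\; -\Re\!\int \phi_r^2 \bar f\,\Delta f\,dx \;-\; 2\Re\!\int \phi_r (\nabla\phi_r)\cdot\nabla f\,\bar f\,dx.
\]
Treating the second term with Young's inequality and absorbing $\tfrac12\int\phi_r^2|\nabla f|^2$ back on the left, one gets
\[
\int_{B_r^c}|\nabla f|^2\,dx \;\le\; 2\|\phi_r f\|_{L^2}\|\Delta f\|_{L^2} \;+\; 4\|(\nabla\phi_r) f\|_{L^2}^2.
\]
Since $\operatorname{supp}\phi_r\subset\{|x|\ge r/2\}$ and $|\nabla\phi_r|\le C/r$, both $\|\phi_r f\|_{L^2}$ and $r\|(\nabla\phi_r)f\|_{L^2}$ are bounded by $Cr^{-2}\|f\|_{H_2}$. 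Multiplying through by $\mu^2$ and using $\mu^2\|\Delta f\|_{L^2}\le\|f\|_{H^{2,\operatorname{sem}}_2}$, the first term is $\le Cr^{-2}\|f\|_{H^{2,\operatorname{sem}}_2}^2$ and the second is $\le C\mu^2 r^{-6}\|f\|_{H^{2,\operatorname{sem}}_2}^2$, both $\le Cr^{-1}\|f\|_{H^{2,\operatorname{sem}}_2}^2$ for $r\ge 1$ and $\mu$ bounded (as in the semiclassical regime considered throughout the paper). The principal technical subtlety is precisely this cutoff-and-absorb step, which converts the "weightless" gradient integral into an expression controlled by quantities that the semiclassical norm actually bounds; the commutator term is harmless because $\nabla\phi_r$ lives in an annulus of radius $\sim r$, providing the extra power of $r$ needed.
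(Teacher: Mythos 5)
Your proof is correct and takes a genuinely different route from the paper's. The paper avoids applying Green's identity directly at the target radius $r$: it instead integrates the surface integral over $r\in[i,i+1]$ (co-area formula, plus Remark \ref{rem:genSob}) to locate a \emph{good} radius $r_i\in[i,i+1]$ where $\int_{\partial\mathcal B_{r_i}}|f||\nabla_n f|\,dS \lesssim 1/i$, and then propagates to an arbitrary $s_i$ in the same interval via Green's formula on the annulus $A_{s_i,r_i}$. The obstinate term $\int_{A}|\nabla f|^2$ is handled by a second Green's identity on the exterior $\mathbb R^d\setminus\mathcal B_{r_i}$, which reintroduces the boundary integral at $r_i$ — but that one is under control precisely because $r_i$ was chosen well; this is how the paper breaks the apparent circularity. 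You break the same circularity differently: you apply Green's identity on $\mathcal B_r^c$ at the radius $r$ of interest (legitimate for $f\in H^2$, with the boundary contribution at infinity vanishing along a subsequence by co-area), and then bound the unweighted $\mu^2\int_{\mathcal B_r^c}|\nabla f|^2$ by a cutoff-and-absorb integration by parts. The decay in $r$ for that term comes from $\nabla\phi_r$ living in an annulus of width $\sim r$ around $|x|\sim r$, exactly as you say. Both approaches buy the same $\mathcal O(r^{-1})$ bound; yours is more direct and avoids the averaging/good-radius step, while the paper's trick is the more robust one when one only has enough regularity to control an averaged rather than a pointwise-in-$r$ trace. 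One small point worth noting: both arguments implicitly require $\mu$ bounded above (the paper buries this in $\lesssim$; you state it explicitly when handling the $\mu^2 r^{-6}$ term), which is consistent with the semiclassical normalization $\mu\in(0,1]$ used throughout.
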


\begin{proof}

Let $f,g$ be functions in a generalized Sobolev space $H_1(\mathbb R^d),$ then the co-area formula 
\[ \int_{\mathcal B_R(0)} \vert f(x) \vert^2 dx = \int_0^R \int_{\partial \mathcal B_{r}(0)} \vert f(x) \vert^2 \ dS(x) \ dr \] 
and Remark \ref{rem:genSob} imply that for any $R>0$ there exists an explicit upper bound on the smallest $r>R$ such that $ \int_{\partial \mathcal B_{r}(0)} \vert f(x) \vert \vert \nabla_n f(x)\vert dS(x)<\varepsilon.$  To see this, take a partition $I_i:=[i,i+1]$ with $[1,\infty)=\bigcup_{i \in \mathbb N} I_i$, then it follows by Remark \ref{rem:genSob} that 
\begin{equation}
\begin{split}
\label{eq:infimum}
 \inf_{r\in I_i} \int_{\partial \mathcal B_{r}(0)} \vert f(x)\nabla_n f(x) \vert \ dS(x)
&\le \int_{i}^{i+1} \int_{\partial \mathcal B_{r}(0)} \vert f(x)\nabla_n f(x)\vert \ dS(x) \ dr\\
&\le \Vert f \indic_{ \mathcal B_{i}(0)^c} \Vert_{L^2}\Vert \nabla_n  \indic_{ \mathcal B_{i}(0)^c} \Vert_{L^2} \\
&\le \frac{\left\lVert  f \right\rVert_{H_1}\left\lVert  \nabla_n f(x) \right\rVert_{H^1}}{i}.
 \end{split}
\end{equation}
Since $h(r):=\int_{\partial \mathcal B_{r}(0)} \vert f(x)\nabla_n f(x) \vert \ dS(x)$ is a continuous function of $r$ the above infimum is attained at some radius $r_i \in I_i$ such that 
\[  \inf_{r\in I_i} \int_{\partial \mathcal B_{r}(0)} \vert f(x)\nabla_n f(x) \vert \ dS(x)= \int_{\partial \mathcal B_{r_i}(0)} \vert f(x)\nabla_n f(x) \vert \ dS(x).\]
Consider now any other $s_i \in I_i$

By Green's formula and Remark \ref{rem:genSob} it follows that for an annulus $A_{s_i,r_i}=\mathcal B_{s_i}(0) \backslash \mathcal B_{r_i}(0)$ (if $s_i>r_i$ and vice versa otherwise), $A_i:=\RR^d \setminus \mathcal B_{r_i}(0)$ and normal derivative $\nabla_n f:=\langle \nabla f, n \rangle$, with unit normal $n$,
\begin{equation}
\begin{split}
\label{eq:Green}
\left\lvert \mu^2\int_{\partial A_{s_i,r_i}} f(x) (\nabla_n f)(x) \ dS(x) \right\rvert 
&= \left\lvert  \mu^2\int_{ A_{s_i,r_i}} f(x) (\Delta f)(x) \ dx \right\rvert+ \int_{ A_{s_i,r_i}} \mu^2\vert \nabla f(x) \vert^2\ dx  \\
&\le \mu^2 \left\lVert \indic_{A} f \right\rVert_{L^2} \left\lVert \indic_{ A} \Delta f \right\rVert_{L^2} + \mu^2 \left\lVert \indic_{A} \nabla f \right\rVert_{L^2}^2 \\
& \le 2 \mu^2 \left\lVert \indic_{A} f \right\rVert_{L^2} \left\lVert \indic_{ A} \Delta f \right\rVert_{L^2} + \mu^2 \left\lvert \int_{\partial \mathcal B_{r_i}(0)} f(x) (\nabla_n f)(x) \ dS(x) \right\rvert  \\
& \lesssim 2 \frac{\left\lVert  f \right\rVert^2_{H^{2,\operatorname{sem}}_2}}{i}.
\end{split}
\end{equation}

Here, we bounded $\left\lVert \indic_{A} \nabla f \right\rVert_{L^2}^2$ by Green's formula and the Cauchy-Schwarz inequality
\begin{equation}
\begin{split}
\int_{A} \vert \nabla f(x) \vert^2\ dx &\le \int_{A} \vert (\Delta f)(x) \vert \vert f(x) \vert \ dx + \int_{\partial B_{r_i}(0)}  \vert f(x) \vert \vert  (\nabla_n f)(x) \vert \ dS(x) \\
&\le  \Vert \indic_{A} f \Vert_{H^2} \Vert \indic_{A} f \Vert_{L^2} + \int_{\partial B_{r_i}(0)}  \vert f(x) \vert \vert  (\nabla_n f)(x) \vert \ dS(x) \\
& \lesssim \frac{\Vert f \Vert^2_{H^{2,\operatorname{sem}}_2}}{i}.
\end{split}
\end{equation}
We can now use that for all $r \in I_i:$ we have $\left\lvert 1/r -1/i \right\rvert \le \frac{\vert r - i \vert}{ri} \le \frac{1}{r}$ since $\vert r-i \vert \le 1$ and $i \ge 1.$ 
This implies that for all $s_i \in I_i$ 
\begin{equation}
\begin{split}
\label{eq:si}
\left\lvert \mu^2\int_{\partial A_{s_i,r_i}} f(x) (\nabla_n f)(x) \ dS(x) \right\rvert & \lesssim 2 \left\lVert  f \right\rVert^2_{H^{2,\operatorname{sem}}_2} \left( s_i^{-1}+ \vert i^{-1}-s_i^{-1} \vert \right) \lesssim 4 \left\lVert  f \right\rVert^2_{H^{2,\operatorname{sem}}_2} s_i^{-1}
\end{split}
\end{equation}
and since $i$ was arbitrary, the claim of Lemma \ref{lemm:auxlemmes} follows. In particular, for all $s_i \in I_i$
\begin{equation}
\begin{split}
\left\lvert \mu^2\int_{\partial B_{s_i}(0)} f(x) (\nabla_n f)(x) \ dS(x) \right\rvert & \le 
\left\lvert \mu^2\int_{\partial A_{s_i,r_i}} f(x) (\nabla_n f)(x) \ dS(x) \right\rvert  \\
&\quad + \left\lvert \mu^2\int_{\partial B_{r_i}(0)} f(x) (\nabla_n f)(x) \ dS(x) \right\rvert\\
&\lesssim  \left\lVert  f \right\rVert^2_{H^{2,\operatorname{sem}}_2} s_i^{-1}.
\end{split}
\end{equation}

\end{proof}

\begin{theo}[Reduction to bounded domains]
\label{theo2}
Let $\varphi_0$ be an initial state in $H^2_2(\mathbb R^d)$ to either \eqref{eq:bilSchr}, \eqref{eq:GP} ($h=1$) or the semiclassical Schr\"odinger equation \eqref{eq:semicl} ($h>0$) and consider potentials $V$ and $V_{\operatorname{con}}$ satisfying a standard condition \emph{(Assumption \ref{ass1})}. Then for control functions $u \in W^{1,1}_{\operatorname{pcw}}(0,T)$ and any compact time interval the solution of the corresponding Schr\"odinger equation with solution $\psi$ can be approximated by $\psi^{D,N}$, the solution to an auxiliary \emph{BVP}, as introduced in Lemma \ref{eos} or Lemma \ref{eos2}, on $\mathcal B_R(0)$ with $R>0$, where $\chi_{\mathcal B_R(0)}$ is a suitably chosen smooth cut-off function supported in $\mathcal B_R(0)$. In particular, the difference $\xi = \psi-\psi^{D,N}$ of the two solutions then satisfies for a recursive function, in terms of $C_{\mu^2/2}$ as in \eqref{eq:zero-bd}, 
\[C = C(T,\Vert u \Vert_{W^{1,1}_{\operatorname{pcw}}((0,T))},\Vert W_{\operatorname{sing}} \Vert_{L^p}, \Vert \langle \bullet \rangle^{-2} W_{\operatorname{reg}} \Vert_{L^{\infty}}, \Vert \langle \bullet \rangle^{-2} V_{\operatorname{con}} \Vert_{L^{\infty}},C_{\mu^2/2})\]
\begin{equation}
\begin{split}
\label{eq:dirichletestimate}
\left\lVert \xi  \right\rVert^2_{L^{\infty}((0,T),L^2(\RR^d))} \le C(T)\Bigg(& \sup_{t \in (0,T)}\left\lvert \mu^2 \int_{\partial \mathcal B_R(0)} ((\nabla_n \xi)\overline{\xi})(x,t)  \ dS(x) \right\rvert +\frac{\Vert \varphi_0 \Vert_{H^{2,\operatorname{sem}}_2(\RR^d)}}{R^2} \Bigg).
\end{split}
\end{equation}
In particular, the approximation error in \eqref{eq:dirichletestimate} decays explicitly like $\mathcal O(R^{-1})$, using that the right hand side is controlled by Lemma \ref{lemm:auxlemmes}. 
\end{theo}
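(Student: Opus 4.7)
The plan is to compare $\psi$ on $\mathbb R^d$ with $\psi^{D,N}$ (extended by zero outside $\mathcal B_R(0)$) via an energy estimate on $\mathcal B_R(0)$, supplemented by the tail decay from Remark \ref{rem:genSob} outside. Throughout I use that $\psi \in L^{\infty}((0,T); H^{2,\operatorname{sem}}_2(\mathbb R^d))$ with norm controlled by $\|\varphi_0\|_{H^{2,\operatorname{sem}}_2}$, thanks to Lemmas \ref{eos}, \ref{NLS} and \ref{eos2}.

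First, I would split
\[
\|\xi(t)\|_{L^2(\mathbb R^d)}^2 = \|\xi(t)\|_{L^2(\mathcal B_R(0))}^2 + \|\psi(t)\|_{L^2(\mathcal B_R(0)^c)}^2,
\]
since $\psi^{D,N}$ vanishes outside $\mathcal B_R(0)$. The tail is bounded by $\|\psi(t)\|_{H_2}^2 R^{-4}$ via Remark \ref{rem:genSob}, and hence by $C\|\varphi_0\|_{H^2_2}^2 R^{-4}$, which is $\lesssim \|\varphi_0\|_{H^{2,\operatorname{sem}}_2}^2 R^{-2}$ for $R\ge 1$.

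Next, on $\mathcal B_R(0)$ the function $\xi=\psi-\psi^{D,N}$ satisfies
\[
i\partial_t \xi = -\mu^2 \Delta \xi + (V+V_{\operatorname{TD}}(t))\xi + F(\psi)-F(\psi^{D,N}),
\qquad \xi(0)=(1-\chi_{\mathcal B_R(0)})\varphi_0.
\]
Choosing the cut-off $\chi_{\mathcal B_R(0)}$ to equal one on $\mathcal B_{R/2}(0)$ and supported in $\mathcal B_R(0)$, Remark \ref{rem:genSob} once more yields $\|\xi(0)\|_{L^2}^2 \lesssim \|\varphi_0\|_{H^2_2}^2 R^{-4}$. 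Then I would pair the equation with $\overline\xi$, integrate over $\mathcal B_R(0)$ and take imaginary parts: the self-adjoint terms (kinetic plus real potentials) produce, after one integration by parts in the Laplacian, only a boundary contribution, giving
\[
\frac{d}{dt}\|\xi(t)\|_{L^2(\mathcal B_R)}^2 = -2\mu^2\,\Im\!\int_{\partial\mathcal B_R(0)}(\nabla_n\xi)\overline{\xi}\,dS + 2\,\Im\langle F(\psi)-F(\psi^{D,N}),\xi\rangle.
\]
The nonlinear source is controlled by $C\|\xi\|_{L^2(\mathcal B_R)}^2$ via the local Lipschitz estimate of Lemma \ref{F} combined with the uniform $H^1$ bound of Lemma \ref{NLS} (the term is absent in the linear case). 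Gronwall's inequality then delivers
\[
\|\xi(t)\|_{L^2(\mathcal B_R)}^2 \le e^{CT}\!\left(\|\xi(0)\|_{L^2}^2 + 2T\sup_{s\in(0,T)}\mu^2\!\left|\int_{\partial\mathcal B_R(0)}(\nabla_n\xi)\overline{\xi}(s)\,dS\right|\right),
\]
which, combined with the $L^2$-splitting above, yields \eqref{eq:dirichletestimate}. The explicit $\mathcal O(R^{-1})$ decay of the whole right-hand side then follows directly from Lemma \ref{lemm:auxlemmes}.

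The main obstacle is the interpretation of the boundary integral: it involves $\xi = \psi - \psi^{D,N}$ and its normal derivative, whereas Lemma \ref{lemm:auxlemmes} controls boundary traces of a \emph{single} function in $H^{2,\operatorname{sem}}_2(\mathbb R^d)$. In the Dirichlet case $\psi^{D,N}$ vanishes on $\partial\mathcal B_R(0)$, so $\xi|_{\partial\mathcal B_R}=\psi|_{\partial\mathcal B_R}$, but $\nabla_n \psi^{D,N}|_{\partial\mathcal B_R}$ is not controlled by a global norm of $\varphi_0$; the Neumann case is symmetric. I would resolve this by selecting $R$ along the family of ``good'' radii $r_i$ constructed inside the proof of Lemma \ref{lemm:auxlemmes}, and by invoking the uniform $H^{2,\operatorname{sem}}_2(\mathcal B_{R'}(0))$-bound on $\psi^{D,N}$ (provided by Lemma \ref{eos} applied on the ball and uniform in $R'\ge R$) together with a trace estimate to bound the remaining $\psi^{D,N}$-contribution on $\partial\mathcal B_R(0)$, thereby reducing the mixed boundary integral to the single-function setting covered by Lemma \ref{lemm:auxlemmes}.
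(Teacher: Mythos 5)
Your proof follows essentially the same route as the paper's: bound the tail of $\psi$ outside $\mathcal B_R(0)$ via the weighted Sobolev decay in Remark \ref{rem:genSob}, write the PDE for $\xi$ on $\mathcal B_R(0)$, take the imaginary part of the $\bar\xi$-pairing so that the Laplacian contributes only the boundary term and the nonlinearity is absorbed through Lemma \ref{F}, then close with Gronwall. The only cosmetic difference is that the paper first invokes \eqref{eq:convinis} from Lemma \ref{potlem} to replace $\varphi_0$ by its localization in $\mathcal B_R(0)$ so that $\xi(0)=0$, whereas you carry $\xi(0)=(1-\chi_{\mathcal B_R(0)})\varphi_0$ through the Gronwall estimate directly; and your closing observation — that Lemma \ref{lemm:auxlemmes} controls boundary traces of a \emph{single} $H^{2,\operatorname{sem}}_2(\mathbb R^d)$ function while the boundary integral here involves the difference $\xi$ — is a genuine subtlety the paper's proof leaves implicit, and your proposed resolution (choosing $R$ among the good radii of Lemma \ref{lemm:auxlemmes}'s proof and using the uniform-in-$R'$ bound on $\psi^{D,N}$ from Lemma \ref{eos} together with a trace estimate) is the natural way to close it.
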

\begin{proof}
We first use \eqref{eq:convinis} to reduce the initial state to an initial state in $\mathcal B_R(0)$. To justify this, just recall the $\mathcal O(R^{-2})$ decay in the $L^2$ sense, that we obtain from the initial state being bounded in $H^2_2$ by \eqref{eq:outside}. 
To verify \eqref{eq:dirichletestimate}, we may separate the dynamics of the solution $\psi$ outside the ball $\mathcal B_R(0)$, where the solution to the BVP vanishes anyway, from inside the ball. This can be controlled by combining the estimate \eqref{eq:outside} with the respective estimate from Lemmas \ref{eos} and \ref{eos2} such that
\[ \sup_{t \in [0,T]}\Vert \indic_{\mathcal B_R(0)^c} \varphi(t) \Vert_{H^{2,\operatorname{sem}}_2} \le R^{-2} \sup_{t \in [0,T]}\Vert  \varphi(t) \Vert_{H^{2,\operatorname{sem}}_2} \le R^{-2} C(T) \Vert \varphi_0 \Vert_{H^{2,\operatorname{sem}}_2}.\]
Consider the solution $\psi^{\operatorname{D,N}}$ to the BVP for the Schr\"odinger equation on $\mathcal B_R(0).$ Taking the difference of the true solution and the solution of the BVP yields on $\mathcal B_R(0)$ for $\xi:= \psi- \psi^{\operatorname{D,N}}$ the differential equation 
\begin{equation*}
\begin{split}
 i \partial_t\xi (t) &= \left(-\mu^2\Delta +V+ V_{\operatorname{TD}}(t) \right) \xi(t) + F_{\sigma}(\psi(t))-F_{\sigma}(\psi^{\operatorname{D,N}}(t)) \operatorname{ and }\xi(0) = 0.
 \end{split}
 \end{equation*}
Then multiplying this equation by $\overline{\xi}$, integrating over $\mathcal B_R(0)$, and taking the imaginary part shows that 
\begin{equation*}
\begin{split}
\Re \int_{\mathcal B_R(0)}\partial_t\xi (x,t) \overline{\xi(x,t)} \ dx 
= \Im \left( \int_{\mathcal B_R(0)} \left( -\mu^2\Delta\xi(x,t)+ F_{\sigma}(\psi(x,t))-F_{\sigma}(\psi^{\operatorname{D,N}}(x,t)) \right) \overline{\xi(x,t)} \ dx \right).   
\end{split}
\end{equation*}
Using Green's formula and Lemmas \ref{eos}, \ref{F}, and \ref{eos2} we deduce that 
\begin{equation*}
\begin{split}
\frac{d}{dt} \int_{\mathcal B_R(0)} \left\lvert \xi (x,t) \right\rvert^2 \ dx &\lesssim  \mu^2 \left\lvert \int_{\partial \mathcal B_R(0)} (\nabla_n \xi)(x,t) \overline{\xi(x,t)}  \ dS(x) \right\rvert \\
&\quad + \int_{ \mathcal B_R(0)} \left\lvert F_{\sigma}(\psi(x,t))-F_{\sigma}(\psi^{\operatorname{D,N}}(x,t)) \right\rvert  \left\lvert  \overline{\xi(x,t)} \right\rvert \ dx \\
&\lesssim\left\lvert \int_{\partial \mathcal B_R(0)} \mu^2(\nabla_n \xi)(x,t) \overline{\xi(x,t)}  \ dS(x) \right\rvert+ \left\lVert \xi(t) \right\rVert^2_{L^2(\mathcal B_R(0))}.
\end{split}
\end{equation*}
Thus, Gr\"onwall's inequality implies 
\[\left\lVert \xi (t) \right\rVert^2_{L^2(\mathcal B_R(0))} \lesssim \left\lvert \int_{\partial \mathcal B_R(0)} \mu^2(\nabla_n \xi)(x,t) \overline{\xi(x,t)}  \ dS(x) \right\rvert.\]
\end{proof}

\section{Discretization of the Schr\"odinger equation}
We now discuss numerical methods for the analysis of the Schr\"odinger equation. To do so, we make -without loss of generality- the following simplifying assumption:
\begin{Assumption}[Locally constant controls and global methods]
In this section, we assume that the dynamics is restricted to some cube $\Omega:=[-R,R]^d$ with Dirichlet boundary conditions, since we already showed in Theorem \ref{theo2} that it suffices to analyze the dynamics on a compact domain with Dirichlet or Neumann boundary conditions.
Moreover, Lemma \ref{pertcon} allows us then to make, with full error control, the following simplifying assumption on the controls:
By choosing the time-step in our methods sufficiently small, we assume that the control functions $u$ are constant in every time step $\tau.$
\label{pcwconst}
\end{Assumption}

\subsection{Discretization}

We start by defining a spatial discretization of $L^1_{\operatorname{loc}}(\mathbb R^d)$ functions which allows us to study low-regularity function by handling a sequence of countably many values.

\begin{defi}[Cubic discretization]
\label{cubdisc}
Consider a lattice of side length $h$ and lattice points $(x_j)_{j \in \mathbb Z^d}$ and a family of cubes $Q_{x_j}:=\times_{i=1}^d  [x_j^i - h/2, x_j^i + h/2)$ with $j \in \mathbb Z^d$ that form a disjoint decomposition of $\mathbb R^d$ up to a set of measure zero. The \emph{cubic approximation} of a function $f \in L^1_{\operatorname{loc}}(\mathbb R^d,\mathbb C)$ is defined by
\[
 f_{Q}(x):= \sum_{j \in \mathbb Z^d}  \fint_{Q_{x_j}} f(s) \ ds \  \indic_{Q_{x_j}}(x),
 \]
where 
\[
\fint_{Q_{x_j}} f(s) \ ds = \frac{1}{\mathrm{Vol}(Q_{x_j})}\int_{Q_{x_j}} f(s) \ ds.
\]
We define the \emph{standard decomposition}, with inverse side length (grid size) $m \in \mathbb N$, to be the uniform decomposition of $\mathbb R^d$ into cubes
$\times_{i=1}^d [n_i+\tfrac{k_i}{m},n_i+\tfrac{k_i+1}{m})$ with mid-points $x^i = n_ i + \frac{2k_i + 1 }{2m}$ for $n \in \mathbb Z^d$ and $k \in \left\{0,..,m-1 \right\}^d.$
\end{defi}

Note that we need to numerically compute $\fint_{Q_{x_j}} f(s) \ ds$ for $j \in \mathbb Z^d$. Before we embark on the numerical approximation, the reader unfamiliar with the concept of Halton sequences may want to review this material. An excellent reference is \cite{N92} (see p. 29 for definition). 

In order to numerically approximate the integral so we let $\rho_{x_j}$ denote the canonical linear map that maps $[0,1)^d$ to $Q_{x_j}$  and $S =\{t_k\}_{k\in\mathbb{N}}$, where $t_k \in [0,1]^d$ is a Halton sequence (see \cite{N92} p. 29 for definition) in the pairwise relatively prime 
bases $b_1,\hdots, b_d$ (note that the particular choice of the $b_j$s is not important). 

\begin{defi}[Numerical cubic discretization]\label{def:numerical_disc}
Consider the setup in Definition \ref{cubdisc} and define for
$f \in L^1_{\operatorname{loc}}(\Omega,\mathbb C)$ 
\begin{equation}\label{eq:f}
f_{Q,N}(x):= \sum_{x_j \in \Omega} \frac{1}{N}\sum_{k=1}^Nf(\rho_{x_j}(t_k))  \indic_{Q_{x_j}}(x).
\end{equation}
Also, for any any function $f_{Q,N}$ of the form \eqref{eq:f} we say that $f^M_{Q,N}$ is an $M$ approximation to $f_{Q,N}$ if 
\[
f^M_{Q,N} = \sum_{x_j \in \Omega} \frac{1}{N}\sum_{k=1}^Nf^M(\rho_{x_j}(t_k))  \indic_{Q_{x_j}}(x)
\]
and
\[
|f^M(\rho_{x_j}(t_k)) - f(\rho_{x_j}(t_k))| \leq 2^{-M} \qquad \forall \, x_j \in \Omega, k \leq N.
\]
\end{defi}

%{\BLUE{More text linking the different Theorems/Definitions would be good}}

%Define for a function $f \in L^1_{\operatorname{loc}}(\mathbb R^d,\mathbb C)$
%\[
%I^h_{x_j}(f) = \frac{1}{N}\sum_{k=1}^Nf(\rho_{x_j}(t_k)).
%\]

\begin{defi}
Let $\{t_1,\hdots t_N\}$ be a sequence in $[0,1]^d$. Then we define the \emph{star discrepancy} of 
$\{t_1,\hdots t_N\}$  to be 
$$
D^*_N(\{t_1,\hdots t_N\}) = \sup_{K \in \mathcal{K}}\left|\frac{1}{N}\sum_{k=1}^N\chi_{K}(t_k) - \nu(K)\right|,
$$  
where 
$\mathcal{K}$ denotes the family of all subsets of $[0,1]^d$ of the form $\prod_{k=1}^d[0,b_k),$ $\chi_K$ denotes the characteristic function on $K$, $b_k \in (0,1]$ and $\nu$ denotes the Lebesgue measure. 
\end{defi}

\begin{theo}[\cite{N92}]\label{disc_bound}
If $\{t_k\}_{k\in\mathbb{N}}$ is the Halton sequence in $[0,1]^d$ in the pairwise relatively prime 
bases $b_1,\hdots , b_d$, then 
\begin{equation}\label{star}
D^*_N(\{t_1,\hdots t_N\}) \leq \frac{d}{N} + \frac{1}{N}\prod_{k=1}^d\left(\frac{b_k-1}{2\log(b_k)}\log(N) + \frac{b_k+1}{2}\right) \qquad N \in \mathbb{N}.
\end{equation}
\end{theo}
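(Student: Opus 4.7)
The plan is to reduce the $d$-dimensional bound to $d$ one-dimensional estimates for the coordinate (van der Corput) sequences, and then combine them through a product-type discrepancy inequality that exploits the coprimality of the bases $b_1,\ldots,b_d$.

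First I would treat the one-dimensional case. For a single base $b$, the $k$-th coordinate of the Halton sequence is the radical inverse $\phi_b(k) = \sum_{j \geq 0} a_j(k)\, b^{-j-1}$, where $k = \sum_{j \geq 0} a_j(k)\, b^j$ is the base-$b$ expansion. The crucial combinatorial observation is that for any level $m$, the first $b^m$ terms $\{\phi_b(0),\ldots,\phi_b(b^m-1)\}$ hit each subinterval $[\ell b^{-m}, (\ell+1) b^{-m})$ with $\ell = 0,\ldots,b^m-1$ exactly once. For general $N$, one decomposes $N$ in base $b$ as $N = \sum_j a_j b^j$, stacks the corresponding equidistributed blocks, and controls the residual boundary terms to obtain
\[
N D^*_N\bigl(\{\phi_b(k)\}_{k=1}^N\bigr) \leq \tfrac{b-1}{2\log b}\log N + \tfrac{b+1}{2}.
\]

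Next I would promote this to $d$ dimensions. The $d$-dimensional star discrepancy concerns anchored boxes $B = \prod_{k=1}^d [0, c_k)$. The key algebraic identity is the telescoping expansion
\[
\prod_{k=1}^d \mathbf{1}_{[0,c_k)}(t^{(k)}) - \prod_{k=1}^d c_k = \sum_{\emptyset\neq S \subseteq \{1,\ldots,d\}} \prod_{k\in S}\bigl(\mathbf{1}_{[0,c_k)}(t^{(k)})-c_k\bigr)\prod_{k\notin S} c_k,
\]
which, after averaging over the first $N$ points of the Halton sequence and taking the supremum over $B$, expresses the multidimensional discrepancy as a sum of products of one-dimensional discrepancies of the component van der Corput sequences. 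Bounding each factor by the one-dimensional estimate of the previous paragraph and reorganising the sum as a telescoping product yields the bound \eqref{star}.

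The main obstacle is the second step. The identity above reduces the multidimensional star discrepancy to one-dimensional ones only if the one-dimensional discrepancies along different coordinates can be controlled \emph{independently}; this is precisely where pairwise coprimality of the $b_k$ enters, since it guarantees that the rational grids associated with the coordinate sequences only align on coarse scales. Carefully tracking the boundary contributions through the telescoping sum is what produces the additive correction $(b_k+1)/2$ in each factor, and handling the sub-$b_k^{m_k}$ tails for general $N$ (rather than a pure power of $b_k$) is the technical heart of the argument.
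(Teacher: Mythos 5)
The paper does not actually prove this statement --- it cites Niederreiter~\cite{N92}, p.~29, and moves on --- so the question is whether your argument stands on its own. Your one-dimensional discussion is sound and standard: equidistribution of the van der Corput sequence over blocks of length $b^m$, the base-$b$ decomposition of $N$, and the resulting estimate $N D^*_N \le \tfrac{b-1}{2\log b}\log N + \tfrac{b+1}{2}$.

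The passage to $d$ dimensions is where the argument breaks down. The telescoping identity you quote is algebraically correct, but after averaging over $n$ the generic term is
\[
\frac{1}{N}\sum_{n=1}^{N}\prod_{k\in S}\bigl(\mathbf{1}_{[0,c_k)}(t^{(k)}_n)-c_k\bigr),
\]
and this is a \emph{mixed} moment in the single running index $n$. It does not factor into a product over $k\in S$ of one-dimensional averages, because the coordinate sequences $(t^{(k)}_n)_n$ are deterministic functions of the same $n$ and hence strongly correlated; coprimality of the bases does not repair this. As a sanity check, if such a factorization were available, the singleton sets $S$ would dominate and you would obtain $D^*_N = O(\log N / N)$ for the Halton sequence, far stronger than \eqref{star} and certainly not a known estimate --- so the step cannot be legitimate.

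The mechanism that actually exploits pairwise coprimality (Halton~1960, Niederreiter~\cite{N92} Thm.~3.6) is different. By the Chinese Remainder Theorem, a Halton point $t_n$ lies in a product elementary interval $\prod_{k=1}^d[u_k b_k^{-g_k},(u_k+1)b_k^{-g_k})$ if and only if $n$ belongs to a single residue class modulo $\prod_k b_k^{g_k}$; hence the counting error in every such box is at most~$1$. One then approximates an arbitrary anchored box by a union of elementary intervals, decomposing each $[0,c_k)$ via the base-$b_k$ digits of $c_k$ into roughly $(b_k-1)g_k$ elementary pieces with $g_k\approx\log_{b_k}N$ (the factor $\tfrac12$ in the leading coefficient, and the $(b_k+1)/2$ correction, come from a balanced refinement of this decomposition and from tracking the residual sub-interval). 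Multiplying the per-coordinate counts and summing the $O(1)$ errors over boxes gives \eqref{star}. Your subset expansion does not replicate this CRT/elementary-interval structure, and without it the $d$-dimensional bound does not follow from the one-dimensional one.
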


For a proof of this theorem see \cite{N92}, p. 29. Note that as the right-hand side of (\ref{star}) is somewhat cumbersome to work with, it is convenient to define the following constant.

\begin{defi}\label{C*}
Define $C^*(b_1,\hdots,b_d)$ to be the smallest integer such that for all $N \in \mathbb{N}$
$$
\frac{d}{N} + \frac{1}{N}\prod_{k=1}^d\left(\frac{b_k-1}{2\log(b_k)}\log(N) + \frac{b_k+1}{2}\right) \leq C^*(b_1,\hdots,b_d)\frac{\log(N)^d}{N}
$$
where $b_1,\dots,b_d$ are as in Theorem \ref{disc_bound}.
\end{defi}

\begin{prop}\label{prop:bound_disc_int}
For $f \in L^{\infty}(\mathbb R^d,\mathbb C)$ then
\[
\| f_{Q} - f_{Q,N} \|_{L^{\infty}(\mathbb{R}^d)} \leq \left(\sup_j \mathrm{TV}\left(f \circ \rho_{x_j} \right)\right) C^*(b_1,\hdots,b_d)\frac{\log(N)^d}{N}
 \]
 and
 \[
 \| f_{Q} - f^M_{Q,N} \|_{L^{\infty}(\mathbb{R}^d)} \leq \left(\sup_j \mathrm{TV}\left(f \circ \rho_{x_j} \right)\right) C^*(b_1,\hdots,b_d)\frac{\log(N)^d}{N} + N2^{-M}.
\]
For $g \in L^1_{\operatorname{loc}}(\mathbb R^d,\mathbb C) \cap L^2(\mathbb R^d,\mathbb C)$ then
\[
\| g_{Q} - g_{Q,N} \|_{L^2(\mathbb{R}^d)} \leq \left(\sum_j \left(\mathrm{TV}\left(f \circ \rho_{x_j} \right) \right)^2\right)^{\frac{1}{2}} C^*(b_1,\hdots,b_d)\frac{\log(N)^d}{N}
\]
and
\[
\| g_{Q} - g^M_{Q,N} \|_{L^2(\mathbb{R}^d)} \leq \left(\sum_j \left(\mathrm{TV}\left(f \circ \rho_{x_j} \right) \right)^2\right)^{\frac{1}{2}} C^*(b_1,\hdots,b_d)\frac{\log(N)^d}{N} + N2^{-M}|\{x_j \, \vert \, x_j \in \Omega\}| 
\]
\end{prop}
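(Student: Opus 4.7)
\textbf{Plan for the proof of Proposition \ref{prop:bound_disc_int}.} The plan is to reduce the problem on each cube $Q_{x_j}$ to a quasi-Monte Carlo integration error on the unit cube and apply the Koksma-Hlawka inequality, combined with the star-discrepancy bound of Theorem \ref{disc_bound}. Since $\rho_{x_j}:[0,1)^d \to Q_{x_j}$ is an affine bijection with constant Jacobian, the change of variables formula yields
$$\fint_{Q_{x_j}} f(s)\, ds = \int_{[0,1]^d} (f\circ \rho_{x_j})(t)\, dt,$$
so the pointwise deviation of $f_{Q,N}$ from $f_Q$ on the interior of $Q_{x_j}$ equals the quadrature error
$$E_j := \left| \int_{[0,1]^d} (f\circ \rho_{x_j})(t)\, dt - \frac{1}{N}\sum_{k=1}^N (f\circ \rho_{x_j})(t_k)\right|.$$

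By the Koksma-Hlawka inequality (with $\mathrm{TV}$ interpreted in the Hardy-Krause sense when $d\ge 2$, and as ordinary total variation when $d=1$), one has $E_j \le \mathrm{TV}(f\circ \rho_{x_j})\cdot D_N^\ast(\{t_1,\dots,t_N\})$. Combining with Theorem \ref{disc_bound} and Definition \ref{C*} gives $D_N^\ast \le C^\ast(b_1,\dots,b_d)\,\log(N)^d/N$. Since the cubes $Q_{x_j}$ are pairwise disjoint, the first $L^\infty$ bound follows by taking the supremum of $E_j$ over $j$, and the first $L^2$ bound follows from
$$\|f_Q - f_{Q,N}\|_{L^2}^2 \le \sum_{x_j \in \Omega} E_j^2\, |Q_{x_j}| \le \Bigl(C^\ast \frac{\log(N)^d}{N}\Bigr)^{\!2} \sum_{x_j \in \Omega} \mathrm{TV}(f\circ \rho_{x_j})^2,$$
using $|Q_{x_j}|\le 1$ in the standard decomposition.

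For the $M$-approximations $f_{Q,N}^M$, I would use the triangle inequality $\|f_Q - f_{Q,N}^M\|\le \|f_Q - f_{Q,N}\| + \|f_{Q,N} - f_{Q,N}^M\|$ and control the second term cubewise: for $x \in Q_{x_j}$,
$$|f_{Q,N}(x) - f^M_{Q,N}(x)| \le \frac{1}{N}\sum_{k=1}^N |(f - f^M)(\rho_{x_j}(t_k))| \le 2^{-M}.$$
Taking the supremum over $j$ yields $\|f_{Q,N} - f^M_{Q,N}\|_{L^\infty}\le 2^{-M}$ (the stated $N\, 2^{-M}$ is a looser but still valid estimate), and summing squares over $x_j\in \Omega$ with $|Q_{x_j}|\le 1$ gives the stated $L^2$ perturbation term.

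The main technical input is the Koksma-Hlawka inequality, a classical result in quasi-Monte Carlo integration; the remainder is bookkeeping via change of variables, aggregation over disjoint cubes, and the triangle inequality. The only delicate conceptual point is the choice of multivariate variation for $d\ge 2$: the Hardy-Krause variation is the natural one for Koksma-Hlawka, and one should verify that the symbol $\mathrm{TV}(f\circ \rho_{x_j})$ used in the proposition is to be understood in this sense (or otherwise dominates it), so that the chain of estimates closes.
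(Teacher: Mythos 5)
Your proof is correct and follows essentially the same route as the paper: reduce to each cube via the affine change of variables, apply the Koksma--Hlawka inequality with the Halton-sequence star-discrepancy bound of Theorem \ref{disc_bound}, then aggregate over the disjoint cubes. In fact the paper's proof records only the single-cube Koksma--Hlawka estimate and omits the aggregation step and the $M$-approximation perturbation entirely, so your write-up is, if anything, more complete; your observation that $2^{-M}$ suffices in place of the stated (looser) $N\,2^{-M}$ is also accurate.
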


\begin{proof}
Note that, by the multi-dimensional Koksma--Hlawka inequality (Theorem 2.11 in \cite{N92}) it follows that 
\begin{equation*}
\begin{split}
\left|  I^h_{x_j}(f) - \fint_{Q_{x_j}} f(s) \ ds  \right| &\leq \mathrm{TV}\left(f \circ \rho_{x_j} \right) D^*_N(t_1,\hdots,t_N).\\
&\leq \mathrm{TV}\left(f \circ \rho_{x_j} \right) C^*(b_1,\hdots,b_d)\frac{\log(N)^d}{N}.
\end{split}
\end{equation*}
\end{proof}

To approximate the Laplacian of the Schr\"odinger operator we use the finite-difference approximation of the derivative: 
\begin{defi}[Derivative discretization]
\label{def:derdisc}
Let $(\tau^i_hf)(t):=f(t-h\widehat{e}_i)$ be the translation by $h$ and $\delta_{h}^i:=(\tau_{-{h}}^i-\tau_{h}^i)/(2h_i)$ the discretized symmetric derivative in direction $i$ with step size $h>0.$ Then, we can define the discretized Laplacian $\Delta^{h}:=\sum_{i=1}^d \left(\delta_{h}^i\right)^2$ and discretized gradient $\nabla^{h}:=(\delta_{h}^i)_{i}.$
\end{defi}
Let $f,g \in L^2(\RR^d)$ then $\langle \tau^i_x f,g \rangle  = \langle f,\tau^i_{-x}g \rangle$ and thus $\left(\delta_{h}^i\right)^*= -\delta_h^i.$
Moreover, the following version of the product rule holds
$
(\delta_hfg)(x) = f(x+h)(\delta_hg)(x)+g(x-h)(\delta_hf)(x).
$
%Let $T^{h}:=e^{it\Delta^{h}}$ then those semigroups commute $T^{h_1}T^{h_2}=T^{h_2}T^{h_1}$ and we may bound for $t \ge 0$ and $u \in H^2(\mathbb R^d)$
%\begin{equation}
%\label{eq:semgroupbd}
%\left\lVert T^{h_1}u - T^{h_2}u\right\rVert_{L^2(\mathbb R^d)} \le t \left\lVert \Delta^{h_1}u-\Delta^{h_2}u \right\rVert_{L^2(\mathbb R^d)}.
%\end{equation}
We record elementary convergence properties of the finite-difference scheme in the following proposition:
\begin{prop}
\label{convrate}
Let $n \in \mathbb{Z}_+$, $k \in \{1,.., d\}$ and $f \in W^{n,p}(\mathbb R^d)$. Then, it follows that for $p \in [1,\infty)$ \[  \left\lVert (\delta_h^k)^n f_Q-\partial_k^n f\right\rVert_{L^p(\mathbb R^d)} = o(1)\text{ as }h \downarrow 0, \] 
and for $f \in W^{n+1,p}(\mathbb R^d)$ and $p \in [1,\infty]$, there is an explicit constant $C>0$, independent of $f$ and $h$, such that 
\[
\left\lVert (\delta_h^k)^n f_Q-\partial_k^n f\right\rVert_{L^p(\mathbb R^d)} \le  C \Vert f \Vert_{W^{n+1,p}(\RR^d)} h.
\] 
In particular, $\Vert (\delta_h^k)^n f \Vert_{L^p} \le C \Vert f \Vert_{W^{n,p}(\RR^d)}.$
Moreover, for $p \in (1,\infty)$, $f \in W^{n+\varepsilon,p}(\mathbb R^d)$, and some $\varepsilon \in (0,1]$ the convergence satisfies the rate $\left\lVert (\delta_h^k)^n f_Q-\partial_k^n f\right\rVert_{L^{p}(\mathbb R^d)} = \mathcal O(\Vert f \Vert_{W^{n+\varepsilon,p}(\mathbb R^d)} h^{\varepsilon}).$
\end{prop}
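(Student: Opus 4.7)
The plan is to exploit a key commutation identity, namely that the symmetric difference operator and the cubic averaging commute under the matching grid size: $(\delta_h^k)^n f_Q = ((\delta_h^k)^n f)_Q$. To verify this, observe that for $x \in Q_{x_j}$, writing $x = x_j + y$ with $y \in [-h/2,h/2)^d$, we have $x \pm h e_k = x_{j\pm e_k} + y \in Q_{x_{j\pm e_k}}$, so that by translation invariance of Lebesgue measure
\[
(\delta_h^k f_Q)(x) = \tfrac{1}{2h}\bigl[\tfrac{}{}{\textstyle \fint_{Q_{x_{j+e_k}}}} f(s)\,ds - {\textstyle \fint_{Q_{x_{j-e_k}}}} f(s)\,ds\bigr] = {\textstyle \fint_{Q_{x_j}}} (\delta_h^k f)(u)\,du = (\delta_h^k f)_Q(x).
\]
Iterating gives the asserted commutation.

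Next I will establish the basic uniform bound. Writing $\phi_h := \frac{1}{2h}\indic_{[-h,h]}$, for $g \in W^{1,p}$ the fundamental theorem of calculus gives $\delta_h^k g = \phi_h *_k \partial_k g$, so by Minkowski's integral inequality $\|\delta_h^k g\|_{L^p} \le \|\partial_k g\|_{L^p}$. Iterating, $\|(\delta_h^k)^n f\|_{L^p}\le \|\partial_k^n f\|_{L^p}\le \|f\|_{W^{n,p}}$, which is the final assertion of the proposition. Jensen's inequality gives $\|g_Q\|_{L^p}\le \|g\|_{L^p}$ as well.

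For the quantitative rate, I split
\[
\|(\delta_h^k)^n f_Q - \partial_k^n f\|_{L^p} \le \|((\delta_h^k)^n f)_Q - (\delta_h^k)^n f\|_{L^p} + \|(\delta_h^k)^n f - \partial_k^n f\|_{L^p}.
\]
For the second term, using $(\delta_h^k)^n = \phi_h^{*n}*_k\partial_k^n$ (valid for $f \in W^{n+\varepsilon,p}$) and the identity $(\phi_h^{*n}*_k g)(x) - g(x) = \int \phi_h^{*n}(s)[g(x+se_k)-g(x)]\,ds$, Minkowski and the translation-continuity estimate $\|g(\cdot + se_k) - g\|_{L^p} \le C|s|^\varepsilon [g]_{W^{\varepsilon,p}}$ (valid for $p \in (1,\infty)$, $\varepsilon \in (0,1]$, and elementary for $\varepsilon = 1$, any $p \in [1,\infty]$) with $g = \partial_k^n f$ and $\mathrm{supp}\,\phi_h^{*n}\subset[-nh,nh]$ yield an $O(h^\varepsilon\|f\|_{W^{n+\varepsilon,p}})$ bound. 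For the first term, the standard Poincaré--Wirtinger inequality on cubes of side $h$ gives $\|g - g_Q\|_{L^p(Q)} \le Ch\|\nabla g\|_{L^p(Q)}$; summing the $p$-th powers over cubes and applying this with $g=(\delta_h^k)^n f$ (noting that $\partial_j(\delta_h^k)^n f = (\delta_h^k)^n \partial_j f$ and so $\|\nabla (\delta_h^k)^n f\|_{L^p} \le \|f\|_{W^{n+1,p}}$) gives the $O(h)$ rate for integer $\varepsilon=1$. The analogous fractional Poincaré estimate $\|g-g_Q\|_{L^p}\le Ch^\varepsilon[g]_{W^{\varepsilon,p}}$, which follows from the Gagliardo--Slobodeckij definition of $W^{\varepsilon,p}$ for $p\in(1,\infty)$, handles the fractional case after bounding $[(\delta_h^k)^n f]_{W^{\varepsilon,p}}\le \|f\|_{W^{n+\varepsilon,p}}$ (using again that $\delta_h^k$ commutes with translations and hence with the Gagliardo seminorm).

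Finally, the qualitative $o(1)$ assertion when only $f \in W^{n,p}$ ($p<\infty$) is obtained by a density argument: for $\eta>0$ choose $f_\eta \in C_c^\infty$ with $\|f - f_\eta\|_{W^{n,p}} < \eta$. Applying the uniform bound to $f - f_\eta$ and the $O(h)$ rate to $f_\eta \in W^{n+1,p}$ gives
\[
\|(\delta_h^k)^n f_Q - \partial_k^n f\|_{L^p} \le 2\|f-f_\eta\|_{W^{n,p}} + Ch\|f_\eta\|_{W^{n+1,p}},
\]
and letting $h\downarrow 0$ then $\eta\downarrow 0$ finishes the argument. The main obstacle is recognising and rigorously verifying the commutation identity in step one (which crucially uses that the cube side and the difference step coincide); once that is established, everything else reduces to standard Poincaré and translation-continuity estimates on Sobolev spaces.
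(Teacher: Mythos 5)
Your proof is correct and rests on the same two key observations as the paper's own argument: the commutation identity $(\delta_h^k)^n f_Q = ((\delta_h^k)^n f)_Q$ (crucially relying on the difference step matching the cube side), and the resulting triangle-inequality split into a Poincar\'e term $\|g_Q - g\|_{L^p}$ and a difference-versus-derivative term $\|g - \partial_k^n f\|_{L^p}$ with $g = (\delta_h^k)^n f$. Your convolution representation $\delta_h^k g = \phi_h *_k \partial_k g$ with $\phi_h = \tfrac{1}{2h}\indic_{[-h,h]}$ is a clean way to package what the paper writes as an iterated average of $\partial_k^n f$, and it immediately gives the uniform $L^p$ bound via Young.

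Where you genuinely diverge from the paper is in the fractional regularity case $\varepsilon \in (0,1)$. The paper handles this abstractly by complex interpolation of the operator $T_h f := (\delta_h^k)^n f_Q - \partial_k^n f$ between the endpoints $\|T_h\|_{W^{n,p}\to L^p} = O(1)$ and $\|T_h\|_{W^{n+1,p}\to L^p} = O(h)$, whereas you argue directly: a fractional Poincar\'e--Wirtinger inequality $\|g - g_Q\|_{L^p} \lesssim h^\varepsilon [g]_{W^{\varepsilon,p}}$ (obtained by summing local Gagliardo seminorms over cubes) together with the Besov-type translation-continuity estimate $\|g(\cdot+se_k) - g\|_{L^p} \lesssim |s|^\varepsilon [g]_{W^{\varepsilon,p}}$, which holds since $W^{\varepsilon,p} = B^\varepsilon_{p,p} \hookrightarrow B^\varepsilon_{p,\infty}$ for $p \in (1,\infty)$. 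Both routes are sound; yours is more elementary and self-contained (no interpolation machinery) at the cost of having to know or rederive the fractional Poincar\'e and translation-modulus inequalities, while the paper's two-line interpolation argument is shorter but less explicit. You also make explicit the density step that gives the qualitative $o(1)$ claim under $f \in W^{n,p}$, which the paper leaves implicit after proving the estimates for $f$ smooth.
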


\begin{proof}
The proof is stated in the appendix in Subsection \ref{sec:CD}. 
\end{proof}

\subsection{The linear Schr\"odinger equation}
\label{sec:LSE}
To complete the reduction of the PDE to a discretized finite-difference equation for the linear Schr\"odinger equation, we study the linear Schr\"odinger equation with discretized Laplacian, introduced in Definition \ref{def:derdisc} 
\begin{equation}
\begin{split}
\label{eq:Schrh}
i \partial_t \psi^h(t) &= \left(-\Delta^h +V + V_{\operatorname{TD}}(t) \right)\psi^h(t),\quad \psi^h(0) = \varphi_0.
\end{split}
\end{equation}

\begin{lemm}
\label{H22discrete}
The Schr\"odinger equation \eqref{eq:Schrh} for potentials $V,V_{\operatorname{con}} \in W^{2,\infty}(\Omega)$, a control function $u \in L^1(0,T)$, has a unique solution in $H^2(\Omega) \cap H^{1}_0(\Omega) $ such that for some recursive function 
\[
C=C(T,\Vert u \Vert_{L^1}, \Vert V\Vert_{W^{2,\infty}(\Omega)},\Vert V_{\operatorname{con}} \Vert_{W^{2,\infty}(\Omega)})
\]
we have that
\[\left\lVert \psi^h \right\rVert_{L^{\infty}((0,T),H^{2}(\Omega))}  \le C \left\lVert \varphi_0 \right\rVert_{H^{2}(\Omega)}.\]
\end{lemm}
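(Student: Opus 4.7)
The plan is to treat \eqref{eq:Schrh} as a linear inhomogeneous ODE in $L^2(\Omega)$ whose generator is, at each time, a bounded self-adjoint operator, and then to upgrade the bound to $H^2$ via a commutator-free energy estimate. The crucial structural observation is that $\Delta^h = \sum_i(\delta_h^i)^2$ is a finite linear combination of translations on $\RR^d$; extending functions by zero outside $\Omega$ (compatibly with the Dirichlet boundary) makes $\Delta^h$ a bounded, self-adjoint operator on $L^2(\Omega)$, and on $\RR^d$ it commutes with every classical derivative $\partial^\alpha$. Existence and uniqueness in $C([0,T]; L^2(\Omega))$ then follow from Banach's fixed-point theorem applied to the Duhamel integral equation
\begin{equation*}
\psi^h(t) = e^{it\Delta^h}\varphi_0 - i\int_0^t e^{i(t-s)\Delta^h}\bigl(V + u(s)V_{\operatorname{con}}\bigr)\psi^h(s)\,ds,
\end{equation*}
with contraction constant controlled by $\|V\|_{L^\infty} + |u(s)|\|V_{\operatorname{con}}\|_{L^\infty}$, integrable in $s$ thanks to $u\in L^1(0,T)$; unitarity of $e^{it\Delta^h}$ yields the conservation $\|\psi^h(t)\|_{L^2} = \|\varphi_0\|_{L^2}$.

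For the $H^2$ bound I would apply $\partial^\alpha$ with $|\alpha|\leq 2$ to \eqref{eq:Schrh}. The commutation $[\partial^\alpha,\Delta^h] = 0$ together with the Leibniz rule gives
\begin{equation*}
i\partial_t(\partial^\alpha\psi^h) = -\Delta^h(\partial^\alpha\psi^h) + \sum_{\beta\leq\alpha}\binom{\alpha}{\beta}\partial^\beta\bigl(V + u(t)V_{\operatorname{con}}\bigr)\,\partial^{\alpha-\beta}\psi^h.
\end{equation*}
Pairing with $\overline{\partial^\alpha\psi^h}$ in $L^2(\Omega)$ and taking imaginary parts, the self-adjoint Laplacian term contributes a real inner product and drops out, leaving
\begin{equation*}
\frac{d}{dt}\|\partial^\alpha\psi^h\|_{L^2}^2 \leq C\bigl(\|V\|_{W^{|\alpha|,\infty}} + |u(t)|\|V_{\operatorname{con}}\|_{W^{|\alpha|,\infty}}\bigr)\|\psi^h\|_{H^{|\alpha|}}^2.
\end{equation*}
Summing over $|\alpha|\leq 2$ and applying Gr\"onwall's inequality against the $L^1$-weight $1 + |u(t)|$ yields the recursive bound $\|\psi^h\|_{L^\infty((0,T);H^2(\Omega))} \leq C(T,\|u\|_{L^1},\|V\|_{W^{2,\infty}},\|V_{\operatorname{con}}\|_{W^{2,\infty}})\|\varphi_0\|_{H^2}$.

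The main obstacle is the boundary treatment. Naively, the zero-extension of a function in $H^2(\Omega)\cap H^1_0(\Omega)$ lies only in $H^1(\RR^d)$, so the identity $[\partial^\alpha,\Delta^h] = 0$ on all of $\RR^d$ for $|\alpha|=2$ requires justification at $\partial\Omega$. A standard density argument resolves this: approximate $\varphi_0$ in $H^2$ by $\varphi_0^{(n)}\in C_c^\infty(\Omega)$, run the interior computation rigorously for the corresponding $\psi^{h,(n)}$, and pass to the limit using uniqueness of $L^2$-solutions and weak-$*$ lower semicontinuity of $\|\cdot\|_{H^2}$. Persistence of $\psi^h(t)\in H^1_0(\Omega)$ follows because the subspace of zero-extended $H^1(\RR^d)$ functions supported in $\overline{\Omega}$ is closed under weak convergence and invariant under the unitary flow of the self-adjoint generator $-\Delta^h + V + V_{\operatorname{TD}}(t)$ with the Dirichlet convention built into $\Delta^h$.
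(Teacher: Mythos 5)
Your proposal is correct in its main ideas but follows a genuinely different route than the paper. The paper's proof is very terse: it observes that $H^h := -\Delta^h + V$ is a bounded operator on the Banach space $H^2(\Omega)\cap H^1_0(\Omega)$ (because $\Delta^h$ is a finite linear combination of translations and multiplication by $V\in W^{2,\infty}$ is bounded on $H^2$), so \eqref{eq:Schrh} is a linear ODE with bounded time-dependent generator in that space; existence and uniqueness follow from elementary Banach-space ODE theory, and the $H^2$ bound is read off from the variation-of-constants formula with propagator $e^{-itH^h}$ plus Gr\"onwall. You instead first establish existence and uniqueness in $L^2(\Omega)$ via Duhamel and a fixed-point argument (using unitarity of $e^{it\Delta^h}$ on $L^2$), and then upgrade to $H^2$ via an $L^2$ energy estimate on $\partial^\alpha\psi^h$, exploiting $[\partial^\alpha,\Delta^h]=0$ and the fact that, for real $V$, the $\beta=0$ Leibniz term contributes nothing to the imaginary part. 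A real advantage of your version is that it makes the $h$-independence of the final constant manifest: the Laplacian contribution vanishes identically by self-adjointness, and only commutator terms of order $\|V\|_{W^{2,\infty}}$ and $|u(t)|\,\|V_{\operatorname{con}}\|_{W^{2,\infty}}$ survive. By contrast, the paper's displayed inequality $\|\psi^h(t)\|_{H^2}\le\|\varphi_0\|_{H^2}+\int_0^t\|V_{\operatorname{TD}}(s)\psi^h(s)\|_{H^2}\,ds$, taken at face value, requires $\|e^{-itH^h}\|_{\mathcal L(H^2)}\le 1$, which is not immediate because $V$ does not commute with $\partial^\alpha$; to recover an $h$-independent bound from the paper's route one needs to first note that $e^{it\Delta^h}$ is unitary on $H^2(\RR^d)$ (because $\Delta^h$ commutes with $\partial^\alpha$) and run Gr\"onwall with both $V$ and $V_{\operatorname{TD}}$ in the source. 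Your energy estimate is exactly that cancellation made explicit, which is arguably a cleaner presentation.

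The one point where your argument, like the paper's, is thinner than it looks is the boundary. You correctly flag the obstacle: the zero extension of an $H^2(\Omega)\cap H^1_0(\Omega)$ function is only $H^1(\RR^d)$, and the Dirichlet version of $\Delta^h$ (extend by zero, translate, restrict to $\Omega$) does not visibly preserve $H^2(\Omega)\cap H^1_0(\Omega)$: a translate of a function vanishing on $\partial\Omega$ need not vanish there. Your proposed fix --- density in $C_c^\infty(\Omega)$, interior computation, and weak-$\ast$ lower semicontinuity --- still presupposes $H^2$-persistence for each approximating $\psi^{h,(n)}$, which faces the same obstruction after one application of the Dirichlet $\Delta^h$ (the cut-off $\indic_\Omega\Delta^h\tilde\varphi_0^{(n)}$ picks up a jump at $\partial\Omega$). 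The paper's one-sentence claim $H^h\in\mathcal L(H^2(\Omega)\cap H^1_0(\Omega))$ glosses over precisely this point, so you are not in a worse position, but the gap is real and not closed by the density step as written.
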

\begin{proof} 
The free Schr\"odinger operator defines a linear operator 
\[H^h:=-\Delta^h + V \in \mathcal L\left(  H^2(\Omega) \cap H^1_0(\Omega)\right).\]

Thus, the linear Schr\"odinger equation $ i \partial_t \psi^h(t) = \left(H^h + V_{\operatorname{TD}}(t)\right) \psi^h(t),$ with $\psi(0) = \varphi_0 \in H^2(\Omega) \cap H^1_0(\Omega)$
 has a unique solution in $H^2(\Omega) \cap H^1_0(\Omega)$ and the flow is bounded in $H^2(\Omega)$ as the variation of constant formula, which implies
 \begin{equation*}
 \begin{split}
\left\lVert \psi^h(t) \right\rVert_{H^2(\Omega)} \le \left\lVert \psi^h(0) \right\rVert_{H^2(\Omega)}+  \int_0^t  \left\lVert V_{\operatorname{TD}}(s)\psi^h(s) \right\rVert_{H^2(\Omega)} \ ds,
 \end{split}
 \end{equation*}
and Gronwall's lemma show.
\end{proof}
The next Lemma allows us to relate the dynamics defined by the linear Schr\"odinger equation with discrete Laplacian \eqref{eq:Schrh} to the dynamics of the actual linear Schr\"odinger equation \eqref{eq:bilSchr} with fixed error rate. 
We therefore consider norms 
\[
\Vert \psi\Vert_{H^1_h}:=\sqrt{\Vert \psi\Vert^2_{L^2} + \Vert\nabla^h \psi\Vert^2_{L^2}}\text{ and }\Vert \psi\Vert_{H^2_h}:=\sqrt{\Vert \psi\Vert^2_{L^{2}} + \Vert \Delta^h \psi\Vert^2_{L^{2}}}
\] 
and analyze convergence of the solution to a fully discretized equation. We record that summation by parts implies that 
\[ 
\Vert \nabla^h f_Q \Vert^2_{L^2}  \le \Vert f_Q \Vert_{L^2} \Vert \Delta^h f_Q \Vert_{L^2}.
\]

\begin{lemm}
\label{dynlem}
\underline{$L^2$-convergence:} For an initial state $\varphi_0 \in H^{2+\varepsilon}_2(\Omega) \cap H^1_0(\Omega)$ with $\varepsilon \in (0,1]$, a control function $u \in W^{1,1}_{\operatorname{pcw}}(0,T),$ and potentials $V,V_{\operatorname{con}} \in W^{2,\infty}(\Omega)$, the difference of the solution $\psi$ to the linear Schr\"odinger equation
\begin{equation*}
\begin{split}
i \partial_t \psi(x,t) &= (-\Delta+V +V_{\operatorname{TD}}(t))\psi(t), \quad \psi(0)=\varphi_0
\end{split}
\end{equation*}
and the solution $\psi^h_{\infty}$ to the discretized Schr\"odinger equation where $U_{\infty}(t):=V_Q+u(t) \left(V_{\operatorname{con}}\right)_Q$
\begin{equation}
\begin{split}
\label{eq:psiinftyl2}
i \partial_t \psi^h_{\infty}(t) &= (-\Delta^h +U_{\infty}(t))\psi^h_{\infty}(t), \quad \varphi(0) = (\varphi_0)_Q
\end{split}
\end{equation}
satisfy an error bound in terms of some recursively defined function
\[ C= C(T,\Vert u \Vert_{L^1}, \Vert V\Vert_{W^{2,\infty}},\Vert V_{\operatorname{con}} \Vert_{W^{2,\infty}},\Vert \varphi_0 \Vert_{H^{2+\varepsilon}_2}) \]
such that 
\[ \left\lVert \psi-\psi^h_{\infty} \right\rVert_{L^{\infty}((0,T),L^2)}  \le C(\Vert u \Vert_{L^1}, \Vert V\Vert_{W^{2,\infty}},\Vert V_{\operatorname{con}} \Vert_{W^{2,\infty}},\Vert \varphi_0 \Vert_{H^{2+\varepsilon}_2}) h^{\varepsilon}.\]

\medskip

\underline{$H^1_h$-convergence:} For an initial state $\varphi_0 \in H^{3+\varepsilon}_{2}(\Omega) \cap H^1_0(\Omega)$ with $\varepsilon \in (0,1]$, a control function $u \in W^{1,1}_{\operatorname{pcw}}(0,T)$ and potentials $V,V_{\operatorname{con}} \in W^{3,\infty}(\Omega)$, the difference of the solution $\psi$ to the linear Schr\"odinger equation
\begin{equation*}
\begin{split}
i \partial_t \psi(x,t) &= (-\Delta+V +V_{\operatorname{TD}}(t))\psi(t), \quad \psi(0)=\varphi_0
\end{split}
\end{equation*}
and the solution $\psi^h_{\infty}$ to the discretized Schr\"odinger equation where $U_{\infty}(t):=V_Q+u(t) \left(V_{\operatorname{con}}\right)_Q$
\begin{equation}
\begin{split}
\label{eq:psiinfty}
i \partial_t \psi^h_{\infty}(t) &= (-\Delta^h +U_{\infty}(t))\psi^h_{\infty}(t), \quad \varphi(0) = (\varphi_0)_Q
\end{split}
\end{equation}
satisfy an error bound in terms of some recursively defined function
\[ C=C(\Vert u \Vert_{W^{1,1}_{\operatorname{pcw}}}, \Vert V\Vert_{W^{3,\infty}},\Vert V_{\operatorname{con}} \Vert_{W^{3,\infty}},\Vert \varphi_0 \Vert_{H^{3+\varepsilon}_{2}}) \]
such that 
\[
 \left\lVert \psi-\psi^h_{\infty} \right\rVert_{L^{\infty}((0,T),H^1_h)}  \le C(\Vert u \Vert_{W^{1,1}_{\operatorname{pcw}}}, \Vert V\Vert_{W^{3,\infty}},\Vert V_{\operatorname{con}} \Vert_{W^{3,\infty}},\Vert \varphi_0 \Vert_{H^{3+\varepsilon}_{2}}) h^{\varepsilon}.
\]
\end{lemm}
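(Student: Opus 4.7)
The overall strategy for both parts is to write an equation for the error $\xi := \psi - \psi^h_\infty$ and run an energy estimate closed by Gronwall's inequality, the novelty sitting only in how carefully the consistency residual is bounded. Subtracting the two evolution equations, $\xi$ satisfies
\begin{equation*}
i\partial_t \xi = \bigl(-\Delta^h + U_\infty(t)\bigr)\xi + R(t), \qquad \xi(0) = \varphi_0 - (\varphi_0)_Q,
\end{equation*}
with residual
\begin{equation*}
R(t) := (\Delta^h - \Delta)\psi(t) + \bigl(V_Q - V\bigr)\psi(t) + u(t)\bigl((V_{\operatorname{con}})_Q - V_{\operatorname{con}}\bigr)\psi(t).
\end{equation*}
Since $-\Delta^h + U_\infty(t)$ is self-adjoint on $L^2$, testing against $\overline{\xi}$ and taking the imaginary part removes it, leaving $\tfrac{d}{dt}\|\xi(t)\|_{L^2}^2 \le 2\|\xi(t)\|_{L^2}\|R(t)\|_{L^2}$. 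This is the engine that drives both claims.

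For the $L^2$ statement I would first control each term in $R(t)$. Proposition \ref{convrate} applied to the spatial regularity of $\psi$, guaranteed by Lemma \ref{eos2} together with the assumed $H^{2+\varepsilon}_2$-regularity of $\varphi_0$, gives $\|(\Delta^h - \Delta)\psi(t)\|_{L^2} \lesssim h^{\varepsilon}\|\psi(t)\|_{H^{2+\varepsilon}}$. For the potential mismatch, a standard cubic-average argument shows $\|V - V_Q\|_{L^\infty} \lesssim h\|\nabla V\|_{L^\infty}$ whenever $V \in W^{1,\infty}$, and since $h \le h^{\varepsilon}$ for $h \le 1$ and $\varepsilon \in (0,1]$ this again yields an $h^{\varepsilon}$-level error; the same bound handles the $V_{\operatorname{con}}$-term after pulling out $|u(t)|$, ultimately integrated against $\|u\|_{L^1}$. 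The initial error $\|\varphi_0 - (\varphi_0)_Q\|_{L^2}$ is of order $h$ by the same averaging argument applied to $\varphi_0 \in H^1$. Gronwall's inequality then closes the first estimate with a constant depending recursively on $T$, $\|u\|_{L^1}$, $\|V\|_{W^{2,\infty}}$, $\|V_{\operatorname{con}}\|_{W^{2,\infty}}$ and $\|\varphi_0\|_{H^{2+\varepsilon}_2}$.

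For the $H^1_h$ statement I would apply $\delta_h^i$ to the error equation for each $i=1,\ldots,d$ and run the same $L^2$ energy estimate on $\eta_i := \delta_h^i \xi$. Using the discrete product rule $\delta_h^i(U_\infty \xi) = U_\infty(\cdot + h\widehat e_i)\,\eta_i + \xi(\cdot - h\widehat e_i)\,\delta_h^i U_\infty$, the self-adjoint multiplication part still cancels in the imaginary part, while the commutator contributes $\|\xi\|_{L^2}\|\delta_h^i U_\infty\|_{L^\infty}$, which is already controlled by the $L^2$ bound obtained in the previous step together with a uniform bound on $\delta_h^i V_Q$ coming from $V, V_{\operatorname{con}} \in W^{1,\infty}$. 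The main obstacle, and the reason the regularity hypotheses are tightened to $H^{3+\varepsilon}_2$ and $W^{3,\infty}$, is estimating $\delta_h^i R(t)$ in $L^2$: Proposition \ref{convrate} applied to the third-order spatial derivatives of $\psi$ (which exist by Lemma \ref{eos2} with parameter $1+\varepsilon$ applied to the higher-regularity assumption on $\varphi_0$) is what produces $\|\delta_h^i(\Delta^h - \Delta)\psi\|_{L^2} \lesssim h^{\varepsilon}\|\psi\|_{H^{3+\varepsilon}}$, and expanding $\delta_h^i[(V-V_Q)\psi]$ via the discrete product rule reduces the potential term to two pieces each of order $h \le h^{\varepsilon}$, using $V \in W^{2,\infty}$ so that $\|\delta_h^i V_Q - \partial_i V\|_{L^\infty} = O(h)$. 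Summing in $i$ and applying Gronwall to $\|\xi\|_{L^2}^2 + \|\nabla^h \xi\|_{L^2}^2$ then delivers the second estimate with the stated recursive constant.
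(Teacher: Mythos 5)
Your argument is correct and reaches the same estimate, but the decomposition is genuinely different from the paper's. The paper's proof proceeds in two stages: it first introduces an intermediate problem $i\partial_t\psi^h = (-\Delta^h + V + V_{\operatorname{TD}}(t))\psi^h$, $\psi^h(0)=\varphi_0$ (discrete Laplacian, but \emph{exact} potentials and initial datum), bounds $\xi^h := \psi - \psi^h$ by an energy/Gronwall argument against the residual $(\Delta^h - \Delta)\psi$, and only then bounds the second difference $\nu^h := \psi^h - \psi^h_\infty$ — which is where the potential- and initial-data-discretization errors enter — via Duhamel. You instead treat $\xi = \psi - \psi^h_\infty$ directly, aggregating the Laplacian, pinning-potential, control-potential, and initial-data errors into a single residual $R(t)$ and one testing argument. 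Both routes rely on the same ingredients (Proposition \ref{convrate}, the regularity provided by Lemma \ref{eos2}, self-adjointness of the discretized Hamiltonian, the discrete product rule, Gronwall), and both give the $h^\varepsilon$ rate. The advantage of your single-step version is that the residual involves only the \emph{continuous} solution $\psi$ and its regularity, so you never need the intermediate well-posedness and $H^2$-boundedness result (Lemma \ref{H22discrete}) that the paper invokes to control $\psi^h$ in its second step; the paper's split, on the other hand, cleanly separates the two error sources, which is conceptually useful and mirrors the later algorithmic structure. Two small cautions that are worth spelling out in a full write-up: (i) when you invoke Proposition \ref{convrate} to bound $\|\delta_h^i(\Delta^h-\Delta)\psi\|_{L^2}$ you need its mixed-derivative variant (the stated version treats only powers $(\delta_h^k)^n$ in a single direction); this follows by the same Taylor-plus-interpolation argument, and is implicitly used in the paper as well; (ii) the energy identity here is sharper than Gronwall: since $-\Delta^h + U_\infty(t)$ is self-adjoint, testing against $\overline{\xi}$ gives $\tfrac{d}{dt}\|\xi\|_{L^2}^2 = 2\Im\langle R,\xi\rangle$, so for the $L^2$ claim no exponential factor is needed — the bound is simply $\|\xi(t)\|_{L^2}\le\|\xi(0)\|_{L^2}+\int_0^t\|R(s)\|_{L^2}\,ds$. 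The Gronwall step is only genuinely required once the commutator term $\xi(\cdot-h\widehat e_i)\delta_h^i U_\infty$ appears in the $H^1_h$ argument.
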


\begin{proof}
We start by first replacing the Laplacian with its discretization in the above equation and then proceed by replacing the remaining quantities. Thus, we first study the auxiliary equation
\begin{equation*}
\begin{split}
i \partial_t \psi^h(t) &= (-\Delta^h +V +V_{\operatorname{TD}}(t))\psi^h(t), \quad \psi(0) = \varphi_0.
\end{split}
\end{equation*}
Subtracting the two solutions $\psi$ and $\psi^h$ from each other and introducing the auxiliary function $\xi^h:=\psi-\psi^h$ shows that 
\begin{equation*}
\begin{split}
\label{eq:diffeqlin}
i \partial_t \xi^h &= \left(-\Delta^h + V+ V_{\operatorname{TD}}(t) \right) \xi^h + \left( \Delta^h-\Delta\right)\psi, \ \xi^h(0)= 0. 
\end{split}
\end{equation*}
Multiplying by $\xi^h$, integrating over $\Omega$, and taking the imaginary part implies the claim by Gronwall's lemma, Lemma \ref{eos2}, and Proposition \ref{convrate}. 

\medskip
For the $H^1_h$ norm, we find analogously by applying the discretized gradient
\begin{equation*}
\begin{split}
\label{eq:diffeq}
i \partial_t  \nabla^h\xi^h &= -\nabla^h\Delta^h \xi^h+\nabla^h \left(\left(V+ V_{\operatorname{TD}}(t) \right) \xi^h\right) + \left( \Delta^h-\Delta\right)\nabla^h\psi, \\
 \xi^h(0)&= 0. 
\end{split}
\end{equation*}
Multiplying by $\nabla^h \overline{\xi^h}$, integrating over $\Omega$, and taking the imaginary part implies
\[ \frac{1}{2} \frac{d}{dt}  \left\Vert \nabla^h \xi^h(x) \right\Vert^2_{L^2} \ dx \lesssim (1+ \Vert V+ V_{\operatorname{TD}}(t) \Vert_{W^{1,\infty}}^2) \Vert \xi^h \Vert_{H^1_h}^2 + \left\lVert  \left( \Delta^h-\Delta\right)\nabla^h\psi \right\Vert_{L^2}^2 \]  which yields the claim by Gronwall's lemma, Lemma \ref{eos2}, and Proposition \ref{convrate}. 

Finally, let $\nu^h:=\psi^h-\psi^h_{\infty}$ then we have for the differences in the $L^2$ and $H^1_h$ norm
\begin{equation*}
\begin{split}
\left\Vert \nu^h(t) \right\Vert_{L^2} &\le \left\Vert \nu^h(0) \right\Vert_{L^2} + \int_0^t \Big(\left\Vert (V+V_{\operatorname{TD}}(s)-U_{\infty}(s)) \right\rVert_{L^{\infty}} \left\Vert \psi^{h}(s) \right\Vert_{L^2} \\
& \qquad \qquad \qquad \qquad \qquad + \left\Vert U_{\infty}(s)\right\rVert_{L^{\infty}} \left\Vert \nu^h(s) \right\Vert_{L^2}  \Big )\ ds
 \end{split}
\end{equation*}
and similarly in $H^1_h$ norm
\begin{equation*}
\begin{split}
\left\Vert \nu^h(t) \right\Vert_{H^1_h} &\le \left\Vert \nu^h(0) \right\Vert_{H^1_h} + \int_0^t \Big(\left\Vert (V+V_{\operatorname{TD}}(s)-U_{\infty}(s)) \right\rVert_{W^{1,\infty}_h} \left\Vert \psi^{h}(s) \right\Vert_{H^1_h} \\
& \qquad \qquad \qquad \qquad \qquad + \left\Vert U_{\infty}(s)\right\rVert_{W^{1,\infty}_h} \left\Vert \nu^h(s) \right\Vert_{H^1_h}  \Big )\ ds
 \end{split}
\end{equation*}
respectively. By Gronwall's lemma this implies the claim, and in particular also the recursivity of $C$, as we have 
$
  \left\Vert \nu^h(0) \right\Vert_{L^2},\left\Vert \nu^h(0) \right\Vert_{H^1_h}= \mathcal O(h^{\varepsilon})
 $ 
and
\[  \left\Vert (V+V_{\operatorname{TD}}(s)-U_{\infty}(s)) \right\rVert_{L^{\infty}},\left\Vert (V+V_{\operatorname{TD}}(s)-U_{\infty}(s)) \right\rVert_{W^{1,\infty}_h} = \mathcal O(h)\]
by the assumptions on the initial states and potentials, again using Proposition \ref{convrate}.
\end{proof}

 We now analyze the convergence of a Crank-Nicholson discretization scheme with time step $\tau:=t_{k+1}-t_k$ for the linear Schr\"odinger equation \eqref{eq:psiinfty}:
For the linear Schr\"odinger equation we use an (implicit) Crank-Nicholson scheme
\begin{equation}
\begin{split}
\label{eq:discrule}
i \left(\frac{\psi^{h}_{\operatorname{CN}}(t_{k+1})-\psi^{h}_{\operatorname{CN}}(t_{k})}{\tau} \right) &= \frac{1}{2} \left(-\Delta^h + V_Q  \right) \left(\psi^{h}_{\operatorname{CN}}(t_{k+1})+\psi^{h}_{\operatorname{CN}}(t_{k}) \right), \\
 \psi^{h}_{\operatorname{CN}}(0) &:= (\varphi_0)_Q.
\end{split}
\end{equation}

\begin{prop}
\label{prop:Schr}
Consider the solution to the linear time-independent Schr\"odinger equation
\begin{equation}
\begin{split}
\label{eq:trueone}
i \partial_t \psi_{\infty}^h(t) &= \left(-\Delta^h +V_Q\right)\psi^h_{\infty}(t), \quad \psi^h_{\infty}(0) = (\varphi_0)_Q,
\end{split}
\end{equation}
with bounded potentials $V \in L^{\infty}(\Omega)$ and initial datum $\varphi_0 \in H^{2+\varepsilon}_2(\Omega) \cap H^1_0(\Omega).$ The solution $\psi^{h}_{\operatorname{CN}}$ obtained from the Crank-Nicholson method \eqref{eq:discrule} preserves the $L^2$ norm, is $H^2_h$ bounded, and convergent in both $L^2$ and $H^2_h$, such that for some recursively defined function $C\equiv C(\Vert V \Vert_{L^{\infty}}, \Vert \varphi_0\Vert_{H^2_h} )>0$ 
and any $k$
\begin{equation}
\begin{split}
\label{eq:estimateII}
 \left\lVert \psi_{\infty}^h (t_{k})-\psi_{\operatorname{CN}}^h (t_{k}) \right\rVert_{L^2(\Omega)} &\le  \left\lVert \psi_{\infty}^h (t_{0})-\psi_{\operatorname{CN}}^h (t_{0}) \right\rVert_{L^2(\Omega)} + \tfrac{kC\tau^3}{h^4} \text{ and }\\
 \left\lVert \psi_{\infty}^h (t_{k})-\psi_{\operatorname{CN}}^h (t_{k}) \right\rVert_{H^2_h(\Omega)} &\le  \left\lVert \psi_{\infty}^h (t_{0})-\psi_{\operatorname{CN}}^h (t_{0}) \right\rVert_{H^2_h(\Omega)} + \tfrac{kC \tau^3}{h^6}.
\end{split}
\end{equation}
In particular, let $\tau= o(h^{4/3})$ or $\tau=o(h^2)$ respectively, then the above scheme is convergent.
\end{prop}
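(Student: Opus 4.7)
The plan is to rewrite the Crank-Nicholson iteration as the action of the Cayley transform of the time-independent, bounded, self-adjoint operator $H := -\Delta^h + V_Q$, and then control the per-step truncation error via the functional calculus combined with a scalar Padé-vs-exponential bound. Concretely, \eqref{eq:discrule} is equivalent to $\psi^{h}_{\operatorname{CN}}(t_{k+1}) = U_\tau \psi^{h}_{\operatorname{CN}}(t_k)$ with $U_\tau := (I + i\tau H/2)^{-1}(I - i\tau H/2)$, while $\psi^{h}_{\infty}(t_{k+1}) = e^{-i\tau H}\psi^{h}_{\infty}(t_k)$ because $H$ is time-independent. Since $(\delta^i_h)^* = -\delta^i_h$ makes $\Delta^h$ self-adjoint on $L^2(\Omega)$ and $V_Q$ is a bounded real multiplier, $H$ is bounded self-adjoint, so $U_\tau$ is unitary. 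This yields $L^2$ conservation immediately; moreover, $U_\tau$ is a Borel function of $H$ and hence commutes with $H$, so iteration gives $\|H \psi^h_{\operatorname{CN}}(t_k)\|_{L^2} = \|H (\varphi_0)_Q\|_{L^2}$, which together with $\|\Delta^h \psi\|_{L^2} \le \|H\psi\|_{L^2} + \|V_Q\|_{L^\infty}\|\psi\|_{L^2}$ gives the uniform $H^2_h$-boundedness of the iterates.

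For the convergence estimates, consider the scalar function $g(z) := e^{-iz} - (1-iz/2)/(1+iz/2)$. A Taylor expansion around $0$ gives $g(z) = -iz^3/12 + O(z^4)$, while $|g(z)| \le 2$ uniformly on $\mathbb{R}$; since $g(z)/|z|^3$ extends continuously to $\mathbb{R}\cup\{\pm\infty\}$ with finite values at $0$ and at infinity, it is uniformly bounded, giving $|g(z)| \le C|z|^3$ for all $z \in \mathbb{R}$. The Borel functional calculus of $H$ then yields
\begin{equation*}
\|(e^{-i\tau H} - U_\tau)\psi\|_{L^2} \le C\tau^3 \|H^3\psi\|_{L^2}, \qquad \psi \in L^2(\Omega).
\end{equation*}

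Setting $e_k := \psi^{h}_{\infty}(t_k) - \psi^{h}_{\operatorname{CN}}(t_k)$ and $r_k := (e^{-i\tau H} - U_\tau)\psi^{h}_{\infty}(t_k)$, one has $e_{k+1} = U_\tau e_k + r_k$. Unitarity of $U_\tau$ telescopes this to $\|e_k\|_{L^2} \le \|e_0\|_{L^2} + \sum_{j=0}^{k-1}\|r_j\|_{L^2}$; since $H$ commutes with $e^{-it_j H}$ and the latter is unitary, each $\|r_j\|_{L^2}$ is bounded by $C\tau^3\|H^3(\varphi_0)_Q\|_{L^2}$, independently of $j$. Using $\|\Delta^h\|_{\mathcal{L}(L^2)} = O(h^{-2})$, and hence $\|H\|_{\operatorname{op}} = O(h^{-2})$, together with the uniform bound $\|H(\varphi_0)_Q\|_{L^2} = O(\|\varphi_0\|_{H^{2+\varepsilon}_2})$ from Proposition~\ref{convrate}, we obtain $\|H^3(\varphi_0)_Q\|_{L^2} \le \|H\|^2_{\operatorname{op}}\|H(\varphi_0)_Q\|_{L^2} \le C h^{-4}$, which yields the first bound in \eqref{eq:estimateII}. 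For the $H^2_h$ bound, apply $H$ throughout: by commutativity, $He_{k+1} = U_\tau(He_k) + (e^{-i\tau H} - U_\tau)(H\psi^h_\infty(t_k))$, and the latter remainder is $\le C\tau^3 \|H^4 (\varphi_0)_Q\|_{L^2} \le Ch^{-6}$; converting $\|He_k\|_{L^2}$ to $\|\Delta^h e_k\|_{L^2}$ via $\Delta^h = V_Q - H$ only adds a lower-order multiple of $\|e_k\|_{L^2}$ already controlled.

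The principal subtlety is establishing the global pointwise estimate $|g(z)| \le C|z|^3$ uniformly on $\mathbb{R}$: the third-order Taylor expansion only yields this near $0$, and one must separately verify that the Cayley symbol $(1-iz/2)/(1+iz/2)$ has modulus exactly $1$ as $|z|\to\infty$ so that $g$ stays bounded there, allowing $g(z)/|z|^3$ to decay. Once this pointwise bound is in place, the remainder of the argument reduces to telescoping and exploiting $H$-invariance; the convergence conditions $\tau = o(h^{4/3})$ and $\tau = o(h^2)$ then follow directly from requiring the per-step errors $\tau^3/h^4$ and $\tau^3/h^6$ respectively to tend to zero.
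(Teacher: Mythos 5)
Your proof is correct and takes essentially the same route as the paper: the Cayley-transform rewriting of Crank--Nicholson, unitarity for $L^2$ conservation and commutation with $H$ for $H^2_h$ boundedness, the functional-calculus reduction to the scalar bound $\lvert e^{-iz}-(1-iz/2)/(1+iz/2)\rvert \lesssim \lvert z\rvert^3$, the operator-norm estimate $\lVert H\rVert_{\mathcal L(L^2)} = O(h^{-2})$ giving the $h^{-4}$ and $h^{-6}$ factors, and telescoping. Your split into $e_k$ and $r_k$ is a cosmetic repackaging of the paper's $\zeta^h(t_k)$.
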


\begin{proof}
$L^2$-norm preservation follows immediately from the Cayley transform representation:  
That is, in terms of the self-adjoint operator $H^h_{\infty}:= -\Delta^h+ V_Q,$ the Crank-Nicholson method reads 
\begin{equation}\label{eq:the_C}
\begin{split}
\psi_{\operatorname{CN}}^h (t_{k+1}) &= \left(1-\frac{i\tau}{2} H^h_{\infty}\right)\left(1+\frac{i\tau}{2} H^h_{\infty}\right)^{-1} \psi_{\operatorname{CN}}^h(t_k)=:\mathcal C^{\tau}_{h} \psi_{\operatorname{CN}}^h(t_k).
\end{split}
\end{equation}
Thus, it follows that
\[ \left\lVert H^h_{\infty} \psi_{\operatorname{CN}}^h (t_{k+1}) \right\rVert_{L^2} =  \left\lVert H^h_{\infty} \psi_{\operatorname{CN}}^h (t_{k}) \right\rVert_{L^2}.\]
On the other hand, we have that 
\begin{equation*}
\begin{split}
&\Vert H^h_{\infty} f_Q \Vert_{L^2} \le \Vert \Delta^h f_Q \Vert_{L^2} + \Vert V_Q f_Q \Vert_{L^2} \le \Vert \Delta^h f_Q \Vert_{L^2} + \Vert V_Q \Vert_{L^{\infty}} \Vert  f_Q \Vert_{L^2} \text{ and } \\
&\Vert \Delta^h f_Q \Vert_{L^2} \le \Vert  H^h_{\infty} f_Q \Vert_{L^2} + \Vert V_Q f_Q \Vert_{L^2} \le \Vert H^h_{\infty} f_Q \Vert_{L^2} + \Vert V_Q \Vert_{L^{\infty}} \Vert  f_Q \Vert_{L^2}
\end{split}
\end{equation*}
which shows the equivalence of norms 
$
\Vert H^h_{\infty} f_Q \Vert_{L^2} + \Vert f_Q \Vert_{L^2}
$
and
$
\Vert \Delta^h f_Q \Vert_{L^2} + \Vert f_Q \Vert_{L^2}.
$

We can decompose the solution to \eqref{eq:trueone}, into the output from the Crank-Nicholson method and an error term $\zeta^h(t_k)$
\begin{equation*}
\psi_{\infty}^h (t_{k+1}) = e^{-iH^h_{\infty}\tau} \psi_{\infty}^h (t_{k})  = \mathcal C^{\tau}_{h} \psi_{\infty}^h(t_k) + \zeta^{h}(t_k).
\end{equation*}
 
Hence, we conclude by the functional calculus for the bounded self-adjoint operator $H^h_{\infty}$ with spectrum $\sigma(H_{\infty}^h)$ and eigenfunctions ($\varphi_n^h$) that 
for any Borel function $f:\mathbb R \rightarrow \mathbb R$
\[ \left\lVert f(H_{\infty}^h) \psi_h \right\rVert_{L^2(\Omega)}^2   = \sum_{\lambda \in \sigma(H_{\infty}^h)} \vert f(\lambda) \vert^2  \vert \langle  \psi_h, \varphi_n^h \rangle_{L^2(\Omega)} \vert^2. \]
If we combine this with the fact that for $\lambda \in \mathbb R$ and $t \in [0,T]$ there is $C_T>0$ such that
\[ \left\vert e^{-it\lambda}-\left(\frac{1-i\frac{t}{2} \lambda}{1+i\frac{t}{2} \lambda} \right) \right\vert  \le C_T t^3 \vert \lambda \vert^3,\]
we see that there is some constant $C>0$ independent of $h$ and $\tau$ such that 
\begin{equation*}
\begin{split}
\left\lVert  \zeta^{h}(t_k)\right\rVert_{L^2(\Omega)} &\le \left\lVert \left(e^{-iH_{\infty}^h(t_k)\tau}  - \mathcal C^{\tau}_{h} \right)\psi_{\infty}^h (t_{k}) \right\rVert_{L^2(\Omega)}\\
&\le \widetilde{C}  \tau^3 \left\lVert (H^h_{\infty})^3\psi_{\infty}^h (t_{k}) \right\rVert_{L^2(\Omega)}\\
&\le C(\Vert (\varphi_0)_Q \Vert_{H^2_h}) \tau^3/h^4 \text{ and similarly} \\
\left\lVert  \zeta^{h}(t_k)\right\rVert_{H^2_h(\Omega)} &\le C\tau^3/h^6.
\end{split}
\end{equation*}
Here, we used that $\Vert H^h_{\infty} \Vert =\mathcal O(h^{-2}).$ In particular, this computation implies that $C$ is recursively defined.
We notice that since $\varphi_0 \in H^{2+\varepsilon}$ the expression $\Vert (\varphi_0)_Q \Vert_{H^2_h}$ remains uniformly bounded as $h \downarrow 0.$
This implies that 
\begin{equation*}
\begin{split} \left\lVert \psi_{\infty}^h (t_{k+1})-\psi_{\operatorname{CN}}^h (t_{k+1}) \right\rVert_{L^2(\Omega)} 
&\le  \left\lVert \psi_{\infty}^h (t_{k})-\psi_{\operatorname{CN}}^h (t_{k}) \right\rVert_{L^2(\Omega)} + C\tau^3/h^4 \\
&\le  \left\lVert \psi_{\infty}^h (t_{0})-\psi_{\operatorname{CN}}^h (t_{0}) \right\rVert_{L^2(\Omega)} + (k+1)C\tau^3/h^4 \text{ and }\\
 \left\lVert \psi_{\infty}^h (t_{k+1})-\psi_{\operatorname{CN}}^h (t_{k+1}) \right\rVert_{H^2_h(\Omega)} 
&\le  \left\lVert \psi_{\infty}^h (t_{0})-\psi_{\operatorname{CN}}^h (t_{0}) \right\rVert_{H^2_h(\Omega)} + (k+1)C\tau^3/h^6.
\end{split}
\end{equation*}
\end{proof}

\subsection{Time-dependent Schr\"odinger equation, defocusing NLS \& Strang splitting scheme} 

\medskip

We start by first discussing how to include a time-dependent potential to the numerical analysis of linear Schr\"odinger evolutions on bounded domains:

\subsubsection{Time-dependent linear Schr\"odinger equation}

Consider first the time-independent Schr\"odinger operator $(H\psi)= -\Delta \psi + V\psi$ on a bounded domain $\Omega \subset \mathbb R^n.$
The solution to the linear Schr\"odinger equation \eqref{eq:bilSchr} can be obtained from separating the time-dependent part from the time-homogeneous part using the following Strang splitting scheme:

By writing $t_k:= k \tau$ and $\tau$ for the time step, the Strang splitting scheme, corresponding to the midpoint rule in the integral, in one time step and continuous space is given on bounded domains by
\begin{equation}
\begin{split}
\label{eq:Stranglin}
&(1) \quad \psi^-_{k+\tfrac{1}{2}} := e^{-\frac{i\tau H}{2}}\psi^{\operatorname{lin}}_{\operatorname{S}}(t_k), \\
&(2) \quad \psi^+_{k+\tfrac{1}{2}}:=\exp\left( \tau X_{t_k}^{\operatorname{lin}} \right) \psi_{k+\tfrac{1}{2}}^-, \operatorname{ and }\\
&(3) \quad \psi^{\operatorname{lin}}_{\operatorname{S}}(t_{k+1}) :=e^{-\frac{i\tau H}{2}}\psi^+_{k+\tfrac{1}{2}}
\end{split}
\end{equation}
with initial condition $\psi^{\operatorname{lin}}_{\operatorname{S}}(0):=\varphi_0$ and $X_{t_k}^{\operatorname{lin}}:=-iV_{\operatorname{TD}}(t_k)\psi.$ The approach in \eqref{eq:Stranglin} is of course an idealised setting that has to be approximated:
The Strang splitting for the cubic discretization then satisfies with $\mathcal C^{\tau}_h$, the Crank-Nicholson method defined in \eqref{eq:discrule},
\begin{equation}
\begin{split}
\label{eq:Stranglin2}
&(1) \quad \varphi^-_{k+\tfrac{1}{2}} := \mathcal C^{\tau/2}_h \varphi^{\operatorname{lin}}_{\operatorname{S}}(t_k), \\
&(2) \quad \varphi^+_{k+\tfrac{1}{2}}:=\exp_K\left(-i \tau (X_{t_k}^{\operatorname{lin}})_Q \right) \varphi_{k+\tfrac{1}{2}}^-, \operatorname{ and }\\
&(3) \quad \varphi^{\operatorname{lin}}_{\operatorname{S}}(t_{k+1}) :=\mathcal C^{\tau/2}_h \varphi^+_{k+\tfrac{1}{2}}
\end{split}
\end{equation}
where $\varphi^{\operatorname{lin}}_{\operatorname{S}}(0):=(\varphi_0)_Q$, $(X_{t_k}^{\operatorname{lin}})_Q:=-iV_{\operatorname{TD}}(t_k)_Q$, and the function 
\begin{equation}
\label{eq:expK}
\operatorname{exp}_K(x):= \sum_{n=0}^K  \frac{(-1)^n}{(2n)!}x^{2n} + i \frac{(-1)^n}{(2n+1)!} x^{2n+1}.
\end{equation}
For our subsequent error analysis of the above scheme, we need the following technical Lemma:
\begin{lemm}
\label{expK}
For every $\varepsilon>0$ and $r \in (0,\infty)$ there is a recursive map $(\varepsilon,r)  \mapsto K(\varepsilon, r) \in \mathbb N$ such that for all $x \in [-r,r]$
$
\left\lvert  \operatorname{exp}(ix)- \operatorname{exp}_K(ix) \right\rvert \le \varepsilon. 
$
\end{lemm}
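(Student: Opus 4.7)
The plan is to recognize $\exp_K(ix)$ as the Taylor polynomial of $e^{ix} = \cos(x) + i\sin(x)$ about $0$, truncated so that the real (cosine) part has degree $2K$ and the imaginary (sine) part has degree $2K+1$. Applying the Lagrange remainder theorem separately to $\cos$ and $\sin$ yields, for every $x \in [-r,r]$,
\begin{equation*}
|\exp(ix) - \exp_K(ix)| \;\le\; \frac{r^{2K+2}}{(2K+2)!} + \frac{r^{2K+3}}{(2K+3)!} \;=:\; \Psi(K,r).
\end{equation*}

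It then suffices to produce, from the inputs $(\varepsilon,r)$, an integer $K$ with $\Psi(K,r) \le \varepsilon$; such a $K$ exists because $r^n/n! \to 0$ as $n \to \infty$. For the recursive construction I would define $K(\varepsilon,r)$ to be the output of a simple while-loop: starting with $K=0$, increment $K$ until the inequality $\Psi(K,r) \le \varepsilon$ holds. The quantity $\Psi(K,r)$ requires only finitely many multiplications, divisions, and comparisons applied to $r$, $\varepsilon$, and explicit rationals, which is admissible in the BSS model used throughout the paper (and in the Turing model for computable real inputs).

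Termination of the loop is guaranteed by a geometric tail bound: once $2K+3 \ge 2er$ (say), the ratio $\Psi(K+1,r)/\Psi(K,r)$ is bounded above by $\tfrac{1}{2}$ by direct inspection of the factorial ratios, so $\Psi(K,r)$ decays geometrically past this threshold and crosses any $\varepsilon > 0$ in finitely many steps. This finite halting time depends only on $(\varepsilon,r)$, which delivers the required recursive map $K$. There is essentially no hard step; the only point requiring care is that the elementary quantity $\Psi(K,r)$ is produced by the permitted arithmetic operations, which is immediate.
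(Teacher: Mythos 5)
Your proof is correct and takes essentially the same approach as the paper: both bound the Taylor remainders of $\cos$ and $\sin$ on $[-r,r]$ by $\frac{r^{2K+2}}{(2K+2)!}+\frac{r^{2K+3}}{(2K+3)!}$. The only cosmetic difference is in the final step establishing recursiveness of $(\varepsilon,r)\mapsto K$: the paper invokes Stirling's approximation $N! \ge \sqrt{2\pi}\,N^{N+1/2}e^{-N}$ to give an explicit threshold for $K$, whereas you run a terminating while-loop justified by geometric decay of the remainder once $2K+3 \gtrsim r$ — both are admissible in the BSS model.
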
  
\begin{proof}
From Taylor's formula, we have
\begin{equation*}
\begin{split}
&\left\lvert  \operatorname{exp}(ix)-\left( \sum_{n=0}^N  \frac{(-1)^n}{(2n)!}x^{2n} + i \frac{(-1)^n}{(2n+1)!} x^{2n+1} \right) \right\rvert \\
&\le \left\lvert  \cos(x)-\sum_{n=0}^N  \frac{(-1)^n}{(2n)!}x^{2n} \right\rvert + \left\lvert  \sin(x) - \sum_{n=0}^N \frac{(-1)^n}{(2n+1)!} x^{2n+1} \right\rvert \\
&\le \left\lvert   \frac{x^{2N+2}}{(2N+2)!} \right\rvert + \left\lvert   \frac{x^{2N+3}}{(2N+3)!}  \right\rvert.
\end{split}
\end{equation*}
By Stirling's approximation $N! \ge \sqrt{2\pi} N^{N+1/2} e^{-N},$ it follows that $\frac{x^N}{N!} \le \left( \frac{x}{eN} \right)^N \frac{1}{\sqrt{2\pi N}}$ which implies the claim. \end{proof}

We then get the following convergence result:

\begin{prop}
\label{prop:lintime}
Consider an initial state $\varphi_0 \in H^{2+\varepsilon}_2(\Omega) \cap H^1_0(\Omega)$ with $\varepsilon \in (0,1)$, controls $u \in W^{1,1}_{\operatorname{pcw}}(0,T)$, and potentials $V,V_{\operatorname{con}} \in W^{2,\infty}(\Omega)$.
Then there exist recursive maps 
\begin{equation*}
\begin{split}
&T, \Vert \varphi_0 \Vert_{H^{2+\varepsilon}},\Vert V \Vert_{W^{2,\infty}},\Vert V_{\operatorname{con}} \Vert_{W^{2,\infty}}, h,\Vert u \Vert_{W^{1,1}_{\operatorname{pcw}}}\\
& \qquad \mapsto \tau(T,\Vert \varphi_0 \Vert_{H^{2+\varepsilon}},\Vert V \Vert_{W^{2,\infty}},\Vert V_{\operatorname{con}} \Vert_{W^{2,\infty}}, h,\Vert u \Vert_{W^{1,1}_{\operatorname{pcw}}})
\end{split}
\end{equation*} 
and 
\begin{equation*}
\begin{split}
&T,\tau, \Vert \varphi_0 \Vert_{H^{2+\varepsilon}},\Vert V \Vert_{W^{2,\infty}},\Vert V_{\operatorname{con}} \Vert_{W^{2,\infty}},  h,\Vert u \Vert_{W^{1,1}_{\operatorname{pcw}}}  \\
& \qquad \mapsto K(T,\tau, \Vert \varphi_0 \Vert_{H^{2+\varepsilon}},\Vert V \Vert_{W^{2,\infty}},\Vert V_{\operatorname{con}} \Vert_{W^{2,\infty}}, h,\Vert u \Vert_{W^{1,1}_{\operatorname{pcw}}})
\end{split}
\end{equation*}
such that the solution $\varphi_S$ obtained from the Strang splitting scheme \eqref{eq:Stranglin2} satisfies with respect to the full solution $\psi$ of \eqref{eq:bilSchr} for some recursively-defined function
\[
C=C(T, \Vert \varphi_0 \Vert_{H^{2+\varepsilon}}, \Vert V\Vert_{W^{2,\infty}},\Vert V_{\operatorname{con}}  \Vert_{W^{2,\infty}}, \Vert u \Vert_{W^{1,1}_{\operatorname{pcw}}})
\] 
the bound
\[ \max_{k} \Vert \psi(\tau k) - \varphi^{\operatorname{lin}}_{S}(\tau k) \Vert_{L^2} \le C \Vert \varphi_0 -(\varphi_0)_Q \Vert_{L^2}.\]
\end{prop}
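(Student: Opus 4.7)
The plan is to interpolate between $\psi$ and $\varphi^{\operatorname{lin}}_S$ via a chain of four auxiliary evolutions, each differing from the previous one by a single modification controlled by one of the preceding lemmas, and then to choose $\tau$ and $K$ recursively so that every error contribution is dominated by a constant multiple of $\Vert \varphi_0 - (\varphi_0)_Q \Vert_{L^2}$. With $H^h := -\Delta^h + V_Q$ and $M_k := u(t_k)(V_{\operatorname{con}})_Q$, the intermediate states are: (a) $\psi$ itself; (b) $\widetilde\psi$, solving \eqref{eq:bilSchr} with $u$ replaced by its piecewise-constant interpolant $u_\tau(t) := u(t_k)$ on $[t_k,t_{k+1})$; (c) $\widetilde\psi_h$, additionally replacing $-\Delta$ by $-\Delta^h$, the potentials by their cubic approximations, and the initial datum by $(\varphi_0)_Q$; (d) $\psi_h^{\mathrm{Sp}}$, applying on each step the exact Strang product $e^{-i\tau H^h/2} e^{-i\tau M_k} e^{-i\tau H^h/2}$; and (e) $\varphi^{\operatorname{lin}}_S$, the scheme output, which replaces $e^{-i\tau H^h/2}$ by the Crank--Nicholson propagator $\mathcal{C}^{\tau/2}_h$ of \eqref{eq:the_C} and $e^{-i\tau M_k}$ by the Taylor truncation $\exp_K$.

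Each transition is handled by a preceding result. For (a)$\to$(b), Lemma \ref{pertcon} gives $\Vert \psi - \widetilde\psi \Vert_{L^\infty((0,T);L^2)} \lesssim \Vert u - u_\tau \Vert_{L^1} = O(\tau)$. For (b)$\to$(c), Lemma \ref{dynlem} together with the $H^{2+\varepsilon}_2$ regularity guaranteed by Lemma \ref{eos2} (applied with the piecewise-constant control, which still lies in $W^{1,1}_{\operatorname{pcw}}$) yields an error of order $h^\varepsilon$ plus the unavoidable initial-datum discrepancy $\Vert \varphi_0 - (\varphi_0)_Q \Vert_{L^2}$, which is preserved by the $L^2$-unitarity of both flows. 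The Strang step (c)$\to$(d) is a classical BCH estimate: since $H^h$ and $M_k$ are bounded self-adjoint operators on the finite discretisation of $\Omega$, the local error satisfies
\begin{equation*}
\bigl\Vert e^{-i\tau(H^h+M_k)} - e^{-i\tau H^h/2}\, e^{-i\tau M_k}\, e^{-i\tau H^h/2} \bigr\Vert \le C\tau^3 \bigl(\Vert[H^h,[H^h,M_k]]\Vert + \Vert[M_k,[H^h,M_k]]\Vert\bigr),
\end{equation*}
and because $[-\Delta^h, M_k]$ gains one discrete derivative, these commutator norms acting on states of bounded $H^2_h$-norm are controlled by $\Vert V_{\operatorname{con}} \Vert_{W^{2,\infty}}$ together with a state-norm factor, yielding cumulative error $O(T\tau^2)$. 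Finally (d)$\to$(e) splits into the Crank--Nicholson error from Proposition \ref{prop:Schr}, of cumulative size $O(T\tau^2/h^4)$ in $L^2$, and the Taylor-truncation error $\exp \to \exp_K$ controlled per step by Lemma \ref{expK} with an arbitrarily small prescribed bound via a recursive choice of $K$; unitarity of $\mathcal{C}^{\tau/2}_h$ and closeness of $\exp_K$ to $\exp$ make errors accumulate only linearly in the $T/\tau$ time steps.

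Summing by triangle inequality yields a total bound of the form $C_1\tau + C_2(h^\varepsilon + \eta) + C_3\tau^2 + C_4 \tau^2 h^{-4} + E_K$, where $\eta := \Vert \varphi_0 - (\varphi_0)_Q \Vert_{L^2}$, $E_K$ is the cumulative Taylor-truncation contribution tending to $0$ as $K\to\infty$, and the constants $C_j$ depend recursively only on $T$, $\Vert \varphi_0 \Vert_{H^{2+\varepsilon}_2}$, $\Vert V \Vert_{W^{2,\infty}}$, $\Vert V_{\operatorname{con}} \Vert_{W^{2,\infty}}$ and $\Vert u \Vert_{W^{1,1}_{\operatorname{pcw}}}$. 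I would then define $\tau$ as a recursive function of these inputs and $h$ by requiring $C_1\tau + (C_3 + C_4 h^{-4})\tau^2 \le \eta$ (admissible for instance with $\tau := c\min\{\eta,\, h^2\sqrt\eta\}$ for an explicit small $c$), choose $K$ via Lemma \ref{expK} so that $E_K \le \eta$, and absorb the residual $C_2 h^\varepsilon$-term into $C\eta$ using the standard $H^1$-based upper bound $\eta \le Ch\Vert \nabla\varphi_0 \Vert_{L^2}$. The main obstacle is twofold: first, the $h^{-4}$ blow-up appearing in both the Crank--Nicholson estimate and, implicitly, in the Strang BCH constants through $\Vert H^h \Vert = O(h^{-2})$ forces $\tau$ to be chosen much smaller than $h^2$, and one must verify that the resulting map $(\cdots,h) \mapsto \tau$ is still recursive in all listed inputs; second, the BCH and Crank--Nicholson estimates require uniform $H^2_h$-boundedness of the iterates, so one must show that the scheme \eqref{eq:Stranglin2} propagates $H^2_h$-boundedness over all $\lfloor T/\tau\rfloor$ steps, which follows by combining the $H^2_h$-stability of $\mathcal{C}^{\tau/2}_h$ supplied by Proposition \ref{prop:Schr} with a perturbative control of $\exp_K$ against $\exp$ in $H^2_h$-norm.
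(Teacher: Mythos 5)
Your architecture of interpolating from the exact flow to the scheme output through a chain of intermediate evolutions matches the paper's high-level plan, but the key technical device differs and the difference exposes a real gap. The paper performs the Strang-splitting error analysis entirely at the continuous level, using the Lie-calculus and variation-of-constants framework (Lubich's approach, Step (1) of the paper's proof, with the remainder functions $r_1,r_2$ and the midpoint Peano-kernel estimate); the commutator $[-\Delta+V,\,u(t)V_{\operatorname{con}}]$ loses exactly one derivative and is then absorbed by the $H^{2+\varepsilon}$ bound on the \emph{continuous} solution from Lemmas \ref{eos}, \ref{eos2}, giving a local splitting error $O(\tau^2)$ uniformly in $h$. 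Discretisation enters only afterwards, through Lemma \ref{dynlem} and the Crank--Nicholson estimate of Proposition \ref{prop:Schr}, so the $\tau^3 h^{-4}$ factor appears only there. You instead perform the splitting at the discrete level and cite the operator-norm BCH bound for $e^{-i\tau(H^h+M_k)}$. As written, the left-hand side of your displayed inequality is an operator norm, so the commutator norms on the right must also be read in operator norm, and then $\Vert[H^h,[H^h,M_k]]\Vert = O(h^{-4})$ simply because $\Vert H^h\Vert = O(h^{-2})$. The subsequent sentence asserting that "these commutator norms acting on states of bounded $H^2_h$-norm are controlled \ldots yielding cumulative error $O(T\tau^2)$" is a non-sequitur: the refined local-error representation in which the double commutators are evaluated on the iterate (rather than measured in operator norm) is exactly what the paper's variation-of-constants argument establishes, and it does not follow from the crude BCH estimate you wrote down. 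Indeed you later acknowledge the $h^{-4}$ blow-up, so the earlier $O(T\tau^2)$ claim is internally contradicted, and the constants in your triangle-inequality bookkeeping are wrong as stated. The $H^2_h$-stability of the iterates that you flag at the end — something the paper avoids needing entirely, because it only uses regularity of the continuous solution in the splitting step — becomes load-bearing rather than a footnote in your discrete-level version, and you have not supplied it.

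A smaller issue: defining $\tau := c\min\{\eta,\, h^2\sqrt{\eta}\}$ with $\eta := \Vert\varphi_0-(\varphi_0)_Q\Vert_{L^2}$ makes $\tau$ depend on a quantity not among the declared arguments of the recursive map. Replacing $\eta$ by the computable majorant $C'h\Vert\varphi_0\Vert_{H^1}$ repairs recursiveness, but then dominating the remaining $\tau$- and $h$-dependent error contributions by a multiple of $\eta$ would require a \emph{lower} bound on $\eta$, which none of the listed quantities supply. This tension is latent in the paper's own phrasing, but your explicit construction surfaces it and should be addressed rather than silently absorbed into the constant.
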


Next, we turn to the numerical analysis of nonlinear Schr\"odinger equations.

\subsubsection{Nonlinear Schr\"odinger equation}\label{sec:defoc}

We now extend the scheme to the nonlinear Schr\"odiger equation, cf. also \cite{L08}. For the NLS, we require an $H^1_h$ convergent scheme on a bounded domain $\Omega \subset \mathbb R^n$ to control the nonlinearity.
In the Strang splitting scheme, the dynamics due to the nonlinearity and the time-dependent potential is separated from the linear Schr\"odinger dynamics that we discussed in Proposition \ref{prop:Schr} using the Crank-Nicholson method:

By writing $t_k:=k \tau$ and $\tau$ for the time step, the Strang splitting scheme, corresponding to the midpoint rule in the integral, in one time step and continuous space is given by 
\begin{equation}
\begin{split}
\label{eq:StrangNLS}
&(1) \quad \psi^-_{k+\tfrac{1}{2}} := e^{-\frac{i\tau H}{2}}\psi^{\operatorname{NLS}}_{\operatorname{S}}(t_k), \\
&(2) \quad \psi^+_{k+\tfrac{1}{2}}:=\exp\left(\tau X_{t_k}^{\operatorname{NLS}}\left(\psi_{k+\tfrac{1}{2}}^-\right)  \right) \psi_{k+\tfrac{1}{2}}^-, \operatorname{ and }\\
&(3) \quad \psi^{\operatorname{NLS}}_{\operatorname{S}}(t_{k+1}) :=e^{-\frac{i\tau H}{2}}\psi^+_{k+\tfrac{1}{2}}.
\end{split}
\end{equation}
where $\psi^{\operatorname{NLS}}_{\operatorname{S}}(0):=\varphi_0$ and $(X_{t_k}^{\operatorname{NLS}}(\psi))_Q:=-i(V_{\operatorname{TD}}(t_k) + \vert  \psi \vert^{\sigma-1}).$

The Strang splitting scheme for the cubic discretization then satisfies with $\mathcal C^{\tau}_h$ being the Crank-Nicholson method defined in \eqref{eq:discrule}
\begin{equation}
\begin{split}
\label{eq:StrangNLS2}
&(1) \quad \varphi^-_{k+\tfrac{1}{2}} := \mathcal C^{\tau/2}_h \varphi^{\operatorname{NLS}}_{\operatorname{S}}(t_k), \\
&(2) \quad \varphi^+_{k+\tfrac{1}{2}}:=\exp_K\left(-i \tau X_{t_k}^{\operatorname{NLS}}(\varphi^-_{k+\tfrac{1}{2}})_Q \right) \varphi_{k+\tfrac{1}{2}}^-\\
&(3) \quad \varphi^{\operatorname{NLS}}_{\operatorname{S}}(t_{k+1}) :=\mathcal C^{\tau/2}_h \varphi^+_{k+\tfrac{1}{2}}
\end{split}
\end{equation}
where $\varphi^{\operatorname{NLS}}_{\operatorname{S}}(0):=(\varphi_0)_Q$, $X_{t_k}^{\operatorname{NLS}}(\psi)\psi:=-i(V_{\operatorname{TD}}(t_k) + \vert  \psi \vert^{\sigma-1})\psi,$ and $\exp_K$ defined in \eqref{eq:expK}.

\medskip

For the NLS we show convergence of the numerical scheme in $H^1_h$ which requires one integer higher Sobolev exponents in the initial state and potentials than what is needed for $L^2$ convergence of the linear Schr\"odinger equation, cf. Prop. \ref{prop:lintime}.

\begin{prop}
\label{prop:nontime}
Consider an initial state $\varphi_0 \in H^{3+\varepsilon}_2(\Omega) \cap H^1_0(\Omega)$ for some $\varepsilon>0$, potentials $V,V_{\operatorname{con}} \in W^{3,\infty}(\Omega)$, and a control function $u \in W^{1,1}_{\text{pcw}}((0,T))$.
Then there exist recursive maps 
\begin{equation*}
\begin{split}
&T,\Vert \varphi_0 \Vert_{H^{3+\varepsilon}},\Vert V \Vert_{W^{3,\infty}},\Vert V_{\operatorname{con}} \Vert_{W^{3,\infty}},  h,\Vert u \Vert_{W^{1,1}_{\operatorname{pcw}}}  \\
&\qquad \mapsto \tau(T,\Vert \varphi_0 \Vert_{H^{3+\varepsilon}},\Vert V \Vert_{W^{3,\infty}},\Vert V_{\operatorname{con}} \Vert_{W^{3,\infty}},  h,\Vert u \Vert_{W^{1,1}_{\operatorname{pcw}}})
\end{split}
\end{equation*} 
and 
\begin{equation*}
\begin{split}
&\tau, T,\Vert \varphi_0 \Vert_{H^{3+\varepsilon}},\Vert V \Vert_{W^{3,\infty}},\Vert V_{\operatorname{con}} \Vert_{W^{3,\infty}}, h,\Vert u \Vert_{W^{1,1}_{\operatorname{pcw}}} \\
& \qquad \mapsto K(\tau,T, \Vert \varphi_0 \Vert_{H^{3+\varepsilon}},\Vert V \Vert_{W^{3,\infty}},\Vert V_{\operatorname{con}} \Vert_{W^{3,\infty}}, h,\Vert u \Vert_{W^{1,1}_{\operatorname{pcw}}})
\end{split}
\end{equation*} 
such that the solution $\varphi_S$ obtained from the Strang splitting scheme \eqref{eq:StrangNLS2} satisfies with respect to the full solution $\psi$ to \eqref{eq:GP} for some recursively defined function 
\[
C=C(T, \Vert \varphi_0 \Vert_{H^{3+\varepsilon}}, \Vert V\Vert_{W^{3,\infty}},\Vert V_{\operatorname{con}}  \Vert_{W^{3,\infty}}, \Vert u \Vert_{W^{1,1}_{\operatorname{pcw}}})
\]
the estimate
\[
 \max_{k} \Vert \psi(\tau k) - \varphi^{\operatorname{NLS}}_{S}(\tau k) \Vert_{H^1_h} \le C  \Vert \varphi_0 -(\varphi_0)_Q \Vert_{H^1_h}.
 \]
\end{prop}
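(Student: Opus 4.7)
The plan is to decompose the error $\psi(t_k)-\varphi^{\mathrm{NLS}}_S(t_k)$ along three successive approximations and combine them by a telescoping Gronwall argument at the discrete level. Write $\Phi^{\mathrm{lin}}_t$ for the half-step linear flow generated by $H=-\Delta+V$ and $\Phi^{\mathrm{nl}}_t[\psi]$ for the (explicit) flow of $-i(V_{\operatorname{TD}}(t_k)+F_\sigma(\cdot))$ acting pointwise. The three layers of approximation are: (a) replace the exact NLS flow by the \emph{continuous} Strang scheme \eqref{eq:StrangNLS} started at $\varphi_0$; (b) replace $e^{-i\tau H/2}$ by the Crank--Nicholson propagator $\mathcal C^{\tau/2}_h$ and the continuous potentials/nonlinearity by their cubic discretisations; (c) truncate the exponential in the nonlinear step to $\exp_K$.

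For (a) I would use the standard Lie expansion for Strang splitting: after one step, the local error is
$O(\tau^3)$ controlled by the iterated commutators $[H,[H,X^{\mathrm{NLS}}_{t_k}]]$ and $[X^{\mathrm{NLS}}_{t_k},[H,X^{\mathrm{NLS}}_{t_k}]]$ evaluated on the current iterate. Expanding these produces at most three spatial derivatives of $\psi$ and one spatial derivative of the nonlinearity $F_\sigma$; these are controlled in $L^2$ (and hence in $H^1_h$ after summation by parts) by Lemma \ref{F}, the global $H^1_1$-bound of Lemma \ref{NLS}, and the $H^{3+\varepsilon}_2$-bound provided by the argument of Lemma \ref{eos2} (whose proof generalises verbatim to $H^{3+\varepsilon}_2$ under the sharper assumptions $V,V_{\operatorname{con}}\in W^{3,\infty}$). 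Summing $N=T/\tau$ local errors and applying Gronwall with the local Lipschitz bounds of Lemma \ref{F} yields a global continuous-Strang error of order $\tau^2$ in $H^1_h$.

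For (b) I would replace each $e^{-i\tau H/2}$ by $\mathcal C^{\tau/2}_h$ using the $H^1_h$ analogue of Proposition \ref{prop:Schr}: by interpolating between the $L^2$ rate $\tau^3/h^4$ and the $H^2_h$ rate $\tau^3/h^6$ (or by rerunning the functional-calculus proof with $(H^h_\infty)^{5/2}$ in place of $(H^h_\infty)^3$), the per-step error in $H^1_h$ is of order $\tau^3/h^5$ times a polynomial of $\|(\varphi_0)_Q\|_{H^2_h}$. Simultaneously, Lemma \ref{dynlem} and Proposition \ref{convrate} bound the contribution of discretising $V$, $V_{\operatorname{con}}$, and the nonlinear vector field $F_\sigma$ by $O(h^\varepsilon)$. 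For (c) the pointwise argument of $\exp_K$ is bounded by $\tau(\|V\|_{L^\infty}+\|V_{\operatorname{con}}\|_{L^\infty}+\|\varphi^-_{k+1/2}\|_{L^\infty}^{\sigma-1})$; the Sobolev embedding $H^1(\mathbb R)\hookrightarrow L^\infty$ together with the $H^1_h$ a priori bound on the iterates gives a recursive ceiling on this argument, so by Lemma \ref{expK} a $K$ logarithmic in $\tau^{-1}$, $h^{-1}$ and the $H^1_h$-ceiling renders the truncation error negligible against (a) and (b).

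The main obstacle is the nonlinear step in (b): to propagate an $H^1_h$-error through the pointwise nonlinear exponential I need $H^1_h$-stability of the discrete nonlinear flow, uniformly in $h$. This requires a discrete Leibniz rule, which is available via the summation-by-parts identity $\delta_h(fg)=f(\cdot+h)\delta_h g+g(\cdot-h)\delta_h f$ stated after Definition \ref{def:derdisc}, combined with a discrete analogue of $H^1(\mathbb R)\hookrightarrow L^\infty$ obtained from Proposition \ref{convrate}. This is precisely where the restriction to $d=1$ and the extra regularity $H^{3+\varepsilon}_2$ on $\varphi_0$ (and $W^{3,\infty}$ on the potentials) enter, so that the local Lipschitz estimate of Lemma \ref{F} lifts to the discrete level and the three contributions close up into a single Gronwall estimate. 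Once this stability is in place, choosing $\tau=\tau(T,\|\varphi_0\|_{H^{3+\varepsilon}},\|V\|_{W^{3,\infty}},\|V_{\operatorname{con}}\|_{W^{3,\infty}},h,\|u\|_{W^{1,1}_{\operatorname{pcw}}})$ of order $h^{5/2}$ (to absorb $\tau^3/h^5$) and $K$ logarithmic in $\tau^{-1}$ delivers the claimed bound with all of $C$, $\tau$, $K$ depending recursively on the listed data.
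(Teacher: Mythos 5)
Your three-layer decomposition (continuous Strang error, space discretisation + Crank--Nicholson, truncated exponential) matches the paper's four-step proof (see the combined proof of Propositions~\ref{prop:lintime} and~\ref{prop:nontime}), and your treatment of the nonlinear step in~(b) --- propagating $H^1_h$ errors through the pointwise exponential via the discrete Leibniz rule and the discrete embedding $\|\varphi\|_{L^\infty}\lesssim\|\varphi\|_{H^1_h}$ for cube-constant functions --- is exactly the paper's estimate~\eqref{eq:estimateIII}. Your choice of $K$ via Lemma~\ref{expK} is also the paper's.

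The gap is in step~(a). You invoke the classical Strang order-2 analysis with local error $O(\tau^3)$ controlled by the double commutators $[H,[H,X^{\mathrm{NLS}}_{t_k}]]$ and $[X^{\mathrm{NLS}}_{t_k},[H,X^{\mathrm{NLS}}_{t_k}]]$. With $H=-\Delta+V$, however, $[-\Delta,[-\Delta,V]]\psi$ contains the term $(\Delta^2 V)\psi$, so bounding $\|[H,[H,X^{\mathrm{NLS}}]]\psi\|_{H^1}$ requires on the order of five derivatives of the potentials and four of $\psi$ --- beyond the hypotheses $V,V_{\operatorname{con}}\in W^{3,\infty}$ and $\varphi_0\in H^{3+\varepsilon}_2$. (Your claim ``at most three spatial derivatives of $\psi$'' is fine; it is the fourth derivative of $V$ that is missing.) The paper sidesteps this entirely: it writes the one-step error via the variation-of-constants formula~\eqref{eq:VoC}, applies the Peano-kernel estimate for the midpoint rule~\eqref{eq:midpoint}, and thus needs only the \emph{single} commutator $[-\Delta,X^{\mathrm{NLS}}_{t_k}]$ --- see~\eqref{eq:NLSest}, which is controlled by $W^{3,\infty}$ and $\|\psi\|_{H^3}$ (supplied by Lemma~\ref{eos2}). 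This gives only local $O(\tau^2)$, global $O(\tau)$, but that is immaterial: the final bound $\max_k\|\psi(\tau k)-\varphi^{\mathrm{NLS}}_S(\tau k)\|_{H^1_h}\le C\|\varphi_0-(\varphi_0)_Q\|_{H^1_h}$ does not encode a rate in $\tau$; $\tau$ is simply chosen as a recursive function of $h$ and the a priori data. You should either drop the double-commutator argument and use the paper's first-order midpoint estimate, or strengthen the regularity hypotheses if you want genuine second-order convergence in $\tau$.
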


\medskip
\begin{proof}[Proof of Prop. \ref{prop:lintime} and \ref{prop:nontime}]
The proof consists of the following steps:
\begin{enumerate}
\item We first approximate the full solution by splitting it into an evolution of a linear Schr\"odinger equation and a potential or nonlinear part (Strang splitting scheme)
\item The time-dependent or nonlinear part is approximated by discretizing the exponential on cubes. 
\item The linear evolution is first approximated by a discrete Laplacian using Lemma \ref{dynlem} and this one is propagated by a Crank-Nicholson method, cf. Prop. \ref{prop:Schr}.
\item Finally, we argue how to choose $K$ in \eqref{eq:Stranglin2} and \eqref{eq:StrangNLS2} using Lemma \ref{expK}.
\end{enumerate}
\smallsection{(1):} \quad The first part of the proof, the reduction by the Strang splitting scheme, follows along the lines of \cite{L08}:
It suffices to consider a single time-step as the claim then follows from summing over all time steps in the bounded interval $[0,T]$.

We write $\varphi_Y^t(\psi)$ for the solution to $\frac{d}{dt} \varphi^t_Y = Y \varphi^t_Y$ with initial value $\varphi^0_Y=\psi.$
To analyze this Strang splitting method, we then introduce the Lie derivative $(\mathscr L_Y G)(\psi)=\frac{d}{dt} \big \vert_{t=0}G(\varphi_{Y}^t(\psi))$ along a vector field $Y$ and the exponential map 
$
 \left(e^{t \mathscr L_Y}G\right)(\psi) := G(\varphi_Y^t(\psi)). 
 $
In particular, we consider vector fields
\[X^{\operatorname{lin}}_{t_k}(\psi)\psi:=-iV_{\operatorname{TD}}(t_k)\psi \text{ and }X^{\operatorname{NLS}}_{t_k}(\psi)\psi:=-iV_{\operatorname{TD}}(t_k)\psi-i\vert \psi \vert^{\sigma-1} \psi. \] 
The variation of constant formula reads then for $t \in [0,t_{k+1}-t_{k}]$
\begin{equation}
\begin{split}
\label{eq:VoC}
\left(e^{t \mathscr L_{-iH+X_{t_k}}}G\right)(\psi)= \left(e^{t \mathscr L_{-iH}}G + \int_0^t e^{(t-s)  \mathscr L_{-iH+X_{t_k}}} \mathscr L_{X_{t_k}} e^{s \mathscr L_{-iH}} G \ ds \right)(\psi).
\end{split}
\end{equation}

We then obtain from this expression a formula for the solution to the Schr\"odinger equation by choosing $G=\operatorname{id}$. It follows from comparing this exact expression with the Strang splitting method that the leading order error of the splitting scheme is given by the error of the midpoint rule applied to the function 
\[ f(s):=e^{(\tau-s) \mathscr L_{-iH}} \mathscr L_{X_{t_k}} e^{s \mathscr L_{-iH}} \varphi_0 \text{ for } s \in [0,\tau].\] 
The midpoint rule satisfies an error estimate
\begin{equation} 
\begin{split}
\label{eq:midpoint}
 \left\lVert \tau f(\tau/2)- \int_0^{\tau} f(s) \ ds \right\rVert \le \tau^2 \int_0^1 \vert \kappa_{\operatorname{mid}}(s) \vert \left\lVert f'( \tau s) \right\rVert \ ds,  
 \end{split}
\end{equation}
%\cite[Theorem $2.1$]{L}
where
\begin{equation} 
\begin{split}
\kappa_{\operatorname{mid}}(s)&= \begin{cases} -s^2/2&\text{ if }s \le 1/2\text{ and }\\
-(s-1)^2/2&\text{ if }s> 1/2,
\end{cases}
\end{split}
\end{equation}
is the continuous Peano kernel.
The error in the Strang splitting scheme is composed of the error of the midpoint rule \eqref{eq:midpoint} and the $L^2/H^1$ norm of functions $r_1$ and $r_2$, see \cite[4.4]{L08} for details, where for $t \in [0,\tau]$
\[ r_1(t):=\int_0^{t} \int_0^{t-s} e^{(t-s-\sigma) \mathscr L_{-iH+X_{t_k}}} \mathscr L_{X_{t_k}} e^{\sigma \mathscr L_{-iH}} \mathscr L_{X_{t_k}} e^{s \mathscr L_{-iH}} \varphi_0 \ d \sigma \ ds \]
and
\[ r_2(t):=t^2 \int_0^{1} (1-\theta) e^{\frac{t}{2} \mathscr L_{-iH}}e^{\theta t \mathscr L_{X_{t_k}}}  \mathscr L_{X_{t_k}}^2 e^{\frac{t}{2} \mathscr L_{-iH}}\varphi_0 \ d\theta. \] 
We then have for the integrand in $r_1$
\begin{equation} 
\begin{split}
 &e^{\rho \mathscr L_{-iH+X_{t_k}}} \mathscr L_{X_{t_k}} e^{\sigma \mathscr L_{X_{t_k}}}\mathscr L_{X_{t_k}}e^{s\mathscr \mathscr L_{-iH}}\varphi_0  \\
 &=e^{-is H} DX_{t_k}(e^{-i \sigma H} \varphi^{\rho}_{-iH+X_{t_k}}(\psi) ) e^{-i\sigma H}X_{t_k} ( \varphi^{\rho}_{-iH+X_{t_k}}(\psi)) 
\end{split}
\end{equation}
and for the integrand in $r_2$ with $\eta:=e^{-i\theta \tau X_{t_k}(\phi)} \phi$ where $\phi=e^{ \tfrac{-itH}{2}}\varphi_0$
\begin{equation} 
\begin{split}
&e^{\tfrac{t}{2} \mathscr L_{-iH}} \mathscr L_{X_{t_k}} e^{\theta t \mathscr L_{X_{t_k}}} \mathscr L_{X_{t_k}}^2 e^{\tfrac{t}{2}\mathscr L_{-iH}}\varphi_0=e^{-i \tfrac{tH}{2} }DX_{t_k}(\eta)(X_{t_k}(\eta)).
\end{split}
\end{equation}
%It follows from $L^2$ (mass) conservation, the boundedness of the potential, and energy conservation 
%\begin{equation} 
%\begin{split}
%E(\psi(t)) = &\frac{1}{2} \int_{\Omega} \vert \nabla \psi(x,t) \vert^2+ (V + V(t_k)) \vert \psi(x,t) \vert^2 \ dx \\xx
%&+ \frac{1}{\sigma+1} \int_{\Omega} F_{\sigma}(\psi(x,t)) \psi(x,t) \ dx  
% \end{split}
%\end{equation}
%that in each time-step: $[t_k,t_k+\tau]$ \BLUE{This sentence seems to end in thin air, also do we define $E$?}

For the linear Schr\"odinger equation, there is a recursive function $C(\Vert V_{\operatorname{TD}}(t_k) \Vert_{L^{\infty}(\Omega)})>0$ such that for $t \in [0,\tau]$
$
\Vert r_1(t) \Vert_{L^2(\Omega)}+\Vert r_2(t) \Vert_{L^2(\Omega)} \le C \tau^2
$
and for the nonlinear Schr\"odinger equation, there is a recursive function 
$
C(\Vert V_{\operatorname{TD}}(t_k) \Vert_{W^{1,\infty}(\Omega)},\Vert \varphi_0 \Vert_{H^1(\Omega)} )>0
$ such that for $t \in [0,\tau]$
\[ \Vert r_1(t) \Vert_{H^1(\Omega)}+\Vert r_2(t) \Vert_{H^1(\Omega)} \le C \tau^2.\]

Taking the $L^2$ norm in \eqref{eq:midpoint}, we find for the linear Schr\"odinger equation
\begin{equation}
\label{eq:f'} 
f'(s) = e^{-is H} [H,u(t)V_{\operatorname{con}}] (e^{-i(\tau-s)H} \varphi_0).
\end{equation}
A computation shows then that for arbitrary $\psi \in H^1$
\begin{equation} 
\begin{split}
[-\Delta+V,u(t)V_{\operatorname{con}}]\psi &= - 2 u(t) \langle \nabla V_{\operatorname{con}}, \nabla \psi \rangle - (\Delta V(t_k) ) \psi
 \end{split}
\end{equation}
such that
\begin{equation*} 
\begin{split}
\label{eq:linest}
\left\lVert [-\Delta+V,u(t)V_{\operatorname{con}}]\psi \right\rVert_{L^2}&\lesssim  \vert u(t) \vert \Vert V_{\operatorname{con}} \Vert_{W^{2,\infty}(\Omega)} \Vert \psi \Vert_{H^1(\Omega)}.
 \end{split}
\end{equation*}
We may apply this estimate in our setting as the solution to linear Schr\"odinger equation is uniformly bounded in $H^1,$ cf. Lemma \ref{eos}. 

Similarly, for the nonlinear Schr\"odinger equation a computation shows then that
\begin{equation*} 
\begin{split}
[-\Delta,X^{\operatorname{NLS}}_{t_k}] \psi&= (-\Delta )( (V+V_{\operatorname{TD}}(t_k)) \psi+\vert \psi \vert^{\sigma-1} \psi)-(V+V_{\operatorname{TD}})(-\Delta \psi)\\
&\quad -\frac{\sigma+1}{2} \vert \psi \vert^{\sigma-1} (-\Delta)\psi -\frac{\sigma-1}{2} \vert \psi \vert^{\sigma-3}\psi^2 (-\Delta \overline{\psi})
 \end{split}
\end{equation*}
which implies by the Sobolev embedding that 
\begin{equation}
\label{eq:NLSest}
\left\lVert [-\Delta,X^{\operatorname{NLS}}_{t_k}](\psi) \right\rVert_{H^1(\Omega)} \lesssim (\Vert V \Vert_{W^{2,\infty}(\Omega)}+\Vert V_{\operatorname{TD}}(t_k) \Vert_{W^{2,\infty}(\Omega)} ) \Vert \psi \Vert_{H^3(\Omega)} \Vert \psi \Vert^{\sigma-1}_{H^2(\Omega)} .
\end{equation}
To apply this estimate, we use that the continuous space solution is bounded in $H^3,$ cf. Lemma \ref{eos2}.

To obtain a quadratic error in $\tau$ for a single step (and thus a linear error in the time step on the entire time interval) from the midpoint rule \eqref{eq:midpoint}, it suffices to estimate the term \eqref{eq:f'}. For the linear Schr\"odinger equation this is \eqref{eq:linest} and for the NLS this is \eqref{eq:NLSest}. 

\smallsection{(2):} Next, we compare the exponential step (2) in \eqref{eq:Stranglin} and \eqref{eq:StrangNLS} (in continuous space) to the respective discretized exponential steps in \eqref{eq:Stranglin2} or \eqref{eq:StrangNLS2} with $K=\infty$, first.
Since the dynamics in steps (1) and (3) for the discretized evolution is just governed by the evolution of a discretized Schr\"odinger operator, we can use Lemma \ref{dynlem} and Prop. \ref{prop:Schr} to study the evolution in step (2). 
We then have that for the $L^2$ norm with $C_1, C_2$ two recursively defined functions
\begin{equation}
\begin{split}
\label{eq:estimateIIII}
&\left\Vert e^{-i\tau X_{t_k}\left(\psi_{k+1/2}^-\right) } \psi_{k+\tfrac{1}{2}}^- - \operatorname{exp}\left(-i\tau X_{t_k}\left(\varphi_{k+1/2}^-\right)_Q \right) \varphi^-_{k+\tfrac{1}{2}}\right\rVert_{L^2} \\
&\le \left\Vert
\psi_{k+\tfrac{1}{2}}^- -\varphi^-_{k+\tfrac{1}{2}} \right\rVert_{L^2}+\left\Vert  \left(e^{-i\tau X_{t_k}\left(\psi_{k+1/2}^-\right)}-  \operatorname{exp}\left(-i\tau X_{t_k}\left(\varphi_{k+1/2}^-\right)_Q \right)\right) \varphi^-_{k+\tfrac{1}{2}} \right\rVert_{L^2} \\
&\le (1+\tau C_1(\Vert V_{\operatorname{TD}}(t_k) \Vert_{L^{\infty}})) \left\lVert \psi_{k+\tfrac{1}{2}}^- - \varphi^-_{k+\tfrac{1}{2}} \right\rVert_{L^2} \\
&\quad + \tau C_2(\Vert \psi_{k+1/2}^-\Vert)\left\lVert V_{\operatorname{TD}}(t_k)-V_{\operatorname{TD}}(t_k)_Q \right\rVert_{L^{\infty}}.
\end{split}
\end{equation}
Moreover, we conclude from the product rule of the discrete Laplacian that there are $C_1, C_2$ two recursively defined functions
\begin{equation}
\begin{split}
\label{eq:estimateIII}
&\left\Vert  \nabla_h  \left(e^{-i\tau X^{\operatorname{NLS}}_{t_k}\left(\psi_{k+1/2}^-\right) } \psi_{k+\tfrac{1}{2}}^- - \operatorname{exp}\left({-i\tau X^{\operatorname{NLS}}_{t_k}\left(\varphi^-_{k+\tfrac{1}{2}}\right)_Q} \varphi^-_{k+\tfrac{1}{2}} \right)\right)\right\rVert_{L^2} \\
&\quad \le \left\Vert \nabla_h \left(\operatorname{exp}\left({-i\tau X^{\operatorname{NLS}}_{t_k}\left(\varphi^-_{k+\tfrac{1}{2}}\right)_Q} \varphi^-_{k+\tfrac{1}{2}} \right)\left(\psi_{k+\tfrac{1}{2}}^- - \varphi^-_{k+\tfrac{1}{2}}\right)\right)\right\rVert_{L^2}\\
&\quad +\left\Vert \nabla_h \left(\left(e^{-i\tau X_{t_k}^{\operatorname{NLS}}\left(\psi_{k+1/2}^-\right)}- \operatorname{exp}\left(-i\tau X^{\operatorname{NLS}}_{t_k}\left(\varphi^-_{k+\tfrac{1}{2}}\right)_Q\right)\right) \varphi^-_{k+\tfrac{1}{2}} \right)\right\rVert_{L^2} \\
&\le \left\Vert \psi_{k+\tfrac{1}{2}}^- - \varphi^-_{k+\tfrac{1}{2}} \right\rVert_{H^1_h} \left(1+ C_1(\Vert V_{\operatorname{TD}}(t_k) \Vert_{W^{1,\infty}})  \tau  \left(1+\left\Vert  \psi_{k+\tfrac{1}{2}}^-  \right\Vert_{H^1}^{\sigma-1} + \left\Vert  \varphi^-_{k+\tfrac{1}{2}}  \right\Vert_{H^1_h}^{\sigma-1}\right)\right)\\
&\quad + \tau C_2(\Vert \psi_{k+1/2}^-\Vert_{H^1})\left\lVert V_{\operatorname{TD}}(t_k)-V_{\operatorname{TD}}(t_k)_Q \right\rVert_{W_h^{1,\infty}}.
\end{split}
\end{equation}
where we used that for functions $\varphi$, that are constant on cubes, we have $\left\lVert  \varphi \right\rVert_{L^{\infty}} \le C \left\lVert \varphi \right\rVert_{H^1_h}$ for a universal constant $C>0$ independent of $h$ and $\varphi.$ 

Hence, the error propagates at most linearly in this step, as long as the $H^1$ norm of $\psi$ and the $H^1_h$ norm of $\varphi$ remain uniformly bounded, as well. The boundedness of the $H^1$ norm of $\psi$ follows from \eqref{eq:midpoint}.
Since the convergence of the scheme is in $H^1_h$ we can bound the $H^1_h$ norm of $\varphi$ using $\psi$ and the local error. The $H^1_h$ norm of $\psi$ however is controlled by the $H^1$ norm of $\psi$ which is uniformly bounded. This is sufficient for the global convergence of the Strang splitting scheme.

Combining the Splitting scheme (1), the convergence of the exponential function in (2), and (3) the convergence Lemma \ref{dynlem}, the error of the Crank-Nicholson method, cf. Prop. \ref{prop:Schr}, the time-step for the linear Schr\"odinger equation is a recursive function defined as 
\[
\tau(T, \Vert V \Vert_{W^{2,\infty}(\Omega)},\Vert V_{\operatorname{con}} \Vert_{W^{2,\infty}(\Omega)}, \Vert u \Vert_{W^{1,1}_{\operatorname{pcw}}}, \Vert \varphi_0 \Vert_{H^2}).
\]
For the NLS, the time-step is a recursive function
\[
\tau(T, \Vert V \Vert_{W^{3,\infty}(\Omega)},\Vert V_{\operatorname{con}} \Vert_{W^{3,\infty}(\Omega)}, \Vert u \Vert_{W^{1,1}_{\operatorname{pcw}}}, \Vert \varphi_0 \Vert_{H^3}).
\]

\smallsection{(4):}
As a last step, we use the convergence of the full solution to the discretized ones that we established above. This implies that the discretized solutions are uniformly bounded on $(0,T)$. Thus, by Lemma \ref{expK} we can recursively find a $K$ to approximate the exponential function $K$ by $\operatorname{exp}_K$ where $K$ depends on 
\[(T,\tau, \Vert \varphi_0 \Vert_{H^{2+\varepsilon}},\Vert V \Vert_{W^{2,\infty}},\Vert V_{\operatorname{con}} \Vert_{W^{2,\infty}}, h,\Vert u \Vert_{W^{1,1}_{\operatorname{pcw}}})\]
in case of the linear Schr\"odinger equation and 
$
(T,\tau, \Vert \varphi_0 \Vert_{H^{2+\varepsilon}},\Vert V \Vert_{W^{3,\infty}},\Vert V_{\operatorname{con}} \Vert_{W^{3,\infty}}, h,\Vert u \Vert_{W^{1,1}_{\operatorname{pcw}}})
$
for the NLS. 
\end{proof}

We finish with a perturbation result that allows us to take also the numerical integration error for the potentials and the initial state when approximating the cubic discretization into account. 
\begin{prop}
\label{prop:approx_int}
Consider potentials $V,V_{\operatorname{con}}$ and $\widetilde V,\widetilde{V}_{\operatorname{con}}$ that are constant on cubes of fixed(!) size $h$ contained in a bounded cube $\Omega \subset \RR^d$ of side length $R$ and two initial states $\varphi_0, \widetilde{\varphi_0} $ that are constant on the same cubes. We then define the output of the numerical schemes \eqref{eq:Stranglin2} and \eqref{eq:StrangNLS2} for either potential and initial state by $\psi$ and $\widetilde \psi$ respectively. 
For some fixed step size $\tau$ and $n:=\lceil T/\tau \rceil$, there exists a recursively defined function $C(R,h, \tau)>0$ such that for 
\[\operatorname{max}\left\{\left\Vert V-\widetilde{V} \right\rVert_{L^{\infty}}, \left\Vert V_{\operatorname{con}}-\widetilde{V}_{\operatorname{con}} \right\rVert_{L^{\infty}}, \left\lVert  \varphi_0-\widetilde{\varphi_0} \right\Vert_{L^2}\right\} \le 1\] 
we have for a recursive function $C=C(R,h, \tau)$
\begin{equation*}
\begin{split}
\max_{k \in [n]}\left\Vert \psi_{\operatorname{S}}(\tau k)-\widetilde \psi_{\operatorname{S}}(\tau k) \right\Vert_{L^2} \le C\operatorname{max} \Bigg\{  \left\Vert V-\widetilde{V} \right\rVert_{L^{\infty}},\left\Vert V_{\operatorname{con}}-\widetilde{V}_{\operatorname{con}} \right\rVert_{L^{\infty}}, \left\lVert \varphi_0-\widetilde{\varphi_0} \right\Vert_{L^2}\Bigg\}.
\end{split}
\end{equation*}
\end{prop}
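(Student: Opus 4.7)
The plan is to exploit the fact that once the grid size $h$ and the box $\Omega$ of side length $R$ are fixed, all the relevant data (potentials, initial states, iterates) live in the finite-dimensional space $V_{R,h}$ of functions that are constant on the $\approx(R/h)^d$ cubes of side length $h$ forming a partition of $\Omega$. Every operation appearing in either Strang splitting scheme \eqref{eq:Stranglin2} or \eqref{eq:StrangNLS2} is then a (locally) Lipschitz map between finite-dimensional spaces, so the whole numerical scheme is a composition of $3n = 3\lceil T/\tau\rceil$ such maps. The task reduces to estimating the Lipschitz constant of a single time step and then iterating.

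First I would establish uniform $\ell^2(V_{R,h})$ bounds on the iterates $\psi_S(\tau k)$ and $\widetilde\psi_S(\tau k)$ along the evolution. The Crank--Nicholson operator $\mathcal C^{\tau/2}_h$ associated with any self-adjoint $H^h = -\Delta^h + V_Q$ is the Cayley transform \eqref{eq:the_C} and therefore an $L^2$-isometry, while the exponential step is norm-preserving in the linear case, and in the nonlinear case produces a multiplicative factor whose modulus differs from $1$ only through the $\exp_K$ truncation; using Lemma \ref{expK} with the a priori bound $\|\varphi^-_{k+1/2}\|_{L^\infty} \lesssim h^{-d/2}\|\varphi^-_{k+1/2}\|_{L^2}$ the truncation error per step is controlled in terms of $R,h,\tau$, and Gr\"onwall then gives a uniform bound $M = M(R,h,\tau)$ on $\max_k (\|\psi_S(\tau k)\|_{L^2} + \|\widetilde\psi_S(\tau k)\|_{L^2})$, using also that the initial data differ by at most $1$ in $L^2$.

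Next I would estimate one-step Lipschitz constants. For the Crank--Nicholson halves I would write the standard resolvent identity
\[
\mathcal C^{\tau/2}_h - \widetilde{\mathcal C}^{\tau/2}_h = \bigl(1+\tfrac{i\tau}{4}H^h\bigr)^{-1}\bigl(-\tfrac{i\tau}{2}(V_Q-\widetilde V_Q)\bigr)\bigl(1+\tfrac{i\tau}{4}\widetilde H^h\bigr)^{-1},
\]
which together with $\|(1+\tfrac{i\tau}{4}H^h)^{-1}\|_{L^2\to L^2}\le 1$ gives $\|\mathcal C^{\tau/2}_h - \widetilde{\mathcal C}^{\tau/2}_h\|_{L^2\to L^2}\le \tfrac{\tau}{2}\|V_Q-\widetilde V_Q\|_{L^\infty}$. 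For the exponential step I would use that $\exp_K$ is a polynomial of degree $2K+1$, hence smooth, and estimate differences by the mean value theorem on the ball of radius $C(R,h,\tau,M)$ on which the arguments live; the Lipschitz dependence on the state (via $V_{\operatorname{TD}}(t_k)_Q$ and, for the NLS, via $|\varphi|^{\sigma-1}$) then follows from the elementary fact that $z\mapsto |z|^{\sigma-1}z$ is locally Lipschitz, combined with the uniform bound $M$. Putting the three substeps together yields, for each $k$,
\[
\|\psi_S(\tau(k{+}1)) - \widetilde\psi_S(\tau(k{+}1))\|_{L^2} \le (1 + \tau L)\|\psi_S(\tau k) - \widetilde\psi_S(\tau k)\|_{L^2} + \tau L\, E,
\]
where $L = L(R,h,\tau,M)$ and $E = \max\{\|V-\widetilde V\|_{L^\infty}, \|V_{\operatorname{con}}-\widetilde V_{\operatorname{con}}\|_{L^\infty}\}$. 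Iterating over $n=\lceil T/\tau\rceil$ steps and invoking the discrete Gr\"onwall inequality, together with the initial-state estimate at $k=0$, yields the claimed recursive function $C(R,h,\tau)$.

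The main obstacle is the careful bookkeeping of how the Lipschitz constants depend on $h$ and $\tau$: the discrete Laplacian has operator norm $O(h^{-2})$, so in the nonlinear step the degree $K$ provided by Lemma \ref{expK} and the Taylor remainder estimates both blow up as $h\downarrow 0$ or $\tau\downarrow 0$. However, since $h,\tau,R$ are \emph{fixed} in the statement, these bounds are finite, and the only thing that matters is that the resulting constant $C$ is a computable (recursive) function of these parameters; this is automatic from the explicit form of the Cayley transform, the polynomial $\exp_K$, and the power-law dependence of $\|\Delta^h\|$ on $h$, all of which are recursively computable.
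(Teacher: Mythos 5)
Your proof follows essentially the same route as the paper: a resolvent-identity estimate for the Crank--Nicholson half-steps, the finite-dimensional $L^\infty$–$L^2$ equivalence for cube-constant functions (the $h^{-d/2}$ bound you cite is precisely the paper's recursive $K(R,h)$), local Lipschitz control of the $\exp_K$/nonlinear step, and a step-by-step iteration. The differences are purely organizational — you make the a priori uniform bound $M$ on the iterates and the final discrete Grönwall argument explicit, and your resolvent decomposition (via $\mathcal C = -1 + 2(1+A)^{-1}$) is slightly cleaner — but the ideas match.
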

\begin{proof}
In the first step, we may use that for two Hamiltonians $H=-\Delta^h+V_Q$ and $\widetilde{H}=-\Delta^h+\widetilde{V}_Q$ we have from the resolvent identity
\begin{equation}
\begin{split}
\label{eq:1}
&\left\lVert \left(1- \tfrac{i\tau}{2} H \right)\left(1+ \tfrac{i\tau}{2} H \right)^{-1} -\left(1- \tfrac{i\tau}{2} \widetilde{H} \right)\left(1+ \tfrac{i\tau}{2} \widetilde{H}\right)^{-1} \right\rVert \\
& \le \tfrac{\tau}{2} \left\lVert V-\widetilde V \right\rVert_{L^{\infty}} \Bigg(\left\Vert \left(1+ \tfrac{i\tau}{2} H \right)^{-1} \right\Vert+ \\
&\qquad \qquad \left\lVert 1- \tfrac{i\tau}{2} \widetilde{H}  \right\rVert \left\lVert \left(1+ \tfrac{i\tau}{2} H \right)^{-1} \right\rVert \left\lVert \left(1- \tfrac{i\tau}{2} H \right)^{-1}\right\rVert \Bigg)\\
&\le \tfrac{\tau}{2} \left\lVert V-\widetilde V \right\rVert_{L^{\infty}}  \times \left(2+\tfrac{\tau}{2} \left\Vert \widetilde H \right\Vert \right).
\end{split}
\end{equation}
This shows that the approximation error in the Crank-Nicholson step (1) and (3) in \eqref{eq:Stranglin2} is controlled by the potential difference. 

The potential difference of the control potential and the error in the state also determines the error in step $(2)$ of the splitting scheme. 
To see this, it suffices to note that since there are only finitely many cubes in the bounded domain $\Omega$, there exists a recursive function $K=K(R,h)>0$ such that for any function $f$ that is constant on cubes we find $\Vert f \Vert_{L^{\infty}} \lesssim K(R,h) \Vert f \Vert_{L^2}.$ 

Since the $L^2$ norm is preserved, up to error of order $\tau^K$ by the scheme \eqref{eq:Stranglin2}, we find for $X_{t_k}^{\operatorname{NLS}}$ that
\begin{equation}
\begin{split}
\label{eq:2}
&\left\lVert \exp_K\left(-i \tau X_{t_k}^{\operatorname{NLS}}(\varphi^{-})\right) - \exp_K\left(-i \tau \widetilde{X}_{t_k}^{\operatorname{NLS}}(\widetilde{\varphi^{-}}) \right) \right\rVert_{L^{\infty}} \\
&\le \tau \left\lVert  X_{t_k}^{\operatorname{NLS}}(\varphi^{-}) - \widetilde{X}_{t_k}^{\operatorname{NLS}}(\widetilde{\varphi^{-}}) \right\rVert_{L^{\infty}}  \\
&\lesssim \tau \Bigg( \left\lVert V_{\operatorname{TD}}(t_k)_Q-\widetilde{V_{\operatorname{TD}}}(t_k)_Q \right\rVert + K(R,h) \left(\left\lVert  \varphi^{-} \right\rVert^{\sigma-2}_{L^2}+\left\lVert \widetilde \varphi^{-} \right\rVert^{\sigma-2}_{L^2} \right)\left\lVert \varphi^{-} - \widetilde \varphi^{-} \right\rVert^{}_{L^2}\Bigg).
\end{split}
\end{equation}
Combining \eqref{eq:1} and \eqref{eq:2} yields the existence of a recursive function $C(R,h,\tau).$ 
\end{proof}

\section{Proof of Theorem \ref{Th:main_thrm_lower_bound}}

\begin{proof}[Proof of Theorem \ref{Th:main_thrm_lower_bound}]
To prove part I of Theorem \ref{Th:main_thrm_lower_bound} we show that $\{\Omega^1_{\mathrm{free}}, \Xi^1_{\mathrm{free}}\} \notin \Delta_1^G$ and argue by contradiction and assume that $\{\Omega^1_{\mathrm{free}}, \Xi^1_{\mathrm{free}}\} \in \Delta_1^G$. 
By a simple translation of variables, it suffices to assume that $O$ is an interval centred at zero. Let $2L$ be the length of this interval.
By the assumption that $\{\Omega^1_{\mathrm{free}}, \Xi^1_{\mathrm{free}}\} \in \Delta_1^G$ we can find a sequence $\{\Gamma_n\}$ of general algorithms such that $\|\Gamma_n(\varphi)- \Xi^1_{\mathrm{free}}(\varphi)\|_{L^2(O)} \leq 2^{-n}$ for all $\varphi \in \Omega^1_{\mathrm{free}}$.  Choose $\varphi_0$ to be the zero function, pick any $\sigma_0 > 0$ and choose $N$ large enough so that 
\begin{equation}\label{eq:lower}
\frac{1}{{(2\pi \sigma_0^2)^{1/4}}}    \int_{-L}^L  \left| e^{-\frac{x^2}{4\sigma_0}}\right|^2 \, dx  > 3 \cdot 2^{-N}.
\end{equation} 
Recall that 
\begin{equation}\label{eq:the_Lambda}
\Lambda = \{f_m\}_{m\in \mathbb{N}}, \quad f_m(\varphi) = \varphi(t_m), \quad \varphi \in \Omega,
\end{equation}
where $\{t_m\}$ is an enumeration of the rational numbers. Let $K = \max|\{t_m \in \mathbb{R} \, \vert \,  f_m \in \Lambda_{\Gamma_N}(\varphi_0)\}|$. Let 
\[
\tilde \varphi(x) = \rho_{K,\epsilon}(x)\psi(x), \quad \psi(x) = \frac{e^{-(x+kT)^2/(4\sigma_0 \nu) + ik\left(x+\frac{kT}{2} \right)}}{\left(2\pi \nu^2 \right)^{1/4}}, 
\]  
where $\nu:=\sigma_0 \left(1 - \frac{iT}{2\sigma_0^2} \right)$,  $\rho_{K,\epsilon}$ is a smooth bump function that is zero outside of $[-K,K]$ and one on $[-(K-\epsilon),K-\epsilon]$ for some $\epsilon > 0$, and $k \in \mathbb{R}$. Clearly, $\psi$ is computable and $\rho_{K,\epsilon}$ can easily be made computable and hence by any appropriate scaling we have that $\tilde \varphi \in \Omega_{\mathrm{free}}$. Now choose $\epsilon$ small enough and $k$ large enough so that $\|\tilde \varphi - \psi\|_{L^2(\mathbb{R})} \leq 2^{-N}$. Note that 
\[
\Xi^1_{\mathrm{free}}(\psi)(x) = \frac{e^{-\frac{x^2}{4\sigma_0} + ikx}}{(2\pi \sigma_0^2)^{1/4}},
\]
 by using the Fourier transform, \cite[Sec.$7.3$]{Te14}. Hence, as time evolution of the Schr\"odinger equation preserves the $L^2$ norm we claim that 
\begin{equation}
\begin{split}
\frac{1}{{(2\pi \sigma_0^2)^{1/4}}}    \int_{-L}^L  \left| e^{-\frac{x^2}{4\sigma_0}}\right|^2 \ dx &\leq \|\Xi_{\mathrm{free}}(\psi) - \Xi_{\mathrm{free}}(\tilde \varphi)\|_{L^2(O)} +  \|\Xi_{\mathrm{free}}(\tilde \varphi) - \Gamma_N(\tilde \varphi)\|_{L^2(O)}\\
&\quad + \|\Gamma_N(\tilde \varphi) - \Xi_{\mathrm{free}}(\varphi_0)\|_{L^2(O)} \leq 3 \cdot 2^{-N},
\end{split}
\end{equation}   
which contradicts \eqref{eq:lower}. Indeed, the first two terms in the right and side are each bounded by $2^{-N}$ by the choices made above. Thus, we are only left with the last term. Note that it suffices to show that $\Gamma_N(\tilde \varphi) = \Gamma_N(\varphi_0)$ since we have that $\|\Gamma_N(\varphi)- \Xi_{\mathrm{free}}(\varphi)\|_{L^2(O)} \leq 2^{-N}$. To see this, note that by the choice of $K$ in the definition of $\tilde \varphi$ it follows that for any $f \in \Lambda_{\Gamma_N}(\varphi_0)$ we have $f(\varphi_0) = f(\tilde \varphi)$. Thus, by assumption (ii) and (iii) in Definition \ref{alg} it follows that $\Gamma_N(\tilde \varphi) = \Gamma_N(\varphi_0)$.

To prove part II of Theorem \ref{Th:main_thrm_lower_bound} we argue by contradiction and assume that $\{\Omega^2_{\mathrm{free}}, \Xi^2_{\mathrm{free}}\} \in \Delta_1^G$, thus we can find a sequence $\{\Gamma_n\}$ of general algorithms such that $\|\Gamma_n(\varphi) - \Xi^2_{\mathrm{free}}(\varphi)\|_{L^2(\mathbb{R})} \leq 2^{-n}$ for all $\varphi \in \Omega^2_{\mathrm{free}}$. By considering shifts and slight variations of 
\[
{\displaystyle \gamma(x)={\begin{cases}\exp \left(-{\frac {1}{1-x^{2}}}\right),&x\in (-1,1)\\0,&{\mbox{otherwise}},\end{cases}}}
\]
it is clear that one can find a sequence $\{\varphi_k\} \subset \Omega^2_{\mathrm{free}}$ of bump functions such that $\mathrm{supp}(\varphi_k) \subset [k,k+1]$, $\|\varphi_k\|_{L^2(\mathbb{R})} = \tilde C$, for some $\tilde C > 0$, where each $\varphi_k$ is computable. Choose any $M \in \mathbb{N}$ such that $2^{-M} < \tilde C/4$.   
Let $\psi$ denote the zero function and consider $\Lambda_{\Gamma_M}(\psi)$. Note that for each $f_m \in \Lambda_{\Gamma_M}(\psi)$ there is a $t_m \in \mathbb{Q}$ such that $f_m(\psi) = \psi(t_m)$. Let $K = \max\{t_m \, \vert \, f_m(\psi) = \psi(t_m), f_m \in \Lambda_{\Gamma_M}(\psi)\}$. Choose $k$ so large that $K < k$. Then, by the choice of the support of $\varphi_k$ we have that $f_m(\psi) = f_m(\phi_k)$ for all $f_m \in \Lambda_{\Gamma_M}(\psi)$. Hence, by assumption (ii) and (iii) in Definition \ref{alg} it follows that $\Gamma_{M}(\psi) = \Gamma_{M}(\varphi_k)$.
Thus, since $\|\Gamma_{M}(\psi)-\Xi^2_{\mathrm{free}}(\psi)\|_{L^2(\mathbb{R})} \leq 2^{-M}$, we have that $\|\Gamma_{M}(\psi)\|_{L^2(\mathbb{R})} = \|\Gamma_{M}(\phi_k)\|_{L^2(\mathbb{R})} < \tilde C/4$. However, the time evolution of the Schr\"odinger equation preserves the $L^2$-norm, so $\|\Xi^2_{\mathrm{free}}(\varphi_k)\|_{L^2(\mathbb{R})} = \tilde C$ and $ \|\Gamma_{M}(\varphi_k)-\Xi^2_{\mathrm{free}}(\varphi_k)\|_{L^2(\mathbb{R})} \leq 2^{-M} < \tilde C/4$ hence $ \|\Gamma_{M}(\phi_k)\|_{L^2(\mathbb{R})}> \tilde C/4$ establishing the contradiction. 

 To prove part III we define $\Omega_{\mathrm{free}} = \{\varphi_s \, \vert \, s \in \mathbb{N}, \varphi_s \text{ is given by \eqref{eq:the_phi}} \},$ 
\begin{equation}\label{eq:the_phi}
 \varphi_s(x) = \frac{e^{-(x+sT)^2/(4\nu) + is\left(x+\frac{sT}{2} \right)}}{\left(2\pi \nu^2 \right)^{1/4}}, \quad \nu=\left(1 - \frac{iT}{2} \right), \quad s \in \mathbb{N}.
\end{equation}
Recall that 
\[
\left\lVert f \right\rVert^2_{H^\rho_{\eta}}:= \|\langle\bullet \rangle^{\rho} \widehat{f} \|_{L^2}^2+  \left\lVert  \langle\bullet \rangle^{\eta} f \right\rVert_{L^2}^2.
\]
Thus, it follows from the basic properties of the Fourier transform that $\|\langle\bullet \rangle^{\rho} \widehat{\varphi_s}\|_{L^2}^2 \rightarrow \infty$ as $s \rightarrow \infty$ for $\rho > 0$, and $\|\langle\bullet \rangle^{\eta}\varphi_s\|_{L^2}^2 \rightarrow \infty$ as $s \rightarrow \infty$, for $\eta > 0$ follows immediately from the definition of $\varphi_s$. Hence, $\sup\{\|\varphi \|_{H^\rho_{\eta}} \, \vert \, \varphi \in \Omega_{\mathrm{free}}\} = \infty$ for all $(\rho, \eta) \in \mathbb{R}^2_+ \setminus \{(0,0)\}$. 
In order to describe the algorithm we first note that $\Lambda$ is the same as in \eqref{eq:the_Lambda}. 
Hence, we may consider $f_m \in \Lambda$, where $t_m = 0$. Define, for $k,l \in \mathbb{N},$ $\mathrm{Log}_{k,l}: (0,\infty) \rightarrow \mathbb{R}$ to be a recursive functions such that $|\mathrm{Log}_{k,l}(x) - \log(x)| \leq 2^{-k}$ for $x \in [1/l,l]$. The existence of such functions are well known (take one of the many series expansion formulas for the $\log$ function for example). Given $\varphi \in \Omega_{\mathrm{free}}$, we can access $f_m(\varphi)$ and thus compute $a = (|f_m(\varphi)|^4 (2\pi |\nu|^2))^{-1}$. Choose $k,l$ such that $2^{-k} \frac{|v|^2}{T^2}\leq 1/2$ and $a \in [1/l,l]$. It is clear that choosing $k,l$ can be done recursively from $f_m(\varphi)$ and $T$. 
Now let $p \in \mathbb{N}$ be such that 
\begin{equation}\label{eq:the_p}
\left|p^2 - \mathrm{Log}_{k,l}(a)\frac{|\nu|^2}{T^2}\right| \leq \frac{1}{2}.
\end{equation}
We claim that $p$ is unique, that it can be computed recursively from $f_m(\varphi)$ and $T$, and that $\Xi_{\mathrm{free}}(\varphi) = \varphi_p$ where $\varphi_p$ is defined in \eqref{eq:the_phi}. Indeed, note that since $\varphi \in \Omega_{\mathrm{free}}$ we have that $\varphi = \varphi_s$ for some $s \in \mathbb{N}$ where $\varphi_s$ is defined in \eqref{eq:the_phi}. Then, 
\[
\log(a) = \log\left(|f_m(\varphi_s)|^4 (2\pi |\nu|^2))^{-1}\right) = \frac{(sT)^2 }{|\nu|^2}, \quad |\log(a) - \mathrm{Log}_{k,l}(a)| \leq 2^{-k},   
\]
thus
\[
\left|s^2 - \mathrm{Log}_{k,l}(a)\frac{|\nu|^2}{T^2}\right| \leq \frac{1}{2}, \text{ since } 2^{-k} \frac{|v|^2}{T^2}\leq 1/2.
\]
Hence, by \eqref{eq:the_p}, since $p \in \mathbb{N}$, we have that $p=s$ (thus it is unique) and 
\[
\Xi_{\mathrm{free}}(\varphi) = \frac{e^{-\frac{x^2}{4} + ipx}}{(2\pi )^{1/4}} =: \psi_p(x).
\]
The latter follows, as above, from basic properties of the Fourier transform (see \cite[Sec.$7.3$]{Te14}).
 The claim about recursiveness follows from the fact that $k,l$ were constructed recursively and that $\mathrm{Log}_{k,l}$ is recursive. As we have been able to recursively compute the $p \in \mathbb{N}$ such that  $\Xi_{\mathrm{free}}(\varphi) = \psi_p$, it is now easy to finally establish the algorithm $\Gamma$, and we will be a bit brief regarding the details, as this part is a routine exercise. Let $\Gamma(\epsilon, \varphi)$ be the vector $\{a_k\}$ of coefficients of a sum of step functions $f = \sum_k a_kg_k$ (the $g_k$s are step functions) where the $a_k$s are approximations to $\psi_p(x)$  for different values of $x \in \mathbb{R}$ such that $\|f - \psi_p\|_{L^2(\mathbb{R})} \leq \epsilon$. Constructing the $a_k$s is clearly recursive as $\psi$ is recursive. Moreover, choosing the gridsize for the step functions can be done in a recursive way from $\epsilon$, $f_m(\varphi_s)$, however, we omit the details. 
\end{proof}

\section{Proof of Theorem \ref{BigTh:main_thrm2}}

%\ref{Th:main_thrm2

\subsection{Proof of Theorem \ref{BigTh:main_thrm2} with Assumption \ref{ass:1}}

\begin{proof}[Proof of Theorem \ref{BigTh:main_thrm2} with Assumption \ref{ass:1}]
 We will construct a sequence of algorithms $\{\Gamma_n\}$ such that 
\begin{equation}\label{eq:total_bound}
\left \| \Gamma_n(\varphi_0,V) - \psi(\bullet,T) \right\|_{L^2} \leq 2^{-n},
\end{equation}
where $\psi$ is the solution to the linear Schr\"odinger equation
\begin{equation}\label{eq:schr}
\begin{split}
i \partial_t \psi(x,t) &= (-\Delta+V +V_{\operatorname{TD}}(t))\psi(t), \quad \psi(0)=\varphi_0.
\end{split}
\end{equation}

{\bf Step I:} (Choosing $R$). We begin by choosing an $R > 0$ to restrict our equation to the cube centred at zero with length $R$. To do that we begin with Theorem \ref{theo2} and its proof which implies that for any $R > 0$ there is a smooth cut-off function $\gamma_R$, based on exponentials which parameters are recursive in $R$, that is supported on $\mathcal B_R(0)$ such that 
\begin{equation}\label{eq:bound_phi}
\Vert \varphi_0\gamma_R \Vert_{H^{2+\varepsilon}_2} \leq f_1(M, R), 
\end{equation}
where $f_1$ is recursive and $\Vert \varphi_0 \Vert_{H^{2+\epsilon}_2} \leq M$.
 Moreover, if $\psi^{D}_R$ is the solution of \eqref{eq:schr} on $\mathcal B_R(0)$, with zero Dirichlet boundary conditions and initial state $\varphi_0\gamma_R$,  and $\xi = \psi - \psi^{D}_R$, then \begin{equation*}
\left\lVert \xi  \right\rVert^2_{L^{\infty}((0,T),L^2(\RR^d))} \lesssim \sup_{t \in (0,T)}\left\lvert \int_{\partial \mathcal B_R(0)} (\nabla_n \xi)(x,t) \overline{\xi(x,t)}  \ dS(x) \right\rvert + R^{-2} C(T) \Vert \varphi_0 \Vert_{H^2_2(\RR^d)},
\end{equation*}
and by Lemmas \ref{eos}, \ref{eos2},\ref{lemm:auxlemmes}, we then have that for some $D_1>0$
\begin{equation}\label{eq:bound_xi}
\begin{split}
\left\lVert \xi  \right\rVert^2_{L^{\infty}((0,T),L^2(\RR^d))} &\leq D_1
\sup_{t \in (0,T)} \frac{\left\lVert  \xi(\bullet, t) \right\rVert^2_{H^2_2}}{R}
+ R^{-2} C(T) \Vert \varphi_0 \Vert_{H^2_2(\RR^d)},\\
&\leq D_1 2 C(T)  \frac{\left\lVert  \varphi_0 \right\rVert^2_{H^2_2}}{R}
+ R^{-2} C(T) \Vert \varphi_0 \Vert_{H^2_2(\RR^d)},
\end{split}
\end{equation}
where 
\[
C(T) = C(T,\Vert u \Vert_{W^{1,1}_{\operatorname{pcw}}((0,T))},\Vert W_{\operatorname{sing}} \Vert_{L^p}, \Vert \langle \bullet \rangle^{-2} W_{\operatorname{reg}} \Vert_{L^{\infty}}, \Vert \langle \bullet \rangle^{-2} V_{\operatorname{con}} \Vert_{L^{\infty}})
\]
is a recursive function. 
Hence, using the fact that for $(\varphi_0,V) \in \Omega_{\operatorname{Lin}}$ as in Assumption \ref{ass:1} we have that $\Vert \varphi_0 \Vert_{H^2_2} \le C_1$ we can, by \eqref{eq:outside}, choose $R > 0$ such that 
\begin{equation}\label{eq:first_bound_R}
D_1 2 C(T)  \frac{C_1^2}{R}
+ R^{-2} C(T) C_1 \leq 2^{-n},
\end{equation}
and thus we can focus on computing an approximation to $\psi^{D}_R$. 
From now on and throughout the argument $\Omega := \mathcal{C}_R(0)$, and it is clear that the bounds above for the ball will apply for the cube.  Note that, since the parameters in $\gamma_R$ are determined recursively and $\gamma_R$ is based on the exponential function, we can evaluate recursively point samples of $\varphi_0\gamma_R$ from point samples of $\varphi_0$. 

{\bf Step II:} (Removing singularities).
We now need to deal with the singular potential $W_{\operatorname{sing}}$ as this potential will have to be approximated by something bounded in order to do a discretisation.
By assumption we have that $W_{\operatorname{sing}} \in L^p \cap W^{2,\infty}_{\mathrm{loc}}(\mathbb{R}^d \setminus \cup_j\{x_j\})$ with singularities $\{x_j\}_{j=1}^{\infty} \subset \mathbb R^d$ has controlled singularity-blowup by $f: (0,\epsilon_0) \times \mathbb{R}_+ \rightarrow \mathbb{Q}_+ \times \mathbb{Q}_+$. Then, let $(\delta, L) = f(2^{-(n+2)}, R)$. Then, $\|W_{\operatorname{sing}}\chi_{\mathcal{A}} - W_{\operatorname{sing}}\chi_{\mathcal{C}_R(0)}\|_{L^p} \leq 2^{-(n+2)},$ where 
\[
\mathcal{A} = \mathcal{A}(\delta,R) = \bigcup_{x_j \in \mathcal{C}_R(0)} \mathcal{C}_\delta(x_j)^c \cap \mathcal{C}_R(0).
\]
 Moreover, $\|W_{\operatorname{sing}}\big\vert_{\mathcal{A}}\|_{W^{q,\infty}(\mathcal A)} \leq L$. 
Hence, given $(\delta, L) = f(2^{-(n+2)}, R)$, and the fact that $f$ is recursive, it is easy to see that one can obtain finitely many cut-off functions based on exponentials with parameters that are recursive in the variables $n$,$R$ and $\{x_j\} \cap \Omega$ such that if $\zeta$ is the sum of these functions then $\zeta(x) = 0$ for $x \in \Omega^c$ and  
\begin{equation}\label{eq:bound_W}
\|W_{\operatorname{sing}}\zeta - W_{\operatorname{sing}}\chi_{\Omega}\|_{L^p} \leq 2^{-(n+1)}, \qquad 
\|W_{\operatorname{sing}}\zeta\|_{W^{q,\infty}} \leq \hat C(L),
\end{equation}
where $\hat C$ can be obtained recursively from $L$. 
Note that, since $\zeta$ is a finite sum of exponentials, it can be recursively evaluated at any rational point to any precision. Hence, by Lemma \ref{potlem}, we can replace $W_{\operatorname{sing}}\chi_{\Omega}$ by $W_{\operatorname{sing}}\zeta$ and the point samples of $W_{\operatorname{sing}}\chi_{\Omega}$ needed later in the construction of $\Gamma_n$ can be determined recursively from point samples of $W_{\operatorname{sing}}$.
We can therefore continue with the problem of computing $\psi^{D}_R$. By using the assumption that $W_{\operatorname{reg}}, V_{\operatorname{con}} \in  W^{2,\infty}_{\mathrm{loc}}$ have controlled local smoothness by $g$ we can now, by using \eqref{eq:bound_W}, assume that we have a potential on $\Omega$ of the form $V + u(t)V_{\operatorname{con}}\big\vert_{\Omega}$, where 
\begin{equation}\label{eq:bound_C_of_n}
\Vert u \Vert_{W^{1,1}_{\operatorname{pcw}}}, \Vert V\Vert_{W^{2,\infty}},\Vert V_{\operatorname{con}}\big\vert_{\Omega} \Vert_{W^{2,\infty}(\Omega)} \leq \tilde C,
\end{equation}   
where the bound $\tilde C$ can be constructed recursively from the integer $n$ determining the accuracy, and where, with slight abuse of notation, $V = W_{\operatorname{sing}}\zeta + W_{\operatorname{reg}}\big\vert_{\Omega}$. To simplify the notation below we will omit the restrictions. 

{\bf Step III:} (Choosing the gridsize $h$ in the discretisation). In order to discretise the initial state $\varphi_0\gamma_R$ we recall Definition \ref{cubdisc} of the function $(\varphi_0\gamma_R)_Q$ that is a sum of characteristic functions according to the lattice depending on a step size $h$ in Definition \ref{cubdisc}. We will simply use the $(\varphi_0\gamma_R)_Q$ notation keeping the dependence of $h$ in mind. The size of this $h$ will be chosen at the very end.   
Now define the Strang splitting scheme with $\mathcal C^{\tau}_h$ being the Crank-Nicholson method defined in \eqref{eq:discrule} and \eqref{eq:the_C}, as follows:
\begin{equation}
\begin{split}
\label{eq:Stranglin22}
&(1) \quad \varphi^-_{k+\tfrac{1}{2}} := \mathcal C^{\tau/2}_h \varphi^{\operatorname{lin}}_{\operatorname{S}}(t_k), \\
&(2) \quad \varphi^+_{k+\tfrac{1}{2}}:=\exp_K\left(-i \tau (X_{t_k}^{\operatorname{lin}})_Q \right) \varphi_{k+\tfrac{1}{2}}^-, \operatorname{ and }\\
&(3) \quad \varphi^{\operatorname{lin}}_{\operatorname{S}}(t_{k+1}) :=\mathcal C^{\tau/2}_h \varphi^+_{k+\tfrac{1}{2}}
\end{split}
\end{equation}
where $\exp_K$ is defined in \eqref{eq:expK}, and 
\begin{equation}\label{eq:initial}
\varphi^{\operatorname{lin}}_{\operatorname{S}}(0):=(\varphi_0 \gamma_R)_Q, \quad (X_{t_k}^{\operatorname{lin}})_Q:=V_{\operatorname{TD}}(t_k)_Q.
\end{equation} 
By Proposition \ref{prop:lintime} there are recursive maps 
\begin{equation}\label{eq:rec1}
\begin{split}
&T, \Vert \varphi_0 \gamma_R\Vert_{H^{2+\varepsilon}},\Vert V \Vert_{W^{2,\infty}},\Vert V_{\operatorname{con}} \Vert_{W^{2,\infty}}, h,\Vert u \Vert_{W^{1,1}_{\operatorname{pcw}}}\\
& \qquad \mapsto \tau(T,\Vert \varphi_0 \gamma_R\Vert_{H^{2+\varepsilon}},\Vert V \Vert_{W^{2,\infty}},\Vert V_{\operatorname{con}} \Vert_{W^{2,\infty}}, h,\Vert u \Vert_{W^{1,1}_{\operatorname{pcw}}})
\end{split}
\end{equation} 
determining the stepsize $\tau$ in \eqref{eq:Stranglin22}, and 
\begin{equation}\label{eq:rec2}
\begin{split}
&T,\tau, \Vert \varphi_0\gamma_R \Vert_{H^{2+\varepsilon}},\Vert V \Vert_{W^{2,\infty}},\Vert V_{\operatorname{con}} \Vert_{W^{2,\infty}},  h,\Vert u \Vert_{W^{1,1}_{\operatorname{pcw}}}  \\
& \qquad \mapsto K(T,\tau, \Vert \varphi_0 \gamma_R\Vert_{H^{2+\varepsilon}},\Vert V \Vert_{W^{2,\infty}},\Vert V_{\operatorname{con}} \Vert_{W^{2,\infty}}, h,\Vert u \Vert_{W^{1,1}_{\operatorname{pcw}}})
\end{split}
\end{equation}
determining $K$ in \eqref{eq:Stranglin22} such that 
\[
 \max_{k} \Vert \psi^{D}_R(\tau k) - \varphi^{\operatorname{lin}}_{\operatorname{S}}(\tau k) \Vert_{L^2} \le C_{T} \Vert \varphi_0 \gamma_R -(\varphi_0 \gamma_R)_Q \Vert_{L^2}.
 \]
Note that \eqref{eq:bound_phi} and \eqref{eq:bound_C_of_n} provide bounds, that are recursively defined in the input $n$, on the input needed for the mappings in \eqref{eq:rec1} and \eqref{eq:rec2}. Also, noting that 
 by Proposition \ref{convrate} we have that for $f \in W^{1,2}(\mathbb R^d)$  
\[
\left\|f_Q-f\right\rVert_{L^2(\mathbb R^d)} \leq C'\Vert f\Vert_{W^{1,2}(\RR^d)} h,
\] 
for some constant $C'$, and by using \eqref{eq:bound_phi}, we can deduce that we can recursively compute
\begin{equation}\label{eq:recursive_h}
h \text{ (from $n$) such that } \max_{k} \Vert \psi^{D}_R(\tau k) - \varphi^{\operatorname{lin}}_{\operatorname{S}}(\tau k) \Vert_{L^2} \le 2^{-(n+1)}.
\end{equation}

{\bf Step IV:} (Choosing $N$ in the integration). To finalise the proof we need to approximate the initial state $(\varphi_0 \gamma_R)_Q$ and the potentials \MA{$V_Q$} and 
$V_{\operatorname{TD}}(t_k)_Q$ in \eqref{eq:Stranglin22} and \MA{\eqref{eq:discrule}} with the numerical integration from Definition \ref{cubdisc}. In particular, we replace (2) in \eqref{eq:Stranglin22} by 
\begin{equation*}
(2') \quad \varphi^+_{k+\tfrac{1}{2}}:=\exp_K\left(-i \tau (X_{t_k}^{\operatorname{lin}})_{Q,N} \right) \varphi_{k+\tfrac{1}{2}}^-, 
\end{equation*}
and \eqref{eq:initial} gets changed to 
\begin{equation*}
\varphi^{\operatorname{lin}}_{\operatorname{S}}(0):=(\varphi_0)_{Q,N} \quad (X_{t_k}^{\operatorname{lin}})_{Q,N}:=V_{\operatorname{TD}}(t_k)_{Q,N},
\end{equation*} 
where we recall that for $\phi \in L^1_{\operatorname{loc}}(\mathbb R^d,\mathbb C)$, $\phi_{Q,N}$ defined in Definition \ref{def:numerical_disc}.  Let $ \tilde  \varphi^{\operatorname{lin}}_{\operatorname{S}}(\tau k) $ denote the outputs of this modified scheme. 
Then, by Proposition \ref{prop:approx_int}, it follows that for $k=\lceil T/\tau \rceil$, $R$ and $h$
\begin{equation}\label{eq:bound1}
\begin{split}
 \left\Vert  \varphi^{\operatorname{lin}}_{\operatorname{S}}(\tau k) - \tilde \varphi^{\operatorname{lin}}_{\operatorname{S}}(\tau k) \right\Vert_{L^2} &\le C(n, R, h) \big( \left\Vert V_Q- V_{Q,N} \right\rVert_{L^{\infty}} \\
& \, \vee \left\Vert V_{\operatorname{con},Q}- V_{\operatorname{con},Q,N} \right\rVert_{L^{\infty}} \vee \left\lVert (\varphi_0)_{Q}-(\varphi_0)_{Q,N} \right\Vert_{L^2} \big),
 \end{split}
 \end{equation}
where the mapping $n, R, h \mapsto C(n, R, h)$ is recursive. 
Note that by Proposition \ref{prop:bound_disc_int} we have that 
\begin{equation*}
\begin{split}
&\| V_{Q} - V_{Q,N} \|_{L^{\infty}} \vee \left\Vert V_{\operatorname{con},Q} - V_{\operatorname{con},Q,N}  \right\rVert_{L^{\infty}}\\
& \qquad \leq \left(\max_{x_j \in \Omega} \left(\mathrm{TV}\left(V \circ \rho_{x_j} \right) \vee \mathrm{TV}\left(V_{\operatorname{con}} \circ \rho_{x_j} \right) \right)\right) C^*(b_1,\hdots,b_d)\frac{\log(N)^d}{N}.
\end{split}
\end{equation*}
as well as 
\begin{equation*}
\begin{split}
&\| (\varphi_0 \gamma_R)_{Q}-(\varphi_0 \gamma_R)_{Q,N}  \|_{L^2(\mathbb{R}^d)} \\
& \quad \leq \left(\sum_{x_j \in \Omega} \left(\mathrm{TV}\left( \varphi_0 \gamma_R\circ \rho_{x_j} \right) \right)^2\right)^{\frac{1}{2}} C^*(b_1,\hdots,b_d)\frac{\log(N)^d}{N}.
\end{split}
\end{equation*}
Thus, we need to bound the total variation of $V, V_{\operatorname{con}}$ and $\varphi_0 \gamma_R$. Note that it is well known that for $a > 0$ and $\lambda = 3^d-2^{d+1} +2$, we have
\begin{equation}\label{eq:TV_bounds}
\begin{split}
\mathrm{TV}_{[-a,a]^d}(\varphi_0 \gamma_R) &\leq \|\varphi_0\|_{L^{\infty}}\|\gamma_R\|_{L^{\infty}}  + \lambda^2\mathrm{TV}_{[-a,a]^d}(\varphi_0)\mathrm{TV}_{[-a,a]^d}( \gamma_R)\\ 
& \quad +  \lambda\left(\mathrm{TV}_{[-a,a]^d}(\varphi_0)\|\gamma_R\|_{L^{\infty}} + \mathrm{TV}_{[-a,a]^d}(\gamma_R)\|\varphi_0\|_{L^{\infty}} \right).
\end{split}
\end{equation}
 
Hence, it is easy to see, by using \eqref{eq:bound_phi}, \eqref{eq:TV_bounds} and (i) in Assumption \ref{ass:1} (asserting that $\varphi_0$ has controlled local bounded variation by $\omega$), as well as the standard bounds of the total variation in terms of the Jacobian, that there is a recursive mapping $ \mathbb{N} \times \mathbb{Q}_+ \times \mathbb{N} \ni (R,h,K) \mapsto \tilde C(R,h,K)$ such that for  $\|V\|_{W^{2,\infty}} \vee \|V_{\operatorname{con}}\|_{W^{2,\infty}} \vee \| \varphi_0\|_{L^{\infty}}    \leq K$ we have
\begin{equation}\label{eq:bound2}
\begin{split}
\| V_{Q} - V_{Q,N} \|_{L^{\infty}} \vee & \left\Vert V_{\operatorname{con},Q} - V_{\operatorname{con},Q,N}  \right\rVert_{L^{\infty}} \vee \| (\varphi_0 \gamma_R)_{Q}-(\varphi_0 \gamma_R)_{Q,N}  \|_{L^2} \\
& \quad \leq \tilde C(R,h,K)C^*(b_1,\hdots,b_d)\frac{\log(N)^d}{N}.
\end{split}
\end{equation}
Thus, by \eqref{eq:bound1} and \eqref{eq:bound2} we have established a recursive way of computing $N$ such that 
\begin{equation}\label{eq:almost_last_bound}
\left\Vert  \varphi_{S}(\tau k) - \tilde  \varphi_{S}(\tau k) \right\Vert_{L^2} \leq 2^{-n}.
\end{equation}

{\bf Step V:} (Showing recursiveness). Let $\Gamma_n(\varphi_0,V) = \tilde  \varphi_{S}(T)$. Then, the desired bound \eqref{eq:total_bound} follows from \eqref{eq:almost_last_bound}, \eqref{eq:recursive_h}, \eqref{eq:first_bound_R} and \eqref{eq:bound_xi}. 
The only thing left to prove is that the mapping $n \mapsto \Gamma_n(\varphi_0,V)$ is recursive. Note that we have already shown how $R$, $h$ and eventually $N$ can be determined in a recursive way from the input. Thus, we only need to show that the execution of $\tilde  \varphi_{S}(T)$ can be done with finitely many arithmetic operations and comparisons. The numerical approximation from Definition \ref{cubdisc} is requires only arithmetic operations. Moreover, so does executing the scheme \eqref{eq:Stranglin22} as the Crank-Nicholson method requires only matrix vector multiplication and solution of linear systems. 

The only thing we have left out is that the numerical integration of from Definition \ref{cubdisc} assumes that we can sample $W_{\operatorname{sing}}\zeta$ and $\varphi_0 \gamma_R$ exactly at the points in the Halton sequence (which consists of rational numbers). However, $\gamma_R$ and $\zeta$, that are cut-off functions based on exponentials can only be evaluated approximately to arbitrary precision. This extra layer of approximation can easily be added using \eqref{eq:bound1}, however, we omit this elementary and obvious exercise, thus finalising the proof. 

{\bf Step VI:} (Uniform runtime).The uniform runtime of the algorithm follows since all steps in the above algorithm only depend on uniform properties of elements in the set $\Omega_{\operatorname{Lin}}.$
\end{proof}

\subsection{Proof of Theorem \ref{BigTh:main_thrm2} with Assumption \ref{ass:2}}

\begin{proof}[Proof of Theorem \ref{BigTh:main_thrm2} with Assumption \ref{ass:2}]
The strategy of the proof is to convert the problem with Assumption \ref{ass:2} to a problem with Assumption \ref{ass:2} and then follow the proof of Theorem \ref{BigTh:main_thrm2} with Assumption \ref{ass:1} almost verbatim. 
 In particular, to obtain $\{\Gamma_m\}$ such that  
 $\| \Gamma_m(\varphi_0,V) - \psi(\bullet,T) \|_{L^2} \leq 2^{-m+1},$
 we begin by preparing the setup so that we can use some of the steps in the proof of Theorem \ref{BigTh:main_thrm2} with Assumption \ref{ass:1} verbatim. To prove the statement that the runtime of $\Gamma_m$ has a uniform upper bound for all inputs $\varphi_0,V$ we will argue as follows. The final definition of $\Gamma_m$ may be viewed as a collection of subroutines defined through several steps. We will argue that the runtime needed in each step is uniformly bounded for all inputs.

{\bf Step I:} (Perturbation theory).
Let $\varphi(t;\varphi_0)$ denote the solution to \eqref{eq:schr} with initial state $\varphi_0$. For the linear Schr\"odinger equation, it follows straight from the variation of constant formula
\[ \varphi(t;\varphi_0) = e^{i \Delta t} \varphi_0 + \int_0^t  e^{i \Delta (t-s)}(V+u(s) V_{\operatorname{control}}) \varphi(s;\varphi_0) \ ds \]
that there exists a recursively defined function $K(C;T)$ such that 
\begin{equation}
\begin{split}
\label{eq:Schroedinger}
&\sup_{t \in (0,T)} \Vert \varphi(t;\varphi_0)  -  \varphi(t;\widetilde \varphi_0) \Vert_{L^2} \le K(C,T) \Vert \varphi_0 - \widetilde \varphi_0\Vert_{L^2},
\end{split}
\end{equation}
where $C$ is the constant from Assumption \ref{ass:2} and $T$ is the final time. 
We therefore start by smoothing out the initial state in order to use the techniques in the proof of Theorem \ref{BigTh:main_thrm2} with Assumption \ref{ass:1} that needs smoothness. 
 
{\bf Step II:} (Smoothing out initial states). 
To simplify the notation we display the technique in one dimension. The multi-dimensional case follows immediately from the one-dimensional setup by using tensor products of one-dimensional functions.  
To approximate the initial state by a smoother state in $L^2(\RR)$, consider the following $L^2(\RR)$ orthonormal system
\begin{equation}\label{eq:eig_func}
 \psi_n(x) = \frac{e^{-x^2}}{\sqrt{2^n n!} \pi^{1/4}}, \quad n \in \mathbb N_0.
 \end{equation}
These Schwartz functions are eigenfunctions to the quantum harmonic oscillator
$
S = -\Delta + \vert x \vert^2 
$
to eigenvalues $\lambda_n:=(2n+1)$\footnote{In higher dimensions $d$, the eigenstates are just the tensor products of the $1d$-eigenfunctions. Correspondingly, the eigenvalues are just $(2n+d)$ for $n \in \mathbb N_0$ where the $n$-th eigenvalue is $\binom{d+n-1}{n}$-fold degenerate.}.

For the linear Schr\"odinger equation, we assume that the initial state $\varphi_0$ has CLBV and is in the space $H^{2+\varepsilon}_{2}.$ The condition on the Sobolev can be relaxed to the condition that $\Vert S^{\varepsilon} \varphi_0\Vert \le C$ for some explicit constant $C>0$ and some $\varepsilon>0.$ 
To see this, we recall that since $(\varphi_n)$ forms an orthonormal system, it follows that 
$
\varphi_0= \sum_{n=0}^{\infty} \langle \psi_n, \varphi_0 \rangle_{L^2} \psi_n.
$
Hence, we find that
\[
\Vert S^{\varepsilon}\varphi_0 \Vert^2_{L^2}= \sum_{n=0}^{\infty} \lambda_n^{2\varepsilon} \lvert \langle \psi_n, \varphi_0 \rangle_{L^2}\rvert^2.
\] 
To compute an $L^2$ approximation of error at most $\delta$ we have to find $K>0$ such that $\sum_{n=K}^{\infty}  \lvert \langle \psi_n, \varphi_0 \rangle_{L^2}\rvert^2<\delta.$ 
Hence,
\[\Vert S^{\varepsilon}\varphi_0 \Vert^2_{L^2} \ge \sum_{n=K}^{\infty} \lambda_n^{2\varepsilon} \lvert \langle \psi_n, \varphi_0 \rangle_{L^2}\rvert^2 \ge \lambda_K^{2\varepsilon} \sum_{n=K}^{\infty}  \lvert \langle \psi_n, \varphi_0 \rangle_{L^2}\rvert^2 \]
which implies that 
$
\sum_{n=K}^{\infty}  \lvert \langle \psi_n, \varphi_0 \rangle_{L^2}\rvert^2 \le \frac{C}{\lambda_K^{2\varepsilon}} <\delta
$
for some explicit $K$ large enough.
Hence, it is clear that one can choose $K$ recursively from $C$ and (rational $\delta$) such that 
\begin{equation}\label{eq:phi_approximation}
\widetilde{\varphi_0}:=\sum_{n=0}^{K-1} \langle \psi_n, \varphi_0 \rangle_{L^2} \psi_n
\end{equation}
approximate the initial state $\varphi_0$ up to an error 
$
\left\Vert \varphi_0- \widetilde{\varphi_0} \right\Vert_{L^2} \le \delta.
$

All inner products $\langle \varphi_0, \psi_n \rangle$ can be computed to arbitrary precision by the assumption that $\varphi_0$ has controlled local bounded variation by $\omega: \mathbb{R}_+ \rightarrow \mathbb{N}$ the explicit decay and regularity estimates on the eigenstates $(\psi_n)$ in \eqref{eq:eig_func}. The computations are taken care of in Step VI. 

It is well-known \cite[Lemm. $24$]{BDLR} that there exist for $k \in [0,\infty)$ universal constants $d_{k}>0$
such that 
$
 \left\lVert \psi \right\rVert_{H^{k}_{k}}\le d_{k}\Vert S^{k} \psi\Vert.
$
Hence, we find a new explicit estimate on the $H^4_4(\RR^d)$ norm of the new initial state
\[ \left\lVert \widetilde{\varphi_0} \right\rVert_{H^{4}_{4}}\le d_{4}\Vert S^2 \widetilde{\varphi_0} \Vert_{L^2} = \sqrt{\sum_{n=0}^{N-1}\lambda_n^{2}\lvert \langle \psi_n, \varphi_0 \rangle_{L^2}\rvert^2} .\] 

To obtain an estimate on the total variation of $\widetilde{\varphi_0}$, it suffices to bound the $L^1$ norm of the gradient of $\widetilde{\varphi_0}$ by 

\begin{equation}\label{eq:bounded_var_bound}
 \Vert \nabla \widetilde{\varphi_0} \Vert_{L^1(B_R(0))} \le C_R \Vert \nabla \widetilde{\varphi_0} \Vert_{L^2(B_R(0))} \le C_R \left\lVert \widetilde{\varphi_0} \right\rVert_{H^{2}_{2}}.
\end{equation}

{\bf Step III:} (Choosing $R$ ). Since the initial state is now smooth, we may choose 
\[R(T, \Vert u \Vert_{W^{1,1}_{\operatorname{pcw}}(0,T)},\Vert \langle \bullet \rangle^{-2}  V \Vert_{L^{\infty}},\Vert \langle \bullet \rangle^{-2}  V_{\operatorname{con}} \Vert_{L^{\infty}})\]
exactly as in Step I in the proofs of Theorem \ref{BigTh:main_thrm2} with Assumption \ref{ass:1}.

 {\bf Step IV:} (Smoothing out potentials).
 We explain how to smoothen out potentials $V,V_{\operatorname{con}}$ without increasing the $L^{\infty}$ norm such that the cut-off radius $R$ remains still applies. Consider the Gaussian distribution 
\[
\chi_{\sigma}(x) = \frac{e^{-\frac{x^2}{2\sigma^2}}}{\sqrt{2\pi} \sigma}
\]
 with variance $\sigma^2>0$. We can then define the smooth potentials
$V_{\sigma} = V*\chi_{\sigma}$ and $(V_{\operatorname{con}})_{\sigma} =  V_{\operatorname{con}}*\chi_{\sigma}.$
By \eqref{eq:citeme} in Lemma \ref{potlem}, it suffices to approximate potentials by smooth ones in $L^p$ where $p$ is as in Remark \ref{rem:bounded}.
In particular, we may choose $\sigma$ such that the error of the following expression becomes as small as required to approximate \eqref{eq:Schroedinger} up to the desired accuracy with the new potentials $V_{\sigma}$ and $ (V_{\operatorname{con}})_{\sigma}$ as follows: 
From Minkowski's integral inequality, we find using Proposition \ref{convrate} for some fixed constant $C>0,$ and $\delta>0$ arbitrary
\begin{equation}\label{eq:conv_bound}
\begin{split}
 \Vert V- V_{\sigma} \Vert_{L^p} 
&\le \int_{\vert y \vert > \delta} \left( \int_{\RR^d} \vert V(x-y)-V(x) \vert^p  \vert \chi_{\sigma}(y) \vert^p \ dx  \right)^{1/p} \ dy\\
&+\int_{\vert y \vert \le \delta} \chi_{\sigma}(y)  \left(\int_{\RR^d} \vert V(x-y)-V(x) \vert^p   \ dx  \right)^{1/p} \ dy \\
& \le 2 \Vert V \Vert_{L^p} \int_{\vert y \vert >\delta}   \chi_{\sigma}(y) \ dy + \int_{\vert y \vert \le \delta}\chi_{\sigma}(y) \sup_{\vert y \vert \le \delta} \Vert V(\bullet-y)-V \Vert_{L^p} \ dy \\
&\le 2 \Vert V \Vert_{L^p} \operatorname{erfc}\left(\frac{\delta}{\sqrt{2}\sigma} \right)+ \sup_{\vert y \vert \le \delta} \Vert V(\bullet-y)-V \Vert_{L^p}\\
&\le 2 \Vert V \Vert_{L^{\infty}} \vert B(0,R+1) \vert^{1/p} \operatorname{erfc}\left(\frac{\delta}{\sqrt{2}\sigma} \right) + C \Phi(R+1) \delta^{\varepsilon}.
\end{split}
\end{equation}
Moreover, for every $n \in \mathbb N_0$ there are explicit estimates on the $n$-th derivative by Young's inequality
\begin{equation}\label{eq:V_bound}
\Vert V_{\sigma}^{(n)} \Vert_{L^{\infty}} \le \Vert V \Vert_{L^{\infty}} \Vert \chi_{\sigma}^{(n)} \Vert_{L^1}.
\end{equation}

By \eqref{eq:citeme} in Lemma \ref{potlem} and \eqref{eq:conv_bound} it follows that we can choose $\sigma$ recursively from $n \in \mathbb{N}$ and $R \in \mathbb{N}$ such that the solution to the Schr\"odinger equation \eqref{eq:schr} with initial state $\widetilde \varphi_0$ does not differ in $L^2$ norm more than $2^{-n}$ from the solution to \eqref{eq:schr} with the potentials $V_{\sigma}$, $(V_{\operatorname{con}})_{\sigma}$. Thus, the same holds if we restrict the problem to the $R$-cube. We will denote the restricted potentials of $V_{\sigma}$, $(V_{\operatorname{con}})_{\sigma}$ to the $R$-cube by $\widetilde V$, $\widetilde V_{\operatorname{con}}$ respectively. 
Given the new $\widetilde \varphi_0$, $\widetilde V$, $\widetilde V_{\operatorname{con}}$ we now have 
\begin{equation}\label{eq:bound_C_of_n5}
\Vert u \Vert_{W^{1,1}_{\operatorname{pcw}}}, \Vert \widetilde V\Vert_{W^{2,\infty}},\Vert \widetilde V_{\operatorname{con}}\big\vert_{\Omega} \Vert_{W^{2,\infty}(\Omega)}, \Vert \widetilde  \varphi_0 \Vert_{H^2_2} \leq \tilde C.
\end{equation}   
Thus, we are now having the same situation as Assumption \ref{ass:1} for our Schr\"odinger problem on the $R$-cube.

{\bf Step V:} (Choosing the gridsize $h$ in the discretisation). This is done exactly as in Step III in the proofs of Theorem \ref{BigTh:main_thrm2} with Assumption \ref{ass:1}.

{\bf Step VI:} (Choosing $N$ and $M$ in the integration). 
This part differs from the proof of Theorem \ref{BigTh:main_thrm2} with Assumption \ref{ass:1}.
To finalise the proof we need to approximate the initial state $(\widetilde \varphi_0 \gamma_R)_Q$ and the potentials $\tilde V_Q$ and 
$\tilde V_{\operatorname{TD}}(t_k)_Q$ in \eqref{eq:Stranglin22} and \eqref{eq:discrule} with the numerical integration from Definition \ref{cubdisc}. However, the numerical integration will now be slightly different. We will replace 
\begin{equation}\label{eq:function_change}
(\widetilde  \varphi_0 \gamma_R)_Q,  \quad \widetilde  V_Q, \quad \widetilde  V_{\operatorname{TD}}(t_k)_Q
\end{equation}
with functions that are, as the functions in \eqref{eq:function_change}, constant on cubes as in Definition \ref{cubdisc}. Note that if we could compute $(\widetilde  \varphi_0 \gamma_R)_{Q,N}$, $\widetilde  V_{Q,N}$, and $\widetilde  V_{\operatorname{TD}}(t_k)_{Q,N}$,  
where we recall that for $\phi \in L^1_{\operatorname{loc}}(\mathbb R^d,\mathbb C)$, $\phi_{Q,N}$ is introduced in Definition \ref{def:numerical_disc}, the rest of the argument would be identical to that of Step IV in the proof of Theorem \ref{BigTh:main_thrm2} with Assumption \ref{ass:1}. However, we can only produce approximations to $(\widetilde  \varphi_0 \gamma_R)_{Q,N}$, $\widetilde  V_{Q,N}$ and $\widetilde  V_{\operatorname{TD}}(t_k)_{Q,N}$ because of the smoothing approximations done in Step II and Step IV. Hence, $(\widetilde  \varphi_0 \gamma_R)_{Q,N}$, $\widetilde  V_{Q,N}$, and $\widetilde  V_{\operatorname{TD}}(t_k)_{Q,N}$ will be replaced by the $M$-approximations 
\[
(\widetilde  \varphi_0 \gamma_R)^{M}_{Q,N}, \quad \widetilde  V^M_{Q,N}, \quad \widetilde V_{\operatorname{TD}}(t_k)^M_{Q,N},
\]
introduced in Definition \ref{def:numerical_disc}. We will specify how these functions are chosen at the very last stage in this step. 

From here on we can now stay close to Step IV of the proof of Theorem \ref{BigTh:main_thrm2} with Assumption \ref{ass:1}.
In particular, we replace (2) in \eqref{eq:Stranglin22} by 
\begin{equation*}
(2') \quad \varphi^+_{k+\tfrac{1}{2}}:=\exp_K\left(-i \tau (X_{t_k}^{\operatorname{lin}})^M_{Q,N} \right) \varphi_{k+\tfrac{1}{2}}^-, 
\end{equation*}
and \eqref{eq:initial} gets changed to 
\begin{equation*}
\widetilde \varphi^{\operatorname{lin}}_{\operatorname{S}}(0):=(\widetilde \varphi_0 \gamma_R)^{M}_{Q,N}, \quad (X_{t_k}^{\operatorname{lin}})^M_{Q,N}:=\widetilde  V_{\operatorname{TD}}(t_k)^M_{Q,N}.
\end{equation*} 
Then, by Proposition \ref{prop:approx_int}, it follows that for $k=\lceil T/\tau \rceil$, $R$ and $h$
\begin{equation}\label{eq:bound1a}
\begin{split}
 \left\Vert  \varphi^{\operatorname{lin}}_{\operatorname{S}}(\tau k) - \widetilde  \varphi^{\operatorname{lin}}_{\operatorname{S}}(\tau k) \right\Vert_{L^2} &\le C(n, R, h) \big( \left\Vert \widetilde V_Q- \widetilde V^M_{Q,N} \right\rVert_{L^{\infty}} \\
& \, \vee \left\Vert \widetilde V_{\operatorname{con},Q}- \widetilde V^M_{\operatorname{con},Q,N} \right\rVert_{L^{\infty}} \vee \left\lVert (\widetilde \varphi_0)_{Q}-(\widetilde \varphi_0)_{Q,N} \right\Vert_{L^2} \big),
 \end{split}
 \end{equation}
where the mapping $n, R, h \mapsto C(n, R, h)$ is recursive. 
Note that by Proposition \ref{prop:bound_disc_int} we have that 
\begin{equation}\label{eq:righthand1}
\begin{split}
&\| \widetilde V_{Q} - \widetilde V_{Q,N} \|_{L^{\infty}} \vee \left\Vert \widetilde V_{\operatorname{con},Q} - \widetilde  V^M_{\operatorname{con},Q,N}  \right\rVert_{L^{\infty}}\\
& \qquad \leq \left(\max_{x_j \in \Omega} \left(\mathrm{TV}\left(\widetilde  V \circ \rho_{x_j} \right) \vee \mathrm{TV}\left(\widetilde  V_{\operatorname{con}} \circ \rho_{x_j} \right) \right)\right) C^*(b_1,\hdots,b_d)\frac{\log(N)^d}{N} + N2^{-M}
\end{split}
\end{equation}
as well as 
\begin{equation}\label{eq:righthand2}
\begin{split}
&\| (\widetilde \varphi_0 \gamma_R)_{Q}-(\widetilde \varphi_0 \gamma_R)^M_{Q,N}  \|_{L^2(\mathbb{R}^d)} \\
& \quad \leq \left(\sum_{x_j \in \Omega} \left(\mathrm{TV}\left(\widetilde \varphi_0 \gamma_R\circ \rho_{x_j} \right) \right)^2\right)^{\frac{1}{2}} C^*(b_1,\hdots,b_d)\frac{\log(N)^d}{N} + N2^{-M}|\{x_j \, \vert \, x_j \in \Omega\}|,
\end{split}
\end{equation}
where we recall $\rho_{x_j}$ and the sequence $\{x_j\}$ from Definition \ref{def:numerical_disc}.
Note that the bounds of the total variation of $\widetilde  V$ and $\widetilde  V_{\operatorname{con}}$, 
needed to bound the right hand side of \eqref{eq:righthand1}, follow directly from \eqref{eq:V_bound}. 
The bound of the total variation of $\widetilde \varphi_0 \gamma_R$, needed to bound the right hand side of  \eqref{eq:righthand2} needs a little more care. Note that  \eqref{eq:bounded_var_bound} immediately implies a bound on the total variation of $\widetilde \varphi_0$. Also, \eqref{eq:phi_approximation} implies a bound on $\|\widetilde \varphi_0\|_{L^{\infty}}$ in terms of $\|\varphi_0\|_{L^2}$ and $K$ in \eqref{eq:phi_approximation}. Thus, by using \eqref{eq:TV_bounds} we get a bound on the total variation of $\widetilde \varphi_0 \gamma_R$. 
Hence, to finish the proof it suffices to show that we can compute point samples of $\widetilde \varphi_0 \gamma_R$, $\widetilde V$ and $\widetilde V_{\operatorname{con}}$ to arbitrary precision. To show this we start with $\widetilde \varphi_0 \gamma_R$. Note that by the choice of $\gamma_R$ it suffices to show that we can compute recursively point samples of $\widetilde \varphi_0$ to arbitrary precision from point samples of $\varphi_0$. Recall from \eqref{eq:phi_approximation} that 
$
\widetilde{\varphi_0}:=\sum_{n=0}^{K-1} \langle \psi_n, \varphi_0 \rangle_{L^2} \psi_n
$
where the $\psi_n$s are eigenfunctions to the quantum harmonic oscillator based on exponential functions. Thus, to compute point samples of $\widetilde{\varphi_0}$ to arbitrary precision one needs to compute the inner products $\langle \psi_n, \varphi_0 \rangle_{L^2}$ for $n \leq K-1$ to arbitrary precision. Indeed, by using Proposition \ref{prop:bound_disc_int} this can be done as the total variation bound on $\psi_n\varphi_0$ follows from Assumption \ref{ass:2} that $\varphi_0$ has controlled local bounded variation by $\omega: \mathbb{R}_+ \rightarrow \mathbb{N}$ and the uniform bounds on the total variation of $\psi_n$ when $n \leq K-1$. Note also that it is immediate that this can be done with a fixed number of arithmetic operations and comparisons as a function of the precision needed. 

Note that, by Assumption \ref{ass:2}, $V_{\operatorname{con}},V$ have controlled local bounded variation by $\omega: \mathbb{R}_+ \rightarrow \mathbb{N}$. Thus, since we have that for $x \in \RR$ and $\xi>0$
\begin{equation}\label{eq:tale}
\begin{split}
\left\vert V_{\sigma}(x) - \int_{\vert y \vert  \le \xi} V(x-y) \chi_{\sigma}(y) \ dy \right\rvert 
&\le \Vert V \Vert_{L^{\infty}} \int_{\vert y \vert  > \xi} \chi_{\sigma}(y) \ dy \\
&= \Vert V \Vert_{L^{\infty}} \operatorname{erfc}\left(\frac{\xi}{\sqrt{2}\sigma} \right).
\end{split}
\end{equation}
Note that the complementary error function $\operatorname{erfc}$ is entire and strictly decreasing for positive inputs. Hence, it follows that, given any rational $\sigma > 0$ and $L \in \mathbb{N}$, one can recursively determine $\xi \in \mathbb{N}$ such that the right hand side of \eqref{eq:tale} is bounded by $2^{-L}$. Hence, we are left with the problem of computing $\int_{\vert y \vert  \le \xi} V(x-y) \chi_{\sigma}(y)$ to arbitrary precision for rational $x$ and given $\sigma$ and $\xi$. However, this can be done by again using Proposition \ref{prop:bound_disc_int} as long as one can bound the total variation of $V(x-\cdot) \chi_{\sigma}(\cdot)$. However, this can be done by using \eqref{eq:TV_bounds} and the local bounds on the total variation of $\chi_{\sigma}(\cdot)$ that follows immediately from bounds on the derivatives of $\chi_{\sigma}(\cdot)$.  
\end{proof}

\section{Proof of Theorem \ref{BigTh:main_thrm2_NLS}}
\subsection{Proof of Theorem \ref{BigTh:main_thrm2_NLS} with Assumption \ref{ass:1}}

\begin{proof}[Proof of Theorem \ref{BigTh:main_thrm2_NLS} with Assumption \ref{ass:1}]
We will construct a sequence of algorithms $\{\Gamma_n\}$ such that 
\begin{equation}
\left \| \Gamma_n(\varphi_0,V) - \psi(\bullet,T) \right\|_{L^2} \leq 2^{-n+1},
\end{equation}
where $\psi$ is the solution to the NLS equation
\begin{equation*}
\begin{split}
%\label{eq:GP}
i \partial_t \psi(x,t) &= H_0 \psi(x,t) + V_{\operatorname{TD}}(t)\psi(x,t)+  \nu F_{\sigma}(\psi(x,t)), \quad (x,t) \in \mathbb R \times (0,T) \\
\psi(\bullet,0)&=\varphi_0
\end{split}
\end{equation*} 

{\bf Step I:} Almost Identical to Step I of the proof of Theorem \ref{BigTh:main_thrm2} with Assumption \ref{ass:1}. We will use notation from these steps below, and thus the reader is encouraged to read these before continuing with the other steps. The only difference is that we replace \eqref{eq:bound_phi} by 
\begin{equation}\label{eq:bound_phi2}
\Vert \varphi_0\gamma_R \Vert_{H^{3+\varepsilon}_2} \leq f_1(M, R), 
\end{equation}
where $f_1$ is recursive and $\Vert \varphi_0 \Vert_{H^{3+\epsilon}_2} \leq M$.

{\bf Step II:} (Removing singularities).
This step is almost identical to Step II in the proof of Theorem \ref{BigTh:main_thrm2}. The only difference is that \eqref{eq:bound_C_of_n} is replaced by 
\begin{equation}\label{eq:bound_C_of_n_2}
\Vert u \Vert_{W^{1,1}_{\operatorname{pcw}}}, \Vert V\Vert_{W^{3,\infty}},\Vert V_{\operatorname{con}}\big\vert_{\Omega} \Vert_{W^{3,\infty}} \leq \tilde C,
\end{equation}   
where the bound $\tilde C$ can be constructed recursively from the integer $n$ determining the accuracy, and where, with slight abuse of notation, $V = W_{\operatorname{sing}}\zeta + W_{\operatorname{reg}}\big\vert_{\Omega}$. To simplify the notation below we will omit the restrictions.

{\bf Step III:} (Choosing the gridsize $h$). We stay close to Step III in the proof of Theorem \ref{BigTh:main_thrm2}. We recall Definition \ref{cubdisc} in order to discretise the initial state $\varphi_0\gamma_R$ via $(\varphi_0\gamma_R)_Q$ that is a sum of characteristic functions according to the lattice depending on a step size $h$ in Definition \ref{cubdisc}. We will simply use the $(\varphi_0\gamma_R)_Q$ keeping the dependence of $h$ in mind. The size of this $h$ will be chosen at the very end. The difference from the linear case is that our approximation scheme will have to be slightly altered to the following:  
\begin{equation}
\begin{split}
\label{eq:StrangNLS2_alt}
&(1) \quad \varphi^-_{k+\tfrac{1}{2}} := \mathcal C^{\tau/2}_h \varphi^{\operatorname{NLS}}_{\operatorname{S}}(t_k), \\
&(2) \quad \varphi^+_{k+\tfrac{1}{2}}:=\exp_K\left(-i \tau X_{t_k}^{\operatorname{NLS}}(\psi_{k+\tfrac{1}{2}}^-)_Q \right) \varphi_{k+\tfrac{1}{2}}^-, \operatorname{ and }\\
&(3) \quad \varphi^{\operatorname{NLS}}_{\operatorname{S}}(t_{k+1}) :=\mathcal C^{\tau/2}_h \varphi^+_{k+\tfrac{1}{2}},
\end{split}
\end{equation}
where $\exp_K$ is defined in \eqref{eq:expK} and 
\begin{equation}\label{eq:initial2}
\varphi^{\operatorname{NLS}}_{\operatorname{S}}(0):=(\varphi_0 \gamma_R)_Q, \quad X_{t_k}^{\operatorname{NLS}}(\psi)_Q:=V_{\operatorname{TD}}(t_k)_Q + \vert  \psi \vert^{\sigma-1},
\end{equation} 
where we recall the definition of $\mathcal C^{\tau/2}_h$ from the Crank-Nicholson method \eqref{eq:discrule}. 

By Proposition \ref{prop:nontime} there are recursive maps 
\begin{equation}\label{eq:rec3}
\begin{split}
&T, \Vert \varphi_0 \gamma_R\Vert_{H^{3+\varepsilon}},\Vert V \Vert_{W^{3,\infty}},\Vert V_{\operatorname{con}} \Vert_{W^{3,\infty}}, h,\Vert u \Vert_{W^{1,1}_{\operatorname{pcw}}}\\
& \qquad \mapsto \tau(T,\Vert \varphi_0 \gamma_R\Vert_{H^{3+\varepsilon}},\Vert V \Vert_{W^{3,\infty}},\Vert V_{\operatorname{con}} \Vert_{W^{3,\infty}}, h,\Vert u \Vert_{W^{1,1}_{\operatorname{pcw}}})
\end{split}
\end{equation} 
determining the stepsize $\tau$ in \eqref{eq:StrangNLS2_alt}, and 
\begin{equation}\label{eq:rec4}
\begin{split}
&T,\tau, \Vert \varphi_0\gamma_R \Vert_{H^{3+\varepsilon}},\Vert V \Vert_{W^{3,\infty}},\Vert V_{\operatorname{con}} \Vert_{W^{3,\infty}},  h,\Vert u \Vert_{W^{1,1}_{\operatorname{pcw}}}  \\
& \qquad \mapsto K(T,\tau, \Vert \varphi_0 \gamma_R\Vert_{H^{3+\varepsilon}},\Vert V \Vert_{W^{3,\infty}},\Vert V_{\operatorname{con}} \Vert_{W^{3,\infty}}, h,\Vert u \Vert_{W^{1,1}_{\operatorname{pcw}}})
\end{split}
\end{equation}
determining $K$ in \eqref{eq:StrangNLS2_alt} such that 
\begin{equation}\label{eq:H1_h_bound}
 \max_{k} \Vert \psi^{D}_R(\tau k) - \varphi^{\operatorname{NLS}}_{S}(\tau k) \Vert_{H^1_h} \le C_{T} \Vert \varphi_0 -(\varphi_0)_Q \Vert_{H^1_h}.
 \end{equation}
 To translate this into an $L^2$ bound we use Lemma \ref{convrate} and \eqref{eq:H1_h_bound} and get 
\begin{equation}\label{eq:np}
\begin{split}
\max_k  \left\Vert \psi^{D}_R(\tau k) - \varphi^{\operatorname{NLS}}_{S}(\tau k)\right\Vert_{L^2} &\le 
\max_{k} \Vert \psi^{D}_R(\tau k) - \varphi^{\operatorname{NLS}}_{S}(\tau k) \Vert_{H^1_h}\\
& \le C   \left\Vert \varphi_0 \gamma_R -(\varphi_0 \gamma_R)_Q \right\Vert_{H^1_h} \le C  h \Vert \varphi_0 \gamma_R\Vert_{H^2}. 
\end{split}
\end{equation}
Hence, using \eqref{eq:np} and \eqref{eq:bound_phi2} in Step I
we can deduce that we can recursively compute
\begin{equation*}%\label{eq:recursive_h}
h \text{ (from $n$) such that } \max_k  \left\Vert \psi^{D}_R(\tau k) - \varphi^{\operatorname{NLS}}_{S}(\tau k)\right\Vert_{L^2} \le 2^{-(n+1)}.
\end{equation*}

{\bf Step IV:} (Choosing $N$ in the integration). To finalise the proof we need to approximate the initial state $(\varphi_0 \gamma_R)_Q$ and the potential 
$V_{\operatorname{TD}}(t_k)_Q$ in \eqref{eq:StrangNLS2_alt} with the numerical integration from Definition \ref{cubdisc}. In particular, we replace (2) in \eqref{eq:StrangNLS2_alt} by 
\[
(2) \quad \varphi^+_{k+\tfrac{1}{2}}:=\exp\left(-i \tau X_{t_k}^{\operatorname{NLS}}(\psi_{k+\tfrac{1}{2}}^-)_{Q,N} \right) \varphi_{k+\tfrac{1}{2}}^-
\]
and \eqref{eq:initial2} gets replaced by 
\[
\varphi^{\operatorname{NLS}}_{\operatorname{S}}(0):=(\varphi_0)_{Q,N}, \quad X_{t_k}^{\operatorname{NLS}}(\psi)_{Q,N}:=V_{\operatorname{TD}}(t_k)_{Q,N} + \vert  \psi \vert^{\sigma-1},
\]
where we recall that for $\phi \in L^1_{\operatorname{loc}}(\mathbb R^d,\mathbb C)$, $\phi_{Q,N}$ has been introduced in Definition \ref{cubdisc}.  Let $ \tilde  \varphi_{S}(\tau k) $ denote the outputs of this modified scheme. 
Then, by Proposition \ref{prop:approx_int}, it follows that for $k=\lceil T/\tau \rceil$, $R$ and $h$
\begin{equation}
\begin{split}
 \left\Vert  \varphi_{S}(\tau k) - \tilde  \varphi_{S}(\tau k) \right\Vert_{L^2} &\le C(n, R, h) \big( \left\Vert V_Q- V_{Q,N} \right\rVert_{L^{\infty}} \\
& \, \vee \left\Vert V_{\operatorname{con},Q}- V_{\operatorname{con},Q,N} \right\rVert_{L^{\infty}} \vee \left\lVert (\varphi_0)_{Q}-(\varphi_0)_{Q,N} \right\Vert_{L^2} \big),
 \end{split}
 \end{equation}
where the mapping $n, R, h \mapsto C(n, R, h)$ is recursive. 

{\bf Step V-VI: } The rest of the proof is identical to the proof of is identical to the proof of Theorem \ref{BigTh:main_thrm2}. 
\end{proof}

\subsection{Proof of Theorem \ref{BigTh:main_thrm2_NLS} with Assumption \ref{ass:2}}

\begin{proof}[Proof of Theorem \ref{BigTh:main_thrm2_NLS} with Assumption \ref{ass:2}]
We will follow the proof of Theorem \ref{BigTh:main_thrm2} with Assumption \ref{ass:2} and the proof of Theorem \ref{BigTh:main_thrm2_NLS} with Assumption \ref{ass:1} closely. In certain cases the passages follow the above mentioned proofs verbatim. 

{\bf Step I:} (Perturbation theory).
This step varies slightly from Step I in the proof of Theorem \ref{BigTh:main_thrm2} with Assumption \ref{ass:2}.
Let $\varphi(t;\varphi_0)$ denote the solution to \eqref{eq:schr} with initial state $\varphi_0$. It follows straight from the variation of constant formula
\[ \varphi(t;\varphi_0) = e^{i \Delta t} \varphi_0 -i \int_0^t  e^{i \Delta (t-s)}\vert \varphi(s;\varphi_0) \vert^{\sigma-1} \varphi(s;\varphi_0) \ ds \]
that there exists a recursively defined function $K(C;T)$ such that 
\begin{equation}
\begin{split}
&\sup_{t \in (0,T)} \Vert \varphi(t;\varphi_0)  -  \varphi(t;\widetilde \varphi_0) \Vert_{L^2} \le K(C,T) \Vert \varphi_0 - \widetilde \varphi_0\Vert_{L^2},
\end{split}
\end{equation}
where $C$ is the constant from Assumption \ref{ass:2} and $T$ is the final time. 
We therefore start by smoothing out the initial state in order to use the techniques in the proof of Theorem \ref{BigTh:main_thrm2} with Assumption \ref{ass:1} that needs smoothness. 
 
{\bf Step II:} (Smoothing out initial states). This is exactly as in Step II of the proof of Theorem \ref{BigTh:main_thrm2} with Assumption \ref{ass:2}.

{\bf Step III:} (Choosing $R$ ). This is exactly as in Step III of the proof of Theorem \ref{BigTh:main_thrm2_NLS} with Assumption \ref{ass:1}.

{\bf Step IV:} (Smoothing out potentials). This step is almost identical to Step IV in the proof of Theorem \ref{BigTh:main_thrm2} with Assumption \ref{ass:2}, except that \eqref{eq:bound_C_of_n5} is replaced by  
 \begin{equation*}%\label{eq:bound_C_of_n5}
\Vert u \Vert_{W^{1,1}_{\operatorname{pcw}}}, \Vert \widetilde V\Vert_{W^{3,\infty}},\Vert \widetilde V_{\operatorname{con}}\big\vert_{\Omega} \Vert_{W^{3,\infty}(\Omega)}, \Vert \widetilde  \varphi_0 \Vert_{H^3_2} \leq \tilde C.
\end{equation*}   

{\bf Step V:} (Choosing the gridsize $h$ in the discretisation). This is done exactly as in Step III in the proof of Theorem \ref{BigTh:main_thrm2_NLS} with Assumption \ref{ass:1}.

{\bf Step VI:} (Choosing $N$ and $M$ in the integration). 
This is exactly as in Step VI of the proof of Theorem \ref{BigTh:main_thrm2} with Assumption \ref{ass:2}.
\end{proof}

\section{Proof of Theorem \ref{Th:main_thrm_blow}}

 \subsection{Determining if the initial state yields blow up of the NLS}
 \label{subsubsec:blowup}
Let us consider norms $\vert f \vert_{L^2}:= \Vert f \Vert_{L^2},$ $\vert f \vert_{H^1}:= \Vert f \Vert_{L^2}^{\alpha}\Vert f \Vert_{\dot{H^1}}^{1-\alpha}$ for some fixed $\alpha \in (0,1)$, and $\vert f \vert_{\dot{H^1}}= \Vert f \Vert_{\dot{H^1}}$ a non-trivial function $f$. We then let $X \in \left\{L^2(\Omega), H^1(\Omega), \dot{H}^1(\Omega)\right\}$ and $\Omega \subset \mathbb R^d$ a domain. 
Let $C>0$ be given and let $\Omega_{\operatorname{BU}(X)}$ be the set of functions $v \in X \cap C(\Omega)$ with $\vert v \vert_X \le C$ and $v$ has controlled local bounded variation by $h.$
We then consider the condition
\begin{equation}
\label{eq:le2}
\vert u_0 \vert_X \le  \vert f \vert_X.
\end{equation}
 To define the computational problem we define for $f \in \Omega_{\operatorname{BU}(X)}$ the set 
\begin{equation}
\begin{split}
\Omega_{\mathrm{BU(X,f)}} &= \left\{u_0 \in \Omega_{\operatorname{BU}(X)}; \vert u_0 \vert_X \neq \vert f \vert_X \right\}, \mathcal{M} = \{\mathrm{No}, \mathrm{Yes}\} = \{0,1\}\text{ and }\\
 \Xi_{\mathrm{BU(X,f)}}(\varphi_0) &= \text{Does \eqref{eq:le2} hold?}.
 \end{split}
 \end{equation}

\subsubsection{Impossibility of blow-up analysis:}
\label{sec:blow-up}

As we saw in the introduction in Section \ref{subsec:INLS}, the blow-up analysis for focussing NLS can in many cases be reduced to the decision problem stated in Section \ref{subsubsec:blowup}.
 \begin{prop}
 \label{prop:Sigma1G}
Given the setup as in Section \ref{subsubsec:blowup}, we have that 
\[\{\Xi_{\mathrm{BU}(f,X)},\Omega_{\mathrm{BU}(f,X)},\mathcal{M},\Lambda\} \notin \Sigma_1^G.\]
 \end{prop}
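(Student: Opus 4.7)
The plan is a contradiction argument that exploits the locality of point-sample evaluations against the non-locality of $|\cdot|_X$. Suppose $\{\Xi_{\mathrm{BU}(f,X)},\Omega_{\mathrm{BU}(f,X)},\mathcal{M},\Lambda\}\in\Sigma_1^G$, so there is a sequence $\{\Gamma_n\}$ of general algorithms with $\Gamma_n(u_0)\nearrow \Xi_{\mathrm{BU}(f,X)}(u_0)$ for every $u_0\in\Omega_{\mathrm{BU}(f,X)}$. I assume $|f|_X<C$, since otherwise $\Omega_{\mathrm{BU}(f,X)}$ contains no ``No'' instance and the map is identically $1$, trivially in $\Sigma_1^G$. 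First I would choose a ``Yes'' instance $u_0^{\mathrm{yes}}\in\Omega_{\mathrm{BU}(f,X)}$ with $|u_0^{\mathrm{yes}}|_X<|f|_X$---for example the zero function, or a small localized bump respecting the CLBV profile $h$ and the $X$-norm bound. Monotone convergence from below in $\mathcal{M}=\{0,1\}$ forces some $N$ with $\Gamma_N(u_0^{\mathrm{yes}})=1$, and by Definition \ref{alg} this output is determined by a finite sample set $\Lambda_{\Gamma_N}(u_0^{\mathrm{yes}})$ corresponding to finitely many rational points $\{x_1,\dots,x_m\}\subset\mathbb{Q}^d$.

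The core step is to build a ``No'' instance $u_0^{\mathrm{no}}=u_0^{\mathrm{yes}}+\phi$ indistinguishable from $u_0^{\mathrm{yes}}$ on the sample set: pick $y\in\mathbb{R}^d$ with $\mathrm{dist}(y,\{x_k\})$ large, and let $\phi$ be a smooth bump supported in a neighborhood of $y$ disjoint from $\{x_k\}$, so that $\phi(x_k)=0$ and $u_0^{\mathrm{no}}(x_k)=u_0^{\mathrm{yes}}(x_k)$ for each $k$. Since the supports can be made disjoint, the $L^2$ and $\dot H^1$ seminorms are additive in squares, so by tuning the amplitude, support width, and ramp thickness of $\phi$ I can arrange $|u_0^{\mathrm{no}}|_X\in(|f|_X,C]$: for $X=L^2$ a wide low-amplitude plateau suffices, for $X=\dot H^1$ a narrow steep rim of bounded height does the job, and for $X=H^1$ with the product norm $\|\cdot\|_{L^2}^{\alpha}\|\cdot\|_{\dot H^1}^{1-\alpha}$ both mechanisms are combined. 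The CLBV constraint requires $\|\phi|_{\mathcal{C}_R(0)}\|_{\infty},\mathrm{TV}(\phi|_{\mathcal{C}_R(0)})\le h(R)$ for every $R$; this is handled by a smoothed plateau construction with $\|\phi\|_{\infty}$ and $\mathrm{TV}(\phi)$ bounded uniformly below $\inf_{R\ge R_0}h(R)$, where $R_0$ is chosen large enough to contain $\mathrm{supp}\,\phi$.

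With $u_0^{\mathrm{no}}\in\Omega_{\mathrm{BU}(f,X)}$ so constructed, Definition \ref{alg}(ii)--(iii) give $\Lambda_{\Gamma_N}(u_0^{\mathrm{no}})=\Lambda_{\Gamma_N}(u_0^{\mathrm{yes}})$ and hence $\Gamma_N(u_0^{\mathrm{no}})=\Gamma_N(u_0^{\mathrm{yes}})=1$. But $|u_0^{\mathrm{no}}|_X>|f|_X$ means $\Xi_{\mathrm{BU}(f,X)}(u_0^{\mathrm{no}})=0$, so the monotone-from-below property demands $\Gamma_n(u_0^{\mathrm{no}})=0$ for all $n$, contradicting $\Gamma_N(u_0^{\mathrm{no}})=1$. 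The hard part lies in the bump construction: simultaneously matching the sample values, inflating $|\cdot|_X$ past $|f|_X$ while respecting the global bound $C$, and keeping the local amplitude and total variation within the prescribed CLBV profile $h$. Once the construction is in place the contradiction is automatic, and no structural assumption on $\{\Gamma_n\}$ beyond generality is used, so the impossibility holds in every model of computation.
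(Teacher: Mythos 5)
Your proof is correct and takes essentially the same route as the paper: contradiction, exploiting that a finite sample set $\Lambda_{\Gamma_N}(u_0^{\mathrm{yes}})$ misses an open region where one can inflate the $X$-norm without the algorithm noticing. The paper picks a single $\tilde\varphi_0$ supported entirely in a ball disjoint from the sample points (which implicitly requires the ``Yes'' reference $\varphi_0$ to vanish at those rational points, e.g.\ $\varphi_0=0$), whereas you take the additive form $u_0^{\mathrm{no}}=u_0^{\mathrm{yes}}+\phi$ with $\phi$ supported away from the samples; your version is marginally more flexible because it does not require $u_0^{\mathrm{yes}}$ to vanish on $\Lambda_{\Gamma_N}(u_0^{\mathrm{yes}})$, and the disjoint-support additivity of $\|\cdot\|_{L^2}^2$ and $\|\cdot\|_{\dot H^1}^2$ is exactly what makes the norm inflation work. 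You also explicitly flag the two implicit admissibility constraints that the paper elides: that $\vert f\vert_X<C$ is needed (otherwise the domain contains only ``Yes'' instances and the proposition would fail), and that $\phi$ must be chosen compatible with the CLBV profile; the latter is genuinely the only delicate point, since a fixed profile $h$ could in principle leave no room for the bump, but this is a hypothesis the paper also tacitly assumes, so your treatment is at least as rigorous. The contradiction you derive --- $\Gamma_N(u_0^{\mathrm{no}})=1$ while $\Xi_{\mathrm{BU}(f,X)}(u_0^{\mathrm{no}})=0$ violates $\Gamma_n\nearrow\Xi$ --- is exactly the paper's, phrased through monotonicity rather than the equivalent soundness implication ``$\Gamma_n=1\Rightarrow\Xi=1$''.
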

 \begin{proof}
%This equation can generate blow-up solutions in finite time, see \cite{HPR} and references therein.
To show that $\{\Xi_{\mathrm{BU}(f,X)},\Omega_{\mathrm{BU}(f,X)},\mathcal{M},\Lambda\} \notin \Sigma_1^G$ we argue by contradiction and assume the contrary. Let therefore $\{\Gamma_n\}$ be a sequence of general algorithms such that $\Gamma_n(\varphi_0) \rightarrow \Xi_{\mathrm{BU}(f,X)}(\varphi_0)$ as $n \rightarrow \infty$, and with $\Gamma_n(\varphi_0) = 1 \Rightarrow \Xi_{\mathrm{BU}(f,X)}(\varphi_0) = 1$.
 Let $\varphi_0 \in \Omega_{\mathrm{BU}(f,X)}$ denote a function satisfying \eqref{eq:le2} and note that, by the reasoning above $\Xi_{\mathrm{BU}(f,X)}(\varphi_0) = 1$. Thus, there is an $N \in \mathbb{N}$ such that $\Gamma_n(\varphi_0) = 1$ for all $n \geq N$. Choose any such $n \geq N$ and let $\mathcal{B} \subset \Omega$ be an open ball such that for all $f_j \in \Lambda_{\Gamma_n}(\varphi_0)$ we have $\omega_j \notin \mathcal{B}$.  Choose a $\tilde \varphi_0 \in \Omega$ such that $\mathrm{supp}(\tilde \varphi_0) \subset \mathcal{B}$ and 
 \begin{equation}\label{eq:cond}
\left\lvert \tilde \varphi_0 \right\rvert_{X} > \left\lvert f \right\rvert_{X}.
\end{equation}  
Note that such a choice is easy to justify by using bump functions. Note that, by the choice of $\tilde \varphi_0$ we have that 
$
f_j(\tilde \varphi_0) = f_j(\varphi_0) \quad \forall f_j \in \Lambda_{\Gamma_n}(\varphi_0).
$    
Hence, by assumption (iii) in (ii) in Definition \ref{alg} it follows that $1 = \Gamma_n(\varphi_0) = \Gamma_n(\tilde \varphi_0)$. However, by \eqref{eq:cond}, it follows that $\Xi_{\mathrm{BU}(f,X)}(\tilde \varphi_0) = 0$, which contradicts that $\Gamma_n(\tilde \varphi_0) = 1 \Rightarrow \Xi_{\mathrm{BU}(f,X)}(\tilde \varphi_0) = 1$, and we have reached the desired contradiction. 
\end{proof}

\begin{prop}[Mass critical NLS]
Given the setup as in \eqref{eq:focusing2}, we have that 
\[
\{\Xi_{\mathrm{BU}(2)},\Omega_{\mathrm{BU}(2)},\mathcal{M},\Lambda\} \notin \Sigma_1^G.
\]
\end{prop}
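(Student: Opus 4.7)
The plan is to mimic the argument of Proposition \ref{prop:Sigma1G} applied to the blow-up question, combining two classical facts: the sharp global-existence theorem of Weinstein--Cazenave (any $u_0$ with $\|u_0\|_{L^2} < \|Q\|_{L^2}$ yields a globally defined solution, where $Q$ is the ground state of the mass-critical NLS), and Glassey's virial blow-up theorem ($E(u_0)<0$ together with $xu_0 \in L^2$ forces finite-time blow-up).

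I would argue by contradiction. Assume a sequence of general algorithms $\{\Gamma_n\}$ with $\Gamma_n \to \Xi_{\mathrm{BU}(2)}$ pointwise and satisfying $\Gamma_n(\varphi_0) = \mathrm{Yes} \Rightarrow \Xi_{\mathrm{BU}(2)}(\varphi_0) = \mathrm{Yes}$. Fix $\alpha > 1$ with $\alpha\|Q\|_{H^\rho_\nu} \le C$---implicit in the theorem being non-vacuous is that $C$ is large enough to accommodate some $\alpha > 1$---and set $\varphi_0 := \alpha Q$. Then $\varphi_0 \in \Omega_{\mathrm{BU}(2)}$, and from $E(\alpha Q) = \tfrac{1}{2}(\alpha^2-\alpha^{2+4/d})\|\nabla Q\|_{L^2}^2 < 0$ together with $xQ \in L^2$, Glassey's theorem gives $\Xi_{\mathrm{BU}(2)}(\varphi_0) = \mathrm{Yes}$. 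Convergence then produces $N$ with $\Gamma_N(\varphi_0) = \mathrm{Yes}$; let $\{p_1,\ldots,p_K\} \subset \mathbb{Q}^d$ be the rational nodes associated to the finite set $\Lambda_{\Gamma_N}(\varphi_0)$.

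The crux is to construct $\tilde\varphi_0 \in \Omega_{\mathrm{BU}(2)}$ with $\tilde\varphi_0(p_j) = \varphi_0(p_j)$ for every $j$ and $\|\tilde\varphi_0\|_{L^2} < \|Q\|_{L^2}$. The sub-threshold theorem then forces $\Xi_{\mathrm{BU}(2)}(\tilde\varphi_0) = \mathrm{No}$, while axioms (ii)--(iii) of Definition \ref{alg} yield $\Gamma_N(\tilde\varphi_0) = \Gamma_N(\varphi_0) = \mathrm{Yes}$, contradicting the $\Sigma_1^G$ soundness condition. The natural candidate is a cutoff $\tilde\varphi_0 := \varphi_0 \cdot \chi$ where $\chi$ is smooth, satisfies $\chi(p_j)=1$ for each $j$, and is supported in narrow disjoint neighborhoods of the $p_j$'s with total volume small enough to bring $\|\tilde\varphi_0\|_{L^2}$ strictly below $\|Q\|_{L^2}$.

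The main obstacle lies in reconciling the $L^2$-smallness requirement with the Sobolev constraint $\|\tilde\varphi_0\|_{H^\rho_\nu} \le C$: the latter imposes a minimum transition scale of order $C^{-1/\rho}$ on $\chi$, while the former may demand highly localized bumps when $K$ is large and the $p_j$'s cluster near the origin where $Q$ concentrates its mass. Overcoming this will likely require either a cancellation-based alternative---$\tilde\varphi_0 = \alpha Q + \eta$ with $\eta(p_j)=0$ for all $j$ and $\mathrm{Re}\int Q\bar\eta$ made sufficiently negative to drop the mass below $\|Q\|_{L^2}$ within the smoothness budget---or replacing $\alpha Q$ at the outset by a different blow-up profile, for instance a concentrated pseudo-conformal image of the soliton whose mass naturally avoids the sample locations.
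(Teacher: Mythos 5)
Your strategy is correct, and you have put your finger on a real issue that the paper's one-line proof (a citation of Proposition~\ref{prop:Sigma1G}) glosses over. The domain $\Omega_{\mathrm{BU}(f,X)}$ in Propositions~\ref{prop:Sigma1G}--\ref{prop:Pi1G} constrains only $|\cdot|_X$ and the local total variation, and it is precisely this weak constraint that lets the shrinking bumps $\chi_{\varepsilon,\omega_j}$ used there remain admissible; but $\Omega_{\mathrm{BU}(2)}$ additionally enforces $\|\cdot\|_{H^\rho_\nu}\le C$ with $\rho,\nu\ge 1$, and since $\|\chi_{\varepsilon,\omega_j}\|_{\dot H^1}\sim\varepsilon^{d/2-1}\to\infty$ for $d=1$ those bumps (and, by the same token, your hard cutoff $\varphi_0\chi$) leave $\Omega_{\mathrm{BU}(2)}$. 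Note also that the construction one actually needs here is the \emph{shrinking} one (a state of large $L^2$ mass that blows up, replaced by one of small mass that is global), i.e.\ the mechanism from the proof of Proposition~\ref{prop:Pi1G}, even though the conclusion is $\notin\Sigma_1^G$; the ``add a remote bump'' construction carried out in Proposition~\ref{prop:Sigma1G} does not port over because there $\Xi=1$ corresponds to \emph{small} norm while here $\Xi_{\mathrm{BU}(2)}=\mathrm{Yes}$ corresponds to \emph{large} mass.

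Your cancellation fix $\tilde\varphi_0=\alpha Q+\eta$ is the right direction, but as stated it hides a circularity you must address: the mass to be removed is $\gtrsim(\alpha^2-1)\|Q\|_{L^2}^2$, while the admissible support of $\eta$ --- and hence how much mass a perturbation of bounded $H^\rho_\nu$ size can remove --- is dictated by the sampling nodes $\Lambda_{\Gamma_N}(\alpha Q)$, which depend on $\alpha$; you cannot first send $\alpha\to 1^+$ and then pick $\eta$. Your pseudo-conformal alternative, pushed slightly harder, avoids this cleanly and is the route I would take. Take $\varphi_0(x)=e^{i|x|^2/4}Q(x)$, a blow-up profile with mass \emph{exactly} $\|Q\|_{L^2}$ and (assuming $C$, $g$ are generous enough for $\Omega_{\mathrm{BU}(2)}$ to contain a margin around it) $\|\varphi_0\|_{H^\rho_\nu}<C$ strictly and local TV bounds strict; let the algorithm produce its finitely many nodes $\{p_j\}$; fix any smooth $\psi\ge 0$, $\psi\not\equiv 0$, with $\psi(p_j)=0$ for all $j$; and set $\tilde\varphi_0=(1-c\psi)\varphi_0$. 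Then $\tilde\varphi_0(p_j)=\varphi_0(p_j)$,
\[
\|\tilde\varphi_0\|_{L^2}^2=\|Q\|_{L^2}^2-2c\int Q^2\psi\,dx+O(c^2)<\|Q\|_{L^2}^2
\]
for \emph{every} sufficiently small $c>0$, and $\tilde\varphi_0\in\Omega_{\mathrm{BU}(2)}$ for $c$ small. Because no lower bound on $c$ is needed, the sample geometry never enters, and the contradiction with Yes-soundness closes without any parameter chase.
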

\begin{proof}
The ground state soliton $Q$ satisfying 
\[ -\Delta Q-Q \vert Q \vert^4+Q=0 \]
for the $1d$-quintic NLS is known explicitly $Q(x) = \left( \frac{3}{\cosh^2(2x)} \right)^{1/4}$ and exists for all $d \ge 1.$
For $d\ge 1$ and $\sigma=1+4/d$, it is known \cite{D15} that if $\Vert \varphi_0 \Vert_{L^2} < \Vert Q \Vert_{L^2}$ then the solution to \eqref{eq:focusing2} exists globally and scatters whereas for $\Vert \varphi_0 \Vert_{L^2}> \Vert Q \Vert_{L^2}$ there exist solutions that exist only for finite time. The statement then follows from Proposition \ref{prop:Sigma1G}.
\end{proof}

Showing that $\{\Xi_{\mathrm{BU}(2)},\Omega_{\mathrm{BU}(2)},\mathcal{M},\Lambda\} \notin \Pi_1^G$ is in general more subtle. To see this, observe that by Sobolev's embedding in dimension one, we have $\Vert \varphi_0 \Vert_{L^{\infty}} \le \Vert \varphi_0 \Vert_{H^1}.$
This implies that if an algorithm samples a sufficiently large value of $\varphi_0$ it follows that $\Vert \varphi_0 \Vert_{H^1}$ is large as well. 

For our next proposition we consider a bump function 
\[\chi_{\varepsilon,x_0}(x):=e^{1+\frac{\varepsilon^2}{\Vert x-x_0\Vert^2-\varepsilon^2} }\indic_{B(x_0, \varepsilon)}(x).\]
We then have that 
\begin{equation}
\label{eq:chi}
\Vert \chi_{\varepsilon,x_0} \Vert_{L^2} = \mathcal O(\varepsilon^d)\text{ and }\Vert \chi_{\varepsilon,x_0} \Vert_{\dot{H}^1} = \mathcal O(\varepsilon^{d-2}).
\end{equation}

If we impose stronger conditions on $X$ and the dimension, we obtain the following result:
 \begin{prop}
 \label{prop:Pi1G}
For the setup as in Section \ref{subsubsec:blowup},  it follows that $\{\Xi_{\mathrm{BU}(f,X)},\Omega_{\mathrm{BU}(f,X)},\mathcal{M},\Lambda\} \notin \Pi_1^G$ under the following conditions on the space $X$ and the dimension $d$ with open domain $\Omega \subset  \RR^d$
\begin{itemize}
\item If $d=1$ and $X \in \left\{L^2(\Omega)\cap C(\Omega), H^1(\Omega)\cap C(\Omega) \right\} $ with $\alpha< 1/2.$
\item If $d=2$ and $X \in  \left\{L^2(\Omega)\cap C(\Omega), H^1(\Omega)\cap C(\Omega)\right\}.$
\item $d \ge 3.$
\end{itemize}
 \end{prop}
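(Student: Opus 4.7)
The approach mirrors the proof of Proposition \ref{prop:Sigma1G} with the roles of small- and large-norm functions interchanged, exploiting the $\Pi_1^G$ constraint $\Gamma_n(\varphi_0) = \mathrm{No} \Rightarrow \Xi(\varphi_0) = \mathrm{No}$. I argue by contradiction: suppose $\{\Gamma_n\}$ is a sequence of general algorithms satisfying $\Gamma_n(A) \searrow \Xi_{\mathrm{BU}(f,X)}(A)$ for every $A \in \Omega_{\mathrm{BU}(f,X)}$. Since $\mathcal M = \{0,1\}$, monotone convergence forces $\Gamma_n(A) \geq \Xi(A)$ pointwise, so $\Gamma_n(A) = 0$ eventually when $\Xi(A) = 0$. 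Choose $\psi^{\star} \in \Omega_{\mathrm{BU}(f,X)}$ with $|\psi^{\star}|_X > |f|_X$, pick $n$ large enough that $\Gamma_n(\psi^{\star}) = 0$, and enumerate the (finite) sample set $\Lambda_{\Gamma_n}(\psi^{\star})$ as $\{t_1, \ldots, t_k\}$ with sample values $v_j := \psi^{\star}(t_j)$. The crux will be to construct $\tilde{\psi} \in \Omega_{\mathrm{BU}(f,X)}$ satisfying $\tilde{\psi}(t_j) = v_j$ for every $j$ yet $|\tilde{\psi}|_X < |f|_X$. Once such $\tilde{\psi}$ is produced, axioms (ii)--(iii) of Definition \ref{alg} give $\Lambda_{\Gamma_n}(\tilde{\psi}) = \Lambda_{\Gamma_n}(\psi^{\star})$ and therefore $\Gamma_n(\tilde{\psi}) = \Gamma_n(\psi^{\star}) = 0$, contradicting $\Gamma_n(\tilde{\psi}) \geq \Xi(\tilde{\psi}) = 1$.

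For the ``generic'' cases listed in the proposition---$X = L^2 \cap C$ in any $d$; $X = H^1 \cap C$ in $d = 2$; or either space in $d \geq 3$---I would build $\tilde{\psi}$ from disjoint cut-off bumps, $\tilde{\psi}(x) = \sum_{j=1}^k v_j \chi_{\varepsilon, t_j}(x)$, with $\varepsilon$ strictly less than half the minimum inter-sample distance so that $\tilde{\psi}(t_j) = v_j$ exactly. Using \eqref{eq:chi} one finds $\|\tilde{\psi}\|_{L^2} = O(\max_j |v_j| \cdot \sqrt{k} \cdot \varepsilon^{d/2})$ and, in the mixed-norm case, $|\tilde{\psi}|_X = O(\max_j |v_j| \cdot \varepsilon^{(d-2+2\alpha)/2})$. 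Since $\psi^{\star}$ lies in the domain, the local bounded-variation hypothesis (Definition \ref{def:CLBV}) controls $\max_j |v_j|$ by a fixed constant depending only on an $R > 0$ containing all sample points. The dimension/exponent hypotheses of the proposition make the relevant exponent strictly positive---$d/2 > 0$ for $L^2$, and $(d-2+2\alpha)/2 > 0$ whenever $d \geq 2$ with $\alpha > 0$, or $d \geq 3$---so $|\tilde{\psi}|_X < |f|_X$ is forced by taking $\varepsilon$ sufficiently small, and the bumps' total variation can be absorbed by choosing $h$ large enough in the definition of the domain.

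The borderline case $d = 1$, $X = H^1 \cap C$, $\alpha < 1/2$ is the real obstacle: the bump exponent $(2\alpha-1)/2$ is negative, and shrinking $\varepsilon$ actually makes $|\tilde{\psi}|_X$ diverge. Here I plan to tailor the reference itself: take $\psi^{\star} = A \phi(\cdot/W)$ for a fixed smooth compactly supported profile $\phi$, so that $\|\psi^{\star}\|_{L^2} \sim A W^{1/2}$, $\|\psi^{\star}\|_{\dot H^1} \sim A W^{-1/2}$, and $|\psi^{\star}|_X \sim A W^{(2\alpha-1)/2}$. Choosing $A$ and $W$ so that this product exceeds $|f|_X$ by a chosen constant factor yields $\Xi(\psi^{\star}) = 0$ and $|v_j| \leq A$. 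I then construct $\tilde{\psi}$ as a piecewise-linear interpolation through $(t_j, v_j)$ smoothly tapered to zero over a much larger scale $W' \gg W$, so that $\|\tilde{\psi}\|_{L^2}^2 \lesssim A^2 W'$, $\|\tilde{\psi}\|_{\dot H^1}^2 \lesssim A^2/W' + S$ with fixed interpolation cost $S = \sum_{j} (v_{j+1}-v_j)^2/(t_{j+1}-t_j)$, and $|\tilde{\psi}|_X \sim A (W')^{(2\alpha-1)/2}$. Since $(2\alpha-1)/2 < 0$, taking $W'$ sufficiently larger than $W$ drives $|\tilde{\psi}|_X$ below $|f|_X$ while preserving the sample values exactly. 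The hard part will be ensuring that the fixed interpolation cost $S$ does not dominate: this forces us to pick $\phi$ slowly varying enough that the differences $v_{j+1}-v_j$ are controlled, after which the scaling argument above delivers the strict inequality $|\tilde{\psi}|_X < |f|_X$ and a final verification confirms that $\tilde{\psi}$ indeed lies in $\Omega_{\mathrm{BU}(f,X)}$.
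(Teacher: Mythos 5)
Your argument for the cases $d \geq 2$ and for $X = L^2$ in any $d$ matches the paper exactly: same contradiction structure, same disjoint-bump interpolant $\sum_j v_j \chi_{\varepsilon,t_j}$, same use of \eqref{eq:chi} to drive $|\tilde\varphi_0|_X$ to zero as $\varepsilon \downarrow 0$. In those cases the exponent $(d - 2 + 2\alpha)/2$ in the bump's $|\cdot|_{H^1}$-norm is strictly positive, so the paper's one-line assertion \enquote{let $\varepsilon$ be sufficiently small, then $|\tilde\varphi_0|_X < |f|_X$} is correct, and your proof is a faithful reconstruction.

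Your observation about the case $d=1$, $X = H^1 \cap C$, $\alpha < 1/2$ is a genuine and important one, and you should be aware that it cuts against the paper's own proof, not just your attempted fix. The paper applies the identical bump construction there, but for $d=1$ the scaling in \eqref{eq:chi} gives $|\chi_\varepsilon|_{H^1} = \|\chi_\varepsilon\|_{L^2}^{\alpha} \|\chi_\varepsilon\|_{\dot H^1}^{1-\alpha} \sim \varepsilon^{(2\alpha-1)/2}$, whose exponent is \emph{negative} precisely when $\alpha < 1/2$: shrinking $\varepsilon$ inflates the $H^1$-interpolation norm rather than killing it. The bump construction therefore works exactly when $\alpha > 1/2$ in $d=1$, and the proposition's stated hypothesis $\alpha < 1/2$ appears to be either a sign typo or an uncovered case. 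You were right to single this out.

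Your proposed workaround for the $\alpha < 1/2$ case, however, has a gap that you partly acknowledge but underestimate. The scaling heuristic $|\tilde\psi|_X \sim A(W')^{(2\alpha-1)/2}$ is valid only if the interpolation cost $S$ is negligible compared to $A^2/W'$; but you are sending $W' \to \infty$, so the comparison you need is $S \ll A^2/W' \to 0$, and $S$ is a fixed positive quantity once $\psi^{\star}$ and the query set are fixed. In fact $\|\tilde\psi\|_{\dot H^1}$ is bounded below purely by the interpolation data via the one-dimensional embedding: if $\tilde\psi(t_j) = v_j$ then $|v_{j+1}-v_j| \leq |t_{j+1}-t_j|^{1/2} \|\tilde\psi\|_{\dot H^1}$, so $\|\tilde\psi\|_{\dot H^1} \geq \max_j |v_{j+1}-v_j|/\sqrt{t_{j+1}-t_j}$. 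Meanwhile $\|\tilde\psi\|_{L^2}^2 \gtrsim A^2 W' \to \infty$, so $|\tilde\psi|_{H^1} = \|\tilde\psi\|_{L^2}^{\alpha}\|\tilde\psi\|_{\dot H^1}^{1-\alpha} \to \infty$, the opposite of what you need. Controlling $S$ by picking $\phi$ slowly varying runs into a circularity you do not resolve: the Lipschitz bound gives $S \lesssim (A/W)^2 D$ where $D$ is the span of the queried points, but $D$ is chosen adversarially by the algorithm \emph{after} $\psi^{\star}$ (and hence $W$) is fixed, and nothing prevents $D \gtrsim W$. For $\alpha < 1/2$ the correct strategy is dual to the bump construction (wide, flat plateaus with long gentle tapers rather than narrow tall spikes), and one can show that the infimum of $|\tilde\psi|_{H^1}$ over interpolants is roughly $p_0^{\alpha} b_0^{1-2\alpha}$ with $p_0 = \max_j |v_j|^2/2$ and $b_0 = \max_j |v_{j+1}-v_j|/\sqrt{t_{j+1}-t_j}$, both of which can be made small relative to $|\psi^{\star}|_{H^1}$ by choosing a slowly-varying plateau; but making this rigorous requires a careful constant-tracking argument that neither your sketch nor the paper supplies.
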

 \begin{proof}
We argue again by contradiction. Assuming the contrary, let $\{\Gamma_n\}$ be a sequence of general algorithms such that $\Gamma_n(\varphi_0) \rightarrow \Xi_{\mathrm{BU}(f,X)}(\varphi_0)$ as $n \rightarrow \infty$, and with $\Gamma_n(\varphi_0) = 0 \Rightarrow \Xi_{\mathrm{BU}(f,X)}(\varphi_0) = 0$.       
Let $ \varphi_0 \in \Omega_{\mathrm{BU}(f,X)}$ be a function that does not satisfy \eqref{eq:le2}.  In this case $\Xi_{\mathrm{BU}(f,X)}( \varphi_0) = 0$ and hence there is an $N \in \mathbb{N}$ such that $\Gamma_n( \varphi_0) = 0$ for all $n \geq N$. Let $\varepsilon$ be small enough such that $B(\tilde \omega_j,\varepsilon)$ are disjoint.

Choose any such $n$ and choose $\tilde \varphi_0 :=\sum  \varphi_0(\omega_j)  \chi_{\varepsilon,\omega_j}$ such that $\tilde \varphi_0$ interpolates $ \varphi_0$ at the points $\tilde \omega_j$, where $f_j( \varphi_0) =  \varphi_0(\tilde \omega_j)$ and $f_j \in \Lambda_{\Gamma_n}( \varphi_0)$. 
Let $\varepsilon$ be sufficiently small, then by \eqref{eq:chi} it follows that
 $\left\lvert \tilde \varphi_0  \right\rvert_{X} < \left\lvert f  \right\rvert_{X}$. Then, as argued as above, we have 
$
f_j(\tilde \varphi_0) = f_j( \varphi_0) \quad \forall f_j \in \Lambda_{\Gamma_n}(\tilde \varphi_0),
$    
and hence by by assumption (iii) in (ii) in Definition \ref{alg} it follows that $0 = \Gamma_n( \varphi_0) = \Gamma_n(\tilde \varphi_0)$. However, since $\left\lvert \tilde \varphi_0  \right\rvert_{X} < \left\lvert f  \right\rvert_{X}$ we have that $\Xi_{\mathrm{BU}(f,X)}(\tilde \varphi_0) = 1$, which contradicts that $\Gamma_n(\tilde \varphi_0) = 0 \Rightarrow \Xi_{\mathrm{BU}(f,X)}(\tilde \varphi_0) = 0$.
\end{proof}

We continue with our result on the cubic NLS:

\begin{prop}
Given the setup in \eqref{eq:focusing} we have that 
\[
\{\Xi_{\mathrm{BU}(1)},\Omega_{\mathrm{BU}(1)},\mathcal{M},\Lambda\} \notin \Pi_1^G.
\]
\end{prop}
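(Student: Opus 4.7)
The plan is to follow the template of Proposition \ref{prop:Pi1G} in dimension $d=3$, with the role of the norm comparison $|u_0|_X \le |f|_X$ played by the Glassey virial blow-up criterion for the three-dimensional focusing cubic NLS: if $\varphi_0 \in H^1(\mathbb R^3) \cap L^2(|x|^2\, dx)$ has strictly negative energy $E(\varphi_0) := \tfrac12 \|\nabla\varphi_0\|_{L^2}^2 - \tfrac14 \|\varphi_0\|_{L^4}^4 < 0$, then the solution to \eqref{eq:focusing} blows up in finite time. First I would fix, for the given $C, g, \rho, \nu$, a reference profile $\tilde\phi \in C_c^\infty(B(0,1))$ of the form $\tilde\phi = A\chi$ (with $\chi$ a smooth bump and $A$ a scalar amplitude) satisfying $\|\tilde\phi\|_{H^\rho_\nu} < C$, CLBV dominated by $g$, and $E(\tilde\phi) < -\delta$ for some $\delta > 0$. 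The admissible range of $A$ is nonempty precisely when $C$ exceeds the sharp Gagliardo-Nirenberg blow-up threshold (essentially $\|Q\|_{L^2}\|\nabla Q\|_{L^2}$), which is the implicit nontriviality hypothesis on the problem; in the complementary small-$C$ regime, $\Omega_{\mathrm{BU}(1)}$ contains no blow-up data at all and the statement is vacuous.

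I would then argue by contradiction. Suppose a $\Pi_1^G$ tower $\{\Gamma_n\}$ exists, so $\Gamma_n \to \Xi_{\mathrm{BU}(1)}$ and $\Gamma_n(u_0) = \mathrm{No} \Rightarrow \Xi_{\mathrm{BU}(1)}(u_0) = \mathrm{No}$. Applied to $\varphi_0 \equiv 0 \in \Omega_{\mathrm{BU}(1)}$, which satisfies $\Xi_{\mathrm{BU}(1)}(0) = \mathrm{No}$, convergence gives some $N$ with $\Gamma_N(0) = \mathrm{No}$ and a finite sample set $\Lambda_{\Gamma_N}(0) = \{f_{x_1},\dots,f_{x_M}\}$ at rational points $x_1,\dots,x_M \in \mathbb Q^3$. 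As in Proposition \ref{prop:Pi1G}, but subtracting rather than adding, set
\[
\tilde\varphi_0 \;:=\; \tilde\phi \;-\; \sum_{i=1}^M \tilde\phi(x_i)\,\chi_{\varepsilon,x_i},
\]
with $\chi_{\varepsilon,x_i}$ the bumps introduced before \eqref{eq:chi}, normalised to $\chi_{\varepsilon,x_i}(x_i) = 1$. Using \eqref{eq:chi}, both $\|\chi_{\varepsilon,x_i}\|_{L^2}$ and $\|\chi_{\varepsilon,x_i}\|_{\dot H^1}$ tend to $0$ as $\varepsilon \downarrow 0$, so the perturbation $\sum \tilde\phi(x_i)\chi_{\varepsilon,x_i}$ can be made arbitrarily small in $H^1(\mathbb R^3)$. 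Picking $\varepsilon$ sufficiently small (depending on $M$ and on the fixed data $C, \delta, g, \tilde\phi$) keeps $\tilde\varphi_0 \in \Omega_{\mathrm{BU}(1)}$ with $|E(\tilde\varphi_0) - E(\tilde\phi)| < \delta$, hence $E(\tilde\varphi_0) < 0$. Since $\tilde\varphi_0$ is compactly supported it has finite variance, so Glassey's criterion yields $\Xi_{\mathrm{BU}(1)}(\tilde\varphi_0) = \mathrm{Yes}$.

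By construction $f_{x_i}(\tilde\varphi_0) = \tilde\varphi_0(x_i) = 0 = f_{x_i}(0)$ for each $i$, so parts (ii)-(iii) of Definition \ref{alg} force $\Lambda_{\Gamma_N}(\tilde\varphi_0) = \Lambda_{\Gamma_N}(0)$ and $\Gamma_N(\tilde\varphi_0) = \Gamma_N(0) = \mathrm{No}$, contradicting $\Pi_1^G$ consistency since $\Xi_{\mathrm{BU}(1)}(\tilde\varphi_0) = \mathrm{Yes}$. The main obstacle is the opening step: simultaneously respecting the $H^\rho_\nu$ ceiling $C$, the CLBV bound by $g$, and the sign of $E$. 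For the base case $\rho = \nu = 1$ this is a routine bookkeeping exercise since all three quantities are continuous in $H^1$ and the bump perturbation is arbitrarily small there. For the high-regularity case $\rho > 3/2$ the Sobolev embedding $H^\rho \hookrightarrow L^\infty$ prevents unit-amplitude bumps from being small in $H^\rho$, so I would replace the explicit bump correction by a minimal-norm Hahn-Banach adjustment in the codimension-$M$ affine subspace $\{f \in H^\rho_\nu : f(x_i) = 0\}$, using that the open set $\{E < 0\} \cap \Omega_{\mathrm{BU}(1)}$ meets this subspace (a standard infinite-dimensional transversality argument once $C$ is above the Gagliardo-Nirenberg threshold).
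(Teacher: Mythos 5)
Your overall plan — fix a base input with $\Xi_{\mathrm{BU}(1)} = \mathrm{No}$, extract the finite sample set $\Lambda_{\Gamma_N}$ consumed once the tower returns $\mathrm{No}$, and exhibit a blow-up datum agreeing with the base input on that set — is exactly the structure the paper uses via its abstract Propositions \ref{prop:Sigma1G}--\ref{prop:Pi1G}. The two differ in the blow-up criterion: the paper feeds the sharp Holmer--Roudenko scattering/blow-up dichotomy for \eqref{eq:focusing} from \cite{HR,HPR} into those propositions, whereas you invoke Glassey's virial argument (strictly negative energy plus finite variance forces finite-time blow-up). Your choice is perfectly adequate and in fact a bit lighter: the adversary need only produce \emph{one} blow-up datum invisible at the finitely many samples, so a one-sided sufficient condition suffices and the full threshold dichotomy is not required. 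You also correctly identify the implicit nontriviality hypothesis that $C$ must be large enough for $\Omega_{\mathrm{BU}(1)}$ to contain any blow-up data at all; the same caveat is needed for the paper's argument.

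The genuine gap is exactly where you flag it, but your proposed repair does not close it. For $\rho \ge 3/2$ the embedding $H^\rho(\mathbb R^3) \hookrightarrow L^\infty$ forces $\|\chi_{\varepsilon,x_i}\|_{H^\rho} \gtrsim 1$ uniformly in $\varepsilon$, so $\sum_i \tilde\phi(x_i)\chi_{\varepsilon,x_i}$ is not small in $H^\rho_\nu$ and $\tilde\varphi_0$ may leave $\Omega_{\mathrm{BU}(1)}$. The Hahn--Banach projection onto $\{f \in H^\rho_\nu : f(x_i)=0\}$ does guarantee $\|\tilde\varphi_0\|_{H^\rho_\nu} \le \|\tilde\phi\|_{H^\rho_\nu}$ (non-expansiveness of orthogonal projection), but it gives no control whatsoever on $\|\tilde\phi - \tilde\varphi_0\|_{H^1}$: if the sample points cluster near the peak of $\tilde\phi$ the projection error is $\mathcal O(\|\tilde\phi\|)$, since the Gram matrix of the point-evaluation representers can be arbitrarily ill-conditioned, and then the sign of $E(\tilde\varphi_0)$ is not controlled. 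The paper's abstract argument (see the proof of Proposition \ref{prop:Sigma1G}) avoids a correction term entirely by reversing the order of operations: first inspect $\Lambda_{\Gamma_N}(0)$, \emph{then} choose the blow-up profile supported in a ball disjoint from those finitely many rational points, so that it already vanishes at the samples. You should adopt that ordering and drop the Hahn--Banach step. (There remains the quantitative point of ensuring such a disjoint ball is large enough to host a blow-up profile inside the fixed $H^\rho_\nu$-ball; the abstract propositions constrain the scale-friendly quasi-norm $|\cdot|_X$ rather than the full $H^\rho_\nu$-norm, so this subtlety is present in the paper's citation as well, but your Hahn--Banach device does not resolve it.)
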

\begin{proof}
For \eqref{eq:focusing} one has the following blow up dichotomy \cite{HR,HPR}:
Let $\varphi_0 \in H^1_1(\mathbb R^3)$ be an initial state to the focusing NLS \eqref{eq:focusing} with ground state soliton $Q$ satisfying 
\[ -\Delta Q-Q \vert Q \vert^2+Q=0. \]
\begin{itemize}
\item If $\left\lVert \varphi_0 \right\rVert_{L^2}\left\lVert \nabla \varphi_0 \right\rVert_{L^2} < \left\lVert Q \right\rVert_{L^2}\left\lVert \nabla Q \right\rVert_{L^2},$ then the solution to \eqref{eq:focusing} exists globally in time in the space $H^1(\mathbb R^3).$
\item If $\left\lVert \varphi_0 \right\rVert_{L^2}\left\lVert \nabla \varphi_0 \right\rVert_{L^2} > \left\lVert Q \right\rVert_{L^2}\left\lVert \nabla Q \right\rVert_{L^2},$ then the solution to \eqref{eq:focusing} blows up in finite time, i.e. the solution to \eqref{eq:focusing} exists only in a maximum time interval $[0,T_{\operatorname{max}})$ in $H^1(\mathbb R^3).$ The result then follows from Proposition \ref{prop:Pi1G}.
\end{itemize}
\end{proof}

\begin{proof}[Proof of Theorem \ref{Th:main_thrm_blow}]
Theorem \ref{Th:main_thrm_blow} follows immediately from the analysis above. 
\end{proof}

The phenomenon of undecidability is, for the blow-up dichotomy, not due to the unboundedness of the domain as the following example shows:

\begin{ex}[Cubic NLS on bounded domain] Let $\Omega \subset \mathbb R^2$ be a bounded and smooth domain:  
Consider the cubic NLS with Dirichlet data $\varphi_0 \in H^2(\Omega) \cap H_0^1(\Omega)$ 
\begin{equation}
\begin{split}
i \partial_t\psi(x,t) + \Delta \psi(x,t)+ \vert \psi(x,t) \vert^2 \psi(x,t) &= 0, \ (x,t) \in \Omega \times (0,T), \\
 					\psi(x,t) &= 0, \  (x,t) \in \partial \Omega \times (0,T), \\
					\psi(x,0) &= \varphi_0(x), \  x \in \Omega. 
\end{split}
\end{equation}
This equation has a unique positive ground state to the equation
\[-Q(x)+ \Delta Q(x) + \left\lvert Q(x) \right\rvert^2 Q(x) = 0, \quad x \in \mathbb R^2. \]
Then, there exists a solution with the same $L^2$ norm as $Q$ that blows up in finite time \cite[Theorem $1$]{BGT03}, whereas \cite[Lemma $2.3$]{BGT03} shows that for Dirichlet initial data $\varphi_0 \in H^2(\Omega) \cap H_0^1(\Omega)$ with $\left\lVert \varphi_0  \right\rVert_{L^2(\Omega)} < \left\lVert Q  \right\rVert_{L^2(\mathbb R^2)}$ the solution exists globally in time, see also \cite{W82}.
\end{ex}

\section{Proof of Theorem \ref{Th:main_thrm_discNLS}}
\label{sec:disc}
In this section we discuss the computability of discrete nonlinear Schr\"odinger equations \eqref{eq:fullNLS} using the Strang splitting problem \eqref{eq:StrangNLS} and prove Theorem \ref{Th:main_thrm_discNLS}. But unlike in the continuous case, we allow for either sign in front of the nonlinearity.

Let the one-dimensional discrete Laplacian with Dirichlet or Neumann boundary conditions on $I_n^1$ be denoted by $\Delta_{\operatorname{BVP}}$, then the multi-dimensional discrete Laplacian on $I_n^d$ is defined by 
\[\Delta_{\operatorname{BVP}}^{d}:= \sum_{i=1}^d \operatorname{id}^{i-1} \otimes \Delta_{\operatorname{BVP}} \otimes \operatorname{id}^{d-i}. \]
Let $I_n^d:=[-n,n]^d \cap \mathbb Z^d,$ we consider now a discrete NLS with $\nu \in \{\pm 1\}$ on the entire space $\ell^2(\mathbb Z^d)$
\begin{equation}
\begin{split}
\label{eq:fullNLS2}
i  \partial_t v(k,t) &=- \Delta^d v(k,t) +\nu F_{\sigma}(v(k,t)) ,\ k \in \mathbb Z^d \\
v(0)&=v_0 \in \ell^2(\mathbb{Z}^d)
\end{split}
\end{equation} 
and associate to it a boundary value problem on the hypercube $I_n^d$
\begin{equation}
\begin{split}
\label{eq:BVP}
i\partial_t v_{\operatorname{BVP}}(k,t) &= -\Delta_{\operatorname{BVP}}^{d} v_{\operatorname{BVP}}(k,t) + F_{\sigma}(v_{\operatorname{BVP}}(k,t) ), \ k \in I_n^d \\
v(0)&=v_0\vert_{I_n} \in \ell^2(I_n).
\end{split}
\end{equation} 
We then have the following discrete analogue of Theorem \ref{theo2} which allows us to estimate the difference between \eqref{eq:fullNLS2} and \eqref{eq:BVP}:
\begin{prop}
\label{prop:disc}
Consider the difference of solutions $\xi=v-v_{\operatorname{BVP}}$ to \eqref{eq:fullNLS2} and \eqref{eq:BVP}, respectively. If the initial datum satisfies a bound $\Vert v_0 \Vert_{\ell^2_s}\le A$ for some $s>0$, then this implies 
\[\sum_{k \in I_{n-1}} \vert \xi(k,t) \vert^2 \lesssim \langle n \rangle^{-s/2}. \]
\end{prop}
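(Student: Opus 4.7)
The plan is to use a discrete energy estimate, exploiting two structural advantages absent in the continuous case: the discrete Laplacian $\Delta^d$ is a bounded operator on $\ell^2(\mathbb Z^d)$ (with $\Vert \Delta^d \Vert \le 4d$), and the nonlinearity $F_\sigma$ is locally Lipschitz on $\ell^2$ since by the embedding $\ell^2(\mathbb Z^d) \hookrightarrow \ell^\infty(\mathbb Z^d)$ a pointwise bound follows from the $\ell^2$ bound. Combined with $\ell^2$-norm conservation of \eqref{eq:fullNLS2} and \eqref{eq:BVP}, this reduces the entire argument to an ODE-type estimate on $\ell^2(I_n^d)$.

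First I would establish propagation of the weighted $\ell^2_s$ norm. Multiplying \eqref{eq:fullNLS2} by $\langle k \rangle^{2s} \overline{v(k,t)}$, summing and taking imaginary parts, the nonlinear term drops out (it is real-valued times $|v|^2$), and one is left with a commutator contribution of the form $\langle [\Delta^d, \langle \cdot \rangle^s] v, \langle \cdot \rangle^s v \rangle$. A direct computation, using that $\Delta^d$ is a nearest-neighbour operator and $|\langle k+e \rangle^s - \langle k \rangle^s| \lesssim \langle k \rangle^{s-1}$, shows $[\Delta^d, \langle \cdot \rangle^s]$ maps $\ell^2_s$ into $\ell^2$ boundedly, yielding by Gr\"onwall
\[
\Vert v(t) \Vert_{\ell^2_s} \le C(T,A)\Vert v_0 \Vert_{\ell^2_s}, \quad t \in [0,T].
\]
In particular $\sum_{k \notin I_n^d} |v(k,t)|^2 \lesssim \langle n \rangle^{-2s}\Vert v_0 \Vert_{\ell^2_s}^2$.

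Next, extending $v_{\operatorname{BVP}}$ by zero outside $I_n^d$, the difference $\xi = v - v_{\operatorname{BVP}}$ restricted to the interior $I_{n-1}^d$ satisfies
\[
i\partial_t \xi(k,t) = -\Delta^d \xi(k,t) + \nu \bigl(F_\sigma(v(k,t)) - F_\sigma(v_{\operatorname{BVP}}(k,t))\bigr) + \mathcal R_n(k,t),
\]
where $\mathcal R_n(k,t)$ collects the discrepancy between $\Delta^d$ and $\Delta^d_{\operatorname{BVP}}$, supported on sites $k \in I_n^d$ adjacent to $\partial I_n^d$ and taking values $v(k',t)$ for $k' \in \mathbb Z^d \setminus I_n^d$ neighbouring $\partial I_n^d$. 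Multiplying by $\overline{\xi(k,t)}$, summing over $I_{n-1}^d$ and taking imaginary parts, the linear kinetic term contributes only boundary flux (by summation-by-parts), bounded in absolute value by $\Vert \xi \Vert_{\ell^2(I_n^d \setminus I_{n-1}^d)}\cdot \Vert v \Vert_{\ell^2(\mathbb Z^d \setminus I_{n-1}^d)}$, while the nonlinear difference satisfies a Lipschitz estimate of the form $\lesssim (\Vert v \Vert_{\ell^\infty}^{\sigma-1} + \Vert v_{\operatorname{BVP}} \Vert_{\ell^\infty}^{\sigma-1})\Vert \xi \Vert_{\ell^2(I_{n-1}^d)}^2$ with both $\ell^\infty$ norms controlled by the conserved $\ell^2$ norm $\le A$.

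The main obstacle — and the source of the weakened exponent $\langle n \rangle^{-s/2}$ — is that $\Vert \xi \Vert_{\ell^2(I_n^d \setminus I_{n-1}^d)}$ appears linearly in the boundary flux term but is not controlled by the interior $\ell^2$-norm of $\xi$. One handles this by using the a priori bound from Step 1: both $\Vert v \Vert_{\ell^2(\mathbb Z^d \setminus I_{n-1}^d)}$ and $\Vert v_{\operatorname{BVP}} \Vert_{\ell^2(I_n^d \setminus I_{n-1}^d)}$ can be bounded (the latter via the same energy argument applied to the BVP with a slightly smaller exponent) by a constant times $\langle n \rangle^{-s}$. Substituting into
\[
\tfrac{d}{dt}\Vert \xi(t) \Vert^2_{\ell^2(I_{n-1}^d)} \lesssim \Vert \xi(t) \Vert^2_{\ell^2(I_{n-1}^d)} + \langle n \rangle^{-s/2}
\]
(where the $\langle n \rangle^{-s/2}$ arises after applying Young's inequality to split the cross-term) and noting $\xi(\cdot,0) \equiv 0$ on $I_{n-1}^d$, Gr\"onwall's inequality yields the claim.
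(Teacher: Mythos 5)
Your proposal is essentially the same strategy as the paper's proof: propagate weighted $\ell^2_s$ bounds for both $v$ and $v_{\operatorname{BVP}}$ to localize them up to a tail of size $O(\langle n\rangle^{-s})$, then run an $\ell^2$-energy estimate for $\xi$ on a slightly smaller cube in which the boundary-flux contribution from $\Delta^d$ versus $\Delta^d_{\operatorname{BVP}}$ is supported in a shell near $\partial I_n^d$ and is controlled by those tails, and finally close with Gr\"onwall starting from $\xi(\cdot,0)=0$. The one genuine point of difference is in how you obtain the weighted-norm propagation: the paper simply invokes [KPS, Lemma 2], whereas you propose to derive it yourself from the observation that $\langle\cdot\rangle^s\Delta^d\langle\cdot\rangle^{-s}-\Delta^d$ is a bounded operator on $\ell^2(\mathbb Z^d)$ (since $\Delta^d$ is nearest-neighbour and $|\langle k+e\rangle^{\pm s}-\langle k\rangle^{\pm s}|\lesssim\langle k\rangle^{\pm s-1}$), and the nonlinearity drops out of the weighted energy identity for the same reason it conserves $\ell^2$. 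That is a perfectly valid and self-contained alternative, and it makes the result transparent; the paper's route is shorter at the cost of a citation.

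A couple of cosmetic remarks. Your bookkeeping in fact gives a forcing term of order $\langle n\rangle^{-2s}$ (product of two tail factors each $\lesssim\langle n\rangle^{-s}$), so the resulting Gr\"onwall bound is stronger than the stated $\langle n\rangle^{-s/2}$; the exponent $-s/2$ in the proposition is not tight and there is no need to invoke Young's inequality to degrade it. Also, when you estimate the boundary flux by Cauchy--Schwarz you should really end up with $\Vert\xi\Vert_{\ell^2(\text{shell})}\cdot\Vert\Delta^d\xi\Vert_{\ell^2(\text{shell})}$, where the second factor is in turn controlled by $\Vert\xi\Vert_{\ell^2}$ on a slightly wider shell using $\Vert\Delta^d\Vert\le 4d$ and the finite range of $\Delta^d$; this matches the paper's explicit choice to take the forcing over $I_{n+2}^d\setminus I_{n-2}^d$. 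These are minor and do not affect the correctness of your argument.
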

\begin{proof}
If the initial datum satisfies a fixed decay bound $\Vert v_0 \Vert_{\ell^2_s}\le A$ for some $s>0$, then this implies  \cite[Lemma $2$]{KPS} that for $t \in (0,T)$ and both \eqref{eq:fullNLS2} and \eqref{eq:BVP} there exists $C_T>0$ such that
\[ \Vert v(t) \Vert_{\ell^2_s} \le C_T \Vert v(0) \Vert_{\ell^2_s}\]
and the same for $v_{\operatorname{BVP}}.$
Thus, this implies that, again for both $v$ and $ v_{\operatorname{BVP}}$, just denoted by $v$, that if
\begin{equation}
\label{eq:estm}
\Vert v(0)\indic_{\mathbb Z^d \backslash I_n^{d}} \Vert_{\ell^2} \le \frac{ A}{\langle n \rangle^{s/2}} \text{ then }\Vert v(t)\indic_{\mathbb Z^d \backslash I_n^{d}} \Vert_{\ell^2} \le \frac{C_T A}{\langle n \rangle^{s/2}}
\end{equation}
for all $t \in (0,T).$

The variation of constant formula immediately implies that the solution is given as 
\[ v(k,t)= e^{-i t\Delta^d}v_0 + \int_0^t e^{-i (t-s)\Delta^d}F_{\sigma}(v(k,s)) \ ds\]
which implies by Gronwall's inequality that the solution is Lipschitz continuous with respect to initial data with a recursively computable bound. Hence, it suffices by \eqref{eq:estm} to assume that the initial state to \eqref{eq:fullNLS2} has compact support in $I_n$ up to an error $\mathcal O(\langle n \rangle{-s/2}).$

We also have that for $\xi(k,t):=v(k,t)- v_{\operatorname{BVP}}(k,t)$ since in the discrete case $\vert v(k,t) \vert, \vert v_{\operatorname{BVP}}(k,t) \vert \lesssim \mathcal O(1)$ and $\Delta_d$ is a bounded operator,
\begin{equation}
\begin{split}
\partial_t  \sum_{k \in I_n^d} \frac{\vert \xi(k,t) \vert^2}{2}&=\Re \sum_{k \in I_n^d} \partial_t \xi(k,t) \ \overline{\xi(k,t)} \\
&\lesssim \left\lvert \Im \sum_{k \in I_n^d} (\Delta_d \xi)(k,t) \overline{\xi(k,t)}  \right\rvert +\sum_{k \in I_n^d}  \vert \xi(k,t) \vert^2\\
&\lesssim  \sum_{k \in I_{n+2}^d \backslash I_{n-2}^d} \left(\vert v(k,t)\vert^2+\vert v_{\operatorname{BVP}}(k,t)\vert^2\right) +\sum_{k \in I_n^d}  \vert \xi(k,t) \vert^2.
\end{split}
\end{equation}
By Gronwall's inequality, we then have that $\sum_{k \in I_n^d} \vert \xi(k,t) \vert^2 \lesssim  \sum_{k \in I_{n+2}^d \backslash I_{n-2}^d} \vert v(k,t)\vert^2.$
This implies by \eqref{eq:estm} that uniformly on bounded sets in time
$
\sum_{k \in I_n^d} \vert \xi(k,t) \vert^2 \lesssim \langle n \rangle^{-s/2}. 
$
\end{proof}

We can now give the proof to Theorem \ref{BigTh:main_thrm2_NLS} and show that the Strang splitting scheme \eqref{eq:StrangNLS} provides a convergent algorithm for the discrete NLS. Since many steps are similar and simpler in the discrete setting to the continuous setting, we only comment on the difference to the proof of Theorem \ref{BigTh:main_thrm2_NLS}

\begin{proof}[Proof of Theo. \ref{Th:main_thrm_discNLS}]
{\bf Step I:} (Choosing $n$). We can restrict our equation to the cube $I_n^d$ centred at zero with length $n$ by Prop. \ref{prop:disc}.

{\bf Step II-IV:} Due to the discreteness of the equation, there is no singular potential, no gridsize parameter $h$, and no need for numerical integration to obtain a cubic discretization.

{\bf Step V:} The recursiveness of the choice of $n$ follows from Prop.\ref{prop:disc} whereas the recursiveness of the Strang splitting and Crank-Nicholson method follows as in the proof of Theorem \ref{BigTh:main_thrm2_NLS}.
\end{proof}

\section{Numerical examples}

In this section, we aim to illustrate two phenomena.

\begin{enumerate}
\item The solution to a Schr\"odinger equation on an unbounded domain is well-approximated by a BVP on a sufficiently large bounded domain. 
\item Blow-up of solutions to NLS can- in general- numerically not be computed. 
\end{enumerate}
To address the first point, we compare the explicit solution to a linear Schr\"odinger equation \eqref{eq:F} to the solution of a numerically computed BVP with the same potential.  This is illustrated in Figure \ref{fig:numericsone} with details provided in Subsection \ref{subsec:1}.

To address the second point, we compare the explicit solution to a focussing NLS that blows up in finite time \eqref{eq:quintic} to the output of a standard finite difference scheme on a bounded domain for that equation which suggests a singularity formation but does not capture the exact point breaking time of the solution well.  This is illustrated in Figure \ref{fig:blowup} with details provided in Subsection \ref{subsec:2}.

\subsection{Linear Schr\"odinger equation}
\label{subsec:1}
We consider the linear Schr\"odinger equation 
\begin{equation}
\label{eq:F}
i \partial_t \psi(x,t) =-\psi_{xx}(x,t) - F(t)x\psi(x,t)
\end{equation}
with time-dependent electric potential. 

The solution to this equation is explicitly given by
\[\psi(x,t) = \frac{1}{\sqrt{A(t)/A_0}} \operatorname{exp} \left( -i \int_0^t p_c^2(\tau)\ d\tau \right) \  \operatorname{exp} \left( -i\frac{\mathcal B_0(x-x_c(t))^2}{2A(t)} + i p_c(t)x \right) \]
with average momentum $p_c(t) =  \int_0^t F(s) \ ds$, average position $x_c(t) = \int_0^t \int_0^s F(\tau) \ d\tau \ ds,$ and constants $A_0,\mathcal B_0$ such that $A(t) = A_0 - \mathcal B_0 t.$
\begin{figure}
  \centering
  \begin{subfigure}{0.45\textwidth}
 \includegraphics[width=7cm,height=5cm]{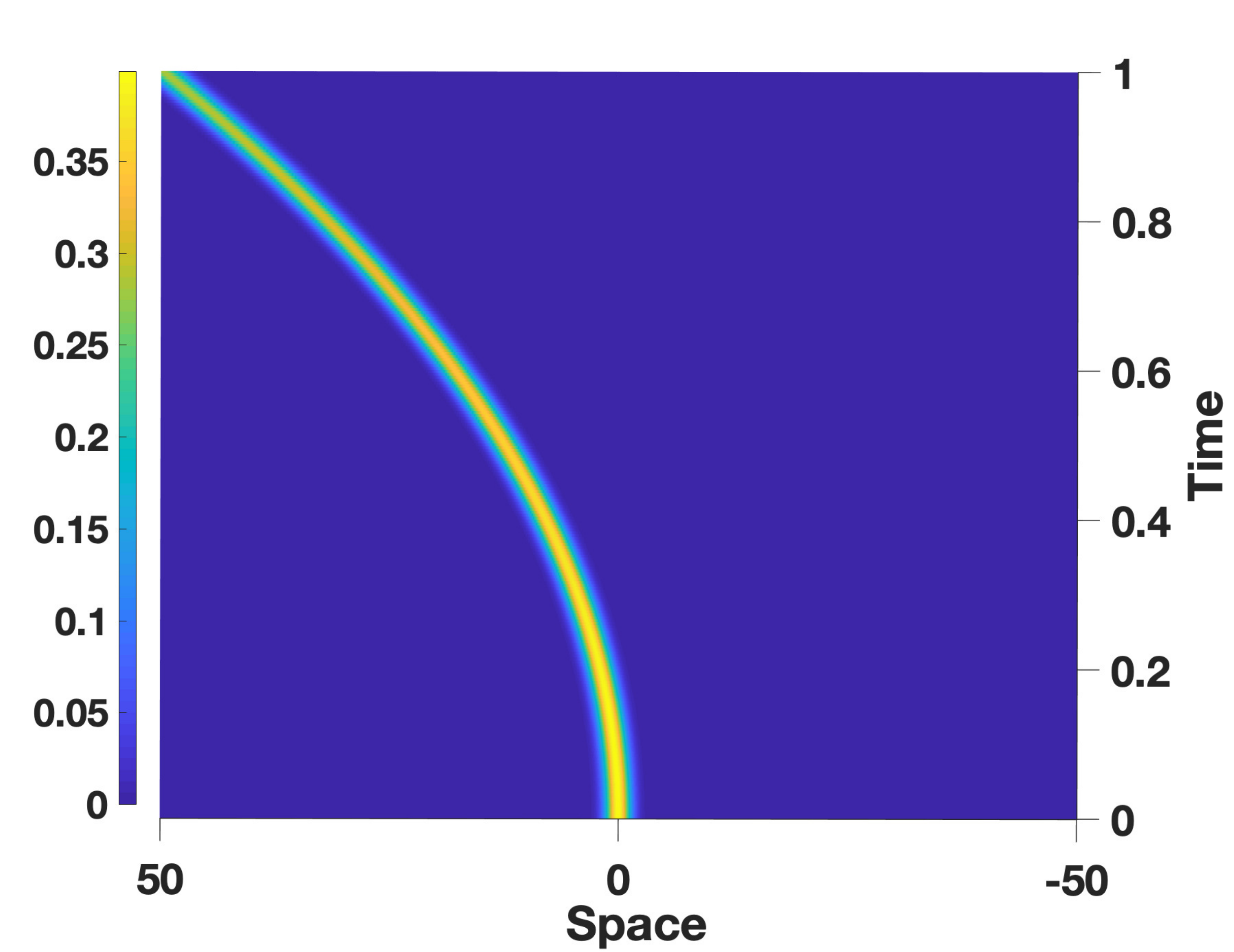}
    \caption{Numerical solution with $F(t)=50$ in \eqref{eq:F}.}
  \end{subfigure}
\qquad \
  \begin{subfigure}{0.45\textwidth}
    \includegraphics[width=7cm,height=5cm]{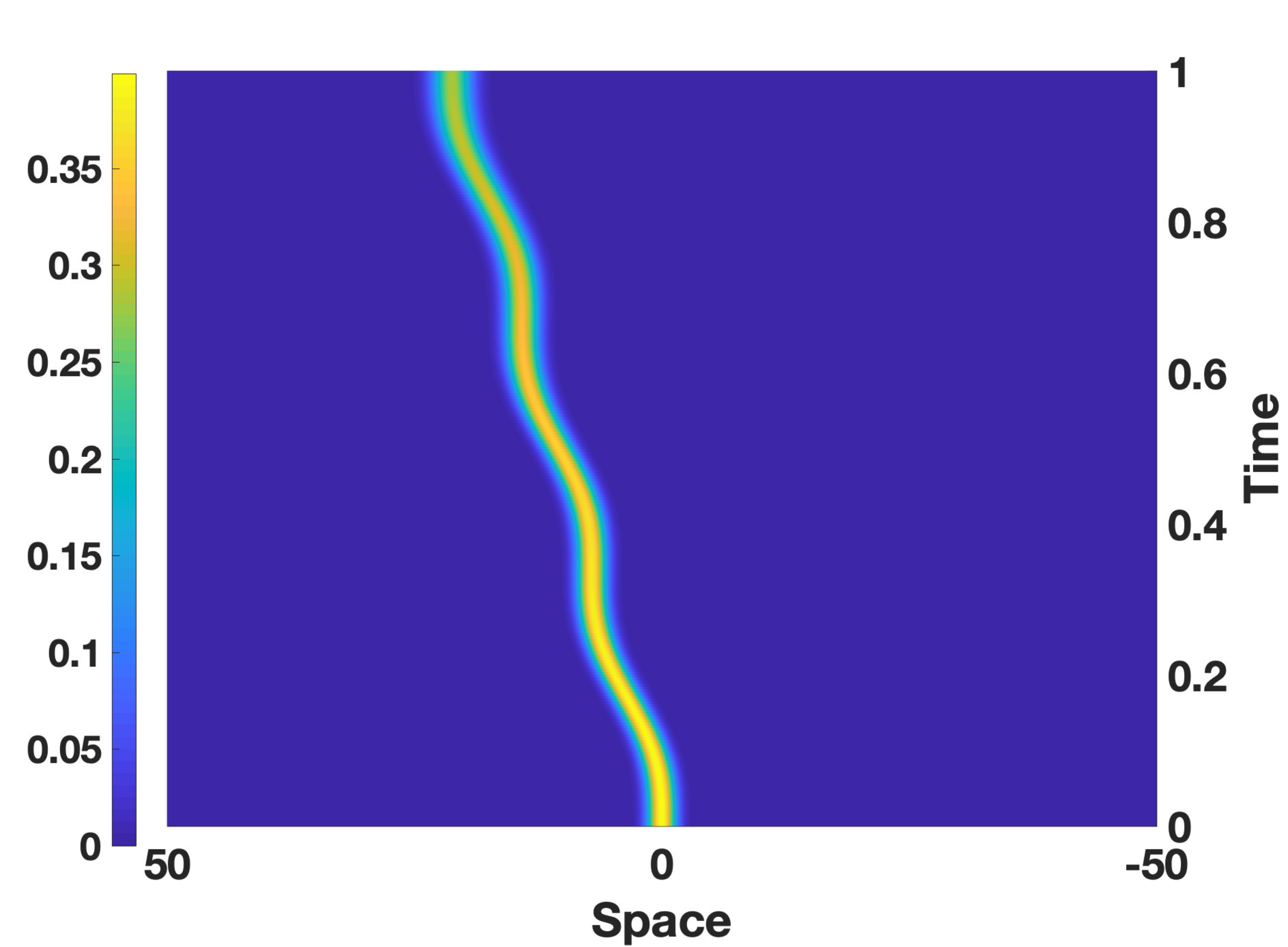}
    \caption{Numerical solution $F(t)=200\sin(6\pi t)$ in \eqref{eq:F}.}
  \end{subfigure}
  \caption{Numerical solution to the linear Schr\"odinger equation. The dynamics \eqref{eq:F} preserves spatial localization of the coherent state and can thus be studied on numerically on a bounded domain \eqref{eq:F}. The color highlights the density of the state.}
  \label{fig:numericsone}
\end{figure}
The density function is then 
\[ \left\vert \psi(x,t) \right\vert^2=\frac{e^{\Im(\mathcal B_0/A_0) \frac{\vert x-x_c(t) \vert^2 }{ \vert A(t)/A_0 \vert^2}}}{\vert A(t)/A_0 \vert}. \]
Thus, we see that although the state disperses over time, it remains exponentially localized. This property allows us to study the global solution in a small finite window, cf. Fig. \ref{fig:numericsone}. 
\subsection{Focusing NLS}
\label{subsec:2}
\begin{figure}
  \centering
  \begin{subfigure}{0.45\textwidth}
 \includegraphics[width=8cm]{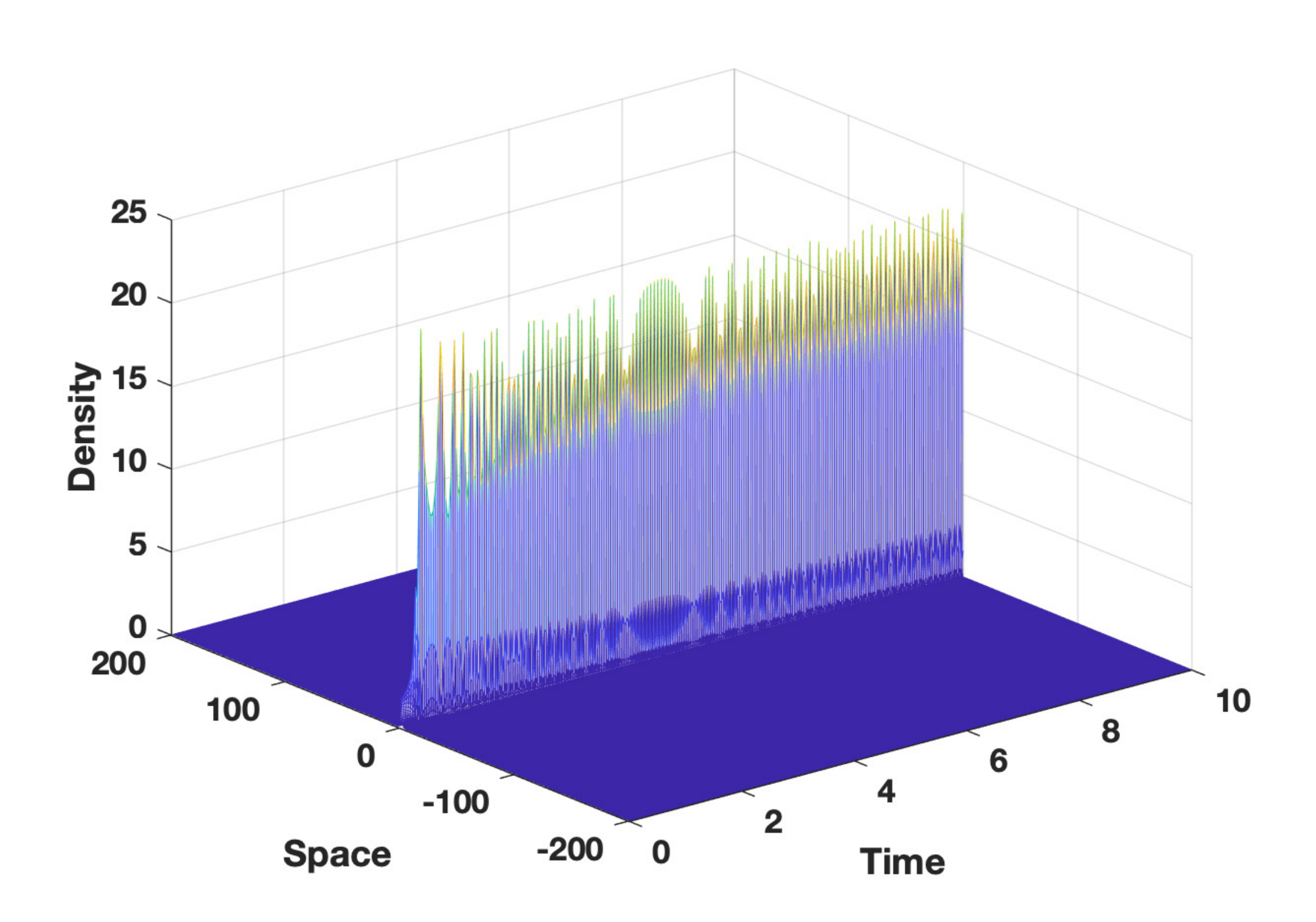}
    \caption{Numerical solution past blow-up time $T^*=10$.}
  \end{subfigure}
\qquad \
  \begin{subfigure}{0.45\textwidth}
    \includegraphics[width=8cm]{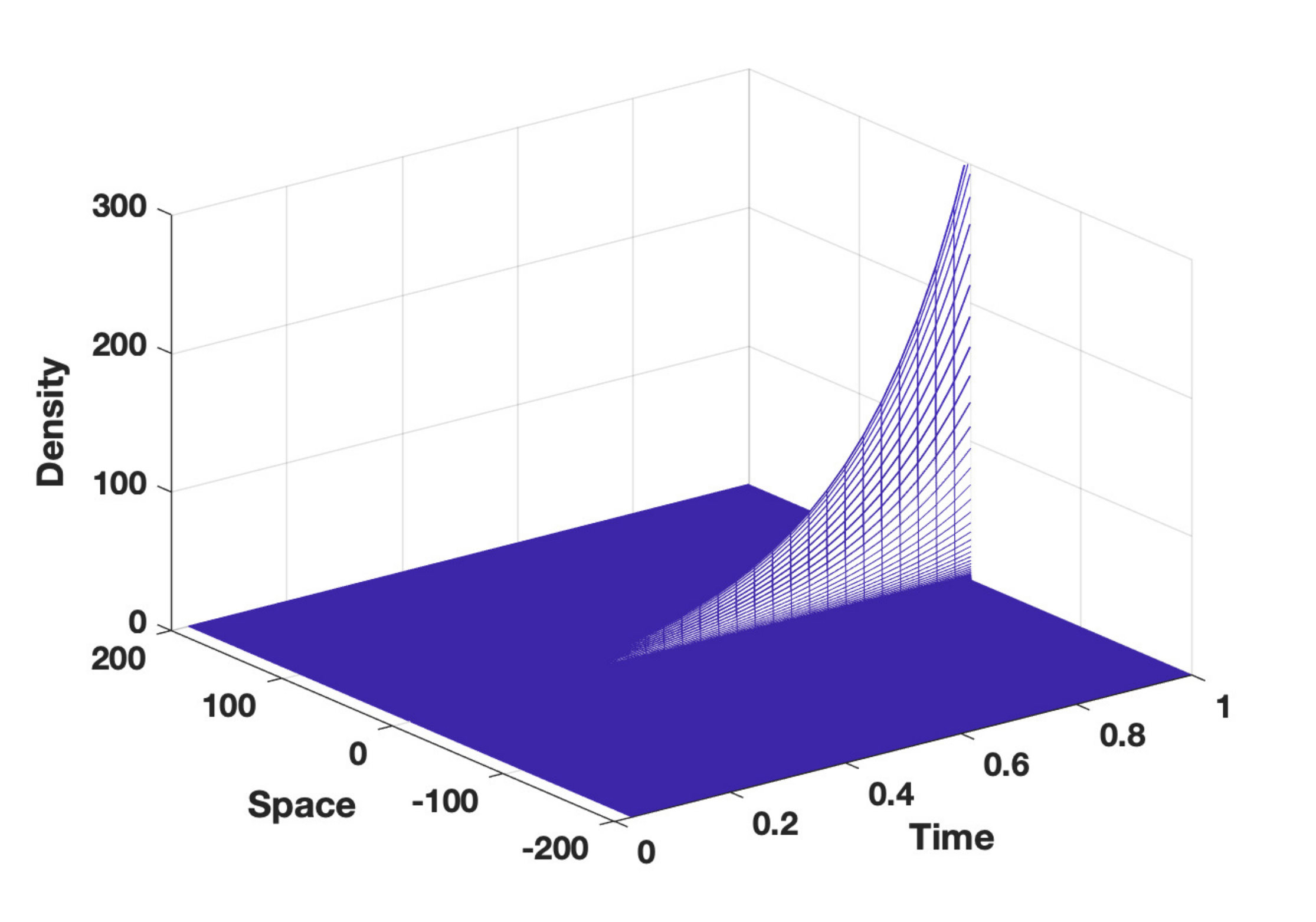}
    \caption{Exact solution on time interval $[0,1]$.}
  \end{subfigure}
  \caption{The numerical and exact solution to the focusing quintic NLS \eqref{eq:quintic} with initial state \eqref{eq:blow-upsol} and blow up time $T^*=10$.}
 \label{fig:blowup}
\end{figure}

Consider for illustrative purposes the quintic focusing 1d NLS 
\begin{equation}
\begin{split}
\label{eq:quintic}
i \partial_t \psi(x,t) &= - \psi_{xx}(x,t)- \left\lvert \psi \right\rvert^4 \psi (x,t), \quad (x,t) \in \mathbb R \in (0,T), \\
\psi(\bullet,0) &= \varphi_0 \in H^1(\mathbb R).
\end{split}
\end{equation}
Then, there exists a family \cite[(5)]{Pe} of explicit blow up solutions $u_{T^*}: (0,T^*) \rightarrow C^{\infty}(\mathbb R)$
\begin{equation}
\label{eq:blow-upsol}
 u_{T^*}(x,t) = \left(\tfrac{T^*}{T^*-t} \right)^{1/2} e^{i \frac{x^2}{4(t-T^*)} + \frac{tT^*}{T^*-t}} \varphi \left( \tfrac{T^*x}{T^*-t} \right)\operatorname{ with }\varphi(x) = \sqrt{\frac{3^{1/2}}{\cosh(2x)}}. 
 \end{equation}
 The singularity of the blow up solution \eqref{eq:blow-upsol} is also numerically visible but stops to increase after some time $t$ and thus exists for all times.

\begin{appendix}
\section{Cubic discretization}
\label{sec:CD}
In this section, we prove the rest of Proposition \ref{convrate}.
\begin{proof}
Let $f \in W^{n,p}(\mathbb R^d) \cap C^{\infty}(\mathbb R^d)$ and a cubic discretization $Q$ of side length $h$ be given, then we can define  
\begin{equation}
\begin{split}
 (\delta_h^1f)(x) &=  \fint_0^h  (\partial_1 f)(x+s \widehat{e}_1 ) \ ds \\
  ((\delta_h^1)^nf)(x) &= \fint_0^h  (\partial_1 (\delta^1_h)^{n-1} f)(x+s \widehat{e}_1 ) \ ds \\
  &=  \fint_0^h... \fint_0^h  (\partial_1^2 f)(x+(s_1+..+s_n) \widehat{e}_1 ) \ ds_n...\ ds_1.
\end{split}
\end{equation}
where $\widehat{e}_1=(1,0,..,0)$ is the unit vector.
 
We then have that 
\[ ((\delta_h^1)^nf)(x)-(\partial_1^nf)(x) = \fint_0^h...\fint_0^h \int_0^1  (\partial_1^{n+1} f)(x+u(s_1+..+s_n) \widehat{e}_1 ) (s_1+...+s_n) \ du \ ds_n... \ ds_1 \]
which implies that 
\[ \int_{\mathbb R^d} \left\lvert ((\delta_h^1)^nf)(x)-(\partial_1^nf)(x) \right\rvert^p \ dx \lesssim h^p \int_{\mathbb R^d} \left\lvert (\partial_1^{n+1}f)(x) \right\rvert^p \ dx \]
and $\left\lVert (\delta_h^1)^nf-\partial_1^nf \right\rVert_{L^{\infty}}  \lesssim h   \left\lVert \partial_1^{n+1}f \right\rVert_{L^{\infty}}.$

If we then define the function $g_h(y):= \fint_0^h...\fint_0^h \partial_1^{n} f (y+ (s_1+..+s_n) \widehat{e}_1) \ ds_n ... ds_1$, then we see that analyzing the convergence of $(\delta_h^1)^n f$ to $(\delta_h^1)^n f_Q$ is equivalent to analyzing the convergence of $g_h$ to $(g_h)_Q.$

To see this, it suffices to observe that 
\begin{equation}
\begin{split}
(\delta_h^1 f_Q )(x)
&= \sum_{j \in \mathbb Z^d} \frac{ \fint_{Q_j} f(u+\widehat{e}_1 h) \ du- \fint_{Q_j} f(u-\widehat{e}_1  h) \ du}{2h} \indic_{Q_j}(x)   \\
&= \sum_{j \in \mathbb Z^d} \fint_{Q_j} \left(\frac{ f(u+\widehat{e}_1 h) -  f(u-\widehat{e}_1  h) }{2h} \right) \ du\indic_{Q_j}(x) =( \delta_h^1f)_Q(x)
\end{split}
\end{equation}  
and similarly for higher derivatives.

Let $f (x)= \sum_{j \in \mathbb Z^d} f(x) \indic_{Q_{x_i}}(x)$ and $f_Q(x) =\sum_{j \in \mathbb Z^d}\fint_{Q_{x_j}} f(u) du \indic_{Q_{x_j}}(x).$
Thus 
\[  \int_{\RR^d} \left\lvert f(x) - f_Q(x) \right \rvert^p  \ dx  =\sum_{j \in \mathbb Z^d} \int_{Q_{x_j}} \left\lvert f(x) - \fint_{Q_{x_j}} f(y) dy \right\rvert^p \ dx. \]

We can then use Poincar\'e's inequality
\[ \int_{Q_{x_j}} \left\lvert f(x) - \fint_{Q_{x_j}} f(y) dy \right\rvert^p \ dx \le d^p h^p \int_{Q_{x_j}} \vert \nabla f(x) \vert^p \ dx\]
or conclude directly when $p=\infty$ that since for the curve $\gamma(s)=sx + (1-s)y$ 
we have that 
$
f(x) -f(y) = \int_{0}^1 \langle Df(\gamma(s)),x-y \rangle \ ds
$
 this implies
\[\sup_{x \in Q_{x_j}} \left\lvert  \fint_{Q_{x_j}} f(x) - f(y) dy \right\rvert  \lesssim h  \Vert D f \Vert_{L^{\infty}}.\]
Hence, we have that 
\begin{equation}
\begin{split}
 &\int_{\RR^d} \left\lvert f(x) - f_Q(x)  \right \rvert^p  \ dx  =d^p h^p \int_{\RR^d} \vert \nabla f(x) \vert^p \ dx \text{ and } \\
 &\sup_{j \in \mathbb{Z}^d, \ x \in Q_{x_j}} \left\lvert  \fint_{Q_{x_j}} f(x) - f(y) dy \right\rvert  \lesssim h  \Vert D f \Vert_{L^{\infty}}. 
\end{split}
\end{equation} 
The statement for general $\varepsilon$ follows then for example from interpolation:
Consider the complex interpolation spaces $(W^{s_0,p}(\RR^d),W^{s_1,p}(\RR^d))_{\varepsilon} = W^{(1-\varepsilon)s_0+\varepsilon s_1,p}(\RR^d),$ then interpolation gives that the operator $T_{\varepsilon}: W^{(1-\varepsilon)s_0+\varepsilon s_1,p}(\RR^d)\rightarrow L^2(\RR^d)$ 
\[ T_{\varepsilon}(f) :=\Delta^{h} f_Q-\Delta f \]
is bounded with operator norm $\Vert T_{\varepsilon} \Vert \le \Vert T_1 \Vert^{\varepsilon} \Vert T_0 \Vert^{1-\varepsilon} = \mathcal O(h^{\varepsilon}).$

\end{proof}

\end{appendix}
%%%%%%%%%%%%%%%%%%%%%%%%%%%%%%%%%%%%%%%%%%%%%%%%%%%%%%%%%%%%%%%%%%%%%%%%%%%%%%%%
%                                 BIBLIOGRAPHY                                 %
%%%%%%%%%%%%%%%%%%%%%%%%%%%%%%%%%%%%%%%%%%%%%%%%%%%%%%%%%%%%%%%%%%%%%%%%%%%%%%%%

\end{document}